\documentclass[numbers,webpdf]{ima-authoring-template}%

\graphicspath{{Fig/}}

\theoremstyle{thmstyletwo}%

\numberwithin{equation}{section}

\usepackage{microtype}
\usepackage{graphicx}
\usepackage{subcaption}
\usepackage{caption}
\usepackage{booktabs} 
\usepackage[margin=1.0in]{geometry}
\usepackage{amsfonts}           
\usepackage{amsthm}
\usepackage{amsmath}
\usepackage{amssymb}
\usepackage{stmaryrd}
\usepackage{xcolor}
\usepackage{caption}
\usepackage{algorithm}
\usepackage{algpseudocode}
\usepackage[margin=1.0in]{geometry}
\usepackage{comment}
\usepackage{hyperref}
\usepackage{mathtools}

\newtheorem{prop}{Proposition}[section]
\newtheorem{theorem}{Theorem}
\newtheorem{lemma}[prop]{Lemma}
\newtheorem{corollary}[prop]{Corollary}
\newtheorem{defn}[prop]{Definition}

\theoremstyle{definition}
\newtheorem{example}[prop]{Example}
\newtheorem{remark}[prop]{Remark}

\newcommand{\littletaller}{\mathchoice{\vphantom{\big|}}{}{}{}}

\DeclareMathOperator*{\argmin}{arg\,min}
\newcommand{\ones}{\mathbf{1}}

\newcommand{\KL}{\operatorname{KL}}
\newcommand{\Ent}{\operatorname{H}}
\newcommand\restr[2]{{%
  \left.\kern-\nulldelimiterspace %
  #1 %
  \littletaller %
  \right|_{#2} %
  }}

\newcommand{\Pc}{\mathcal{P}}
\newcommand{\Nc}{\mathcal{N}}
\newcommand{\Ac}{\mathcal{A}}

\newcommand{\Lc}{\mathsf{L}}

\newcommand{\Hc}{\mathcal{H}}
\newcommand{\Mc}{\mathcal{M}}
\newcommand{\Dc}{\mathcal{D}}

\newcommand{\Proj}{\operatorname{Proj}}

\renewcommand{\vec}{\operatorname{vec}}
\newcommand{\mat}{\operatorname{mat}}
\newcommand{\diff}{\mathrm{d}}
\newcommand{\R}{\mathbb{R}}
\newcommand{\tr}{\operatorname{tr}}

\newcommand{\define}[1]{\textbf{#1}}

\DeclareMathOperator{\PL}{\mathcal{L}\mathcal{P}}

\allowdisplaybreaks

\hypersetup{
  colorlinks   = true, %
  urlcolor     = blue, %
  linkcolor    = blue, %
  citecolor   = blue %
}

\begin{document}

\DOI{}
\copyrightyear{}
\vol{}
\pubyear{}
\access{}
\appnotes{}
\copyrightstatement{}
\firstpage{1}

\title[Geometry of the Space of Partitioned Networks]{Geometry of the Space of Partitioned Networks:\\
A Unified Theoretical and Computational Framework}

\author{Stephen Y Zhang
\address{\orgdiv{School of Mathematics and Statistics}, \orgname{University of Melbourne}
}}
\author{Fangfei Lan
\address{\orgdiv{Scientific Computing and Imaging Institute }, \orgname{University of Utah}
}}
\author{Youjia Zhou
\address{\orgname{Meta}
}}
\author{Agnese Barbensi
\address{\orgdiv{School of Mathematics and Physics}, \orgname{University of Queensland}
}}
\author{Michael P H Stumpf
\address{\orgdiv{School of BioSciences and School of Mathematics and Statistics}, \orgname{University of Melbourne}
}}
\author{Bei Wang
\address{\orgdiv{Kahlert School of Computing}, \orgname{University of Utah}
}}
\author{Tom Needham*
\address{\orgdiv{Department of Mathematics}, \orgname{Florida State University}
}}

\authormark{Zhang, Lan, Zhou, Barbensi, Stumpf, Wang, Needham}

\corresp[*]{Corresponding author: \href{email:tneedham@fsu.edu}{tneedham@fsu.edu}}

\abstract{Network data are ubiquitous in the real world to capture pairwise or high-order relations among objects. We introduce a class of measure-theoretic network objects called \emph{partitioned measure networks} that generalize a number of objects used to model network data in the literature, such as graphs, hypergraphs, and augmented graphs (i.e.,~graphs whose nodes are assigned categorical classes). We then propose a metric called a \emph{partitioned network distance} between partitioned measure networks that extends the Gromov-Wasserstein distance between graphs and the co-optimal transport distance between hypergraphs. We characterize the geometry of the space of partitioned measure networks, thereby providing a unified theoretical treatment of generalized network structures that encompass both pairwise and higher-order relations. In particular, we show that our metric defines an Alexandrov space of non-negative curvature, and leverage this structure to define gradients for certain functionals commonly arising in geometric data analysis tasks. We extend our framework to the setting where nodes have additional label information, and derive efficient computational schemes to utilize the partitioned network distance in practice. Equipped with these theoretical and computational tools, we demonstrate the utility of our framework in a suite of applications, including hypergraph alignment, clustering and dictionary learning from ensemble data, multi-omics alignment, as well as multiscale network alignment.}
\keywords{Optimal Transport; Gromov-Wasserstein Distance; Hypergraphs; Alexandrov Geometry.}

\maketitle

\section{Introduction}

Modelling relations among objects or concepts is a central task across the natural sciences, engineering, as well as arts and humanities. Graphs are a conventional approach to model pairwise relations between objects. Many real-world systems, however, involve higher-order interactions among three or more objects.
In biochemical reaction networks, reactions typically involve multiple chemical species simultaneously~\cite{jost2019hypergraph}. In coauthorship networks, papers are written jointly by any number of authors \cite{zhou2021topological}. And in a theatre play, each scene can be viewed as a (higher-order) interaction between a set of characters~\cite{coupette2024all}. 
These systems cannot be modelled as graphs without information loss. Instead, we need to introduce more general structures such as hypergraphs, simplicial complexes, and cell complexes~\cite{bick2023higher}. 

A natural question arising from the study of graphs is how to compare them:~specifically, how to characterize the distance or degree of similarity between two graphs. This is not straightforward, since two graphs may vary in the number of nodes and comparisons must be invariant under permutation~\cite{Thorne:du} and other symmetries. 
The complexity involving graph data has driven the development of an extensive tool set for graph comparison and matching~\cite{wills2020metrics}, including spectral methods~\cite{feizi2016spectral, feizi2019spectral, nassar2018low, Thorne:du} and graph kernels \cite{shervashidze2011weisfeiler, borgwardt2005shortest}. 
Among these methods, Gromov-Wasserstein couplings of \emph{metric measure spaces} (i.e., metric spaces equipped with measures) have proven fruitful from theoretical and computational perspectives~\cite{memoli2011gromov}. 
A \emph{coupling} between measures is a relaxed notion of correspondence between objects~\cite{memoli2011gromov}; and it creates a joint probability space with the desired measures as its marginals. By modelling graphs as metric measure spaces and considering couplings between them, a notion of Gromov-Wasserstein distance between graphs emerges in terms of a \emph{least distortion} principle. This distance is in fact a pseudometric, and the space of graphs (considered up to a natural notion of equivalence) can thus be formalized as a metric space endowed with the Gromov-Wasserstein metric~\cite{chowdhury2019gromov}. The geometry of the space of metric measure spaces, endowed with this metric, was studied in detail by Sturm \cite{sturm2023space} and was shown to be an Alexandrov space with curvature bounded below. This is a powerful characterization that allows well-defined notions of geodesics, tangent spaces, and gradient flows.

Computationally, the Gromov-Wasserstein framework formulates the distance between two graphs as an optimal value of a non-convex quadratic program over a set of feasible couplings. Efficient computational schemes exist to find local minima of this problem, which have given rise to an algorithmic tool box for dealing with graphs that has gained popularity in the statistics and machine learning communities~\cite{peyre2016gromov}. 

While the Gromov-Wasserstein approach to graph comparison has been a foundational tool for understanding the space of graphs from a geometric viewpoint, in its basic form, it is insufficient to model higher-order systems such as hypergraphs. A \emph{hypergraph} consists of a set of nodes and a set of hyperedges (i.e.,~subsets of nodes); if each hyperedge contains exactly two nodes, then a hypergraph reduces to a graph. 
A hypergraph can be used to encode multi-way relations among nodes. Recently, Chowdhury et al.~\cite{chowdhury2023hypergraph} introduced Gromov-Wasserstein type distances between hypergraphs, based on the co-optimal transport framework of Redko et al.~\cite{titouan2020co}. Framing hypergraphs as metric measure spaces, Chowdhury et al.~showed that the space of hypergraphs can be characterized as a metric space; however, an in-depth geometric description of this metric space remains to be fully developed.  

\begin{figure}[!ht]
\centering 
\includegraphics[width=0.75\linewidth]{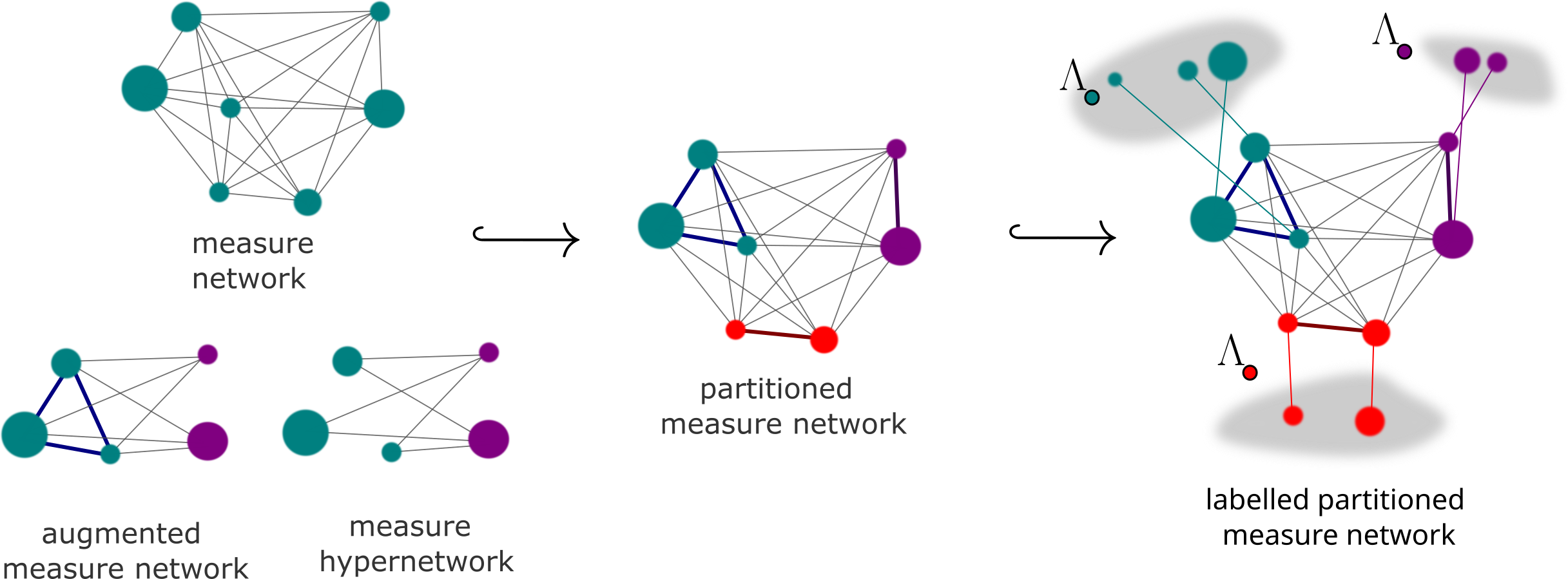}
\caption{A schematic representation of the types of generalized networks which can be embedded into the space of partitioned measure networks. From left to right: (i) Examples of a measure network (or a weighted graph) whose nodes are endowed with a probability measure; a measure hypernetwork (or a weighted bipartite graph) with a node probability measure; and an augmented measure network (or a measure network) whose node set has been partitioned into two classes. (ii) Illustration of a partitioned measure network with $k = 3$ partitions, that is, a measure network whose nodes have been partitioned into 3 classes. (iii) Illustration of a labelled partitioned measure network, or a partitioned measure network whose nodes are endowed with features in some auxiliary metric spaces.}
\label{fig:generalised_network}
\vspace{-4mm}
\end{figure}

In this paper, we introduce a general class of network objects called \emph{partitioned measure networks}, which generalize a number of measure-theoretic objects recently introduced in the literature: measure networks~\cite{chowdhury2019gromov}, measure hypernetworks~\cite{chowdhury2023hypergraph}, and augmented measure networks~\cite{demetci2023revisiting}; see Figure~\ref{fig:generalised_network}, and Remark~\ref{remark:reps_as_graph_structures} for further discussions. A \emph{$k$-partitioned measure network} is a graph structure whose nodes are separated into $k$ classes, and this partitioning should be taken into account when comparing these structures. For example, a hypergraph can be encoded as a bipartite graph between two partitions (that correspond to sets of nodes and hyperedges respectively), so that any hypergraph can be modelled as a $2$-partitioned measure network; further examples are provided in Example~\ref{ex:partitioned_networks}. We also consider $k$-partitioned networks whose nodes come with attributes in some auxiliary metric space, referred to as \emph{labelled $k$-partitioned measure networks}.

\smallskip\noindent\textbf{Contributions.}~We highlight our contributions below.   First, we equip the space of partitioned measure networks with a family of transport-based distances formulated as the minimum of a quadratic distortion functional over $k$-\emph{partitioned couplings} (i.e.,~measure couplings that respect the class structures of the partitioned networks). This choice of distance is shown to be a {\em bona fide} metric (up to a natural equivalence relation), and we explicitly construct isometric embeddings of measure networks, hypernetworks, and augmented measure networks into this space; see Definition~\ref{defn:generalized_network_embeddings} and Theorem~\ref{thm:isometric_embeddings}.

Second, we characterize geodesics in the space of partitioned measure networks and show that it is an Alexandrov space with curvature bounded below; see Theorem~\ref{thm:alexandrov_space}. As an extension to our analysis, we consider the addition of \emph{labels} to partitioned measure networks, which provides a generalization of the so-called \emph{Fused Gromov-Wasserstein} problem~\cite{vayer2020fused}. When labels reside in a Hilbert space, we show that our geometric characterization of geodesics and curvature also applies; see Theorem~\ref{thm:alexandrov_space_labelled}.

Third, our theoretical contributions provide a unified treatment of a family of generalized networks, which encompass multiple network objects recently introduced in the literature~\cite{chowdhury2019gromov,titouan2020co,chowdhury2023hypergraph,demetci2023revisiting}. This allows us to provide a common geometric description of these spaces. To the best of our knowledge, in the settings of labelled measure networks, measure hypernetworks, and augmented measure networks, these characterizations are new. We conclude our theoretical contributions in Section~\ref{sec:riemannian}, with a brief discussion of the Riemannian geometric concepts of tangent spaces, exponential and logarithmic maps, and gradients in the space of $k$-partitioned measure networks, which are crucial for practical applications in learning algorithms. We remark here that a recent paper~\cite{bauer2024z} also provides a general framework for studying several variants of the Gromov-Wasserstein distance, but that the results therein are disjoint from the ones presented here: the framework of the present paper captures different variants of Gromov-Wasserstein distance than that of~\cite{bauer2024z}, and the curvature bounds and Riemannian structures established here are not treated in~\cite{bauer2024z}.

Fourth, we demonstrate the utility of our framework on computational case studies, which bring together the theoretical ideas. We provide a connection between the Gromov-Wasserstein network matching problem and a family of spectral network alignment algorithms~\cite{feizi2016spectral, nassar2018low}. To our knowledge, this connection has not been explicitly pointed out in prior work. We show that partitioned measure networks provide a more natural and flexible extension to the hypergraph alignment problem, and we demonstrate in numerical experiments that transport-based approaches are more accurate and efficient than spectral methods.

We illustrate the practicality of our methods with applications to metabolic network alignment, simultaneous sample and feature alignment in multi-omics data, and multi-scale network matching. We formulate each of these problems in terms of partitioned network matching. In addition, we investigate some more complex tasks on the space of partitioned networks that exploit their geometric properties. For example, we provide  computational characterizations of geodesics and barycenters in the space of measure hypernetworks. We introduce geodesic dictionary learning as a bi-level problem on the space of (partitioned) measure networks, from which we motivate linearized dictionary learning~\cite{vincent2021online} as a fast approximate algorithm. We conclude by demonstrating the utility of geodesic dictionary learning in synthetic hypergraph block models as well as networks derived from atomic and topological representations of small molecules.

\smallskip\noindent\textbf{Overview.}~The plan for the paper is as follows. In Section~\ref{sec:spaces_generalized_networks}, we give  precise definitions of various spaces of generalized networks and define our new variant of Gromov-Wasserstein distance---the partitioned network distance---on them. We establish metric properties of the partitioned network distance. We generalize these ideas in Section~\ref{sec:extension_to_labelled_partitioned_measure_networks} by introducing node features to the partitioned networks. We define a new metric on these objects and establish its metric properties, including curvature bounds. The deferred proofs from previous sections follow as corollaries to these more general results. 
We study the Riemannian structures following from our curvature bounds in Section~\ref{sec:riemannian}, with a focus on computational examples. In Section \ref{sec:applications}, we give an extended collection of computational examples and applications of our framework. This is supplemented by an open source code repository, available at \url{https://github.com/zsteve/partitioned_networks}. We conclude the main paper with a discussion of future directions in Section~\ref{sec:discussion}, followed by technical aspects of our numerical methods in the Appendix~\ref{sec:algorithms_appendix}.

\section{Metric geometry of spaces of generalized networks}
\label{sec:spaces_generalized_networks}

\subsection{Spaces of generalized networks}

We first review some notions of generalized networks, and Gromov-Wasserstein type distances between them, which have appeared previously in the literature.

\subsubsection{Generalized networks}

Let us first recall some definitions of generalized network structures in the literature. These use the following notational conventions: given measure spaces $(X,\mu)$ and $(Y,\nu)$, we use $L^p(\mu)$ to denote the space of $p$-integrable functions on $X$ (for $p \in [1,\infty]$) and we use $\mu \otimes \nu$ to denote the product measure on $X \times Y$. 

\begin{defn}[Various notions of generalized networks]\label{def:generalized_networks}
Let $p \in [1,\infty]$. 
    \begin{enumerate}
        \item A \define{measure $p$-network}~\cite{chowdhury2019gromov} is a triple $N = (X,\mu,\omega)$, where $X$ is a Polish space, $\mu$ is a Borel probability measure on $X$, and $\omega:X \times X \to [0,\infty)$ is an element of $L^p(\mu \otimes \mu)$. Let $\mathcal{N}^p$ denote the collection of all measure $p$-networks.

        \item  A \define{measure $p$-hypernetwork}~\cite{titouan2020co,chowdhury2023hypergraph} is a five-tuple $H = (X,\mu,Y,\nu,\omega)$ consisting of Polish spaces $X$ and $Y$ endowed with Borel probability measures $\mu$ and $\nu$, respectively, and $\omega:X \times Y \to [0,\infty)$ is an element of $L^p(\mu \otimes \nu)$. Let $\mathcal{H}^p$ denote the collection of all measure $p$-hypernetworks.

        \item An \define{augmented measure $p$-network}~\cite{demetci2023revisiting} is a six-tuple $A = (X,\mu,Y,\nu,\omega_X,\omega_{XY})$, consisting of Polish spaces $X$ and $Y$ endowed with Borel probability measures $\mu$ and $\nu$, respectively, and  $\omega_X:X \times X \to [0,\infty)$ and $\omega_{XY}:X \times Y \to [0,\infty)$ are elements of $L^p(\mu \otimes \mu)$ and $L^p(\mu \otimes \nu)$, respectively. Let $\mathcal{A}^p$ denote the collection of all augmented measure $p$-networks.
    \end{enumerate}
\end{defn}

 We will frequently suppress explicit mention of the parameter $p$, as the appropriate value will typically be either unimportant or clear from context; e.g., when referring to certain $L^p$-type metrics. The functions $\omega$, $\omega_X$, $\omega_{XY}$, etc., will loosely be referred to as 
 \define{\emph{network kernels}}. 

\begin{example}\label{ex:generalized_network_examples}
    We now provide prototypical examples of the structures defined above.
    \begin{enumerate}
        \item A \define{metric measure space} is a measure network $(X,\mu,\omega)$ such that $\omega$ satisfies the axioms of a metric, and the topology of $X$ is induced by the metric. This was the original setting where the Gromov-Wasserstein distances were formulated; see~\cite{memoli2007,memoli2011gromov} and the related work~\cite{sturm2006geometry}. More general examples of measure networks frequently come from the setting of graph theory, where $X$ is a finite set of nodes, $\omega$ encodes node affinities, and $\mu$ is some choice of weights on the nodes (e.g., uniform). For example, weighted adjacency functions are used as network kernels in this framework to represent protein-protein interaction networks in~\cite{xu2019scalable}, and heat kernels are used in~\cite{chowdhury2021generalized} for the purpose of uncovering community structures in graph datasets.
        \item A \define{hypergraph} is a set $V$ of nodes and a set $E$ of subsets of $V$, each of which is referred to as a \define{hyperedge}. The containment or incidence relation can be encoded as a function $\omega:V \times E \to \{0,1\}$, so that picking probability measures $\mu$ and $\nu$ (say, uniform) yields a measure hypernetwork $(V,\mu,E,\nu,\omega)$. This representation was used in~\cite{chowdhury2023hypergraph}, where it was applied, for example, to simplify hypergraphs representing complicated social interactions. The notion of a measure hypernetwork therefore generalizes the notion of a hypergraph. One also obtains a measure hypernetwork from a \define{data matrix}, where $X$ is an indexing set for samples, $Y$ is an indexing set for features , $\omega(x,y)$ is the value of the matrix for sample $x$ and feature $y$ and $\mu$ and $\nu$ are some choices of weights. Applications to analysis of ensembles of data matrices was a main motivation for the introduction of this formalism in~\cite{titouan2020co}.
        
        \item One can obtain an \define{augmented measure network} by taking $(X,\mu,Y,\nu,\omega)$ to be a measure hypernetwork representation of a data matrix, setting $\omega_{XY} = \omega$ and taking $\omega_X$ to be some relational function on the rows, such as distance between samples in the data space. This approach was taken to model multi-omics data in~\cite{demetci2023revisiting}, with a view toward integrating several single-cell multi-omics datasets.
    \end{enumerate}
\end{example}

\begin{remark}[Representations as graph structures]\label{remark:reps_as_graph_structures}
    Figure~\ref{fig:generalised_network} provides schematic representations of the generalized networks defined above. To interpret this, one should conceptualize a measure network as a fully connected, weighted graph with a node set $X$ and edge weights encoded by $\omega$. From this perspective, it is natural to consider a measure hypernetwork as a bipartite weighted graph, where $X$ and $Y$ define the two classes of nodes. Then $\omega$ defines edge weights for the complete bipartite graph, while there are no edges joining pairs of nodes in $X$ or pairs of hyperedges in $Y$. Finally, an augmented measure network can be viewed similarly, with the difference being that edges between nodes in $X$ are permitted with weights encoded by $\omega_X$ (and bipartite edge weights being encoded by $\omega_{XY}$). These interpretations lead to the $k$-partitioned measure network formalism introduced in Section~\ref{sec:partitioned_measure_networks}. See Definition \ref{defn:generalized_network_embeddings} and Remark \ref{rem:intuition_for_embeddings} for a more formal justification of these representations.
\end{remark}

\begin{remark}[Finite and continuous spaces]
    The concept of measure hypernetwork was introduced in \cite{titouan2020co}, somewhat less formally, and primarily as a model for data matrices (so $X$ and $Y$ were assumed to be finite). This definition was formalized and extended to the setting of infinite spaces in \cite{chowdhury2023hypergraph} (in a slightly less general form than what is presented here, as network kernels were assumed therein to be bounded and the underlying Polish spaces were assumed to be compact for many results). The computational examples of practical interest are, of course, always defined over finite spaces; the main motivation for extending the definition to infinite spaces is to allow us to consider the collection of all measure hypernetworks as a complete metric space with respect to the distance defined below. Similarly, the notion of an augmented measure network was introduced for finite spaces in~\cite{demetci2023revisiting}, and the more formal definition provided above is novel.
\end{remark}

\subsubsection{Generalized network distances}

For each flavour of generalized network described in Definition \ref{def:generalized_networks}, there has been an associated notion of distance introduced in the literature. They all have a similar structure, defined in terms of optimizing over measure couplings, as in the Kantorovich formulation of classical optimal transport (see \cite{villani2009optimal,peyre2019computational} as general references on optimal transport). Below, we use $\mathrm{proj}_X:X \times X' \to X$ and $\mathrm{proj}_{X'}:X \times X' \to X'$ to denote coordinate projections on some product sets, and for a Borel measurable map $p:X \to Y$ of topological spaces, we use $p_\# \mu$ to denote the pushforward to $Y$ of a Borel measure $\mu$ on $X$.

\begin{defn}[Measure coupling]
For probability spaces $(X,\mu)$ and $(X',\mu')$, we say that a measure $\pi$ on $X \times X'$ is a \define{coupling} of $\mu$ and $\mu'$ if its left and right marginals are equal to $\mu$ and $\mu'$, respectively; that is,  $(\mathrm{proj}_X)_\# \pi = \mu$ and $(\mathrm{proj}_{X'})_\# \pi = \mu'$. Let $\Pi(\mu,\mu')$ denote the set of all couplings of $\mu$ and $\mu'$.
\end{defn}

Let us now establish some convenient notational conventions that will be used throughout the rest of the paper. We always use $N$ and $N'$ to stand for measure networks, with the underlying data always being implicitly denoted $N = (X,\mu,\omega)$ and $N' = (X',\mu', \omega')$.  We similarly use $H,H'$ for measure hypernetworks and $A,A'$ for augmented hypernetworks. That is, when referring to $H$, the underlying data is implicitly given by $H = (X,\mu,Y,\nu,\omega)$, unless explicitly stated otherwise.  For functions $\tau:A \times B \to \R$ and $\tau': A' \times B' \to \R$, the difference $\tau - \tau'$ is understood to be the function defined on $A \times A' \times B \times B'$ as follows: 
\begin{align*}
    \tau - \tau':A \times A' \times B \times B' &\to \mathbb{R} \\
    (a,a',b,b') &\mapsto \tau(a,b) - \tau'(a',b').
\end{align*}
Given a measure space $(X,\mu)$, we use $\|\cdot\|_{L^p(\mu)}$ to denote the standard norm on $L^p(\mu)$. With these conventions in mind, we recall some metrics which have been introduced on the generalized network spaces of Definition~\ref{def:generalized_networks}.

\begin{defn}[Generalized network distances]\label{defn:generalized_network_distances}
Let $p \in [1,\infty]$. 
\begin{enumerate}
    \item The \define{network $p$-distance} or \define{Gromov-Wasserstein $p$-distance}~\cite{chowdhury2019gromov} between measure networks $N,N' \in \mathcal{N}^p$ is 
    \[
    d_{\mathcal{N}^p}(N,N') \coloneqq \inf_{\pi \in \Pi(\mu,\mu')} \frac{1}{2}\|\omega - \omega'\|_{L^p(\pi \otimes \pi)}.
    \]
    \item The \define{hypernetwork $p$-distance}~\cite{titouan2020co,chowdhury2023hypergraph} between measure hypernetworks $H,H' \in \mathcal{H}^p$ is 
    \[
    d_{\mathcal{H}^p}(H,H') \coloneqq \inf_{\substack{\pi \in \Pi(\mu,\mu') \\ \xi \in \Pi(\nu,\nu')}} \frac{1}{2}\|\omega - \omega'\|_{L^p(\pi \otimes \xi)}.
    \]
    \item The \define{augmented network $p$-distance}~\cite{demetci2023revisiting} between augmented measure networks $A,A' \in \mathcal{A}^p$ is 
    \[
    d_{\mathcal{A}^p}(A,A') \coloneqq \inf_{\substack{\pi \in \Pi(\mu,\mu') \\ \xi \in \Pi(\nu,\nu')}} \frac{1}{2}\left(\|\omega_X - \omega_{X'}'\|_{L^p(\pi \otimes \pi)}^p + \|\omega_{XY} - \omega_{X'Y'}'\|_{L^p(\pi \otimes \xi)}^p \right)^{1/p}.
    \]
\end{enumerate}
\end{defn}

In the following section, we unify these generalized network concepts and distances (as well as others) under a common framework. We will use this common framework to derive various metric properties of these distances simultaneously.

\subsection{Partitioned measure networks and generalized networks}\label{sec:partitioned_measure_networks}

One should observe the similarities between the various notions of generalized network in Definition \ref{def:generalized_networks}, and the optimal transport-inspired distances between them described in Definition \ref{defn:generalized_network_distances}. In this section, we describe a new structure that simultaneously generalizes these ideas.

\subsubsection{Partitioned measure networks} Let us now introduce a new generalized network structure.

\begin{defn}[Partitioned measure network]\label{defn:partitioned_measure_network}
    Let $k$ be a positive integer and $p \in [1,\infty]$. A \define{$k$-partitioned measure $p$-network} is a structure of the form $P = \big((X_i,\mu_i)_{i=1}^k,\omega\big)$, where
    \begin{itemize}
        \item each $(X_i,\mu_i)$ is a Polish probability space such that $X_i \cap X_j = \emptyset$ for $i \neq j$, and
        \item using the notation $X \coloneqq \sqcup_i X_i$ and $\mu \coloneqq \sum_i \mu_i$, $\omega:X \times X \to [0,\infty)$ is an element of $L^p(\mu \otimes \mu)$.
    \end{itemize}
    To simplify notation, we sometimes write $(X_i,\mu_i)$ instead of $(X_i,\mu_i)_{i=1}^k$, $(X_i)$ instead of $(X_i)_{i=1}^k$ and $(\mu_i)$ instead of $(\mu_i)_{i=1}^k$. We use  $\Pc_k^p$ to denote the collection of all $k$-partitioned measure $p$-networks.
\end{defn}

It is often the case that the particular values of $k$ and $p$ are not important, in which case we abuse terminology and refer to the objects defined above as  \define{partitioned measure networks}. In line with those established above, we follow the notational convention that $P$ and $P'$ are implicitly assumed to stand for partitioned measure networks $P = ((X_i,\mu_i),\omega)$ and $P' = ((X'_i,\mu_i'), \omega')$. Observe that a $1$-partitioned measure network is just a measure network; that is, $\Nc^p = \Pc_1^p$. Next, we observe below that we could embed the various notions of generalized networks (from Definition \ref{def:generalized_networks}) into the space of partitioned measure networks $\Pc_2^p$. 

\begin{defn}[Generalized network embeddings]\label{defn:generalized_network_embeddings}
We have the following families of embeddings.
    \begin{enumerate}
        \item For each $k$, let $\varepsilon_{k,k+1}:\Pc_k^p \to \Pc_{k+1}^p$ be the map taking $P \in \Pc_k^p$ to 
        \[
        \varepsilon_{k,k+1}(P) \coloneqq \left( (X_i, \mu_i)_{i = 1}^{k+1}, \varepsilon_{k,k+1}(\omega)\right),
        \]
        where $X_{k+1}$ consists of a single abstract point, $\mu_{k+1}$ is the associated Dirac measure, and the network kernel is defined by
        \begin{align*}
        &\varepsilon_{k,k+1}(\omega):(\sqcup_{i=1}^{k+1} X_i) \times (\sqcup_{i=1}^{k+1} X_i) \to \R, \\ 
        &\varepsilon_{k,k+1}(\omega)(u,v) = \left\{\begin{array}{rl} 
        \omega(u,v) & (u,v) \in (\sqcup_{i=1}^{k} X_i) \times (\sqcup_{i=1}^{k} X_i); \\
        0 & \mbox{otherwise.}
        \end{array}\right.
        \end{align*}
        In particular, $\varepsilon_{1,2}$ gives an embedding $\Nc^p \hookrightarrow \Pc_2^p$. For $k < \ell$, we define $\varepsilon_{k,\ell}:\Pc^p_k \to \Pc^p_\ell$ by 
        \[
        \varepsilon_{k,\ell} \coloneqq \varepsilon_{\ell-1,\ell} \circ \cdots \circ \varepsilon_{k,k+1}.
        \]
        \item Let $\varepsilon_{\mathcal{H}}:\mathcal{H}^p \to \Pc^p_2$ be the map taking $H \in \mathcal{H}^p$ to 
        \[
        \varepsilon_{\mathcal{H}}(H) \coloneqq \left(((X, \mu), (Y, \nu)), \varepsilon_{\mathcal{H}}(\omega)\right),
        \]
        where
        \[
        \varepsilon_{\mathcal{H}}(\omega)(u,v) \coloneqq \left\{\begin{array}{rl}
        \omega(u,v) & u \in X \mbox{ and } v\in Y;\\
        0 & \mbox{otherwise}.
        \end{array}\right.
        \]
        \item Let $\varepsilon_{\mathcal{A}}:\mathcal{A}^p \to \Pc_2^p$ be the map taking $A \in \mathcal{A}^p$ to 
        \[
          \varepsilon_{\mathcal{A}}(A) \coloneqq \left(((X, \mu), (Y, \nu)), \varepsilon_{\mathcal{A}}(\omega_X,\omega_{XY})\right),
        \]
        where 
        \[
        \varepsilon_{\mathcal{A}}(\omega_X,\omega_{XY})(u,v) \coloneqq \left\{\begin{array}{rl}
        \omega_X(u,v) & u,v \in X;\\
        \omega_{XY}(u,v) & u \in X \mbox{ and } v \in Y;\\
        0 & \mbox{otherwise}.
        \end{array}\right.
        \]
    \end{enumerate}
\end{defn}

\begin{example}[Partitioned networks]\label{ex:partitioned_networks}
    The generalized network embeddings defined above show that measure networks, hypernetworks and augmented hypernetworks can be considered as 2-partitioned networks, so that this structure encompasses those described in Example~\ref{ex:generalized_network_examples}. 
    Another source of examples of $k$-partitioned networks is the notion of a dataset with a categorical class structure. That is, consider a (say, finite) dataset of points $X$ such that each $x \in X$ belongs to one of $k$ different classes---for example, $X$ could consist of a set of images, and the images could be assigned classes based on subject matter (e.g., cats, dogs, etc.). This class structure can be encoded as probability measures by taking $\mu_i$ to be a uniform measure supported on those points belonging to class $i$. The supports then define the required partition of $X = \sqcup_i X_i$ and any choice of network kernel $\omega$ on $X$ gives a representation of the multiclass dataset as a $k$-partitioned measure network.
\end{example}

\begin{remark}[Intuition for the embeddings]\label{rem:intuition_for_embeddings}
    Remark~\ref{remark:reps_as_graph_structures} gives interpretations of generalized networks in terms of graph structures, illustrated schematically in Figure~\ref{fig:generalised_network}. The embeddings defined above formalize these intuitive descriptions mathematically. For example, if $H \in \mathcal{H}^p$ is a representation of a hypergraph, then $\varepsilon_\mathcal{H}(H)$ gives a representation of $H$ as a bipartite graph.
\end{remark}

\subsubsection{Partitioned network distance} We now define a distance between partitioned measure networks, using the concept of a partitioned coupling. 

\begin{defn}[Partitioned coupling]\label{defn:partitioned_coupling}
Given $k$-tuples of probability spaces $(X_i,\mu_i)_{i=1}^k$ and $(X'_i,\mu_i')_{i=1}^k$, let
\[
\Pi_k\left((\mu_i),(\mu_i')\right)  \coloneqq \Pi(\mu_1,\mu_1') \times \cdots \times \Pi(\mu_k,\mu_k').
\]
An element $(\pi_i)_{i=1}^k$ of $\Pi_k\left((\mu_i),(\mu_i')\right)$ is a \define{$k$-partitioned coupling}. To simplify notation, we sometimes denote $k$-partitioned couplings as $(\pi_i)$ instead of $(\pi_i)_{i=1}^k$. 
\end{defn}

\begin{defn}[Partitioned network distance]\label{defn:partitioned_network_distance}
    For $p \in [0,\infty)$, the \define{partitioned network $p$-distance} between partitioned measure networks $P,P' \in \Pc_k^p$ is 
    \begin{equation}\label{eqn:distance_definition}
    d_{\Pc_k^p}(P,P') \coloneqq \inf_{(\pi_i) \in \Pi_k((\mu_i),(\mu'_i))} \frac{1}{2} \left(\sum_{i,j = 1}^k \|\omega - \omega'\|_{L^p(\pi_i \otimes \pi_j)}^p\right)^{1/p}.
    \end{equation}
    For $p=\infty$, we define
    \begin{equation}\label{eqn:distance_definition_infty}
    d_{\Pc_k^\infty}(P,P') \coloneqq \inf_{(\pi_i) \in \Pi_k((\mu_i),(\mu_i'))} \frac{1}{2} \max_{i,j} \|\omega - \omega'\|_{L^\infty(\pi_i \otimes \pi_j)}.
    \end{equation}
\end{defn}

\begin{remark}
    In the above definition and throughout the rest of the paper, we slightly abuse terminology and consider each $\pi_i$ as a probability measure on $X \times X' = (\sqcup_j X_j) \times (\sqcup_j X_j')$ (which is supported on the subset $X_i \times X_i'$).
\end{remark}

\begin{remark}[Connection to labelled Gromov-Wasserstein distance~\cite{ryu2024cross}]
A notion of Gromov-Wasserstein distance is introduced which is essentially equivalent to our partitioned network distance~\cite{ryu2024cross}. However, the treatment in that paper is very much from a computational perspective, and a formal, a general definition of the distance is not provided. The usefulness of such a metric is demonstrated by applications to cross-modality matching for biological data. We remark that~\cite{ryu2024cross} uses the terminology \define{labelled Gromov-Wasserstein distance}; we use the term \define{labelled} differently below, in Section \ref{sec:extension_to_labelled_partitioned_measure_networks}.
\end{remark}

When $k=1$, $d_{\Pc_1^p}$ is simply the Gromov-Wasserstein distance $d_{\Nc^p}$. We will show below that, for arbitrary $k$, $d_{\Pc_k^p}$ induces a metric on $\Pc_k^p$ modulo a natural notion of equivalence, which generalizes the known result in the Gromov-Wasserstein case. Before doing so, we show that the embeddings of generalized networks of Definition \ref{defn:generalized_network_embeddings} preserve the notions of distance that we have defined so far. 

\begin{theorem}\label{thm:isometric_embeddings}
    The maps from Definition \ref{defn:generalized_network_embeddings} preserve generalized network distances:
    \begin{enumerate}
    \item for $k < \ell$, $d_{\Pc_\ell^p}(\varepsilon_{k,\ell}(P),\varepsilon_{k,\ell}(P')) = d_{\Pc_{k}^p}(P,P')$; in particular, $d_{\Pc_2^p}(\varepsilon_{1,2}(N),\varepsilon_{1,2}(N')) = d_{\Nc^p}(N,N')$;
    \item $d_{\Pc_2^p}(\varepsilon_{\mathcal{H}}(H),\varepsilon_{\mathcal{H}}(H')) = d_{\mathcal{H}^p}(H,H')$; 
    \item and $d_{\Pc_2^p}(\varepsilon_{\mathcal{A}}(A),\varepsilon_{\mathcal{A}}(A')) = d_{\mathcal{A}^p}(A,A')$.
    \end{enumerate}
\end{theorem}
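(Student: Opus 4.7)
The plan is to prove each identity by direct unpacking of the definitions and exploiting the fact that each extended kernel vanishes outside a specific block of the product space, so the sum in the partitioned network objective collapses onto precisely the terms that define the corresponding original distance.

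For (1), I would focus on the base case $\varepsilon_{k,k+1}$; the general case then follows by induction on $\ell-k$. Given $P = ((X_i,\mu_i)_{i=1}^k,\omega)$, the embedding adjoins a singleton $X_{k+1} = \{*\}$ carrying the Dirac mass $\mu_{k+1} = \delta_*$, and sets the extended kernel to agree with $\omega$ on $(\sqcup_{i\leq k} X_i)^2$ and to vanish elsewhere. Any $(k+1)$-partitioned coupling between $\varepsilon_{k,k+1}(P)$ and $\varepsilon_{k,k+1}(P')$ has its $(k+1)$-th component uniquely determined as the Dirac coupling of $\delta_*$ with $\delta_*$, while the first $k$ components range freely over $\Pi_k((\mu_i),(\mu_i'))$. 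Splitting the sum $\sum_{i,j=1}^{k+1}$ into the block $i,j\leq k$ and the block where at least one index equals $k+1$, the second block contributes $0$ (both extended kernels vanish there), while the first reproduces the $k$-partitioned objective verbatim. Hence the infima agree.

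For (2) and (3), the same pattern applies with different vanishing sets. In (2), writing $H = (X,\mu,Y,\nu,\omega)$ so that $\varepsilon_{\mathcal{H}}(H)$ has $X_1 = X$ and $X_2 = Y$, the kernel $\varepsilon_{\mathcal{H}}(\omega)$ is supported on $X \times Y$. Consequently, of the four terms in $\sum_{i,j=1}^2 \|\varepsilon_{\mathcal{H}}(\omega) - \varepsilon_{\mathcal{H}}(\omega')\|_{L^p(\pi_i \otimes \pi_j)}^p$, the $(1,1)$, $(2,1)$, and $(2,2)$ terms vanish, and only the $(1,2)$ term survives, producing exactly $\|\omega - \omega'\|_{L^p(\pi_1 \otimes \pi_2)}^p$ under the identification $(\pi,\xi) \leftrightarrow (\pi_1,\pi_2)$. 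Taking the infimum and multiplying by $\tfrac{1}{2}$ recovers $d_{\mathcal{H}^p}$. Part (3) is analogous: $\varepsilon_{\mathcal{A}}(\omega_X,\omega_{XY})$ is supported on $(X \times X) \sqcup (X \times Y)$, so the $(1,1)$ term contributes $\|\omega_X - \omega_{X'}'\|_{L^p(\pi_1 \otimes \pi_1)}^p$, the $(1,2)$ term contributes $\|\omega_{XY} - \omega_{X'Y'}'\|_{L^p(\pi_1 \otimes \pi_2)}^p$, and the remaining two terms vanish, reproducing the augmented objective exactly.

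The only nontrivial content is the bookkeeping: one must be careful with the convention, stated in the remark following Definition \ref{defn:partitioned_network_distance}, that each $\pi_i$ is regarded as supported on $X_i \times X_i'$ inside the larger product, so that $\pi_i \otimes \pi_j$ is supported precisely on the block where the extended kernels are prescribed by the original data. Apart from this verification, each identity reduces to matching two infima over bijectively identified feasible sets with pointwise-equal integrands. The $p=\infty$ cases of (1)-(3) go through by the same argument with sums replaced by maxima, since the zero contributions are dominated (or equal) and do not affect the maximum.
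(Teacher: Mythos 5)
Your proposal is correct and follows essentially the same route as the paper's proof: establish a bijection between feasible coupling sets (the $(k+1)$-th component being forced to the Dirac coupling in part (1)), then observe that the vanishing of the extended kernels off the relevant blocks collapses the partitioned objective to the original one term by term. The paper likewise handles only the $\ell=k+1$ case for (1), omits the hypernetwork details as analogous to the augmented case, and treats $p=\infty$ by the same remark you make.
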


\begin{proof}
    We provide details for the $p<\infty$ case, with the proof for $p = \infty$ following by similar arguments. For the first claim, it suffices to consider the case where $\ell = k+1$. Let $P,P' \in \Pc_{k}^p$. Any $(\pi_i) = (\pi_i)_{i=1}^{k} \in \Pi_{k}((\mu_i),(\mu_i'))$ extends uniquely to a $(k+1)$-partitioned coupling of $(\mu_1,\ldots,\mu_{k},\mu_{k+1})$ and $(\mu_1',\ldots,\mu_{k}',\mu_{k+1}')$, namely, $(\pi_1,\ldots,\pi_{k},\pi_{k+1})$, where $\pi_{k+1}$ the Dirac mass on the singleton set $X_{k+1} \times X_{k+1}'$. We have that
    \begin{align*}
        \sum_{i,j=1}^{k+1} \|\varepsilon_{k,k+1}(\omega) - \varepsilon_{k,k+1}(\omega')\|_{L^p(\pi_i \otimes \pi_j)}^p &= \sum_{i=1}^{k+1} \|\varepsilon_{k,k+1}(\omega) - \varepsilon_{k,k+1}(\omega')\|_{L^p(\pi_i \otimes \pi_{k+1})}^p  \\
        &\qquad + \sum_{j=1}^{k+1} \|\varepsilon_{k,k+1}(\omega) - \varepsilon_{k,k+1}(\omega')\|_{L^p(\pi_{k+1} \otimes \pi_j)}^p \\
        & \qquad + \sum_{i,j=1}^{k} \|\varepsilon_{k,k+1}(\omega) - \varepsilon_{k,k+1}(\omega')\|_{L^p(\pi_i \otimes \pi_j)}^p \\
        &= \sum_{i,j=1}^{k} \|\omega - \omega'\|_{L^p(\pi_i \otimes \pi_j)}^p,
    \end{align*}
    where the last line follows because $\varepsilon_{k,k+1}(\omega) = \varepsilon_{k,k+1}(\omega') = 0$ on the supports of $\pi_i \otimes \pi_{k+1}$ and $\pi_{k+1} \otimes \pi_{j}$. And moreover, $\varepsilon_{k,k+1}(\omega) = \omega$ and $\varepsilon_{k,k+1}(\omega') = \omega'$ on the support of each $\pi_i \otimes \pi_j$ with $i,j < k$. 
    Since the $k$-partitioned coupling $(\pi_i)$ was arbitrary, the first claim follows. 

    Let us now prove the third claim; the case for hypernetworks is proved using similar arguments, so we omit the details here. Let $A,A' \in \Ac^p$ and let $\pi \in \Pi(\mu,\mu')$ and $\xi \in \Pi(\nu,\nu')$. Then $(\pi_1,\pi_2) = (\pi,\xi)$ is a $2$-partitioned coupling of $(\mu,\nu)$ and $(\mu',\nu')$, and we have 
    \[
    \sum_{i,j = 1}^2 \|\varepsilon_{\Ac}(\omega) - \varepsilon_{\Ac}(\omega')\|_{L^p(\pi_i \otimes \pi_j)}^p = \|\omega_{X} - \omega_{X'}'\|_{L^p(\pi \otimes \pi)}^p + \|\omega_{XY} - \omega_{X'Y'}'\|_{L^p(\pi \otimes \xi)}^p,
    \]
    by reasoning similar to the above. Since $\pi$ and $\xi$ were arbitrary, this completes the proof.
\end{proof}

\subsubsection{Metric properties of the partitioned network distance}

  To describe the exact sense in which $d_{\Pc_k^p}$ is a distance, we need to introduce some equivalence relations on  partitioned measure networks.
  
\begin{defn}[Strong and weak isomorphism]\label{defn:strong_and_weak_isomorphism}
    We say that $k$-partitioned measure networks $P$ and $P'$ are \define{strongly isomorphic} if, for each $i=1,\ldots,k$, there is a Borel measurable bijection $\phi_i:X_i \to X'_i$ (with Borel measurable inverse) such that $(\phi_i)_\# \mu_i = \mu_i'$, and  $\omega(x,y) = \omega'(\phi_i(x),\phi_j(y))$ for every pair $(x,y) \in X_i \times X_j$. The tuple $(\phi_i) = (\phi_i)_{i=1}^k$ is called a \define{strong isomorphism}. 

   A tuple $(\phi_i)$ of maps $\phi_i:X_i \to X_i'$ is called a \define{weak isomorphism} from $P$ to $P'$ if $(\phi_i)_\# \mu_i = \mu_i'$, and  $\omega(x,y) = \omega'(\phi_i(x),\phi_j(y))$ for $\mu_i \otimes \mu_j$-almost every pair $(x,y) \in X_i \times X_j$. We do not require each $\phi_i$ to be a bijection in this definition.
    
    We say that $P$ and $P'$ are \define{weakly isomorphic} if there exists a third partitioned measure network $\overline{P} = ((\overline{X}_i,\overline{\mu}_i),\overline{\omega})$ and weak isomorphisms $(\phi_i)$ from $\overline{P}$ to $P$ and $(\phi_i')$ from $\overline{P}$ to $P'$. 

    It is straightforward to show that weak isomorphism defines an equivalence relation on $\mathcal{P}$; we use $P \sim P'$ to denote that $P$ is weakly isomorphic to $P'$. For $P \in \mathcal{P}_k^p$, let $[P]$ denote its equivalence class under this relation and let $[\mathcal{P}_k^p]$ denote the collection of all equivalence classes.

    Using the embeddings from Definition \ref{defn:generalized_network_embeddings}, there is an induced notion of weak isomorphism on the spaces of generalized networks. Let $[\mathcal{N}^p]$, $[\mathcal{H}^p]$ and $[\mathcal{A}^p]$ denote the collections of weak isomorphism equivalence classes of measure networks, measure hypernetworks and augmented measure networks, respectively.
\end{defn}

Weak isomorphisms of measure networks and of measure hypernetworks are introduced in \cite{chowdhury2019gromov} and \cite{chowdhury2023hypergraph}, respectively. It is straightforward to show that the induced notions from Definition \ref{defn:strong_and_weak_isomorphism} agree with those already established in the literature. The aforementioned papers show that $d_{\mathcal{N}^p}$ and $d_{\mathcal{H}^p}$ descend to well-defined metrics on $[\mathcal{N}^p]$ and $[\mathcal{H}^p]$, respectively. The following theorem generalizes these results, in light of Theorem \ref{thm:isometric_embeddings}.

\begin{theorem}\label{thm:partitioned_metric}
    The $k$-partitioned network $p$-distance $d_{\Pc_k^p}$ induces a well-defined metric on $[\Pc_k^p]$. 
\end{theorem}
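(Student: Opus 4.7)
The plan is to verify the five properties of $d_{\Pc_k^p}$ needed to show it is a well-defined metric on equivalence classes: invariance under weak isomorphism, symmetry, vanishing on the diagonal, non-degeneracy ($d_{\Pc_k^p}(P,P') = 0 \Rightarrow P \sim P'$), and the triangle inequality. The approach parallels the proofs for $\Nc^p$ and $\Hc^p$ in~\cite{chowdhury2019gromov,chowdhury2023hypergraph}, with the partitioning structure carried through each step blockwise. Invariance under weak isomorphism follows by pushing a $k$-partitioned coupling forward componentwise through the weak isomorphism maps $(\phi_i)$, which preserves each $L^p$-norm appearing in the objective. Symmetry is immediate by swapping the two coordinates of each $\pi_i$, and $d_{\Pc_k^p}(P,P) = 0$ follows by using the diagonal couplings $\pi_i = (\mathrm{id} \times \mathrm{id})_\# \mu_i$, on whose supports $\omega - \omega$ vanishes identically. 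The cases $p < \infty$ and $p = \infty$ require only minor modifications, with $\ell^p$-sums replaced by maxima throughout.

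For the triangle inequality I would use the gluing lemma, applied partition-by-partition. Given $P,P',P'' \in \Pc_k^p$ and $\varepsilon$-optimal couplings $(\pi_i) \in \Pi_k((\mu_i),(\mu_i'))$ and $(\pi_i') \in \Pi_k((\mu_i'),(\mu_i''))$, for each $i$ the gluing lemma supplies a Borel probability measure $\gamma_i$ on $X_i \times X_i' \times X_i''$ whose marginals on $X_i \times X_i'$ and $X_i' \times X_i''$ are $\pi_i$ and $\pi_i'$. Let $\tilde{\pi}_i \in \Pi(\mu_i, \mu_i'')$ denote its marginal on $X_i \times X_i''$. Viewed as functions on the enlarged product $(X_i \times X_i' \times X_i'') \times (X_j \times X_j' \times X_j'')$, one has the pointwise identity $(\omega - \omega'') = (\omega - \omega') + (\omega' - \omega'')$. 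Since each of $\omega - \omega'$, $\omega' - \omega''$ and $\omega - \omega''$ depends on only four of the six coordinates, each corresponding $L^p$-norm against $\gamma_i \otimes \gamma_j$ reduces to a norm against the appropriate marginal, and the $L^p$-triangle inequality on $\gamma_i \otimes \gamma_j$ yields
$$
\|\omega - \omega''\|_{L^p(\tilde{\pi}_i \otimes \tilde{\pi}_j)} \leq \|\omega - \omega'\|_{L^p(\pi_i \otimes \pi_j)} + \|\omega' - \omega''\|_{L^p(\pi_i' \otimes \pi_j')}.
$$
Aggregating these estimates over $i,j$ via Minkowski's inequality in $\ell^p$ (or a maximum for $p=\infty$) and sending $\varepsilon \to 0$ establishes the triangle inequality.

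The main obstacle is the non-degeneracy direction. First I would argue that the infimum defining $d_{\Pc_k^p}(P,P')$ is attained: each $\Pi(\mu_i,\mu_i')$ is weakly compact, because $\mu_i,\mu_i'$ are Borel probability measures on Polish spaces (and thus tight), and tightness of marginals implies tightness of their couplings; hence the product space $\Pi_k((\mu_i),(\mu_i'))$ is weakly compact. Combined with weak lower semicontinuity of each $\|\omega - \omega'\|_{L^p(\pi_i \otimes \pi_j)}^p$---obtained by a standard truncation-and-Fatou argument reducing to the continuous bounded case---this yields a minimizer $(\pi_i^\ast)$. If the minimum value is zero then, for every pair $i,j \in \{1,\ldots,k\}$, the identity $\omega(x,y) = \omega'(x',y')$ holds for $\pi_i^\ast \otimes \pi_j^\ast$-almost every $((x,x'),(y,y'))$. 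I would then construct an intermediate partitioned measure network $\overline{P}$ by setting $\overline{X}_i \coloneqq X_i \times X_i'$ (relabelled to ensure pairwise disjointness), $\overline{\mu}_i \coloneqq \pi_i^\ast$, and $\overline{\omega}((x,x'),(y,y')) \coloneqq \omega(x,y)$. The coordinate projections $\phi_i \colon \overline{X}_i \to X_i$ and $\phi_i' \colon \overline{X}_i \to X_i'$ then constitute weak isomorphisms $\overline{P} \to P$ and $\overline{P} \to P'$; the almost-everywhere identification above is precisely what is needed for $\phi_i'$ to qualify as a weak isomorphism to $P'$, since it ensures $\overline{\omega}((x,x'),(y,y')) = \omega'(\phi_i'(x,x'), \phi_j'(y,y'))$ almost surely. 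This establishes $P \sim P'$ and completes the proof.
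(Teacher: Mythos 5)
Your proposal is correct and follows essentially the same route as the paper: the paper formally obtains Theorem~\ref{thm:partitioned_metric} as a two-line corollary of the labelled case (Theorem~\ref{thm:partitioned_metric_labelled}) by taking one-point label spaces, but the substance of that labelled proof is exactly your argument---attainment of the infimum via compactness of $\Pi_k((\mu_i),(\mu_i'))$ and lower semicontinuity (Lemma~\ref{lem:infimum_achieved}), the triangle inequality via the Gluing Lemma applied partition-by-partition followed by Minkowski in $L^p$ and $\ell^p$, and non-degeneracy via the intermediate network on $X_i\times X_i'$ with measures $\pi_i^\ast$ and coordinate projections as weak isomorphisms. The only difference is packaging (direct versus via the labelled generalization), not mathematical content.
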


Putting Theorems \ref{thm:isometric_embeddings} and \ref{thm:partitioned_metric} together, we obtain the following corollary. 

\begin{corollary}\label{cor:metrics_and_embeddings}
    The generalized network distances $d_{\mathcal{N}^p}$, $d_{\mathcal{H}^p}$ and $d_{\mathcal{A}^p}$ induce well-defined metrics on $[\mathcal{N}^p]$, $[\mathcal{H}^p]$ and $[\mathcal{A}^p]$, respectively. The embeddings from Definition \ref{defn:generalized_network_embeddings} induce isometric embeddings of each of these spaces into $[\Pc_2^p]$. 
\end{corollary}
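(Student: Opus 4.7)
The plan is to obtain the corollary as a direct pullback consequence of Theorems~\ref{thm:isometric_embeddings} and~\ref{thm:partitioned_metric}, treating the generalized network spaces uniformly through their embeddings into $[\Pc_2^p]$. Let $\varepsilon$ generically denote any of the three embeddings $\varepsilon_{1,2}$, $\varepsilon_{\Hc}$, $\varepsilon_{\Ac}$, and let $\mathcal{G}$ denote the corresponding source space among $\Nc^p, \Hc^p, \Ac^p$ with its generalized network distance $d_{\mathcal{G}}$. The first step is to observe, from the last paragraph of Definition~\ref{defn:strong_and_weak_isomorphism}, that weak isomorphism on $\mathcal{G}$ is (by design) the pullback along $\varepsilon$ of weak isomorphism on $\Pc_2^p$. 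Consequently, the map $\varepsilon$ descends to a well-defined injection $[\varepsilon]\colon [\mathcal{G}] \hookrightarrow [\Pc_2^p]$: well-definedness amounts to the implication $G \sim G' \Rightarrow \varepsilon(G) \sim \varepsilon(G')$, which is immediate, and injectivity amounts to its converse, which holds by definition.

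Next, I would use Theorem~\ref{thm:isometric_embeddings} to transfer metric properties. That theorem gives the identity $d_{\mathcal{G}}(G,G') = d_{\Pc_2^p}(\varepsilon(G),\varepsilon(G'))$ for all $G,G' \in \mathcal{G}$. Since Theorem~\ref{thm:partitioned_metric} asserts that $d_{\Pc_2^p}$ depends only on weak isomorphism classes, the right-hand side is constant on weak isomorphism classes of $G$ and $G'$; hence so is $d_{\mathcal{G}}$, and it descends to a well-defined function on $[\mathcal{G}] \times [\mathcal{G}]$. Symmetry, non-negativity, and the triangle inequality for the descended function then follow verbatim from the corresponding properties for $d_{\Pc_2^p}$ on $[\Pc_2^p]$. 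The separation axiom $d_{\mathcal{G}}([G],[G']) = 0 \Leftrightarrow [G] = [G']$ is obtained by chaining the distance-preservation identity, the separation axiom on $[\Pc_2^p]$, and the injectivity of $[\varepsilon]$ established above.

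Finally, the two parts of the corollary are in hand: the first assertion is the preceding metric verification, while the second is simply the statement that $[\varepsilon]$ is a distance-preserving injection, which is exactly what the combination of Theorem~\ref{thm:isometric_embeddings} and the injectivity of $[\varepsilon]$ provides. The only mildly subtle point—and the one I would treat with the most care—is confirming that the pullback notion of weak isomorphism on $\Hc^p$ and $\Ac^p$ coincides with the intuitively expected notion; for $\Hc^p$ this matches the definition given in~\cite{chowdhury2023hypergraph}, as the diagonal kernel components in $\varepsilon_{\Hc}(H)$ are identically zero and so impose no nontrivial matching constraint on $X \times X$ or $Y \times Y$, while for $\Ac^p$ the analogous unpacking recovers the natural notion of isomorphism compatible with both $\omega_X$ and $\omega_{XY}$ simultaneously. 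Once this bookkeeping is verified, the corollary follows with no further work.
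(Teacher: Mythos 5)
Your proposal is correct and follows essentially the same route as the paper, which simply states that the corollary is obtained by combining Theorems~\ref{thm:isometric_embeddings} and~\ref{thm:partitioned_metric}; your write-up merely makes explicit the (routine) bookkeeping that the induced weak isomorphism relation is by definition the pullback along the embeddings, so the distance-preservation identity and the metric axioms descend to the quotients. The closing remark about reconciling the pullback notion of weak isomorphism with the pre-existing notions from the literature is a nice touch, but it addresses a separate remark in the paper rather than anything needed for the corollary as stated.
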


    We will abuse notation and continue to denote the induced metric on $[\mathcal{P}_k^p]$ as $d_{\mathcal{P}_k^p}$, and take a similar convention for the other induced metrics.

\begin{remark}
    The case of $\mathcal{N}^p$ in the corollary was proved in \cite{chowdhury2019gromov} and the case of $\mathcal{H}^p$ was proved in \cite{chowdhury2023hypergraph} (those papers assumed boundedness of the $\omega$-functions, but this restriction is easily lifted in those proofs). In \cite{demetci2023revisiting}, a relaxed version of triangle inequality was proved for the augmented network distance in the finite setting. Corollary \ref{cor:metrics_and_embeddings} strengthens \cite[Proposition 1]{demetci2023revisiting} to show that $d_{\mathcal{A}^p}$ satisfies the true (non-relaxed) triangle inequality.
\end{remark}

Theorem~\ref{thm:partitioned_metric} follows as an easy corollary of a more general result in the following section, so we defer its proof to Section~\ref{sec:consequences_and_comparisons}. 

\subsubsection{Geodesics and curvature}

We can say more about the metric properties of the partitioned network distance. To state our next result, we first recall some standard concepts from metric geometry; see \cite{bridson2013metric,burago2022course} as general references.

\begin{defn}[Geodesics and curvature]\label{def:geodesics_and_curvature}
    Let $(X,d)$ be a metric space.
    \begin{enumerate}
        \item A \define{geodesic} between points $x,y \in X$ is a path $\gamma:[0,1] \to X$ with $\gamma(0) = x$, $\gamma(1) = y$ and such that, for all $0\leq s \leq t \leq 1$, we have
        \[
        d(\gamma(s),\gamma(t)) = (t-s) d(x,y).
        \]
        If there is a geodesic joining any two points in $X$, we say that $(X,d)$ is a \define{geodesic space}.
        \item Suppose that $(X,d)$ is a geodesic space. We say that $(X,d)$ has \define{curvature bounded below by zero} if for every geodesic $\gamma:[0,1] \to X$ and every point $x \in X$, we have 
        \[
        d(\gamma(t),x)^2 \geq (1-t) d(\gamma(0),x)^2 + t d(\gamma(1),x)^2 - t(1-t)d(\gamma(0),\gamma(1))^2
        \]
        for all $t \in [0,1]$.
        \item We say that $(X,d)$ is an \define{Alexandrov space of non-negative curvature} if it is a complete geodesic space with curvature bounded below by zero.
    \end{enumerate}
\end{defn}

The next main result follows from a more general result below. We defer its proof to Section~\ref{sec:consequences_alexandrov}.

\begin{theorem}\label{thm:alexandrov_space}
    For any $k \geq 1$, $([\Pc_k^2],d_{\Pc_k^2})$ is an Alexandrov space of non-negative curvature. 
\end{theorem}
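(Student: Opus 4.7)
The plan is to verify the three defining properties of Definition~\ref{def:geodesics_and_curvature}(3) — completeness, existence of geodesics, and the quadrilateral-type curvature inequality — by adapting Sturm's strategy for $(\mathcal{N}^2, d_{\mathcal{N}^2})$ to the partitioned setting. A preliminary observation is that $\Pi_k((\mu_i),(\mu_i'))$ is weakly compact as a finite product of Kantorovich coupling sets, and the functional $(\pi_i) \mapsto \tfrac{1}{4}\sum_{i,j}\|\omega - \omega'\|_{L^2(\pi_i \otimes \pi_j)}^2$ is weakly lower semicontinuous, so that the infimum in~\eqref{eqn:distance_definition} is attained. This gives access to optimal partitioned couplings $(\pi_i^*)$ which drive all subsequent constructions.

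For geodesics, given $P, P' \in \Pc_k^2$ and an optimal $(\pi_i^*)$, I would set
\[
P_t \coloneqq \big((X_i \times X_i', \pi_i^*)_{i=1}^k,\, \omega_t\big), \qquad \omega_t\big((x_1, x_1'), (x_2, x_2')\big) \coloneqq (1-t)\omega(x_1, x_2) + t\,\omega'(x_1', x_2').
\]
For $0 \le s \le t \le 1$, the diagonal coupling $\xi_i \coloneqq (\Delta)_\# \pi_i^* \in \Pi(\pi_i^*, \pi_i^*)$ satisfies $\omega_s - \omega_t = (t-s)(\omega - \omega')$ pointwise on $\mathrm{supp}(\xi_i \otimes \xi_j)$, giving $d_{\Pc_k^2}(P_s, P_t) \le (t-s)\, d_{\Pc_k^2}(P, P')$; the reverse direction is forced by the triangle inequality applied across $0 \le s \le t \le 1$. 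A short argument shows this construction is insensitive to the choice of representatives, so it descends to $[\Pc_k^2]$.

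The key step is the curvature inequality. Fix any $Q \in \Pc_k^2$ with kernel $\omega^Q$ and measures $\mu_i^Q$, and let $(\zeta_i) \in \Pi(\pi_i^*, \mu_i^Q)$ realize $d_{\Pc_k^2}(P_t, Q)$. The crucial observation is that by the marginal constraint, the projection of $\zeta_i$ onto $X_i \times X_i'$ is \emph{exactly} $\pi_i^*$. Letting $\tilde\zeta_i^P$ and $\tilde\zeta_i^{P'}$ denote the pushforwards of $\zeta_i$ onto $X_i \times \mathrm{supp}(\mu_i^Q)$ and $X_i' \times \mathrm{supp}(\mu_i^Q)$ respectively, and applying the pointwise identity $((1-t)a + tb - c)^2 = (1-t)(a-c)^2 + t(b-c)^2 - t(1-t)(a-b)^2$ with $a = \omega$, $b = \omega'$, $c = \omega^Q$, then integrating against $\zeta_i \otimes \zeta_j$ and summing over $i,j$ yields
\[
d_{\Pc_k^2}(P_t, Q)^2 = (1-t) G_P(\tilde\zeta^P) + t\, G_{P'}(\tilde\zeta^{P'}) - t(1-t)\, d_{\Pc_k^2}(P, P')^2,
\]
where $G_P$ and $G_{P'}$ denote the distortion functionals defined with respect to $\omega$ and $\omega'$. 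Crucially, the $-t(1-t)$ term collapses to $d(P,P')^2$ (rather than a larger quantity) precisely because the $(X \times X')$-marginal of $\zeta$ is the optimal $\pi^*$. Since $\tilde\zeta^P$ and $\tilde\zeta^{P'}$ are admissible (but possibly suboptimal) couplings, $G_P(\tilde\zeta^P) \ge d_{\Pc_k^2}(P, Q)^2$ and $G_{P'}(\tilde\zeta^{P'}) \ge d_{\Pc_k^2}(P', Q)^2$, producing the desired inequality.

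Completeness can be established by the standard argument: given a Cauchy sequence $(P_n)$, one extracts optimal couplings between consecutive terms, uses weak compactness together with a diagonal Tychonoff-style argument to build a limit network, and checks via lower semicontinuity of the distortion that $P_n$ converges to it. The main obstacle, I expect, is the curvature step: its elegance depends entirely on the bookkeeping observation that the marginal structure of an optimal coupling from $P_t$ automatically reproduces the optimal $P$--$P'$ coupling $\pi^*$, and verifying this requires some care with measure-theoretic projections and Fubini in the non-compact Polish setting with possibly unbounded $L^2$ kernels. Finally, since the more general labelled result (Theorem~\ref{thm:alexandrov_space_labelled}) is proved in a later section, the statement here is formally obtained as a corollary by taking trivial labels, but the direct argument above transparently reveals the mechanism behind the non-negative curvature.
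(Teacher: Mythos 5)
Your construction of the interpolation geodesics, the use of the polarization identity $((1-t)a+tb-c)^2=(1-t)(a-c)^2+t(b-c)^2-t(1-t)(a-b)^2$, and the marginalization observation that the $(X_i\times X_i')$-marginal of an optimal coupling out of $P_t$ is exactly $\pi_i^*$ (so that the cross term collapses to $d_{\Pc_k^2}(P,P')^2$) are precisely the mechanism the paper uses; the paper simply runs this argument once in the labelled setting (Propositions~\ref{prop:interpolation_geodesic_labelled}--\ref{prop:complete_labelled} and the curvature proposition) and then obtains Theorem~\ref{thm:alexandrov_space} by taking one-point label spaces, as you note at the end.

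There is, however, one genuine gap. Definition~\ref{def:geodesics_and_curvature}(2) requires the comparison inequality to hold for \emph{every} geodesic $\gamma$ in $([\Pc_k^2],d_{\Pc_k^2})$, whereas your argument only establishes it for the specific interpolation geodesics $[P_t]$ that you construct from an optimal coupling. Since geodesics between weak isomorphism classes need not a priori be unique or of this form, your proof as written does not verify the definition. The paper closes exactly this hole with Proposition~\ref{prop:labelled_geodesic_normed}: every geodesic in $[\PL_k]$ (hence in $[\Pc_k^2]$) is, pointwise in time, weakly isomorphic to an interpolation geodesic built from some optimal partitioned coupling. That proposition is proved by a dyadic subdivision of $[0,1]$, gluing optimal couplings between consecutive time slices, and a chain of triangle/Jensen inequalities that forces the defect term $\|((1-t)\omega^0+t\omega^1)-\omega^t\|^2$ to vanish; it is the most technical step of the whole proof and is entirely absent from your proposal. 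To repair your argument you must either supply this geodesic characterization or invoke (and justify) an equivalent synthetic formulation of non-negative curvature that only requires the comparison inequality along \emph{some} geodesic between each pair of points; as the paper states the definition, the former is what is needed. Your completeness sketch is the standard strategy and matches the paper's (gluing, projective limit, $L^2$-Cauchy kernels), though you leave the construction of the limit network implicit.
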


Combining Theorem \ref{thm:alexandrov_space} with Corollary \ref{cor:metrics_and_embeddings} immediately yields the following result. 

\begin{corollary}\label{cor:alexandrov_curvature_gen_networks}
    Each of the spaces $([\mathcal{N}^2],d_{\mathcal{N}^2})$, $([\mathcal{H}^2],d_{\mathcal{H}^2})$ and $([\mathcal{A}^2],d_{\mathcal{A}^2})$ is an Alexandrov space of non-negative curvature.
\end{corollary}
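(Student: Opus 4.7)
The plan is to recognize each of the three spaces as a closed, geodesically convex isometric copy inside the Alexandrov space $([\mathcal{P}_2^2], d_{\mathcal{P}_2^2})$ provided by Theorem~\ref{thm:alexandrov_space}, and to note that being an Alexandrov space of non-negative curvature is inherited by such subspaces. The isometric embeddings themselves are already delivered by Corollary~\ref{cor:metrics_and_embeddings}, so the two points requiring verification are (i)~geodesic convexity and (ii)~closedness of the images of $\varepsilon_{1,2}$, $\varepsilon_{\mathcal{H}}$, and $\varepsilon_{\mathcal{A}}$.

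For geodesic convexity, I would invoke the explicit form of a geodesic in $[\mathcal{P}_2^2]$ that the proof of Theorem~\ref{thm:alexandrov_space} must supply: given endpoints $P_0, P_1$ and an optimal $2$-partitioned coupling $(\pi_1, \pi_2) \in \Pi_2((\mu_i),(\mu_i'))$ realizing $d_{\mathcal{P}_2^2}(P_0, P_1)$, a geodesic is given by $P_t = ((X_i \times X_i', \pi_i)_{i=1}^{2}, (1-t)\tilde{\omega}_0 + t\tilde{\omega}_1)$, where $\tilde{\omega}_0$ and $\tilde{\omega}_1$ denote the natural pullbacks of $\omega_0$ and $\omega_1$ to $(\sqcup_i X_i \times X_i')^{\times 2}$. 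The three embedded images are each cut out by conditions on the kernel support and on the partition structure: for $\varepsilon_{1,2}(P)$ the second partition is a Dirac atom on a singleton; for $\varepsilon_{\mathcal{H}}(H)$ the kernel is supported only on $X \times Y$; and for $\varepsilon_{\mathcal{A}}(A)$ the kernel is supported on $(X \times X) \cup (X \times Y)$. Since an affine interpolation of two non-negative functions vanishes wherever both inputs vanish, and since a Dirac on a singleton couples trivially with itself, each of these conditions is preserved under the interpolation $(1-t)\tilde{\omega}_0 + t\tilde{\omega}_1$; hence, when both endpoints lie in a given image, the geodesic $[P_t]$ remains in that image.

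For closedness, I would observe that each defining condition is a closed condition under $L^2$-convergence of kernels with respect to optimal couplings, and descends to the quotient by weak isomorphism. Combined with completeness of $[\mathcal{P}_2^2]$ (guaranteed by Theorem~\ref{thm:alexandrov_space}), this gives completeness of each subspace. Geodesic convexity then promotes each subspace to a complete geodesic space, and the Alexandrov inequality in Definition~\ref{def:geodesics_and_curvature}, being stated pointwise in terms of a geodesic and a third point, is inherited verbatim. The main obstacle is the bookkeeping required to carry out the closedness verification at the level of weak isomorphism classes, since the kernel support conditions are naturally formulated for representatives rather than equivalence classes; however, all of these conditions are invariant under pushforward by weak isomorphisms, so the verification reduces to standard $L^p$-convergence arguments already used in the proofs of Theorems~\ref{thm:partitioned_metric} and~\ref{thm:isometric_embeddings}.
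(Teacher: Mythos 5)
Your proposal is correct and takes essentially the same route as the paper, which deduces the corollary in one line by combining Theorem~\ref{thm:alexandrov_space} with the isometric embeddings of Corollary~\ref{cor:metrics_and_embeddings}. The geodesic-convexity and closedness checks you spell out are precisely the details that the paper's ``immediately yields'' leaves implicit, and they are consistent with the explicit form of geodesics given in Proposition~\ref{prop:interpolation_geodesic_labelled}.
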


\begin{remark}[Prior curvature results]\label{rmk:background_on_curvature_bounds}
    The fact that $([\mathcal{N}^2],d_{\mathcal{N}^2})$ is an Alexandrov space of non-negative curvature was essentially proved by Sturm in \cite[Theorem 5.8]{sturm2023space} (there the space of \emph{symmetric measure networks} was considered, i.e., where the function $\omega:X\times X \to \R$ is assumed to be symmetric; the proof still works if this assumption is dropped, as was observed in \cite{chowdhury2020gromov}). This result is new for the other spaces of generalized networks in Corollary \ref{cor:alexandrov_curvature_gen_networks}.
\end{remark}

\section{Extension to labelled networks}\label{sec:extension_to_labelled_partitioned_measure_networks}

We consider, as an extension to the discussion so far, the setting of $k$-partitioned measure networks where each element of $X_i$ ($1 \leq i \leq k$) is associated with a label element that lives in a metric space. 

\subsection{Labelled partitioned measure networks}

Let us begin by defining a labelled notion of a $k$-partitioned measure network, and the distances between these objects.

\begin{defn}[Labelled $k$-partitioned measure networks]\label{defn:labelled_partitioned_measure_network}
  Let $(\Lambda_i,d_{\Lambda_i})$ ($1 \leq i \leq k$) be fixed metric spaces, which we consider as spaces of \define{labels}. A \define{labelled $k$-partitioned measure $p$-network} is a tuple $\left(P, (\iota_i)_{i = 1}^k\right)$, where
  \begin{itemize}
      \item $P = \left((X_i, \mu_i)_{i = 1}^k, \omega\right) \in \Pc_k^p$;
      \item each of the $\iota_i : X_i \to \Lambda_i$ is a measurable function, which we refer to as a \define{labelling function};
      \item each function $X_i \times X_i \to \R$ defined by $(x,y) \mapsto d_{\Lambda_i}(\iota_i(x),\iota_i(y))$ belongs to $L^p(\mu_i \otimes \mu_i)$.
  \end{itemize} 
  We frequently simplify notation and write $L=\left(P,(\iota_i)\right)$ for a labelled $k$-partitioned network. We denote by $\PL_k^p$ the space of labelled $k$-partitioned measure $p$-networks, where it is understood that the label spaces $(\Lambda_i,d_{\Lambda_i})$ are fixed. 
\end{defn}

\begin{example}[Node-attributed networks]
    The main examples of labelled partitioned networks come from \define{node-attributed network} structures. For example, consider a measure network $N = (X,\mu,\omega) \in \Nc^p$ representing a graph via some graph kernel $\omega$. In applications, the node set $X$ may be attributed with auxiliary data---for example, if the graph encodes user interactions on a social network, then each node may be attributed with additional user-level statistics. This situation can be modelled as a function $\iota:X \to \Lambda$, where $\Lambda$ is the attribute space (e.g., $\Lambda = \R^n$). The structure $(N,\iota)$ defines an element of $\PL_1^p$. 
\end{example}

The partitioned network distance \eqref{defn:partitioned_network_distance} can be naturally generalized to $\PL_k^p$.

\begin{defn}[Labelled partitioned network distance] \label{defn:labelled_partitioned_distance}
  Let $1 \leq p < \infty$ and let $L=\left(P, (\iota_i)\right), L'=\left(P', (\iota'_i)\right) \in \PL_k^p$ be two labelled partitioned measure networks. Then we define the \define{labelled $k$-partitioned network distance} to be
  \begin{equation}\label{eqn:labelled_partitioned_distance}
      d_{\PL_k^p}(L, L') = \inf_{(\pi_i) \in \Pi_k\left((\mu_i), (\mu_i')\right)} \frac{1}{2} \left( \sum_{i, j = 1}^k \| \omega - \omega' \|_{L^p(\pi_i \otimes \pi_j)}^p + \sum_{i = 1}^k \| d_{\Lambda_i} \circ (\iota_i, \iota'_i) \|_{L^p(\pi_i)}^p \right)^{1/p}.
  \end{equation}
  This extends to the $p=\infty$ case as 
\begin{equation}\label{eqn:labelled_partitioned_distance_infty}
      d_{\PL_k^\infty}(L, L') = \inf_{(\pi_i) \in \Pi_k\left((\mu_i), (\mu_i')\right)} \frac{1}{2} \max \left( \max_{i,j} \| \omega - \omega' \|_{L^\infty(\pi_i \otimes \pi_j)}, \max_i \| d_{\Lambda_i} \circ (\iota_i, \iota'_i) \|_{L^\infty(\pi_i)} \right).
  \end{equation}
\end{defn}

\begin{remark}
    We could include a balance parameter to weight the contributions of the network kernel term (i.e., the first summation) versus the labelling function term (the second summation) in \eqref{eqn:labelled_partitioned_distance}. Such a parameter is included in the definition of \emph{Fused Gromov-Wasserstein (FGW) distance}~\cite{vayer2020fused}, which has a similar structure. The connection between $d_{\PL_k^p}$ and FGW distance is explained precisely in Section~\ref{sec:consequences_and_comparisons}. We avoid the inclusion of the balance parameter in our formulation, as it is unimportant from a theoretical standpoint and can been absorbed into the definitions of network kernels and label functions in practical applications.
\end{remark}

A simple (but useful) observation is that the distances can be written as nested $\ell^p$-norms\footnote{In this paper, $L^p$ is the norm defined in terms of a measure, whereas $\ell^p$ is the standard norm on $\R^n$, which does not depend on any measure.}. For the rest of the paper, let $\|\cdot\|_p$ denote the $\ell^p$-norm on $\R^n$ for $p \in [1,\infty]$. We abuse notation and use the same symbol  $\|\cdot\|_p$ for the norm on spaces of various dimensions, with the specific meaning always being clear from context. Then the labelled $k$-partitioned network distance can be expressed as 
\begin{equation}\label{eqn:nested_norm_expression}
d_{\PL_k^p}(L,L') = \inf_{(\pi_i) \in \Pi_k((\mu_i), (\mu_i'))} \frac{1}{2}\left\|\left(\left\|\left( \|\omega - \omega'\|_{L^p(\pi_i \otimes \pi_j)}\right)_{i,j} \right\|_p, \left\|\left(\|d_{\Lambda_i} \circ (\iota_i,\iota_i')\|_{L^p(\pi_i)} \right)_i \right\|_p \right) \right\|_p, 
\end{equation}
for all $p \in [1,\infty]$. We have made one more abuse of notation by considering the collection $\left( \|\omega - \omega'\|_{L^p(\pi_i \otimes \pi_j)}\right)_{i,j}$, which is most naturally indexed as a $k \times k$ matrix, as an element of $\R^{k^2}$ in order to apply the $\ell^p$-norm to it. 

\subsubsection{Metric properties of the labelled distance}

We now show that $d_{\PL_k^p}$ defines a metric, up to a natural notion of equivalence. Strong and weak isomorphisms of partitioned networks (Definition \ref{defn:strong_and_weak_isomorphism}) extend to the case of labelled partitioned measure networks in a straightforward way.

\begin{defn}[Weak isomorphism of labelled partitioned measure networks]
    We say that labelled $k$-partitioned measure networks $L=(P, (\iota_i))$ and $L' = (P', (\iota_i'))$ are \define{strongly isomorphic} if the underlying partitioned measure networks $P$ and $P'$ are strongly isomorphic (see Definition~\ref{defn:strong_and_weak_isomorphism}) via bijections $\phi_i:X_i \to X_i'$ such that $\iota_i(x) = \iota'_i(\phi(x))$ for $\mu_i$-almost every $x \in X_i$. 

    We say that $L$ and $L'$ are \define{weakly  isomorphic} if there exists $\overline{L} = (\overline{P}, (\overline{\iota}_i)) \in \PL_k$, with $\overline{P} = ((\overline{X}_i,\overline{\mu}_i),\overline{\omega})$, such that 
    \begin{itemize}
        \item there exist weak isomorphisms $(\phi_i)$ and $(\phi_i')$ from $\overline{P}$ to $P$ and $P'$, respectively; that is, $\phi_i:\overline{X}_i \to X_i$ and $\phi_i':\overline{X}_i \to X_i'$ satisfy the conditions given in Definition~\ref{defn:strong_and_weak_isomorphism}; 
        \item and the maps $\phi_i$ and $\phi_i'$ additionally satisfy 
        \[
    \overline{\iota}_i(x) = \iota_i(\phi(x)) = \iota'_i(\phi'(x)), 
    \]
    for $\overline{\mu}_i$-almost every $x \in \overline{X}_i$.
    \end{itemize}
    One can easily verify that weak isomorphism again defines an equivalence relation on $\PL_k^p$, and we write $[P,(\iota_i)]$ for equivalence classes and $[\PL_k^p]$ for the quotient space.
\end{defn}

The next theorem is analogous to Theorem~\ref{thm:partitioned_metric}, which establishes the metric properties of $d_{\Pc_k^p}$. In fact, the deferred proof of Theorem~\ref{thm:partitioned_metric} will follow easily from this result (see Section \ref{sec:consequences_and_comparisons}).

\begin{theorem}\label{thm:partitioned_metric_labelled}
    The labelled $k$-partitioned network $p$-distance $d_{\PL_k^p}$ induces a well-defined metric on $[\PL_k^p]$. 
\end{theorem}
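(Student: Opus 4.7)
I will verify that $d_{\PL_k^p}$ descends to a well-defined, symmetric, non-negative function on $[\PL_k^p]$, and then establish the triangle inequality and identity of indiscernibles. Non-negativity and symmetry are immediate from \eqref{eqn:labelled_partitioned_distance} (symmetry via the map that swaps the two coordinates of each $\pi_i$). For well-definedness, suppose $(\phi_i)$ is a weak isomorphism from some $\overline L$ to $L$. Then $(\overline \pi_i) \mapsto ((\phi_i \times \mathrm{id})_\# \overline \pi_i)$ sends $\Pi_k((\overline \mu_i),(\mu_i'))$ into $\Pi_k((\mu_i),(\mu_i'))$ and leaves every term in the objective \eqref{eqn:nested_norm_expression} invariant, since the kernel and label equalities defining a weak isomorphism hold almost everywhere. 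A symmetric lifting using the graph coupling $(\mathrm{id},\phi_i)_\# \overline \mu_i \in \Pi(\overline \mu_i,\mu_i)$ together with a gluing step (see below) produces the reverse inequality between infima.

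\textbf{Triangle inequality.} I would follow the standard gluing-of-couplings strategy. Fix $L, L', L'' \in \PL_k^p$ and $\varepsilon > 0$, and choose $\varepsilon$-near-optimal partitioned couplings $(\pi_i) \in \Pi_k((\mu_i),(\mu_i'))$ and $(\sigma_i) \in \Pi_k((\mu_i'),(\mu_i''))$. For each $i$, apply the disintegration theorem on the Polish space $X_i'$ to disintegrate $\pi_i$ and $\sigma_i$ against $\mu_i'$ and glue them into a Borel probability measure $\rho_i$ on $X_i \times X_i' \times X_i''$ whose $(X_i, X_i')$ and $(X_i', X_i'')$ marginals are $\pi_i$ and $\sigma_i$ respectively; let $\tau_i$ be the $(X_i, X_i'')$ marginal, so $(\tau_i) \in \Pi_k((\mu_i),(\mu_i''))$. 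For each $i,j$, inserting $\omega'$ pointwise and applying Minkowski in $L^p(\rho_i \otimes \rho_j)$ yields
\[
\|\omega - \omega''\|_{L^p(\tau_i \otimes \tau_j)} \leq \|\omega - \omega'\|_{L^p(\pi_i \otimes \pi_j)} + \|\omega' - \omega''\|_{L^p(\sigma_i \otimes \sigma_j)},
\]
and analogously for the label terms, using the triangle inequality for $d_{\Lambda_i}$ followed by Minkowski in $L^p(\rho_i)$. Two further applications of the triangle inequality for the $\ell^p$-norm at the middle and outer levels of the nested expression \eqref{eqn:nested_norm_expression} bound the objective at $(\tau_i)$ by the sum of the two near-infima; letting $\varepsilon \to 0$ gives the desired inequality. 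The $p=\infty$ case is identical with essential suprema replacing the integrals.

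\textbf{Identity of indiscernibles.} Suppose $d_{\PL_k^p}(L, L') = 0$. Select a minimizing sequence of partitioned couplings; since each component has fixed Borel probability marginals on Polish spaces, it is tight by Prokhorov's theorem, and a diagonal subsequence extracts a weak limit $(\pi_i) \in \Pi_k((\mu_i),(\mu_i'))$. Joint lower semicontinuity of the objective along this sequence shows that $(\pi_i)$ attains the value zero. Hence $\omega(x,y) = \omega'(x',y')$ for $\pi_i \otimes \pi_j$-almost every $((x,x'),(y,y'))$ and $\iota_i(x) = \iota_i'(x')$ for $\pi_i$-almost every $(x,x')$. I then construct $\overline L$ by setting $\overline X_i = X_i \times X_i'$, $\overline \mu_i = \pi_i$, $\overline \omega((x,x'),(y,y')) = \omega(x,y)$, and $\overline \iota_i(x,x') = \iota_i(x)$; the coordinate projections $\overline X_i \to X_i$ and $\overline X_i \to X_i'$ supply the pair of weak isomorphisms required by Definition~\ref{defn:strong_and_weak_isomorphism}, yielding $L \sim L'$.

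\textbf{Main obstacle.} The hardest step is the lower semicontinuity of the labelled objective along weakly convergent sequences of couplings, since the kernels $\omega,\omega'$ and label-difference functions $d_{\Lambda_i}\circ(\iota_i,\iota_i')$ are only assumed to be in $L^p$, not bounded or continuous. I would handle this via a truncation argument: approximate the unbounded $L^p$-functions by bounded continuous ones, apply Skorokhod-type weak continuity for the bounded approximants against each coupling, and use the fact that the $L^p$-integrability of the kernels against $\mu \otimes \mu$ (respectively $\mu' \otimes \mu'$) and of the labels against $\mu_i$ (respectively $\mu_i'$) provides uniform integrability with respect to any sequence of couplings with those fixed marginals. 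This is the step that generalizes the corresponding arguments in \cite{chowdhury2019gromov} and \cite{chowdhury2023hypergraph} to the labelled partitioned setting, and from which Theorem~\ref{thm:partitioned_metric} will follow as a corollary by taking trivial labels.
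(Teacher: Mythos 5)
Your proposal is correct and follows essentially the same route as the paper: gluing near-optimal partitioned couplings and applying Minkowski at the inner $L^p$ and both nested $\ell^p$ levels for the triangle inequality, and the product-space representative $\overline X_i = X_i \times X_i'$, $\overline\mu_i = \pi_i$ with coordinate projections for the identity of indiscernibles. The only cosmetic difference is that the paper isolates the attainment of the infimum as a separate compactness/lower-semicontinuity lemma (Lemma~\ref{lem:infimum_achieved}, citing prior work for the semicontinuity) and then works with exact optimizers throughout, whereas you use $\varepsilon$-near-optimal couplings for the triangle inequality and re-derive the attainment inline, via tightness and your truncation argument, only where it is genuinely needed.
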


The proof will use some important technical lemmas.

\begin{lemma}\label{lem:infimum_achieved}
    The infima in \eqref{eqn:labelled_partitioned_distance} and \eqref{eqn:labelled_partitioned_distance_infty} are always realized by  partitioned couplings.
\end{lemma}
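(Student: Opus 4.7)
The plan is to apply the direct method of the calculus of variations: endow $\Pi_k((\mu_i),(\mu_i'))$ with the product of weak topologies on the factors $\Pi(\mu_i,\mu_i')$, show this feasible set is compact, and verify that the objective functional is lower semicontinuous in this topology. A weakly convergent minimizing subsequence will then produce an optimal partitioned coupling.

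First I would establish compactness. Since each $X_i, X_i'$ is Polish and the marginals are Borel probability measures, each coupling set $\Pi(\mu_i,\mu_i')$ is tight (tightness of the marginals on each factor forces tightness of the joint via a product-of-compacts argument) and closed under weak limits (since projection of a coupling onto its marginals is continuous for the weak topology). By Prokhorov's theorem each $\Pi(\mu_i, \mu_i')$ is weakly compact, and Tychonoff's theorem transfers this to the product. Any minimizing sequence $(\pi_i^n)_n$ then admits a subsequence converging weakly in each coordinate to some $(\pi_i^\star) \in \Pi_k((\mu_i),(\mu_i'))$, and for each pair $i,j$ the product measures $\pi_i^n \otimes \pi_j^n$ converge weakly to $\pi_i^\star \otimes \pi_j^\star$ on $(X \times X') \times (X \times X')$.

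The main obstacle is lower semicontinuity of each term $\|\omega - \omega'\|_{L^p(\pi_i \otimes \pi_j)}$, since the integrand $|\omega(x,y) - \omega'(x',y')|^p$ is only Borel, not continuous, and integrals of Borel functions against weakly convergent measures need not even converge. I would handle this via approximation. By density of bounded continuous functions in $L^p$ on Polish probability spaces, I can choose $\omega_m \to \omega$ in $L^p(\mu \otimes \mu)$ and $\omega'_m \to \omega'$ in $L^p(\mu' \otimes \mu')$ with each $\omega_m, \omega'_m$ bounded and continuous. The crucial observation is that since each $\pi_i$ has marginals $\mu_i$ and $\mu_i'$, the $X \times X$-marginal of $\pi_i \otimes \pi_j$ equals $\mu_i \otimes \mu_j$, and $\mu_i \otimes \mu_j$ is dominated by $\mu \otimes \mu$, which yields the uniform bound
\[
\|\omega - \omega_m\|_{L^p(\pi_i \otimes \pi_j)} \leq \|\omega - \omega_m\|_{L^p(\mu \otimes \mu)},
\]
and analogously for $\omega'$. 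For the continuous, non-negative integrand $|\omega_m - \omega'_m|^p$, the map $(\nu_1,\nu_2) \mapsto \int |\omega_m - \omega'_m|^p \, d(\nu_1 \otimes \nu_2)$ is lower semicontinuous under weak convergence. Combining these three ingredients with the triangle inequality in $L^p$ and then letting $m \to \infty$ gives lower semicontinuity of $(\pi_i) \mapsto \|\omega - \omega'\|_{L^p(\pi_i \otimes \pi_j)}$. The label term $\|d_{\Lambda_i} \circ (\iota_i, \iota'_i)\|_{L^p(\pi_i)}$ is handled by the same strategy, approximating $d_{\Lambda_i} \circ (\iota_i, \iota'_i)$ in $L^p(\mu_i \otimes \mu_i')$ by continuous functions on $X_i \times X_i'$. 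Lower semicontinuity of each summand then propagates through the continuous, componentwise-monotone $\ell^p$-aggregation in \eqref{eqn:nested_norm_expression} to the full objective, proving the claim for finite $p$. For $p = \infty$ one uses that $\|\cdot\|_{L^\infty}$ on a probability space equals $\sup_q \|\cdot\|_{L^q}$, so a supremum of lower semicontinuous functionals remains lower semicontinuous, and the finite maximum in \eqref{eqn:labelled_partitioned_distance_infty} preserves this property.
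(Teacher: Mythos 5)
Your overall strategy---compactness of $\Pi_k((\mu_i),(\mu_i'))$ via tightness, Prokhorov and Tychonoff, plus lower semicontinuity of the objective and the direct method---is exactly the structure of the paper's proof, which simply cites Sturm for compactness of each $\Pi(\mu_i,\mu_i')$ and the proof of a lemma of Chowdhury et al.\ for the (semi)continuity of the two types of terms. Your treatment of the kernel terms $\|\omega-\omega'\|_{L^p(\pi_i\otimes\pi_j)}$ is correct and is essentially the argument behind that citation: the error functions $\omega-\omega_m$ and $\omega'-\omega'_m$ depend only on $(x,y)$ (respectively $(x',y')$), and the corresponding marginal of $\pi_i\otimes\pi_j$ is pinned to $\mu_i\otimes\mu_j$ (respectively $\mu_i'\otimes\mu_j'$) by the coupling constraints, so the $C_b$-approximation error is controlled uniformly over the feasible set and semicontinuity passes to the limit.

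The label term, however, has a genuine gap. You propose to handle $\pi_i\mapsto\|d_{\Lambda_i}\circ(\iota_i,\iota_i')\|_{L^p(\pi_i)}$ ``by the same strategy, approximating $d_{\Lambda_i}\circ(\iota_i,\iota_i')$ in $L^p(\mu_i\otimes\mu_i')$ by continuous functions on $X_i\times X_i'$.'' But the crucial feature that made the kernel argument work is absent here: the cost $c(x,x')=d_{\Lambda_i}(\iota_i(x),\iota_i'(x'))^p$ depends jointly on $(x,x')$, and it is integrated against $\pi_i$ itself, which is a coupling rather than the product measure and may well be singular with respect to $\mu_i\otimes\mu_i'$ (e.g.\ a deterministic coupling concentrated on the graph of a map). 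Consequently $\|c-h_m\|_{L^p(\mu_i\otimes\mu_i')}\to 0$ gives no control on $\|c-h_m\|_{L^p(\pi_i)}$ uniformly over the feasible set, and the uniform-approximation step collapses. The repair must exploit the separated structure of the cost: apply Lusin's theorem to $\iota_i$ and $\iota_i'$ individually to obtain compact sets $K\subset X_i$ and $K'\subset X_i'$ with $\mu_i(X_i\setminus K)<\epsilon$ and $\mu_i'(X_i'\setminus K')<\epsilon$ on which the labels are continuous; the bad set is then contained in a union of two cylinders, so \emph{every} $\pi_i\in\Pi(\mu_i,\mu_i')$ assigns it mass at most $2\epsilon$ by the marginal constraints; and its contribution is controlled uniformly via the pointwise bound $d_{\Lambda_i}(\iota_i(x),\iota_i'(x'))\le d_{\Lambda_i}(\iota_i(x),\lambda_0)+d_{\Lambda_i}(\lambda_0,\iota_i'(x'))$ together with uniform integrability of each single-variable summand with respect to its own (fixed) marginal. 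Your remaining steps---weak convergence of product measures, monotone $\ell^p$ aggregation, and the $\sup_q\|\cdot\|_{L^q}$ characterization of the $L^\infty$ case---are fine once this is fixed.
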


\begin{proof}
    Let $(P,(\iota_i)),(P',(\iota'_i)) \in \PL_k^p$. We have $\Pi_{k}((\mu_i),(\mu'_i)) = \Pi(\mu_1,\mu_1') \times \cdots \times \Pi(\mu_k,\mu_k')$. By \cite[Lemma 1.2]{sturm2023space}, each $\Pi(\mu_i,\mu_i')$ is compact (as a subspace of the space of probability measures on $X_i \times X_i'$, with the weak topology), so it follows that $\Pi_{k}((\mu_i),(\mu_i'))$ is compact as well. By the proof of \cite[Lemma 24]{chowdhury2023hypergraph}, for each $(i,j)$, the function
    \[
        \Pi(\mu_i,\mu'_i) \times \Pi(\mu_j,\mu'_j) \to \mathbb{R}:(\pi_i,\pi_j) \mapsto \|\omega - \omega'\|_{L^p(\pi_i \otimes \pi_j)}
    \]
    is continuous in the $p< \infty$ case and lower semicontinuous in the $p = \infty$ case. Similarly, the function $\Pi(\mu_i, \mu_i') \to \R: \pi_i \mapsto \|d_{\Lambda_i} \circ (\iota_i,\iota_i')\|_{L^p(\pi_i)}$ is continuous (respectively, lower semicontinuous) if $p < \infty$ (respectively, $p = \infty$). It is then straightforward to see that the objectives of \eqref{eqn:labelled_partitioned_distance} and \eqref{eqn:labelled_partitioned_distance_infty} inherit these properties as functions on $\Pi_k((\mu_i),(\mu_i'))$. In either case, it follows from compactness that the infima are achieved. 
\end{proof}

The proofs of Theorem \ref{thm:partitioned_metric_labelled} and results later in the paper will use the following standard result from optimal transport theory. In the statement, and throughout the paper, we use $\mathrm{proj}_i:Y^0 \times Y^1 \times \cdots \times Y^n \rightarrow Y^i$ to denote the coordinate projection map from a product of sets to its $i$th factor.

\begin{lemma}[Gluing Lemma; see, e.g., Lemma 1.4 of \cite{sturm2023space}]\label{lem:gluing_lemma}
    Let $(Y_i,\nu_i)$ be Polish probability spaces (for $i = 0,\ldots,n$). For a collection of measure couplings $\xi_i \in \Pi(\nu_{i-1},\nu_i)$, $i = 1,\ldots,n$, there is a unique probability measure $\tilde{\xi}$ on $Y_0 \times Y_1 \times \cdots \times Y_n$ with the property that
\[
\big(\mathrm{proj}_{i-1} \times \mathrm{proj}_i\big)_\# \tilde{\xi} = \xi_i, 
\]
for all $i = 1,\ldots,n$.
\end{lemma}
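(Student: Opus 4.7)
The plan is to proceed by induction on $n$, constructing $\tilde{\xi}$ iteratively using the disintegration theorem, which applies since each $Y_i$ is Polish.

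For the base case $n=1$, take $\tilde{\xi} = \xi_1$, which trivially satisfies the required marginal condition. For the inductive step, assume a measure $\tilde{\xi}^{(n-1)}$ on $Y_0 \times \cdots \times Y_{n-1}$ has been constructed with $(\mathrm{proj}_{i-1} \times \mathrm{proj}_i)_\# \tilde{\xi}^{(n-1)} = \xi_i$ for all $i \leq n-1$. Since $Y_{n-1}$ is Polish, I would disintegrate $\xi_n \in \Pi(\nu_{n-1},\nu_n)$ along its first coordinate to obtain a $\nu_{n-1}$-a.e.\ uniquely determined, Borel-measurable family of probability kernels $\{K(\cdot \mid y_{n-1})\}_{y_{n-1} \in Y_{n-1}}$ on $Y_n$ such that
\[
\xi_n(dy_{n-1},dy_n) = K(dy_n \mid y_{n-1})\, \nu_{n-1}(dy_{n-1}).
\]
Then I would define $\tilde{\xi}^{(n)}$ on $Y_0 \times \cdots \times Y_n$ by prescribing its action on bounded Borel test functions $f$:
\[
\int f\, d\tilde{\xi}^{(n)} = \int \left( \int f(y_0,\ldots,y_n)\, K(dy_n \mid y_{n-1}) \right) d\tilde{\xi}^{(n-1)}(y_0,\ldots,y_{n-1}).
\]
This expression defines a positive linear functional of unit norm on bounded Borel functions and so extends to a unique Borel probability measure on the product (e.g.\ by Carath\'eodory extension from measurable rectangles, or by Riesz representation applied in the compact-support case and then standard truncation).

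Next, I would verify the marginal conditions. Choosing $f$ independent of $y_n$ shows that the projection of $\tilde{\xi}^{(n)}$ onto $Y_0 \times \cdots \times Y_{n-1}$ equals $\tilde{\xi}^{(n-1)}$, since $K(\cdot \mid y_{n-1})$ has total mass one; hence all the consistency conditions at indices $i < n$ are inherited from the inductive hypothesis. For the new index $i=n$, picking $f$ depending only on $(y_{n-1},y_n)$ and using that the $Y_{n-1}$-marginal of $\tilde{\xi}^{(n-1)}$ equals $\nu_{n-1}$ (itself a consequence of the inductive hypothesis applied to $\xi_{n-1}$, or directly from the base case) reduces the double integral to the defining equation of the disintegration, giving $(\mathrm{proj}_{n-1} \times \mathrm{proj}_n)_\# \tilde{\xi}^{(n)} = \xi_n$.

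The main subtlety lies in the uniqueness claim: the construction above produces the canonical \emph{Markov} gluing, under which $Y_n$ is conditionally independent of $(Y_0,\ldots,Y_{n-2})$ given $Y_{n-1}$, and this property together with the pairwise-consecutive marginals determines $\tilde{\xi}$ on measurable rectangles by a monotone class argument. The principal technical obstacle is the careful invocation of the disintegration theorem---specifically, verifying Borel measurability of the kernel family $y_{n-1} \mapsto K(\cdot \mid y_{n-1})$, which is classical for Polish probability spaces but must be stated carefully---together with the bookkeeping needed to extend the definition of $\tilde{\xi}^{(n)}$ from cylinder sets to the full Borel $\sigma$-algebra. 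Once these ingredients are in place, the induction closes cleanly.
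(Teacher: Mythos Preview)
The paper does not supply its own proof of this lemma; it is quoted from Sturm (``see, e.g., Lemma 1.4 of \cite{sturm2023space}'') and then used as a black box in the triangle-inequality and geodesic arguments. So there is no paper proof to compare against.

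Your approach---induction on $n$, disintegrating $\xi_n$ into a kernel over $Y_{n-1}$ and extending the previously constructed measure---is the standard one and is correct for existence. One remark on uniqueness: you rightly observe that the construction yields the \emph{Markov} gluing (each $Y_i$ conditionally independent of $Y_0,\ldots,Y_{i-2}$ given $Y_{i-1}$), and that uniqueness holds \emph{within that class}. Without the conditional-independence constraint the pairwise-consecutive marginals do not determine $\tilde{\xi}$; for instance, on $\{0,1\}^3$ with uniform marginals and product couplings $\xi_1,\xi_2$, both the fully independent law and the law supported on $\{y_0=y_2\}$ have the required $(0,1)$- and $(1,2)$-marginals. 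This is a minor imprecision in the lemma's statement rather than in your argument, and for every application in the paper only existence is actually used, so the point is harmless.
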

The measure $\tilde{\xi}$ from the lemma is called the \define{gluing} of the measures $\xi_i$. We denote it as $\tilde{\xi} = \xi_1 \boxtimes \xi_2 \boxtimes \cdots \boxtimes \xi_n$.

\begin{proof}[Proof of Theorem \ref{thm:partitioned_metric_labelled}]
    The function $d_{\PL_k^p}$ is clearly symmetric. We now establish the triangle inequality. Let $L=(P,(\iota_i)),L'=(P',(\iota'_i)),L''=(P'',(\iota''_i)) \in \PL_k^p$. By Lemma \ref{lem:infimum_achieved} there exist partitioned couplings $(\pi_i) \in \Pi_k ((\mu_i), (\mu'_i))$ and $(\pi_i')\in \Pi_{k}((\mu'_i), (\mu''_i))$ that realize the infima in the distances, respectively, between $P, P'$ and $P', P''$.
    Let $\widetilde{\xi}_i = \pi_i \boxtimes \pi_i' \boxtimes \pi_i''$ denote the probability measure on $X_i \times X_i' \times X_i''$, for $i = 1,\ldots,k$, obtained from the Gluing Lemma (Lemma \ref{lem:gluing_lemma}). Letting $\xi_i$ denote the pushforward to $\widetilde{\xi}_i$ to $X_i\times X''_i$, we have that $(\xi_i) \in \Pi_k((\mu_i),(\mu_i''))$. 
    Using the expression \eqref{eqn:nested_norm_expression}, we get the desired triangle inequality from the triangle inequality for the $L^p$- and $\ell^p$-norms and for $d_{\Lambda_i}$:
    \begin{align*}
        &2 \cdot d_{\PL_k^p}(L,L'') \\
        &\leq \left\|\left(\left\|\left( \|\omega - \omega''\|_{L^p(\xi_i \otimes \xi_j)}\right)_{i,j} \right\|_p, \left\|\left(\|d_{\Lambda_i} \circ (\iota_i,\iota_i'')\|_{L^p(\xi_i)} \right)_i \right\|_p \right) \right\|_p \\
        &= \left\|\left(\left\|\left( \|\omega - \omega''\|_{L^p(\tilde{\xi}_i \otimes \tilde{\xi}_j)}\right)_{i,j} \right\|_p, \left\|\left(\|d_{\Lambda_i} \circ (\iota_i,\iota_i'')\|_{L^p(\tilde{\xi}_i)} \right)_i \right\|_p \right) \right\|_p \\
        &\leq \left\|\left(\left\|\left( \|\omega - \omega' + \omega' - \omega''\|_{L^p(\tilde{\xi}_i \otimes \tilde{\xi}_j)}\right)_{i,j} \right\|_p, \left\|\left(\|d_{\Lambda_i} \circ (\iota_i,\iota'_i) + d_{\Lambda_i} \circ (\iota'_i,\iota_i'')\|_{L^p(\tilde{\xi}_i)} \right)_i \right\|_p \right) \right\|_p \\
        &\leq \left\|\left(\left\|\left( \|\omega - \omega'\|_{L^p(\tilde{\xi}_i \otimes \tilde{\xi}_j)} + \|\omega' - \omega''\|_{L^p(\tilde{\xi}_i \otimes \tilde{\xi}_j)}\right)_{i,j} \right\|_p, \right. \right. \\ 
        &\hspace{1.5in} \left. \left. \left\|\left(\|d_{\Lambda_i} \circ (\iota_i,\iota'_i)\|_{L^p(\tilde{\xi}_i)} + \|d_{\Lambda_i} \circ (\iota'_i,\iota_i'')\|_{L^p(\tilde{\xi}_i)} \right)_i \right\|_p \right) \right\|_p \\
        &\leq \left\|\left(\left\|\left( \|\omega - \omega'\|_{L^p(\tilde{\xi}_i \otimes \tilde{\xi}_j)}\right)_{i,j} \right\|_p + \left\|\left(\|\omega' - \omega''\|_{L^p(\tilde{\xi}_i \otimes \tilde{\xi}_j)}\right)_{i,j} \right\|_p, \right.\right. \\
        &\hspace{1.5in} \left. \left.
        \left\|\left(\|d_{\Lambda_i} \circ (\iota_i,\iota'_i)\|_{L^p(\tilde{\xi}_i)}\right)_i \right\|_p \left\| + \left( \|d_{\Lambda_i} \circ (\iota'_i,\iota_i'')\|_{L^p(\tilde{\xi}_i)} \right)_i \right\|_p \right) \right\|_p \\
        &\leq \left\|\left(\left\|\left( \|\omega - \omega'\|_{L^p(\tilde{\xi}_i \otimes \tilde{\xi}_j)}\right)_{i,j} \right\|_p, \left\|\left(\|d_{\Lambda_i} \circ (\iota_i,\iota_i')\|_{L^p(\tilde{\xi}_i)} \right)_i \right\|_p \right) \right\|_p \\
        &\hspace{1.5in} 
        +\left\|\left(\left\|\left( \|\omega' - \omega''\|_{L^p(\tilde{\xi}_i \otimes \tilde{\xi}_j)}\right)_{i,j} \right\|_p, \left\|\left(\|d_{\Lambda_i} \circ (\iota_i',\iota_i'')\|_{L^p(\tilde{\xi}_i)} \right)_i \right\|_p \right) \right\|_p \\
        &= \left\|\left(\left\|\left( \|\omega - \omega'\|_{L^p(\tilde{\pi}_i \otimes \tilde{\pi}_j)}\right)_{i,j} \right\|_p, \left\|\left(\|d_{\Lambda_i} \circ (\iota_i,\iota_i')\|_{L^p(\tilde{\pi}_i)} \right)_i \right\|_p \right) \right\|_p \\
        &\hspace{1.5in} 
        +\left\|\left(\left\|\left( \|\omega' - \omega''\|_{L^p(\tilde{\pi'}_i \otimes \tilde{\pi'}_j)}\right)_{i,j} \right\|_p, \left\|\left(\|d_{\Lambda_i} \circ (\iota_i',\iota_i'')\|_{L^p(\tilde{\pi'}_i)} \right)_i \right\|_p \right) \right\|_p \\
        &= 2 \cdot d_{\PL_k^p}(L,L') + 2 \cdot d_{\PL_k^p}(L',L'').
    \end{align*}
    We note that lines where measures are changed in the $L^p$-norms follow by marginalization (for example, the first equality which exchanges $\xi_i$ for $\tilde{\xi}_i$ and $\xi_j$ for $\tilde{\xi}_j$). This proves that the triangle inequality holds.
    
    Finally, let us show that $d_{\PL_k^p}(L,L') = 0$ if and only if $L$ and $L'$ are weakly isomorphic. Suppose that $L$ and $L'$ are weakly isomorphic. Let $\overline{L} \in \PL_k$ denote the auxiliary space from the definition of weak isomorphism. It is easy to show that $d_{\PL_k^p}(\overline{L},L) = d_{\PL_k^p}(\overline{L},L') = 0$, so $d_{\PL_k^p}(L,L') = 0$ follows by symmetry and the triangle inequality. Conversely, suppose that $d_{\PL_k^p}(L,L') = 0$. By Lemma \ref{lem:infimum_achieved}, there is a partitioned coupling $(\pi_i)$ such that $\|\omega - \omega'\|_{L^p(\pi_i \otimes \pi_j)} = \|d_{\Lambda_i} \circ (\iota_i,\iota_i')\|_{L^p(\pi_i)} = 0$ for all $i,j = 1,\ldots,k$. Define $\overline{X}_i = X_i \times X'_i$, $\overline{\mu}_i = \pi_i$ and $\overline{\omega}((x,x'),(y,y')) = \omega(x,y)$. The maps $\phi_i:\overline{X}_i \to X_i$ and $\phi'_i:\overline{X}_i \to X'_i$ from the definition of weak isomorphism are coordinate projection maps. One can then show that this gives a weak isomorphism of $P$ and $P'$. Finally, define a new labelling function $\overline{\iota}_i:\overline{X}_i \to \Lambda_i$ by $\overline{\iota}_i(x,x') = \iota_i(x)$. Since $d_{\Lambda_i}$ is a metric, it must be that $\iota_i(x) = \iota_i'(x')$ for $\pi_i$-almost every $(x,x') \in X_i \times X_i'$, so this labelling function satisfies the condition in the definition of weak isomorphism.
\end{proof}

\subsubsection{Consequences and comparisons to other results}\label{sec:consequences_and_comparisons}

We now give a proof of Theorem~\ref{thm:partitioned_metric}, which says that the (unlabelled) partitioned network distance is a metric, and which then implies that various other generalized network distances in the literature are metrics as well (Corollary~\ref{cor:metrics_and_embeddings}). 

\begin{proof}[Proof of Theorem~\ref{thm:partitioned_metric}]
    Consider the map which takes a $k$-partitioned measure network $P$ to the labelled $k$-partitioned measure network $(P,(\iota_i))$, where $(\Lambda_i,d_{\Lambda_i})$ is the one-point metric space for all $i$ (hence $\iota_i$ is the constant map for all $i$). Clearly, we have 
    \[
    d_{\Pc_k^p}(P,P') = d_{\PL_k^p}\big((P,(\iota_i)),(P',(\iota'_i))\big),
    \]
    since the labelling term in the definition of $d_{\PL_k^p}$ vanishes. Thus the map $P \mapsto (P,(\iota_i))$ induces a bijection from $[\Pc_k^p]$ to $[\PL_k^p]$ which takes $d_{\Pc_k^p}$ to $d_{\PL_k^p}$, and it follows that $d_{\Pc_k^p}$ is a metric.
\end{proof}

Next, we give a more precise comparison between the distance $d_{\PL_k^p}$ and the Fused Gromov-Wasserstein (FGW) distance of Vayer et al.~\cite{vayer2020fused}. The FGW distance is defined in the context of labelled measure networks; that is, in the $k=1$ setting, where we write elements as $(N,\iota)$, with $N  = (X,\mu,\omega) \in \Nc$ and $\iota:X \to \Lambda$, and in which case the distance $d_{\PL_1^p}$ reduces to
\begin{equation}\label{eqn:k_equals_one_version}
d_{\PL_1^p}((N,\iota),(N',\iota')) = \inf_{\pi \in \Pi(\mu,\mu')} \frac{1}{2} \left(\|\omega - \omega'\|_{L^p(\pi \otimes \pi)}^p + \|d_\Lambda \circ (\iota,\iota')\|_{L^p(\pi)}^p \right)^{1/p},
\end{equation}
for $p < \infty$. In contrast, the FGW distance depends on several more parameters, but the version of it which is closest to \eqref{eqn:k_equals_one_version} would read as 
\begin{align*}
&d_{\mathrm{FGW},p}((N,\iota),(N',\iota')) \\
&\qquad = \inf_{\pi \in \Pi(\mu,\mu')} \frac{1}{2} \left( \int_{X \times X' \times X \times X'} \left(|\omega(x,y) - \omega'(x',y')| + d_\Lambda(\iota(x),\iota'(x')) \right)^p d\pi(x,x') d\pi(y,y')  \right)^{1/p} \\
&\qquad = \inf_{\pi \in \Pi(\mu,\mu')} \frac{1}{2} \||\omega - \omega'| + d_\Lambda \circ (\iota,\iota')\|_{L^p(\pi \otimes \pi)}.
\end{align*}
Although the distances $d_{\PL_1^p}$ and $d_{\mathrm{FGW},p}$ treat the same type of object, the above shows that their formulations are subtly but legitimately distinct. 

\begin{remark}[Triangle Inequality for Fused Gromov-Wasserstein]
    The situation described above is slightly murky, as several articles following~\cite{vayer2020fused} have formulated the FGW distance more in line with \eqref{eqn:k_equals_one_version}; see  e.g.,~\cite{titouan2019optimal,brogat2022learning,flamary2021pot}. However, as far as we are aware, the primary references for FGW distance have not established a triangle inequality for any of its formulations. In particular, \cite{vayer2020fused,titouan2019optimal} both give relaxed variants of the triangle inequality for different versions of FGW, where the larger side of the inequality involves an extra scale factor. Theorem~\ref{thm:partitioned_metric_labelled} therefore gives a novel proof of the triangle inequality for FGW, when expressed in the form \eqref{eqn:k_equals_one_version}. We note that the triangle inequality for FGW was also recently established via an independent argument in \cite[Corollary 4.3]{bauer2024z}.
\end{remark}

\subsection{Alexandrov geometry of labelled partitioned networks}

Next, we characterize geodesics and curvature in the space of labelled partitioned measure networks. For the rest of the section, we will assume the following conventions:
\begin{itemize}
    \item We assume that the label spaces $(\Lambda_i, d_{\Lambda_i})$ are geodesic spaces. This is sometimes specialized further to assume that the label spaces are Hilbert spaces, but this specialization will always be pointed out explicitly in the statements of our results.
    \item We will restrict our attention to the case $p=2$, and simply write $\PL_k$ in place of $\PL_k^2$.
\end{itemize}
The following is a generalization of Theorem~\ref{thm:alexandrov_space}; recall that the proof of that theorem was deferred---we will prove it in Section~\ref{sec:consequences_alexandrov} as a corollary.

\begin{theorem}\label{thm:alexandrov_space_labelled}
    Let each $\Lambda_i$ ($1 \leq i \leq k$) be a Hilbert space with inner product $\langle \cdot, \cdot \rangle_{\Lambda_i}$. Then, for any $k \geq 1$, $([\PL_k],d_{\PL_k})$ is an Alexandrov space of non-negative curvature. 
\end{theorem}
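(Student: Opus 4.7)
The plan is to verify the three requirements of Definition \ref{def:geodesics_and_curvature}(3): completeness, the geodesic property, and the curvature lower bound. Completeness should follow from a standard argument adapted from the case of measure networks (e.g., in the vein of Sturm \cite{sturm2023space}), using the fact that Cauchy sequences of optimal couplings admit weakly convergent subsequences by a Prokhorov-type tightness argument combined with the completeness of the Hilbert label spaces; I would leave the detailed bookkeeping to an appendix and focus here on the interesting parts.

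For the geodesic structure, given $L_0, L_1 \in \PL_k$, I would invoke Lemma \ref{lem:infimum_achieved} to obtain an optimal partitioned coupling $(\pi_i) \in \Pi_k((\mu_i),(\mu_i'))$ and define a candidate geodesic $t \mapsto L_t = (P_t, (\iota_{i,t}))$ with underlying parts $(X_i \times X_i', \pi_i)$, network kernel
\[
\omega_t\bigl((x,x'),(y,y')\bigr) = (1-t)\omega(x,y) + t\omega'(x',y'),
\]
and labels $\iota_{i,t}(x,x') = (1-t)\iota_i(x) + t\iota_i'(x')$, exploiting the vector space structure of the $\Lambda_i$. The diagonal coupling $(\mathrm{id},\mathrm{id})_\#\pi_i$ between $(X_i \times X_i', \pi_i)$ and itself then witnesses $d_{\PL_k}(L_s, L_t) \leq (t-s)\, d_{\PL_k}(L_0, L_1)$, since on the diagonal $\omega_s - \omega_t = (t-s)(\omega - \omega')$ and $\iota_{i,s} - \iota_{i,t} = (t-s)(\iota_i - \iota_i')$; the reverse inequality is forced by the triangle inequality (Theorem \ref{thm:partitioned_metric_labelled}).

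The curvature lower bound is the heart of the argument. Fix $L = (P'',(\iota_i''))$ and let $(\xi_{t,i})$ be an optimal coupling realizing $d_{\PL_k}(L_t, L)$, so each $\xi_{t,i}$ is a probability measure on $(X_i \times X_i') \times X_i''$ with marginals $\pi_i$ and $\mu_i''$. Pushing forward along the projections $X_i \times X_i' \times X_i'' \to X_i \times X_i''$ and $X_i' \times X_i''$ produces partitioned couplings $(\xi_{t,i}^0) \in \Pi_k((\mu_i),(\mu_i''))$ and $(\xi_{t,i}^1) \in \Pi_k((\mu_i'),(\mu_i''))$, which are therefore admissible in the distances $d_{\PL_k}(L_0, L)$ and $d_{\PL_k}(L_1, L)$. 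The key computation uses the algebraic identity
\[
\bigl((1-t)a + tb\bigr)^2 = (1-t)a^2 + tb^2 - t(1-t)(a-b)^2
\]
applied pointwise to $\omega_t - \omega''$, together with its Hilbert-space analogue $\|(1-t)u + tv - w\|^2 = (1-t)\|u-w\|^2 + t\|v-w\|^2 - t(1-t)\|u-v\|^2$ applied pointwise to $\iota_{i,t} - \iota_i''$ (this is the sole place where the Hilbert hypothesis on the labels is essential). Integrating these identities against $\xi_{t,i} \otimes \xi_{t,j}$ and $\xi_{t,i}$ respectively, and recognizing that the cross terms marginalize onto $\xi_{t,i}^0$, $\xi_{t,i}^1$, and $\pi_i$, yields
\begin{align*}
4\, d_{\PL_k}(L_t, L)^2 &= (1-t)\, \Psi(\xi_t^0; L_0, L) + t\, \Psi(\xi_t^1; L_1, L) - t(1-t)\, \Psi(\pi; L_0, L_1) \\
&\geq 4(1-t)\, d_{\PL_k}(L_0, L)^2 + 4t\, d_{\PL_k}(L_1, L)^2 - 4t(1-t)\, d_{\PL_k}(L_0, L_1)^2,
\end{align*}
where $\Psi(\cdot; \cdot, \cdot)$ denotes the objective functional in \eqref{eqn:labelled_partitioned_distance} with $p=2$ evaluated at the indicated coupling. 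The two inequalities use that $\xi_t^0, \xi_t^1$ are merely admissible (not necessarily optimal), while the equality on the last term uses optimality of $\pi$.

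The main obstacle is keeping the bookkeeping straight: the base points of $L_t$ live in product spaces $X_i \times X_i'$, so one must carefully track which marginals of $\xi_{t,i}$ can be substituted for which measures in which $L^p$ norms. The sign of the $t(1-t)$ term is what makes the argument deliver a \emph{lower} bound (despite all the individual estimates being upper bounds on the pieces), and the fact that this sign works out correctly is precisely the manifestation of non-negative curvature, inherited from the $L^2$ parallelogram law. The proof of Theorem \ref{thm:alexandrov_space} then follows as the special case in which each $\Lambda_i$ is a one-point space, which I will record as a corollary in Section \ref{sec:consequences_alexandrov}.
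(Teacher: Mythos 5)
Your geodesic construction and your curvature computation are exactly the ones the paper uses: the same linear interpolation of $\omega$ and $\iota$ over an optimal partitioned coupling, the same pointwise identity $\|(1-t)a+tb-c\|^2 = (1-t)\|a-c\|^2 + t\|b-c\|^2 - t(1-t)\|a-b\|^2$, the same marginalization of the three terms onto $\xi^0_t$, $\xi^1_t$, and $\pi$, and the same use of admissibility versus optimality to get the signs right. The reduction of Theorem \ref{thm:alexandrov_space} to the one-point label space is also how the paper proceeds.

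There is, however, one genuine gap. Definition \ref{def:geodesics_and_curvature}(2) requires the triangle comparison inequality to hold for \emph{every} geodesic $\gamma$, and your argument verifies it only for the specific interpolation geodesics $L_t$ that you construct. A priori there could be other geodesics joining $[L_0]$ and $[L_1]$ that are not of this form, and your computation says nothing about them. The paper closes this gap with Proposition \ref{prop:labelled_geodesic_normed}, which shows that (when the $\Lambda_i$ are inner product spaces) every geodesic in $[\PL_k]$ is weakly isomorphic to one of the interpolation form; its proof is a nontrivial dyadic-subdivision argument that glues optimal couplings along a partition of $[0,1]$ into intervals of length $2^{-n}$, uses the same parallelogram identity together with Jensen's inequality to force the "defect" terms to vanish, and then passes to all $t$ by density and continuity. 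Only after this characterization can one assume without loss of generality that an arbitrary geodesic has the standard form and run your computation. You should either supply such a characterization of all geodesics, or explicitly switch to (and justify) a version of the curvature condition that only quantifies over some geodesic between each pair of points. Your completeness sketch is also lighter than what the paper does (gluing along a fast Cauchy subsequence, projective limit measures, and the fact that $L^2(\pi,\Lambda)$ is Hilbert when $\Lambda$ is), but since you flagged it as deferred bookkeeping and identified the essential role of the Hilbert hypothesis, that part is an acceptable omission for a sketch.
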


We will prove this by establishing the necessary properties as propositions. The proof techniques used in this section are largely adapted from the seminal work of Sturm~\cite{sturm2023space}.

\subsubsection{Geodesic structure}

We first prove two results on the geodesic structure of $[\PL_k]$. 

\begin{prop}\label{prop:interpolation_geodesic_labelled}
    For any $k \geq 1$, $([\PL_k],d_{\PL_k})$ is a geodesic space. For labelled $k$-partitioned measure networks $L=(P,(\iota_i)),L'=(P',(\iota'_i)) \in \PL_k$, 
    a geodesic from $[P,(\iota_i)]$ to $[P',(\iota'_i)]$ is given by $[L^t] = [P^t,(\iota^t_i)]$, $t \in [0,1]$, defined as follows. The underlying $k$-partitioned measure network $P^t$ is
    \[
    P^t = \big((X_i \times X_i', \pi_i), \omega^t\big),
    \]
    where $(\pi_i)$ is a $k$-partitioned coupling which realizes $d_{\PL_k}(L,L')$, and $\omega^t: (\sqcup_i X_i) \times (\sqcup_i X_i') \to \R$ is defined by 
    \begin{equation}\label{eqn:omega_interpolation}
    \omega^t((x,x'),(y,y')) = (1-t)\omega(x,y) + t\omega'(x',y').
    \end{equation}
    The labelling function $(\iota_i^t)$ is given by  
    \[
    \iota_i^t: X_i \times X_i' \longrightarrow \Lambda_i, \quad \iota_i^t(x,x') = \gamma_i^{x,x'}(t), \quad x, x' \in X_i \times X_i',
    \]
where $\gamma_i^{x,x'}:[0,1] \to \Lambda_i$ is a geodesic between $\iota_i(x)$ and $\iota_i'(x')$ for each $1 \leq i \leq k$. 
\end{prop}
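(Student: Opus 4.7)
The argument follows the template for constructing Gromov-Wasserstein geodesics due to Sturm, adapted to the partitioned, labelled setting. The plan is to verify that the explicit candidate $L^t$ is well-defined in $\PL_k$, that its equivalence class has the correct endpoints, and that pairwise distances between $L^s$ and $L^t$ scale linearly in $|t-s|$.

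First, I invoke Lemma~\ref{lem:infimum_achieved} to obtain an optimal $k$-partitioned coupling $(\pi_i) \in \Pi_k((\mu_i),(\mu_i'))$ realizing $d_{\PL_k}(L,L')$. Using this $(\pi_i)$, the kernel $\omega^t$ in \eqref{eqn:omega_interpolation} is a convex combination of $\omega$ and $\omega'$ lifted to $(\sqcup_i X_i \times X_i') \times (\sqcup_i X_i \times X_i')$; Minkowski's inequality together with the marginal conditions on $\pi_i$ yields that $\omega^t \in L^2(\pi \otimes \pi)$ where $\pi = \sum_i \pi_i$. For the labels, since each $\Lambda_i$ is a geodesic space (and in the Hilbert-space specialization used later, uniquely geodesic with $\gamma_i^{x,x'}(t) = (1-t)\iota_i(x) + t\iota_i'(x')$), the map $\iota_i^t$ is measurable and satisfies the required $L^2$ condition; I would note here that in the Hilbert case this is immediate, and in the general geodesic-space case one invokes a measurable selection of geodesics.

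Second, I verify the endpoints. For $t=0$, the coordinate projections $\phi_i: X_i \times X_i' \to X_i$, $(x,x')\mapsto x$, push $\pi_i$ forward to $\mu_i$ (by definition of coupling), satisfy $\omega^0((x,x'),(y,y')) = \omega(x,y) = \omega(\phi_i(x,x'),\phi_j(y,y'))$ identically, and match the labels: $\iota_i^0(x,x') = \gamma_i^{x,x'}(0) = \iota_i(x) = \iota_i(\phi_i(x,x'))$. Hence $(\phi_i)$ is a weak isomorphism from $L^0$ to $L$; symmetric reasoning at $t=1$ gives $L^1 \sim L'$.

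Third, and this is the crux of the argument, I establish the upper bound
\begin{equation*}
d_{\PL_k}(L^s, L^t) \le (t-s)\, d_{\PL_k}(L,L'), \quad 0 \le s \le t \le 1.
\end{equation*}
Since $L^s$ and $L^t$ share the same underlying measures $(\pi_i)$ on the product spaces $X_i \times X_i'$, I take the diagonal coupling $\Delta_i \coloneqq (\mathrm{id}\times\mathrm{id})_\# \pi_i \in \Pi(\pi_i,\pi_i)$, which forms a valid $k$-partitioned coupling in $\Pi_k((\pi_i),(\pi_i))$. Under $\Delta_i \otimes \Delta_j$, the integrand evaluates pointwise as
\begin{equation*}
\omega^s((x,x'),(y,y')) - \omega^t((x,x'),(y,y')) = (t-s)\bigl(\omega(x,y) - \omega'(x',y')\bigr),
\end{equation*}
so $\|\omega^s - \omega^t\|_{L^2(\Delta_i \otimes \Delta_j)} = (t-s)\|\omega - \omega'\|_{L^2(\pi_i \otimes \pi_j)}$. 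Analogously, the geodesic property of $\gamma_i^{x,x'}$ gives
\begin{equation*}
d_{\Lambda_i}(\iota_i^s(x,x'),\iota_i^t(x,x')) = (t-s)\, d_{\Lambda_i}(\iota_i(x),\iota_i'(x')),
\end{equation*}
so the label term scales by the same factor. Plugging into \eqref{eqn:nested_norm_expression} and using positive homogeneity of the nested $\ell^2$-norms produces exactly $(t-s)\, d_{\PL_k}(L,L')$. The main technical care required here is ensuring one uses the identity $\Delta_i$ as a coupling of $\pi_i$ with itself (not $\pi_i \otimes \pi_j$ as a coupling between the diagonal partitions), which is where the partitioned structure comes in without friction because each $\pi_i$ is treated independently.

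Finally, the matching lower bound follows from the triangle inequality (Theorem~\ref{thm:partitioned_metric_labelled}): for $0 \le s \le t \le 1$,
\begin{equation*}
d_{\PL_k}(L, L') \le d_{\PL_k}(L, L^s) + d_{\PL_k}(L^s, L^t) + d_{\PL_k}(L^t, L') \le s\, d_{\PL_k}(L,L') + d_{\PL_k}(L^s, L^t) + (1-t)\, d_{\PL_k}(L,L'),
\end{equation*}
forcing $d_{\PL_k}(L^s, L^t) \ge (t-s)\, d_{\PL_k}(L,L')$, so all intermediate inequalities are equalities and $t \mapsto [L^t]$ is a constant-speed geodesic. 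The principal obstacle is the upper bound in step three, but once the diagonal coupling is chosen the algebra collapses cleanly; the measurability of the geodesic selection in the label spaces is a minor subtlety that is fully trivial in the Hilbert setting used for Theorem~\ref{thm:alexandrov_space_labelled}.
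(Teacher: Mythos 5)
Your proposal is correct and follows essentially the same route as the paper's proof: verify the endpoints via coordinate projections, then bound $d_{\PL_k}(L^s,L^t)$ from above by $(t-s)\,d_{\PL_k}(L,L')$ using the diagonal (identity) coupling of each $\pi_i$ with itself, together with the pointwise linear scaling of $\omega^s-\omega^t$ and the geodesic property of the label interpolants. The only cosmetic difference is that you spell out the matching lower bound via the triangle inequality, whereas the paper delegates that standard step to a cited lemma; your added remarks on integrability and measurable selection of geodesics are harmless extras.
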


\begin{proof}
    It is straightforward to show that $L^0$ is weakly isomorphic to $L$ and $L^1$ is weakly isomorphic to $L'$. To show that $[L^t]$ defines a geodesic, it suffices to show that 
    \begin{equation}\label{eqn:geodesic_formula_2_labelled}
    d_{\PL_k}(L^s, L^t) \leq (t-s) d_{\PL_k}(L,L'),
    \end{equation}
    for all $s,t \in [0,1]$ with $s < t$ (see, e.g., \cite[Lemma 1.3]{chowdhury2016explicit}). Let $(\pi_i) \in \Pi_k((\mu_i),(\mu_i'))$ be optimal (Lemma \ref{lem:infimum_achieved}) and set 
    \[
    1_{\pi_i} \coloneqq (\mathrm{id}_{X_i \times X_i'} \times \mathrm{id}_{X_i \times X_i'})_\# \pi_i \in \Pi(\pi_i,\pi_i)
    \]
    for each $i = 1,\ldots,k$. Then
    \begin{equation}\label{eqn:geodesic_formula_1_labelled}
    4 \cdot d_{\PL_k}(L^s,L^t)^2 \leq \sum_{i,j=1}^k \|\omega^s - \omega^t\|_{L^2(1_{\pi_i}\otimes 1_{\pi_j})}^2 +  \sum_{i = 1}^k \| d_{\Lambda_i} \circ (\iota_i^s, \iota_i^t) \|_{L^2(1_{\pi_i})}^2 .
    \end{equation}

    Applying the various definitions, it is straightforward to show that, for each pair $(i,j)$, 
    \[
    \|\omega^s - \omega^t\|_{L^2(1_{\pi_i}\otimes 1_{\pi_j})}^2 = (t-s)^2 \|\omega - \omega'\|_{L^2(\pi_i \otimes \pi_j)}^2.
    \]
To bound the last term in \eqref{eqn:geodesic_formula_1_labelled}, observe that the term $\| d_{\Lambda_i} \circ (\iota_i^s, \iota_i^t) \|_{L^2(1_{\pi_i})}^2  $ is equal to 
\[
  \iint_{(X_i \times X_i')^2} d_\Lambda( \iota_i^s(x,x') ,\iota_i^t(y, y'))^2 \: \diff 1_{\pi}((x, x'), (y, y')) = \int_{X_i \times X_i'} d_{\Lambda_i}(\iota_i^s(x, x'), \iota_i^t(x, x'))^2 \: \diff\pi_i(x, x').
\]
We have that, for all $0 \leq s \leq t \leq 1$,
\[
  d_{\Lambda_i}(\iota_i^s(x,x'), \iota_i^t(x,x')) = d_{\Lambda_i}( \gamma^{x,x'}_i(s) ,\gamma^{x,x'}_i(t)) = (t-s) d_{\Lambda_i}(\iota_i(x) ,\iota_i(x') ),
\]
where the second equality follows by geodesity of $\gamma^{x,x'}$.
This implies 
\[
  \| d_{\Lambda_i} \circ (\iota^s_i, \iota^t_i) \|_{L^2(1_{\pi_i})}^2 = (t-s)^2 \int_{X_i \times X_i'} d_{\Lambda}(\iota_i(x) ,\iota_i(x') \big)^2 \: \diff\pi_i(x, x'). 
\]
Putting all of this together yields the desired inequality \eqref{eqn:geodesic_formula_2_labelled}. 
\end{proof}

\begin{prop} \label{prop:labelled_geodesic_normed}
  Let us now assume that each of the $\Lambda_i, 1 \leq i \leq k$ are inner product spaces with inner products $\langle \cdot,\cdot \rangle_{\Lambda_i}$, associated norms $\| \cdot \|_{\Lambda_i}$ and metrics $d_{\Lambda_i}$ induced by their norms. Then any geodesic in $[\PL_k]$ can be written in the form given in Proposition \ref{prop:interpolation_geodesic_labelled}: for any geodesic $[P^t, (\iota_i^t)], t \in [0, 1]$ between $[L]$ and $[L']$, there exists an optimal coupling $(\pi_i) \in \Pi_k((\mu_i), (\mu_i'))$ such that $[P^t, (\iota_i^t)]$ is weakly isomorphic to $((X_i \times X_i', \pi_i), \omega^t, \gamma_i^t)$ where $\omega^t$ is given by \eqref{eqn:omega_interpolation} and where 
  \[
  \gamma_i^{x,x'}(t) \coloneqq (1-t)\iota_i(x) + t \iota'_i(x').
  \]
\end{prop}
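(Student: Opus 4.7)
The plan is to show that any geodesic must be weakly isomorphic to the canonical interpolation described in Proposition \ref{prop:interpolation_geodesic_labelled}, by exploiting the strict convexity of the $L^2$ norm and the Hilbert norms on the label spaces. The core tool is a gluing argument that simultaneously produces the required optimal coupling and forces the interpolation identities via equality cases in the relevant $L^2$ and Hilbert triangle inequalities, in the spirit of Sturm's characterization of geodesics in the space of metric measure spaces.

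First, let $[P^t, (\iota_i^t)]$ be an arbitrary geodesic between $[L]$ and $[L']$. I would begin by analyzing the midpoint $t = 1/2$. By Lemma \ref{lem:infimum_achieved}, choose optimal partitioned couplings $(\sigma_i) \in \Pi_k((\mu_i),(\mu_i^{1/2}))$ realizing $d_{\PL_k}(L, L^{1/2})$ and $(\tau_i) \in \Pi_k((\mu_i^{1/2}),(\mu_i'))$ realizing $d_{\PL_k}(L^{1/2}, L')$. Apply the Gluing Lemma (Lemma \ref{lem:gluing_lemma}) to form $\tilde\pi_i = \sigma_i \boxtimes \tau_i$ on $X_i \times X_i^{1/2} \times X_i'$, and let $\pi_i$ be its pushforward onto $X_i \times X_i'$, which is a coupling of $\mu_i$ and $\mu_i'$.

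Second, inserting $(\pi_i)$ into the nested-norm expression \eqref{eqn:nested_norm_expression} and invoking the triangle inequality argument used in the proof of Theorem \ref{thm:partitioned_metric_labelled}, together with the geodesic identity $d_{\PL_k}(L,L^{1/2}) + d_{\PL_k}(L^{1/2},L') = d_{\PL_k}(L,L')$, forces equality throughout. Since the $p=2$ case puts us in a Hilbertian setting on every level of the nested norm, equality in the $L^2$-triangle inequalities applied to $\omega - \omega^{1/2}$ and $\omega^{1/2} - \omega'$ (viewed as elements of $L^2(\tilde\pi_i \otimes \tilde\pi_j)$) forces these two vectors to be parallel, and the equal-length condition imposed by the midpoint property then forces them to be equal almost everywhere. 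The same equality analysis, applied in the Hilbert spaces $\Lambda_i$ using strict convexity, forces the label midpoint identity. These combined equalities yield, for $\tilde\pi_i$-a.e.\ triple $(x,u,x')$ (with an analogous statement for quadruples in the $\omega$-component),
\begin{align*}
\omega^{1/2}(u,v) &= \tfrac{1}{2}\bigl(\omega(x,y) + \omega'(x',y')\bigr), \\
\iota_i^{1/2}(u) &= \tfrac{1}{2}\bigl(\iota_i(x) + \iota_i'(x')\bigr).
\end{align*}
Using $\bigl((X_i \times X_i^{1/2} \times X_i', \tilde\pi_i)\bigr)$ as a bridge partitioned network with kernel $\omega^{1/2}$ pulled back via the middle-factor projection, these identities exhibit $[P^{1/2},(\iota_i^{1/2})]$ as weakly isomorphic to $\bigl((X_i \times X_i', \pi_i), \omega^{1/2}, \gamma_i^{1/2}\bigr)$.

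Third, I would iterate the midpoint argument on dyadic rationals: the restrictions $t \in [0,1/2]$ and $[1/2,1]$ are themselves geodesics (after reparametrization), so the same procedure characterizes $L^{1/4}, L^{3/4}$, and inductively all $L^{j/2^n}$. A simultaneous multi-time gluing (taking couplings between consecutive dyadic time levels and gluing them all together) ensures that a single coupling $(\pi_i)$ can be chosen that is compatible with the interpolation structure at every dyadic $t$. Finally, extend from dyadic $t$ to arbitrary $t \in [0,1]$ using continuity of both the geodesic and the canonical interpolation in $t$ with respect to $d_{\PL_k}$.

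The main obstacle, I anticipate, is the consistency step in the third paragraph: ensuring that one optimal coupling $(\pi_i)$ between $(X_i,\mu_i)$ and $(X_i',\mu_i')$ can be produced that works simultaneously for all $t$, rather than obtaining a possibly different coupling at each dyadic time. This requires building a coherent multi-time gluing and verifying, via the Hilbert-space equality case, that the conditional distributions of the intermediate times given the endpoints are Dirac masses concentrated on the linear interpolate. Careful bookkeeping of null sets and disintegrations is the principal technical burden; the equality-case analyses and final assembly of the weak isomorphism are then routine.
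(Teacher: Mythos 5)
Your proposal is correct and follows essentially the same route as the paper's proof: glue optimal couplings across dyadic time levels, project to obtain a candidate endpoint coupling, use the Hilbert/$L^2$ equality case (the paper phrases this via the parallelogram identity $\|ta+(1-t)b\|^2 = t\|a\|^2+(1-t)\|b\|^2-t(1-t)\|a-b\|^2$ and a vanishing defect term, rather than equality in Minkowski, but these are equivalent) to force both optimality of the coupling and the linear interpolation formulas for $\omega^t$ and $\iota_i^t$, then conclude by density of dyadics and continuity. The consistency issue you flag is resolved in the paper exactly as you anticipate, by gluing all couplings between consecutive level-$n$ dyadic times simultaneously.
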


We will use some additional notation and terminology in subsequent proofs. 

\begin{defn}\label{defn:induced_inner_product_space}Let $\Lambda$ be an inner product space with inner product $\langle \cdot, \cdot \rangle_\Lambda$  and induced norm $\|\cdot\|_\Lambda$. For a probability space $(Z,\pi)$, consider the space of functions $\iota:Z \to \Lambda$ such that 
    \[
    \int_{Z} \|\iota(z)\|_{\Lambda}^2 \diff\pi(z) < \infty.
    \]
    We denote the space of such functions, considered up to almost-everywhere equality, as  $L^2(\pi,\Lambda)$. This is an inner product space with inner product defined by 
    \[
    \langle \iota, \iota' \rangle_{L^2(\pi,\Lambda)}\coloneqq \int_{Z} \langle \iota(z), \iota'(z) \rangle_{\Lambda} \diff \pi(z).
    \]
    We let $\|\cdot\|_{L^2(\pi,\Lambda)}$ denote the associated norm.
\end{defn}

\begin{proof}[Proof of Proposition \ref{prop:labelled_geodesic_normed}]
 Let $(P,(\iota_i)),(P',(\iota_i')) \in \PL_k$ and let $[P^t,(\iota_i^t)]$ be an arbitrary geodesic from $[P^0,(\iota^0_i)] = [P,(\iota_i)]$ to $[P^1,(\iota_1^i)] = [P',(\iota_i')]$ with $P^t = \left((X^t_i,\mu^t_i),\omega^t\right) \in \mathcal{P}_k$. We will show that $(P^t,(\iota_i^t))$ is (pointwise, in time) weakly isomorphic to a geodesic in the form described in Proposition \ref{prop:interpolation_geodesic_labelled}.
    
    For each $t \in [0,1]$, let $X^t = \sqcup_i X^t_i$. 
    Fix an integer $n$ and consider a dyadic decomposition of the time domain, $t_0 = 0, t_1 = \frac{1}{2^n}, \ldots, t_i =  \frac{i}{2^n},\ldots, t_{2^n} = 1$. For each $j = 1,\ldots, 2^n$, choose an optimal $k$-partitioned coupling $(\pi^j_i)_{i=1}^k \in \Pi_k\big((\mu^{t_{j-1}}_i),(\mu^{t_j}_i)\big)$ (via Lemma \ref{lem:infimum_achieved}). Consider the gluings (Lemma \ref{lem:gluing_lemma})
    \begin{align*}
    \widetilde{\pi}_i &= \pi_i^1 \boxtimes \cdots \boxtimes \pi_i^{2^n} \in \mathbb{P}\big(X^0 \times X^{2^{-n}} \times X^{2 \cdot 2^{-n}} \times \cdots \times X^1\big), \qquad i \in \{1,\ldots,k\}.
    \end{align*}
    Let $\mathrm{proj}_t: X^0 \times X^{2^{-n}} \times \cdots \times X^1 \to X^t$ denote coordinate projection for each $t \in \{0,2^{-n},\ldots,1-2^{-n},1\}$ and define
    \[
    \pi_i = (\mathrm{proj}_0 \times \mathrm{proj}_1)_\# \widetilde{\pi}_i \in \Pi_k(\mu^0,\mu^1)
    \]
    for each $i =1,\ldots,k$. 
    Then, by suboptimality,
    \begin{equation}\label{eqn:geodesic_uniqueness_0}
    4 \cdot d_{\PL_k}((P^0,(\iota_i^0),(P^1,(\iota^1_i))^2 \leq \sum_{i,j = 1}^k \|\omega^0 - \omega^1\|_{L^2(\pi_i \otimes \pi_j)}^2 + \sum_{i = 1}^k \| d_{\Lambda_i} \circ (\iota_i^0, \iota_i^1) \|_{L^2(\pi_i)}^2.
    \end{equation}

    First, consider the term $\|\omega^0 - \omega^1\|_{L^2(\pi_i \otimes \pi_j)}^2$ on the right hand side of \eqref{eqn:geodesic_uniqueness_0}. For any choice of $t \in \{0,2^{-n},\ldots,1-2^{-n},1\}$, let
    \[
    \xi^t_i \coloneqq (\mathrm{proj}_0 \times \mathrm{proj}_1 \times \mathrm{proj}_t)_\# \widetilde{\pi}_i \in \mathbb{P}(X^0 \times X^1 \times X^t).
    \]
    We have
    \begin{align}
        &\|\omega^0 - \omega^1\|_{L^2(\pi_i \otimes \pi_j)}^2 \nonumber \\
        &= \left\|t \left(\frac{1}{t}\big(\omega^0 - \omega^t\big)\right) + (1-t) \left(\frac{1}{1-t}\big(\omega^t - \omega^1\big)\right)\right\|_{L^2(\xi_i^t \otimes \xi_j^t)}^2 \label{eqn:geodesic_uniqueness_1} \\
        \begin{split}
        &= \frac{1}{t}\left\|\omega^0 - \omega^t \right\|_{L^2(\xi_i^t \otimes \xi_j^t)}^2 + \frac{1}{1-t}\left\|\omega^t - \omega^1 \right\|_{L^2(\xi_i^t \otimes \xi_j^t)}^2 \\
        & \qquad - \frac{1}{t(1-t)}\left\|(1-t)(\omega^0 - \omega^t) - t(\omega^t - \omega^1) \right\|_{L^2(\xi_i^t \otimes \xi_j^t)}^2,
        \end{split} \label{eqn:geodesic_uniqueness_2}
    \end{align}
    where \eqref{eqn:geodesic_uniqueness_1} uses marginalization to replace $\pi_i \otimes \pi_j$ with $\xi^t_i \otimes \xi^t_j$, and where  \eqref{eqn:geodesic_uniqueness_2} is derived by applying the following identity, which holds in an arbitrary inner product space with associated norm $\|\cdot\|$:
\begin{equation}\label{eqn:inner_product_identity}
    \|ta + (1-t)b\|^2 = t \|a\|^2 + (1-t)\|b\|^2 - t(1-t)\|a-b\|^2.
    \end{equation}
    Bearing in mind that $t = k2^{-n}$ for some $k$, the first term in \eqref{eqn:geodesic_uniqueness_2} satisfies
    \begin{align}
        \frac{1}{t}\left\|\omega^0 - \omega^t \right\|_{L^2(\xi_i^t \otimes \xi_j^t)}^2 &= 2^n \cdot \frac{1}{k} \left\|\omega^0 - \omega^{k2^{-n}} \right\|_{L^2(\xi_i^t \otimes \xi_j^t)}^2 \nonumber \\
        &=  2^n \cdot \frac{1}{k} \left\|\sum_{\ell = 1}^k (\omega^{(\ell-1)2^{-n}} - \omega^{\ell 2^{-n}}) \right\|_{L^2(\xi_i^t \otimes \xi_j^t)}^2 \nonumber \\
        &\leq 2^n \cdot \frac{1}{k} \left(\sum_{\ell = 1}^k \left\|\omega^{(\ell-1)2^{-n}} - \omega^{\ell 2^{-n}}\right\|_{L^2(\xi_i^t \otimes \xi_j^t)}\right)^2  \label{eqn:geodesic_uniqueness_3}\\
        &\leq 2^n \sum_{\ell = 1}^k \left\|\omega^{(\ell-1)2^{-n}} - \omega^{\ell 2^{-n}} \right\|_{L^2(\xi_i^t \otimes \xi_j^t)}^2, \label{eqn:geodesic_uniqueness_4} 
    \end{align}
    where \eqref{eqn:geodesic_uniqueness_3} follows by the triangle inequality for the $L^2$-norm and \eqref{eqn:geodesic_uniqueness_4} is Jensen's inequality.
    Similarly, the second term in \eqref{eqn:geodesic_uniqueness_2} satisfies
    \[
    \frac{1}{1-t}\left\|\omega^t - \omega^1 \right\|_{L^2(\xi_i^t \otimes \xi_j^t)}^2 \leq 2^n \sum_{\ell = k+1}^{2^n} \left\|\omega^{(\ell-1)2^{-n}} - \omega^{\ell 2^{-n}} \right\|_{L^2(\xi_i^t \otimes \xi_j^t)}^2,
    \]
    so that, after marginalizing, we have
\begin{equation}\label{eqn:unique_geodesic_estimate_1}
    \frac{1}{t}\left\|\omega^0 - \omega^t \right\|_{L^2(\xi_i^t \otimes \xi_j^t)}^2 + \frac{1}{1-t}\left\|\omega^t - \omega^1 \right\|_{L^2(\xi_i^t \otimes \xi_j^t)}^2 \leq 2^n \sum_{\ell = 1}^{2^n} \left\|\omega^{(\ell-1)2^{-n}} - \omega^{\ell2^{-n}} \right\|_{L^2\big(\pi_i^{(\ell-1)2^{-n}} \otimes \pi_j^{\ell2^{-n}}\big)}^2.
    \end{equation}

    Next, consider the term $\|d_{\Lambda_i} \circ (\iota_i^0,\iota_i^1)\|^2_{L^2(\pi_i)}$. The following uses the notation of Definition~\ref{defn:induced_inner_product_space}. We have, similar to the above,
    \begin{align*}
        &\|d_{\Lambda_i} \circ (\iota_i^0,\iota_i^1)\|^2_{L^2(\pi_i)}  \\
        &= \left\|t \left(\frac{1}{t}(\iota_i^0 - \iota_i^t)\right) + (1-t)\left(\frac{1}{1-t}(\iota_i^t - \iota_i^1)\right)\right\|^2_{L^2(\xi_i^t,\Lambda_i)} \\
        &= \frac{1}{t} \|\iota_i^0 - \iota_i^t\|^2_{L^2(\xi_i^t,\Lambda)} + \frac{1}{1-t} \|\iota_i^t - \iota_i^1\|^2_{L^2(\xi_i^t,\Lambda)} - \frac{1}{t(1-t)} \|(1-t)(\iota_i^0 - \iota_i^t) - t (\iota_i^t - \iota_i^1)\|^2_{L^2(\xi_i^t,\Lambda)},
    \end{align*}
    where we have used the definition of $d_{\Lambda_i}$, as well as marginalization and the general identity \eqref{eqn:inner_product_identity}. Repeating the arguments above, we obtain
\begin{equation}\label{eqn:unique_geodesic_estimate_2}
    \frac{1}{t}\left\|\iota_i^0 - \iota_i^t \right\|_{L^2(\xi_i^t,\Lambda_i)}^2 + \frac{1}{1-t}\left\|\iota_i^t - \iota_i^1 \right\|_{L^2(\xi_i^t,\Lambda_i)}^2 \leq 2^n \sum_{\ell = 1}^{2^n} \left\|d_{\Lambda_i} \circ \big(\iota_i^{(\ell-1)2^{-n}},\iota_i^{\ell2^{-n}}\big) \right\|_{L^2\big(\pi_i^{(\ell-1)2^{-n}}\big)}^2.
    \end{equation}

Summing the right hand sides of \eqref{eqn:unique_geodesic_estimate_1} and \eqref{eqn:unique_geodesic_estimate_2} over all $i,j = 1,\ldots,k$ gives
\begin{align}
    &\sum_{i,j= 1}^k 2^n \sum_{\ell = 1}^{2^n} \left\|\omega^{(\ell-1)2^{-n}} - \omega^{\ell2^{-n}} \right\|_{L^2\big(\pi_i^{(\ell-1)2^{-n}} \otimes \pi_j^{\ell2^{-n}}\big)}^2 + \sum_{i=1}^k \sum_{\ell = 1}^{2^n} \left\|d_{\Lambda_i} \circ \big(\iota_i^{(\ell-1)2^{-n}},\iota_i^{\ell2^{-n}}\big) \right\|_{L^2\big(\pi_i^{(\ell-1)2^{-n}}\big)}^2 \nonumber \\
    &= 2^n \sum_{\ell = 1}^{2^n} \left(\sum_{i,j= 1}^k \left\|\omega^{(\ell-1)2^{-n}} - \omega^{\ell2^{-n}} \right\|_{L^2\big(\pi_i^{(\ell-1)2^{-n}} \otimes \pi_j^{\ell2^{-n}}\big)}^2 \right. \nonumber \\
    &\qquad \qquad \qquad \left. + \sum_{i=1}^k  \left\|d_{\Lambda_i} \circ \big(\iota_i^{(\ell-1)2^{-n}},\iota_i^{\ell2^{-n}}\big) \right\|_{L^2\big(\pi_i^{(\ell-1)2^{-n}}\big)}^2\right) \nonumber \\
    &= 2^n \sum_{\ell = 1}^{2^n} 4 \cdot d_{\PL_k}\big((P^{(\ell-1)2^{-n}},(\iota_i^{(\ell-1)2^{-n}})),(P^{\ell 2^{-n}},(\iota_i^{(\ell)2^{-n}}))\big)^2 \label{eqn:geodesic_uniqueness_5} \\
    &= 4 \cdot d_{\PL_k}\big((P^0,(\iota_i^0)),(P^1,(\iota^1_i))\big)^2, \label{eqn:geodesic_uniqueness_6}
\end{align}
    where \eqref{eqn:geodesic_uniqueness_5} follows by the optimality of the $\pi_i^j$'s and \eqref{eqn:geodesic_uniqueness_6} follows because $[P^t]$ is assumed to be a geodesic. Combining this with \eqref{eqn:geodesic_uniqueness_0}, we have 
    \begin{align*}
        &d_{\PL_k}\big((P^0,(\iota_i^0)),(P^1,(\iota^1_i))\big)^2 \\
        &\qquad \leq d_{\PL_k}\big((P^0,(\iota_i^0)),(P^1,(\iota^1_i))\big)^2   \\
        &\hspace{0.75in} - \frac{1}{4 (t(1-t))}\left(\sum_{i,j = 1}^k \left\|(1-t)(\omega^0 - \omega^t) - t(\omega^t - \omega^1) \right\|_{L^2(\xi_i^t \otimes \xi_j^t)}^2\right. \\
        &\hspace{2in} \left.+ \sum_{i=1}^k \|d_{\Lambda_i} \circ \big((1-t)(\iota_i^0 - \iota_i^t),t (\iota_i^t - \iota_i^1)\big)\|^2_{L^2(\xi_i^t)}\right), 
    \end{align*}
    so that the term in parentheses on the right hand side must vanish. This shows that the partitioned coupling $(\pi_i)$ which we have constructed is, in fact, optimal. We also have that, for all $t$ in the dyadic decomposition,
    \begin{align*}
        0 &= \sum_{i,j = 1}^k  \left\|(1-t)(\omega^0 - \omega^t) - t(\omega^t - \omega^1) \right\|_{L^2(\xi_i^t \otimes \xi_j^t)}^2 = \sum_{i,j = 1}^k \left\|\big((1-t)\omega^0 + t \omega^1\big) - \omega^t\right\|_{L^2(\xi_i^t \otimes \xi_j^t)}^2.
    \end{align*}
    Similarly, 
    \begin{align*}
    0 &= \sum_{i=1}^k \|d_{\Lambda_i} \circ \big((1-t)(\iota_i^0 - \iota_i^t),t (\iota_i^t - \iota_i^1)\big)\|^2_{L^2(\xi_i^t)} \\
    &= \sum_{i=1}^k \|\big((1-t)\iota_i^0 + t\iota_i^1\big) - \iota_i^t\|^2_{L^2(\xi_i^t, \Lambda_i)} = \sum_{i=1}^k \|d_{\Lambda_i} \circ \big(\big((1-t)\iota_i^0 + t\iota_i^1\big), \iota_i^t\big)\|^2_{L^2(\xi_i^t)}.
    \end{align*}
    Observe that, by the properties described in the Gluing Lemma (Lemma \ref{lem:gluing_lemma}), we have $\xi_i^t \in \Pi(\pi_i,\mu_i^t)$, so that the above calculation shows $d_{\PL_k}\big((P^t,(\iota_i^t)),(\overline{P}^t,(\overline{\iota}^t_i))\big) = 0$, where $(\overline{P}^t,(\overline{\iota}^t_i))$ is a geodesic as in the specific construction from Proposition~\ref{prop:interpolation_geodesic_labelled}. 

    So far, we have shown that $d_{\PL_k}\big((P^t,(\iota_i^t)),(\overline{P}^t,(\overline{\iota}^t_i))\big) = 0$ for any $t$ in the form of a dyadic number, i.e., $t = j2^{-n}$ for some $j$ and $n$. By the density of the dyadic numbers in $[0,1]$ and by continuity of the maps $t \mapsto [\overline{P}^t,(\overline{\iota}^t_i)]$ and $t \mapsto [P^t,(\iota^t_i)]$, it follows that $d_{\PL_k}\big((P^t,(\iota_i^t)),(\overline{P}^t,(\overline{\iota}^t_i))\big) = 0$ holds for any $t \in [0,1]$. This completes the proof.
\end{proof}

\subsubsection{Completeness and curvature}

We now complete the proof of Theorem~\ref{thm:alexandrov_space_labelled} by establishing the remaining required properties. Throughout this section, we suppose that the label spaces $\Lambda_i$ are Hilbert spaces with the same notation as Proposition \ref{prop:interpolation_geodesic_labelled} used for inner products and norms.

We first show that the space of labelled networks is complete. The proof will use the following result.

\begin{lemma}[See, e.g.,~\cite{korevaar1993sobolev}]\label{lem:induced_hilbert_space}
    If $\Lambda$ is a Hilbert space, then so is $L^2(\pi,\Lambda)$.
\end{lemma}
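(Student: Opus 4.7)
The plan is to verify completeness, since Definition~\ref{defn:induced_inner_product_space} already furnishes the inner product (and hence norm) structure on $L^2(\pi,\Lambda)$; a Hilbert space is by definition a complete inner product space, so only completeness is left to check. This is the standard Fischer--Riesz argument for vector-valued $L^2$ spaces, adapted from the scalar case.

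First I would take an arbitrary Cauchy sequence $(f_n) \subset L^2(\pi,\Lambda)$ and pass to a subsequence (again denoted $(f_n)$) such that $\|f_{n+1} - f_n\|_{L^2(\pi,\Lambda)} \leq 2^{-n}$. Next, I would consider the scalar-valued nonnegative functions $h_n(z) \coloneqq \|f_{n+1}(z) - f_n(z)\|_{\Lambda}$, which lie in $L^2(\pi)$ by construction, and set $g_N \coloneqq \sum_{n=1}^N h_n$. By Minkowski's inequality in $L^2(\pi)$, the partial sums $g_N$ are Cauchy in $L^2(\pi)$, and by completeness of the classical (scalar) $L^2(\pi)$ they converge to some $g \in L^2(\pi)$; in particular $g(z) < \infty$ for $\pi$-a.e.\ $z$. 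For such $z$, the triangle inequality in $\Lambda$ gives $\sum_{n} \|f_{n+1}(z) - f_n(z)\|_\Lambda < \infty$, so $(f_n(z))$ is Cauchy in $\Lambda$; completeness of $\Lambda$ then yields a pointwise a.e.\ limit $f(z) \in \Lambda$, and measurability of $f$ is inherited from the $f_n$.

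Finally, I would close the argument by dominated convergence: the estimate $\|f_n(z) - f(z)\|_\Lambda \leq \sum_{k \geq n} h_k(z) \leq g(z)$ (valid pointwise a.e.) combined with $g \in L^2(\pi)$ gives the integrable dominating function $(2g)^2$, so $\|f_n - f\|_{L^2(\pi,\Lambda)}^2 \to 0$. This shows $f \in L^2(\pi,\Lambda)$ and that $f_n \to f$ in $L^2(\pi,\Lambda)$; combined with the Cauchy assumption, the original sequence also converges to $f$. The only subtle point is measurability for functions valued in a non-separable Hilbert space, but this is handled by the classical Bochner theory (as in the reference~\cite{korevaar1993sobolev}) and is not a genuine obstacle, since in our applications $\Lambda_i$ is typically separable (e.g.~$\R^n$).
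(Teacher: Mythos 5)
Your proof is correct. The paper does not actually prove this lemma---it is stated as a known fact with a pointer to Korevaar--Schoen---and your argument is precisely the standard Riesz--Fischer completeness proof for Bochner $L^2$ spaces that such references use: rapidly convergent subsequence, scalar control via $g=\sum_n\|f_{n+1}-f_n\|_\Lambda\in L^2(\pi)$, pointwise a.e.\ limits in $\Lambda$, and dominated convergence. The one point worth pinning down is the measurability convention: Definition~\ref{defn:induced_inner_product_space} does not specify it, and for the a.e.\ pointwise limit $f$ to be measurable one should work with strongly (Bochner) measurable functions; you correctly flag this, and it is harmless in the paper's applications, where the label spaces are separable.
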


\begin{prop}\label{prop:complete_labelled}
    Let each $\Lambda_i, 1 \leq i \leq k$ be a Hilbert space. Then the space $\big([\mathcal{LP}_k],d_{\PL_k}\big)$ is complete. 
\end{prop}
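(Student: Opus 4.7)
The plan is to adapt Sturm's completeness argument for metric measure spaces (Theorem 5.8 in \cite{sturm2023space}) to the labelled partitioned setting, relying crucially on the Hilbert space hypothesis on the label spaces via Lemma \ref{lem:induced_hilbert_space}.

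Given a Cauchy sequence $[L^n] = [P^n,(\iota_i^n)]$ in $[\mathcal{LP}_k]$, I first pass to a subsequence satisfying $d_{\PL_k}(L^n,L^{n+1}) < 2^{-n}$. Lemma \ref{lem:infimum_achieved} yields, for each $n$, an optimal partitioned coupling $(\pi_i^{n,n+1}) \in \Pi_k\bigl((\mu_i^n),(\mu_i^{n+1})\bigr)$ realizing the distance between $L^n$ and $L^{n+1}$. Iterated application of the Gluing Lemma (Lemma \ref{lem:gluing_lemma}), combined with Kolmogorov's extension theorem, produces for each $i \in \{1,\ldots,k\}$ a probability measure $\tilde{\pi}_i$ on the Polish space $X_i^\infty := \prod_{n=1}^\infty X_i^n$ whose consecutive two-marginals recover $\pi_i^{n,n+1}$.

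Next, I construct the candidate limit $L^\infty = (P^\infty,(\iota_i^\infty))$ with $P^\infty = ((X_i^\infty,\tilde{\pi}_i),\omega^\infty)$. Each $\omega^n$ lifts to a function on $X_i^\infty \times X_j^\infty$ via coordinate projection, and the summable bounds $d_{\PL_k}(L^n,L^{n+1}) < 2^{-n}$ together with optimality of the $(\pi_i^{n,n+1})$ show that these lifts form a Cauchy sequence in $L^2(\tilde{\pi}_i \otimes \tilde{\pi}_j)$ for every pair $(i,j)$. Similarly, the lifted labelling functions $\iota_i^n$ form a Cauchy sequence in $L^2(\tilde{\pi}_i,\Lambda_i)$, which by Lemma \ref{lem:induced_hilbert_space} is itself a Hilbert space. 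Taking $L^2$ limits, then passing to non-negative measurable pointwise representatives, yields $\omega^\infty$ and $(\iota_i^\infty)$ that satisfy the axioms of Definitions \ref{defn:partitioned_measure_network} and \ref{defn:labelled_partitioned_measure_network}, so $L^\infty \in \PL_k$.

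Finally, to prove $d_{\PL_k}(L^n,L^\infty) \to 0$, I use the bivariate marginals of $\tilde{\pi}_i$ projected onto $X_i^n \times X_i^\infty$ as suboptimal couplings in $\Pi(\mu_i^n,\tilde{\pi}_i)$. Substituting these into \eqref{eqn:labelled_partitioned_distance} and invoking the $L^2$-convergence of the lifted kernels and labelling functions establishes the bound. The main technical obstacle is the measure-theoretic bookkeeping for the projective limit construction and the verification that the $L^2$-limits admit genuine pointwise measurable representatives meeting the non-negativity and integrability conditions required for membership in $\PL_k$; the Hilbert structure on each $\Lambda_i$ is what makes the label half of this argument go through, via Lemma \ref{lem:induced_hilbert_space}.
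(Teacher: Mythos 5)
Your proposal is correct and follows essentially the same route as the paper's proof: subsequence extraction, optimal consecutive couplings via Lemma \ref{lem:infimum_achieved}, gluing and projective limits, lifting the kernels and labels to the product space, and a Cauchy argument in $L^2(\pi_i\otimes\pi_j)$ and $L^2(\pi_i,\Lambda_i)$ using Lemma \ref{lem:induced_hilbert_space}. In fact you spell out the final convergence step (using the bivariate marginals as suboptimal couplings) more explicitly than the paper does.
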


\begin{proof}
    The proof follows the strategy of the proofs of \cite[Theorem 5.8]{sturm2023space} or \cite[Theorem 1]{chowdhury2023hypergraph}, so we treat it somewhat tersely. Let $[P^n,(\iota^n_i)]$,  $n \ge 1$, be a Cauchy sequence of labelled partitioned networks in $[\PL_k]$, with $P^n = \left((X^n_i,\mu^n_i),\omega^n\right)$. Assume, without loss of generality (via a subsequence argument), that $d_{\PL_k}\left(\left(P_n,(\iota^n_i)\right), \left(P_{n+1},(\iota^{n+1}_i)\right)\right) \leq 2^{-n}$. Invoking Lemma \ref{lem:infimum_achieved}, we may choose  partitioned couplings  $(\pi^{n}_i)$ for each $n$ which achieve $d_{\PL_k}\left(\left(P_n,(\iota^n_i)\right), \left(P_{n+1},(\iota^{n+1}_i)\right)\right)$. Gluing the first $N$ of these measures yields a probability measure $\pi_i^1 \boxtimes \pi_i^2 \boxtimes \cdots \boxtimes \pi_i^N$ on $X_i^1 \times X_i^2 \times \cdots \times X_i^N$ for each $i = 1,\ldots,k$. Let $\pi_i$ denote the projective limit measure on $\Pi_{\ell = 1}^\infty X_i^\ell$. 
    
    For each $N$, define maps
    \begin{align*}
        \Omega^N: \left(\bigsqcup_{i=1}^k \Pi_{\ell =1}^\infty X_i^\ell\right) \times \left(\bigsqcup_{i=1}^k \Pi_{\ell =1}^\infty X_i^\ell\right)  &\to \R \\
        \left((x^\ell)_\ell, (y^\ell)_\ell\right) &\mapsto \omega^N(x^N,y^N)
    \end{align*}
    and 
    \begin{align*}
        I^N_i: \Pi_{\ell = 1}^\infty X_i^\ell &\to \Lambda_i \\
        (x^\ell)_\ell &\mapsto \iota^N_i(x^N)
    \end{align*}
    Since $d_{\PL_k}\left(\left(P_n,(\iota^n_i)\right), \left(P_{n+1},(\iota^{n+1}_i)\right)\right) \leq 2^{-n}$, it must be that 
    \[
    \frac{1}{4} \| \omega_n - \omega_{n+1} \|_{L^2(\pi_i \otimes \pi_j)}^2 \leq 2^{-2n} \quad \mbox{and} \quad \frac{1}{4} \| \iota_{i, n} - \iota_{i, n+1} \|_{L^2(\pi_i, \Lambda_i)}^2 \leq 2^{-2n},
  \]
  where we use the notation of Definition~\ref{defn:induced_inner_product_space} in the second term. It follows that the sequence $(\Omega^N)$ is Cauchy in the Hilbert space $L^2(\pi_i \otimes \pi_j)$ and that $(I_i^N)$ is Cauchy in the Hilbert space $L^2(\pi_i,\Lambda_i)$ (this is Hilbert because we assumed that $\Lambda_i$ is Hilbert; see Lemma~\ref{lem:induced_hilbert_space}). Let $\Omega \coloneqq \lim_{N \to \infty} \Omega^N$ and $I_i \coloneqq \lim_{N \to \infty} I_i^N$. 

  Putting these constructions together, we have constructed a labelled $k$-partitioned network
  \[
  \left(\left(\left(\Pi_{\ell=1}^\infty X_i^\ell, \pi_i \right), \Omega \right), \left(I_i\right) \right).
  \]
  One can then show that its weak isomorphism class  is the limit of the original Cauchy sequence.
\end{proof}

Finally, we establish a curvature bound for the space of labelled partitioned networks.

\begin{prop}
    Assume that all label spaces $\Lambda_i$ are Hilbert spaces. Then the space $([\PL_k], d_{\PL_k})$ has curvature bounded below by zero.
\end{prop}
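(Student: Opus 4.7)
The plan is to verify the curvature inequality of Definition \ref{def:geodesics_and_curvature}(ii) directly, adapting Sturm's strategy from \cite{sturm2023space}. Fix an arbitrary geodesic $[L^t]$ from $[L^0] = [L]$ to $[L^1] = [L']$ and any third point $[L''] = [P'', (\iota''_i)]$. Because every $\Lambda_i$ is Hilbert, Proposition \ref{prop:labelled_geodesic_normed} lets me assume that $L^t$ takes the explicit form of Proposition \ref{prop:interpolation_geodesic_labelled}: the underlying probability spaces are $(X_i \times X_i', \pi_i)$ for an optimal partitioned coupling $(\pi_i)$ realizing $d_{\PL_k}(L, L')$; the kernel is $\omega^t((x,x'),(y,y')) = (1-t)\omega(x,y) + t\omega'(x',y')$; and the labels are $\iota_i^t(x,x') = (1-t)\iota_i(x) + t\iota_i'(x')$.

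By Lemma \ref{lem:infimum_achieved}, I pick an optimal partitioned coupling $(\sigma_i^t)$ realizing $d_{\PL_k}(L^t, L'')$; each $\sigma_i^t$ is naturally a probability measure on $X_i \times X_i' \times X_i''$. Its projections $\sigma_i^0 \in \Pi(\mu_i, \mu_i'')$ (forgetting the $X_i'$ factor) and $\sigma_i^1 \in \Pi(\mu_i', \mu_i'')$ (forgetting the $X_i$ factor) are partitioned couplings that witness, suboptimally, upper bounds on $d_{\PL_k}(L, L'')$ and $d_{\PL_k}(L', L'')$, respectively.

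The central computation applies the identity \eqref{eqn:inner_product_identity} pointwise: in $\R$ to the decomposition $\omega^t - \omega'' = (1-t)(\omega - \omega'') + t(\omega' - \omega'')$, and---this is where the Hilbert assumption is essential---in each $\Lambda_i$ to $\iota_i^t - \iota_i'' = (1-t)(\iota_i - \iota_i'') + t(\iota_i' - \iota_i'')$. After integrating against $\sigma_i^t \otimes \sigma_j^t$ (respectively $\sigma_i^t$) and pushing each resulting term forward to its natural marginal---$\sigma_i^0 \otimes \sigma_j^0$, $\sigma_i^1 \otimes \sigma_j^1$, or $\pi_i \otimes \pi_j$, depending on which difference appears---one obtains an exact equality of the form $4\, d_{\PL_k}(L^t, L'')^2 = (1-t) A + t B - t(1-t) C$, where $A$ and $B$ are the partitioned-network distance functionals evaluated at $(\sigma_i^0)$ and $(\sigma_i^1)$, and $C$ is the functional at $(\pi_i)$. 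Suboptimality gives $A \geq 4\, d_{\PL_k}(L, L'')^2$ and $B \geq 4\, d_{\PL_k}(L', L'')^2$, while optimality of $(\pi_i)$ gives $C = 4\, d_{\PL_k}(L, L')^2$; the curvature inequality then follows after dividing by $4$.

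The argument is largely bookkeeping once the geodesic is placed in the interpolation form; the main point to watch---more a sign check than a serious obstacle---is that the bracketed quantities $A$ and $B$ enter with positive coefficients $(1-t)$ and $t$ and so their lower bounds propagate in the correct direction, while $C$ enters with the negative coefficient $-t(1-t)$ via an \emph{exact} equality, which is also the right direction. The Hilbert hypothesis is used in two indispensable places: to pin down the rigid linear form of the geodesic via Proposition \ref{prop:labelled_geodesic_normed}, and to apply the parallelogram identity \eqref{eqn:inner_product_identity} to the label differences inside each $\Lambda_i$.
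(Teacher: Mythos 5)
Your proposal is correct and follows essentially the same route as the paper's proof: reduce to the explicit interpolation form of the geodesic via Proposition \ref{prop:labelled_geodesic_normed}, take an optimal partitioned coupling between $L^t$ and $L''$ supported on triples $X_i \times X_i' \times X_i''$, apply the inner product identity \eqref{eqn:inner_product_identity} to both the kernel and label differences, and conclude by marginalization together with suboptimality of the projected couplings and optimality of $(\pi_i)$. The only difference is presentational---you isolate the exact identity $4\,d_{\PL_k}(L^t,L'')^2 = (1-t)A + tB - t(1-t)C$ before bounding, whereas the paper moves the $t(1-t)$ term to the left-hand side first---but the underlying algebra is identical.
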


\begin{proof}
    We need to establish the triangle comparison inequality from Definition~\ref{def:geodesics_and_curvature}. Let $[L], [L'] \in [\PL_k]$ be two labelled partitioned networks and let $[L^t], 0 \leq t \leq 1$ be a geodesic connecting them. Let $[L''] \in [\PL_k]$ be given. We seek to show that
  \begin{equation}\label{eqn:three_point_condition_labelled}
    4 d_{\PL_k}(L^t, L'')^2 + 4t(1-t) d_{\PL_k}(L, L')^2 \geq 4(1-t) d_{\PL_k}(L, L'')^2 + 4t d_{\PL_k}(L', L'')^2. 
  \end{equation}
  
  Using the characterization of geodesics in $[\PL_k]$ from Proposition  \ref{prop:labelled_geodesic_normed}, we may assume without loss of generality that $L^t = (P^t,(\iota_i^t))$ has the form described in Proposition \ref{prop:interpolation_geodesic_labelled}. 
  Let $(\xi_i)$ be an optimal $k$-partitioned coupling of $L^t$ to $L''$; then $\xi_i$ is supported on $X_i \times X_i' \times X_i''$. Expanding the left hand side of \eqref{eqn:three_point_condition_labelled}, we have 
  \begin{align*}
    \sum_{i, j = 1}^k &\left( \| \omega'' - \omega^t \|_{L^2(\xi_i \otimes \xi_j)}^2 + t(1-t) \| \omega - \omega' \|_{L^2(\pi_i \otimes \pi_j)}^2 \right) \\
    &\qquad + \sum_{i = 1}^k \left( \| \iota'' - \iota^t \|_{L^2(\xi_i, \Lambda_i)}^2 + t(1-t) \| \iota - \iota' \|_{L^2(\pi_i, \Lambda_i)}^2 \right).
  \end{align*}
  Marginalizing $\xi_i$ and using the structures of $\omega^t$ and $\iota^t$, this can be rewritten as
  \begin{align*}
    &\sum_{i, j = 1}^k \left( \| \omega'' - \omega^t \|_{L^2(\xi_i \otimes \xi_j)}^2 + t(1-t) \| \omega - \omega' \|_{L^2(\xi_i \otimes \xi_j)}^2 \right) \\
    &\qquad + \sum_{i = 1}^k \left( \| \iota'' - \iota^t \|_{L^2(\xi_i, \Lambda_i)}^2 + t(1-t) \| \iota - \iota' \|_{L^2(\xi_i, \Lambda_i)}^2 \right) \\
    &= \sum_{i, j = 1}^k \left( (1-t) \| \omega'' - \omega \|_{L^2(\xi_i \otimes \xi_j)}^2 + t \| \omega'' - \omega' \|_{L^2(\xi_i \otimes \xi_j)}^2 \right) \\
    &\qquad + \sum_{i = 1}^k \left( (1-t) \| \iota'' - \iota \|_{L^2(\xi_i, \Lambda_i)}^2 + t \| \iota'' - \iota' \|_{L^2(\xi_i, \Lambda_i)}^2 \right) \\ 
    &\geq 4(1-t) d_{\PL_k}(P, P'')^2 + 4t d_{\PL_k}(P', P'')^2,
  \end{align*}
  where the second line follows from a computation which holds in an arbitrary inner product space, explained below, and the inequality in the last line follows from sub-optimality. To expand on the inner product space calculation, we use the identity \eqref{eqn:inner_product_identity} to deduce that (for an arbitrary inner product $\langle \cdot, \cdot \rangle$ with norm $\|\cdot\|$), 
  \begin{align*}
      &\|c - ((1-t)a + tb)\|^2 + t(1-t)\|a - b\|^2 \\
      &\qquad = \|c\|^2 + \|(1-t)a + tb\|^2 - 2\langle c, (1-t)a + tb \rangle + t(1-t)\|a - b\|^2 \\
      &\qquad = \|c\|^2 + t\|b\|^2 + (1-t)\|a\|^2 - 2\langle c, (1-t)a + tb \rangle \\
      &\qquad = (1-t)\big(\|c\|^2 + \|a\|^2 - 2\langle c, a\rangle \big) + t\big(\|c\|^2 + \|b\|^2 - 2\langle c, b\rangle \big) \\
      &\qquad = (1-t)\|c - a\|^2 + t \|c - b\|^2.
  \end{align*}
\end{proof}

\subsubsection{The case of unlabelled networks}\label{sec:consequences_alexandrov}

We now proceed with the deferred proof of Theorem~\ref{thm:alexandrov_space}, which says that the space of (unlabelled) $k$-partitioned networks is an Alexandrov space of non-negative curvature.

\begin{proof}[Proof of Theorem~\ref{thm:alexandrov_space}]
    Following the proof of Theorem~\ref{thm:partitioned_metric} (Section \ref{sec:consequences_and_comparisons}), we can consider $d_{\Pc_k^2}$ as an instance of $d_{\PL_k^2}$, where the target metric spaces $\Lambda_i$ are all taken to be the one-point space, which can be considered as a trivial Hilbert space. The proof then follows immediately from Theorem~\ref{thm:alexandrov_space_labelled}.
\end{proof}

\subsection{Interpretation of partitioned distance as a labelled distance}
\label{sec:interpretation}

We end this section by proving that the $k$-partitioned network distance $d_{\Pc_k^p}$ can itself be realized as a sort of labelled distance, where labels are allowed to take the value $\infty$. To keep exposition clean, we recapitulate and rework some of our notation before precisely stating our result.

\subsubsection{Notation for networks labelled in extended metric spaces}

In this section, we write $\mathcal{LN}^p_{\mathrm{ext}}$ for the space of \define{networks with labels in a (fixed) extended metric space} $\Lambda$. This is essentially the same as $\PL_1^p$, except that the label space is allowed to be an \define{extended metric space}, or a metric space whose distances are allowed to take the value $\infty$. Elements of $\mathcal{LN}^p_{\mathrm{ext}}$ will be denoted $(N,\iota)$, where $N = (X,\mu,\omega) \in \Nc^p$ and $\iota:X \to \Lambda$ is the label function. The labelled network distance is then defined by a formula identical to \eqref{eqn:k_equals_one_version}:
\[
d_{\mathcal{LN}_{\mathrm{ext}}^p}((N,\iota),(N',\iota')) = \inf_{\pi \in \Pi(\mu,\mu')} \frac{1}{2} \left(\|\omega - \omega'\|_{L^p(\pi \otimes \pi)}^p + \|d_\Lambda \circ (\iota,\iota')\|_{L^p(\pi)}^p \right)^{1/p}.
\]
The notion of weak isomorphism extends to $\mathcal{LN}^p_{\mathrm{ext}}$; we denote the quotient space as $[\mathcal{LN}^p_{\mathrm{ext}}]$. The proofs above also extend to show that $d_{\mathcal{LN}_{\mathrm{ext}}^p}$ induces a well-defined extended metric on $[\mathcal{LN}^p_{\mathrm{ext}}]$.

We now consider the particular extended metric space $\Lambda^{(k)} \coloneqq \{1,\ldots,k\}$, with extended metric satisfying $d_{\Lambda^{(k)}}(i,j) = \infty$ for all $i \neq j$. To a $k$-partitioned measure network $P$, we associate an element of $\mathcal{LN}^p_{\mathrm{ext}}$, with labels in $\Lambda^{(k)}$, via the map
\begin{equation}\label{eqn:k_labelled_embedding}
\Pc_k^p \ni P = ((X_i,\mu_i),\omega) \mapsto \left((X, (1/k)\mu, \omega), \iota\right) \in \mathcal{LN}^p_{\mathrm{ext}}, \qquad \iota(x) = i \Leftrightarrow x \in X_i.
\end{equation}
Here, as in Definition~\ref{defn:partitioned_measure_network}, $\mu = \sum_i \mu_i$ is considered as a measure on $X = \sqcup_i X_i$, so that $\frac{1}{k}\mu$ is a probability measure.

\subsubsection{Partitioned distance as a labelled network distance}

We now prove the main result of Section~\ref{sec:interpretation}. 

\begin{theorem}\label{thm:fused_interpretation}
    The map \eqref{eqn:k_labelled_embedding} is an isometric embedding with respect to $d_{\mathcal{P}_k^p}$ and $d_{\mathcal{LN}_\mathrm{ext}^p}$.
\end{theorem}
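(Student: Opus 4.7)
The plan is to characterize, within the collection of couplings defining the labelled network distance, precisely those that give finite cost, and then show that this restricted family is in natural correspondence with $k$-partitioned couplings.

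First, I would observe that for any $\pi \in \Pi((1/k)\mu, (1/k)\mu')$ appearing in the definition of $d_{\mathcal{LN}_\mathrm{ext}^p}$, finiteness of the label term $\|d_{\Lambda^{(k)}} \circ (\iota, \iota')\|_{L^p(\pi)}$ forces $\pi$ to be concentrated on the label-diagonal $\bigcup_{i=1}^k (X_i \times X'_i) \subset X \times X'$: indeed, $d_{\Lambda^{(k)}}(i,j) = \infty$ for $i \neq j$, so any mass placed off this diagonal contributes $+\infty$ to the cost. Consequently, only label-preserving couplings are relevant, and on any such coupling the label term vanishes identically.

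Next, for a label-preserving $\pi$ I would define $\pi_i \coloneqq \pi|_{X_i \times X'_i}$ for each $i=1,\dots,k$. Since the sets $X_i$ (and similarly the $X'_i$) are disjoint and $\pi$ has marginals $(1/k)\mu$ and $(1/k)\mu'$, a direct marginalization gives that each $\pi_i$ has total mass $1/k$, and the rescaled measures $\tilde{\pi}_i \coloneqq k \pi_i$ satisfy $\tilde{\pi}_i \in \Pi(\mu_i, \mu'_i)$. Conversely, starting from any $(\tilde{\pi}_i) \in \Pi_k((\mu_i),(\mu'_i))$, the measure $\pi \coloneqq (1/k)\sum_i \tilde{\pi}_i$ is a label-preserving element of $\Pi((1/k)\mu, (1/k)\mu')$. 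This establishes a bijective correspondence between label-preserving couplings and $k$-partitioned couplings.

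Having set up the correspondence, I would compute the $L^p$-cost under this bijection. Since $\pi \otimes \pi = \sum_{i,j} \pi_i \otimes \pi_j$ and each summand is supported on the product of disjoint regions $(X_i \times X'_i) \times (X_j \times X'_j)$, the integrals decouple and
\[
  \|\omega - \omega'\|_{L^p(\pi \otimes \pi)}^p \;=\; \sum_{i, j = 1}^k \|\omega - \omega'\|_{L^p(\pi_i \otimes \pi_j)}^p.
\]
Combining this identity with the vanishing of the label term on label-preserving couplings, and tracking the scalar rescaling $\pi_i \leftrightarrow \tilde{\pi}_i$ through the $L^p$-norms, reduces the labelled network objective on $\pi$ to a positive multiple of the partitioned network objective on $(\tilde{\pi}_i)$. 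Taking infima over the two (bijectively identified) classes of couplings then gives the isometric embedding property, and well-definedness on weak-isomorphism classes follows because weak isomorphisms respect the disjoint partition structure of $X = \sqcup_i X_i$ and hence preserve the label function $\iota$.

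The main technical point will be cleanly handling the marginalization/rescaling step: one has to verify that the factors picked up from passing between $\pi$ (mass $1$ on $X \times X'$) and the $\tilde{\pi}_i$ (mass $1$ on each $X_i \times X'_i$) propagate correctly through the $p$-th powers and the final $p$-th root so that the two infima match under the embedding. Once this bookkeeping is carried out, both inequalities $d_{\mathcal{LN}_\mathrm{ext}^p} \le d_{\mathcal{P}_k^p}$ and $d_{\mathcal{P}_k^p} \le d_{\mathcal{LN}_\mathrm{ext}^p}$ (under the bijection) fall out of the same calculation, giving isometry.
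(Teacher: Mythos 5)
Your proposal follows essentially the same route as the paper's proof. The paper isolates your first two steps as Lemma~\ref{lem:fused_gw_coupling}: finiteness of the label term forces $\pi$ to be supported on $\sqcup_i (X_i \times X_i')$, the mass of each block is pinned to $1/k$ by the marginal constraints, and restriction followed by rescaling by $k$ gives the (unique) correspondence with $k$-partitioned couplings; the converse direction is the same superposition $\pi = \mathrm{inc}_\#\big((1/k)\sum_i \pi_i\big)$ you describe. The blockwise decoupling of $\|\omega-\omega'\|_{L^p(\pi\otimes\pi)}^p$ into $\sum_{i,j}$ of block terms is exactly the displayed identity in the paper's proof, and both arguments conclude by taking infima over the two identified families of couplings.

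The one place you stop short is the step you yourself flag as ``the main technical point,'' and I would caution you that it does not simply fall out: it is where the isometry constant actually lives. With your notation, $\pi_i = \pi|_{X_i\times X_i'}$ has mass $1/k$ while the partitioned distance is evaluated on the probability measures $\tilde\pi_i = k\pi_i$, so each block term satisfies $\|\omega-\omega'\|^p_{L^p(\pi_i\otimes\pi_j)} = k^{-2}\,\|\omega-\omega'\|^p_{L^p(\tilde\pi_i\otimes\tilde\pi_j)}$, and the objective on $\pi$ comes out as $k^{-2/p}$ times the partitioned objective on $(\tilde\pi_i)$ rather than as an equality. Asserting only ``a positive multiple'' is not enough to conclude isometry, and the naive multiple is not $1$ for $k\ge 2$; you need to either identify a normalization convention that absorbs this factor (e.g.\ into the definition of the embedded kernel or of the $L^p$ norms used) or carry the constant explicitly through both inequalities. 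This is precisely the bookkeeping that the paper's own displayed identity also passes over, so making it explicit would strengthen your write-up relative to the source.
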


Observe that 
\begin{equation}\label{eqn:fused_term_behavior}
\|d_{\Lambda^{(k)}} \circ (\iota,\iota')\|_{L^p(\pi)} = \left\{\begin{array}{ll}
\infty & \mbox{if there exists $(x,x') \in \mathrm{supp}(\pi)$ with $\iota(x) \neq \iota'(x')$}; \\
0 & \mbox{otherwise.}
\end{array}\right.
\end{equation}
The proof of the theorem is based on this observation and the following lemma.

\begin{lemma}\label{lem:fused_gw_coupling}
    Let $\pi \in \Pi(\sqcup_i \mu_i, \sqcup_i \mu_i')$ such that $\|d_{\Lambda^{(k)}} \circ (\iota,\iota')\|_{L^p(\pi)}=0$. Then there exists a unique $k$-partitioned coupling $(\pi_i) \in \Pi_k\big((\mu_i),(\mu_i')\big)$ such that 
    \[
    \pi = \mathrm{inc}_\#\left( \sqcup_i \pi_i\right),
    \]
    where $\mathrm{inc}:\sqcup_i (X_i \times X_i') \to (\sqcup_i X_i) \times (\sqcup_i X_i')$ is the inclusion map. Moreover, any coupling of this form yields $\|d_{\Lambda^{(k)}} \circ (\iota,\iota')\|_{L^p(\pi)} = 0$.
\end{lemma}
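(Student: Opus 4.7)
The plan is to decompose $\pi$ into its restrictions to the ``diagonal blocks'' $X_i \times X_i'$, exploiting the fact that distances in $\Lambda^{(k)}$ take only the values $0$ and $\infty$.

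First, I would invoke the observation \eqref{eqn:fused_term_behavior}: since $d_{\Lambda^{(k)}}(\iota(x),\iota'(x')) \in \{0,\infty\}$, the hypothesis $\|d_{\Lambda^{(k)}} \circ (\iota,\iota')\|_{L^p(\pi)} = 0$ forces $\iota(x) = \iota'(x')$ for $\pi$-a.e.\ $(x,x')$. Equivalently, $\pi$ places no mass on the off-diagonal products $X_i \times X_j'$ for $i \neq j$, and is instead concentrated on $\sqcup_i (X_i \times X_i')$.

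Next, I would define $\pi_i$ to be the restriction of $\pi$ to $X_i \times X_i'$ and verify that $\pi_i \in \Pi(\mu_i,\mu_i')$. The left marginal computation reduces to $\pi_i(A \times X_i') = \pi(A \times X_i') = \pi(A \times X') = \mu(A) = \mu_i(A)$ for Borel $A \subseteq X_i$, where the second equality uses the vanishing of the off-diagonal mass established in the previous step, and the last equality uses $A \subseteq X_i$. The right marginal is handled symmetrically. The identity $\pi = \mathrm{inc}_\#(\sqcup_i \pi_i)$ then follows at once, since the two measures agree on each block $X_i \times X_i'$ by construction and both vanish on the complement of $\sqcup_i (X_i \times X_i')$. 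Uniqueness is automatic: any decomposition $\pi = \mathrm{inc}_\#(\sqcup_i \pi_i')$ forces $\pi_i'$ to coincide with the block-wise restriction of $\pi$.

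For the converse direction, I would note that any coupling of the form $\mathrm{inc}_\#(\sqcup_i \pi_i)$ with $\pi_i \in \Pi(\mu_i,\mu_i')$ is supported on $\sqcup_i (X_i \times X_i')$, and on this support one has $\iota(x) = \iota'(x') = i$ whenever $(x,x') \in X_i \times X_i'$ by the definition of the labelling map in \eqref{eqn:k_labelled_embedding}. Hence $d_{\Lambda^{(k)}} \circ (\iota,\iota')$ vanishes $\pi$-a.e., and the $L^p$-norm is trivially zero.

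The argument is essentially measure-theoretic bookkeeping and I do not anticipate any serious obstacle. The one subtlety worth flagging is that the ambient measures $\mu = \sum_i \mu_i$ and $\mu' = \sum_i \mu_i'$ have total mass $k$ rather than being probability measures, so the normalization factor $1/k$ from \eqref{eqn:k_labelled_embedding} must be tracked consistently when this lemma is subsequently applied to establish Theorem~\ref{thm:fused_interpretation}.
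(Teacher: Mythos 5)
Your overall strategy is exactly the paper's: use \eqref{eqn:fused_term_behavior} to force the support of $\pi$ into $\sqcup_i(X_i\times X_i')$, decompose block by block, and observe that the converse direction is immediate from the support condition. The structure of the argument is sound and matches the published proof step for step.

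However, there is a concrete normalization error in your marginal computation, and it sits inside the lemma rather than only in its later application as you suggest. Under the embedding \eqref{eqn:k_labelled_embedding} the measure on $X=\sqcup_i X_i$ is $(1/k)\mu=(1/k)\sum_i\mu_i$, and the notation $\Pi(\sqcup_i\mu_i,\sqcup_i\mu_i')$ refers to couplings of these \emph{normalized} probability measures (this is visible in the paper's own computation $\sqcup_j\mu_j'(X_i')=\frac{1}{k}\sum_j\mu_j'(X_i')$). Consequently, for Borel $A\subseteq X_i$ one has $\pi(A\times X')=\frac{1}{k}\mu_i(A)$, not $\mu_i(A)$, and each diagonal block $X_i\times X_i'$ carries total $\pi$-mass exactly $1/k$. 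Your $\pi_i$, defined as the bare restriction of $\pi$ to $X_i\times X_i'$, is therefore a subprobability measure of mass $1/k$ and does \emph{not} lie in $\Pi(\mu_i,\mu_i')$; the chain of equalities $\pi_i(A\times X_i')=\pi(A\times X')=\mu(A)=\mu_i(A)$ is false as written. The paper repairs this by setting $\pi_i(A)=k\cdot\pi(A)$ on the block, after which the marginal verification and the identity $\pi=\mathrm{inc}_\#(\sqcup_i\pi_i)$ (where $\sqcup_i\pi_i$ again denotes the $1/k$-normalized disjoint union) go through, and the same factor of $k$ appears in the uniqueness computation. You flagged the $1/k$ issue at the end, but deferred it to the proof of Theorem~\ref{thm:fused_interpretation}; it must instead be incorporated into the definition of $\pi_i$ here. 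With that one-line correction your argument coincides with the paper's.
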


\begin{proof}
    Suppose that $\pi$ is a coupling with $\|d_{\Lambda^{(k)}}\circ (\iota,\iota')\|_{L^p(\pi)} < \infty$. Then $(x,x') \in \mathrm{supp}(\pi)$ implies $\iota(x) = \iota'(x')$, i.e., $x \in X_i$ and $x' \in X_i'$ for some common index $i$. Therefore the support of $\pi$ is contained in $\sqcup_i (X_i \times X_i')$. We also see that the mass of each block $X_i \times X_i'$ must be equal to $\frac{1}{k}$, since, for all $i$,
    \[
    \pi(X_i \times X_i') = \pi((\sqcup_j X_j) \times X_i') = \sqcup_j \mu_j' (X_i') = \frac{1}{k} \sum_j \mu_j'(X_i') = \frac{1}{k} \mu_i'(X_i') = \frac{1}{k}.
    \]
    
    For each $i$, define $\pi_i$ by $\pi_i(A) = k \cdot \pi(A)$ for each Borel set $A \subset X_i \times X_i'$. We claim that $\pi_i \in \Pi(\mu_i,\mu_i')$. Indeed, for any Borel set $B \subset X_i$, 
    \[
    \pi_i(B \times X_i') = k \cdot \pi(B \times X_i') = k \cdot \pi(B \times (\sqcup_i X_i')) = k \cdot \sqcup_j \mu_j (B \cap X_j) = \mu_i(B),
    \]
    and the other marginal condition follows similarly. 
    
    We will now show that
    \[
    \pi(C) = \mathrm{inc}_\#\left( \sqcup_i \pi_i\right)(C)
    \]
    holds for any Borel subset $C$ of $(\sqcup_i X_i) \times (\sqcup_i X_i')$. In light of the discussion above, we may assume without loss of generality that $C \subset \sqcup_i (X_i \times X_i')$. Then we have
    \begin{align*}
        \mathrm{inc}_\# (\sqcup_i \pi_i)(C) &= \sqcup_i \pi_i (\iota^{-1}(C)) = \frac{1}{k} \sum_i \pi_i(\mathrm{inc}^{-1}(C) \cap (X_i \times X_i')) \\
        &= \frac{1}{k} \sum_i \pi_i(C \cap (X_i \times X_i')) = \sum_i \pi(C \cap (X_i \times X_i')) = \pi(C).
    \end{align*}
    This shows the existence part of the statement.

    To prove uniqueness, suppose that $(\pi_i)$ satisfies $\pi = \mathrm{inc}_\# (\sqcup_i \pi_i)$. For a Borel set $A \subset X_i \times X_i'$
    \[
    \pi_i(A) = \sum_j \pi_j(A) = k \cdot \sqcup_j \pi_j (A) = k\cdot \mathrm{inc}_\#(\sqcup_j \pi_j) (A) = k \cdot \pi(A),
    \]
    so the formula for $\pi_i$ is unique.

    Finally, the last statement follows because  any coupling of this form is supported on $\sqcup_i (X_i \times X_i')$. 
\end{proof}

\begin{proof}[Proof of Theorem \ref{thm:fused_interpretation}]
    Let $P = ((X_i,\mu_i),\omega)$ and $P' = ((X'_i,\mu'_i),\omega')$ be elements of $\Pc_k^p$ with  images under the map \eqref{eqn:k_labelled_embedding} denoted $(N,\iota)$ and $(N',\iota')$, respectively. A $k$-partitioned coupling $(\pi_i)$ of $(\mu_i)$ and $(\mu_i')$ yields a coupling $\pi = \mathrm{inc}_\# ((1/k) \sum_i \pi_i) \in \Pi((1/k) \sum_i \mu_i, (1/k) \sum_i \mu_i')$, as in Lemma \ref{lem:fused_gw_coupling}. Then 
    \begin{align*}
        \frac{1}{2}\|\omega - \omega'\|_{L^p(\pi \otimes \pi)} + \|d_{\Lambda^{(k)}} \circ (\iota,\iota')\|_{L^p(\pi)} &= \frac{1}{2} \left(\sum_{i,j=1}^k \|\omega - \omega'\|^p_{L^p(\pi_i\otimes \pi_j)}\right)^{1/p},
    \end{align*}
    so that $d_{\mathcal{LN}_\mathrm{ext}^p}((N,\iota),(N',\iota')) \leq d_{\Pc_k^p}(P,P')$. Similarly, for any $\pi \in \Pi((1/k)\sum_i \mu_i, (1/k)\sum_i \mu_i')$ with $\|d_{\Lambda^{(k)}} \circ (\iota,\iota')\|_{L^p(\pi)}=0$, we can find a $k$-partitioned coupling $(\pi_i) \in \Pi_k((\mu_i),(\mu_i'))$ via Lemma \ref{lem:fused_gw_coupling} and use it to show that $d_{\mathcal{LN}_\mathrm{ext}^p}((N,\iota),(N',\iota')) \geq d_{\Pc_k^p}(P,P')$.
\end{proof}

The following corollary shows that the various generalized network distances which have appeared in the recent literature can all essentially be considered as special cases of the labelled extended network distance. The result follows as a direct consequence of Corollary~\ref{cor:metrics_and_embeddings} and Theorem~\ref{thm:fused_interpretation}.

\begin{corollary}\label{cor:metrics_and_embeddings2}
     The embeddings from Definition \ref{defn:generalized_network_embeddings} induce isometric embeddings of the space of measure networks  $([\mathcal{N}],d_{\mathcal{N}^p})$, the space of measure hypernetworks $([\mathcal{H}],d_{\mathcal{H}^p})$ and the space of augmented measure networks $([\mathcal{A}],d_{\mathcal{A}^p})$, respectively, into the space of labelled networks $([\mathcal{LN}^p_{\mathrm{ext}}],d_{\mathcal{LN}_{\mathrm{ext}}^p})$. 
\end{corollary}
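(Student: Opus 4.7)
The strategy is to realise each embedding in the corollary as a composition of two isometric embeddings that have already been established. The first leg is furnished by Corollary~\ref{cor:metrics_and_embeddings}, which tells us that
\[
\varepsilon_{1,2}: [\mathcal{N}^p] \hookrightarrow [\Pc_2^p], \quad \varepsilon_{\mathcal{H}}: [\mathcal{H}^p] \hookrightarrow [\Pc_2^p], \quad \varepsilon_{\mathcal{A}}: [\mathcal{A}^p] \hookrightarrow [\Pc_2^p]
\]
are isometric with respect to the generalized network distances and $d_{\Pc_2^p}$. The second leg is the map \eqref{eqn:k_labelled_embedding}, specialised to $k=2$, which sends
\[
((X_i,\mu_i)_{i=1}^2,\omega) \longmapsto \bigl((X_1 \sqcup X_2,\tfrac{1}{2}(\mu_1+\mu_2), \omega),\iota\bigr), \qquad \iota(x) = i \text{ iff } x \in X_i,
\]
with label space $\Lambda^{(2)} = \{1,2\}$ equipped with the extended metric from Section~\ref{sec:interpretation}. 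By Theorem~\ref{thm:fused_interpretation}, this map is an isometric embedding $[\Pc_2^p] \hookrightarrow [\mathcal{LN}^p_{\mathrm{ext}}]$.

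Composing the two legs produces the desired isometric embeddings of $[\mathcal{N}^p]$, $[\mathcal{H}^p]$, and $[\mathcal{A}^p]$ into $[\mathcal{LN}^p_{\mathrm{ext}}]$, since the composition of isometric embeddings is an isometric embedding. Concretely, for instance, a hypernetwork $H = (X,\mu,Y,\nu,\omega)$ is sent to the labelled network whose carrier is $X \sqcup Y$ with probability measure $\tfrac{1}{2}(\mu+\nu)$, with the extended network kernel $\varepsilon_{\mathcal{H}}(\omega)$ vanishing off the bipartite component, and whose label function assigns $1$ on $X$ and $2$ on $Y$; the penalty \eqref{eqn:fused_term_behavior} forces admissible couplings to respect the partition, recovering exactly the pair of marginal couplings in the definition of $d_{\mathcal{H}^p}$.

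There is essentially no obstacle. The one bookkeeping point is to check that the map \eqref{eqn:k_labelled_embedding}, which is defined at the level of $\Pc_k^p$, descends to $[\Pc_k^p]$; but this is immediate, because any (weak) isomorphism of partitioned measure networks respects the partition blocks by definition, and therefore induces a (weak) isomorphism of the associated labelled networks that intertwines $\iota$ and $\iota'$. All remaining content is absorbed into the two previously proven results, so the corollary follows by a two-line composition argument.
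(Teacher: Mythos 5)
Your proposal is correct and follows exactly the paper's route: the paper derives this corollary as a direct consequence of Corollary~\ref{cor:metrics_and_embeddings} and Theorem~\ref{thm:fused_interpretation}, i.e., by composing the isometric embeddings into $[\Pc_2^p]$ with the isometric embedding of $[\Pc_k^p]$ into $[\mathcal{LN}^p_{\mathrm{ext}}]$ given by the map \eqref{eqn:k_labelled_embedding}. Your additional check that the map descends to weak-isomorphism classes is a reasonable bookkeeping point that the paper leaves implicit.
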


\subsubsection{Labelled partitioned distance as a labelled network distance}

The work above can be directly adapted to show that the space of labelled $k$-partitioned networks also embeds into the space of labelled networks. Consider the space $\PL_k^p$ of labelled $k$-partitioned $p$-networks with labels in some arbitrary metric spaces $(\Lambda_i,d_{\Lambda_i})$. Now consider the extended metric space $\Lambda = (\sqcup_i \Lambda_i) \times \Lambda^{(k)}$ with extended metric $d_\Lambda$ defined by
\[
d_\Lambda((a,i),(b,j)) = \left\{
\begin{array}{cl}
d_{\Lambda_i}(a,b) & \mbox{if $i=j$ and $a,b \in \Lambda_i$};\\
\infty & \mbox{otherwise.}
\end{array}\right.
\]

Given an element $L = (P,(\iota_i))$ of $\PL_k^p$, with $P = ((X_i,\mu_i),\omega)$, we associate an element of $\mathcal{LN}^p_{\mathrm{ext}}$ with labels in $\Lambda$ via the map
\begin{equation}\label{eqn:k_labelled_embedding_2}
\PL_k^p \ni L \mapsto ((X,(1/k)\mu,\omega),\iota), \qquad \iota(x) = (\iota_i(x),i) \Leftrightarrow x \in X_i.
\end{equation}

The techniques used above likewise yield a proof of the following.

\begin{theorem}\label{thm:fused_interpretation_2}
    The map \eqref{eqn:k_labelled_embedding_2} is an isometric embedding with respect to $d_{\PL_k^p}$ and $d_{\mathcal{LN}_\mathrm{ext}^p}$.
\end{theorem}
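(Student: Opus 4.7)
The plan is to imitate the proof of Theorem \ref{thm:fused_interpretation}, whose structural heart is Lemma \ref{lem:fused_gw_coupling}. That lemma depends only on one feature of the target label space, namely that the extended metric takes the value $\infty$ between distinct classes, which forces any finite-cost coupling to be supported block-diagonally. Since $d_\Lambda$ on $(\sqcup_i \Lambda_i) \times \Lambda^{(k)}$ inherits exactly this feature from its $\Lambda^{(k)}$ factor, the argument transfers with only bookkeeping changes.

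First, I would establish the labelled analogue of Lemma \ref{lem:fused_gw_coupling} for the embedding \eqref{eqn:k_labelled_embedding_2}. Because $\iota(x) = (\iota_i(x), i)$ for $x \in X_i$ and $d_\Lambda((a,i),(b,j)) = \infty$ whenever $i \neq j$, any coupling $\pi \in \Pi((1/k)\mu, (1/k)\mu')$ with $\|d_\Lambda \circ (\iota,\iota')\|_{L^p(\pi)} < \infty$ must satisfy $\mathrm{supp}(\pi) \subset \sqcup_i (X_i \times X_i')$. The marginal conditions then force $\pi(X_i \times X_i') = 1/k$, and the assignment $\pi_i := k \cdot \pi|_{X_i \times X_i'}$ produces the unique $k$-partitioned coupling corresponding to $\pi$, with $\pi = \mathrm{inc}_\#((1/k)\sum_i \pi_i)$; conversely every $k$-partitioned coupling arises this way. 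The proof is word-for-word identical to that of Lemma \ref{lem:fused_gw_coupling}, since the new spaces $\Lambda_i$ do not enter the marginal calculation.

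Second, I would verify that the two cost functionals agree under this correspondence. On the block $X_i \times X_i'$ the identity $d_\Lambda(\iota(x),\iota'(x')) = d_{\Lambda_i}(\iota_i(x),\iota_i'(x'))$ holds, so the label term $\|d_\Lambda \circ (\iota,\iota')\|^p_{L^p(\pi)}$ decomposes into a sum over $i$ of the per-class contributions $\|d_{\Lambda_i} \circ (\iota_i,\iota_i')\|^p_{L^p(\pi_i)}$, matching the label term in the definition of $d_{\PL_k^p}$. The kernel cost $\|\omega - \omega'\|^p_{L^p(\pi \otimes \pi)}$ splits over product blocks $X_i \times X_i' \times X_j \times X_j'$ exactly as in the computation used for Theorem \ref{thm:fused_interpretation}, recovering the kernel term of $d_{\PL_k^p}$. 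Substituting these identities into the definitions of $d_{\mathcal{LN}^p_{\mathrm{ext}}}$ and $d_{\PL_k^p}$, and using that the correspondence between $\pi$ and $(\pi_i)$ is bijective on the relevant finite-cost couplings, shows that the two infima coincide.

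The only obstacle is the routine tracking of the scaling constants coming from the $(1/k)$ normalization of $\mu$; these appear symmetrically in both the kernel and label terms, in the same pattern as in Theorem \ref{thm:fused_interpretation}, and absorb consistently into the respective normalizations of the two distances. Thus the assertion that ``the techniques used above likewise yield a proof'' is justified, and the embedding \eqref{eqn:k_labelled_embedding_2} is an isometry onto its image.
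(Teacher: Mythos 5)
Your proposal is correct and takes exactly the route the paper intends: the paper's own proof of Theorem~\ref{thm:fused_interpretation_2} is a one-sentence deferral to the techniques of Theorem~\ref{thm:fused_interpretation} and Lemma~\ref{lem:fused_gw_coupling}, and your argument is precisely the fleshed-out version of that deferral (block-diagonal support forced by the $\infty$ values of $d_\Lambda$, the bijective correspondence between finite-cost couplings $\pi$ and partitioned couplings $(\pi_i)$, and the blockwise decomposition of both cost terms). One small caution: your claim that the $(1/k)$ normalization enters the kernel and label terms ``symmetrically'' is not literally accurate --- the kernel term is quadratic in $\pi$ and picks up a factor $k^{-2}$ while the label term is linear and picks up $k^{-1}$ --- but this bookkeeping wrinkle is already present, and equally glossed over, in the paper's own displayed computation for Theorem~\ref{thm:fused_interpretation}.
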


\section{Riemannian structure of partitioned networks} \label{sec:riemannian}

We now focus again on the space $\PL_k = \PL_k^2$, endowed with the metric $d_{\PL_k} = d_{\PL_k^2}$.
We consider the scenario where the label spaces $(\Lambda_i,d_{\Lambda_i})$ are Hilbert spaces endowed with their associated distances. We have showed in Theorem \ref{thm:alexandrov_space_labelled} that $([\PL_k],d_{\PL_k})$ is a non-negatively curved Alexandrov space. This property endows $[\PL_k]$ with synthetic versions of various structures seen in Riemannian geometry, such as tangent spaces and exponential maps~\cite{plaut2001metric}. Rather than following the general constructions of these structures, we follow the approach of Sturm in~\cite[Chapter 6]{sturm2023space} and develop equivalent versions which are more specific to the metric at hand. In this section, we describe these structures and present some example applications to geometric data analysis.

\subsection{Tangent spaces}

We develop notions of tangent spaces and exponential maps for $[\Pc_k]$ and $[\PL_k]$.  These concepts are introduced in detail for $[\PL_k]$, and the case of $[\Pc_k]$ then follows by considering partitioned networks as special cases of labelled partitioned networks, as in the proof of Theorem \ref{thm:partitioned_metric} (see Section~\ref{sec:consequences_and_comparisons}).

\subsubsection{The labelled case}
\label{sec:labelled-case}

For clarity, we remind the reader of some notational conventions, while introducing some new ones. Let $L = (P,(\iota_i)) = \big((X_i,\mu_i),\omega,(\iota_i)\big) \in \PL_k$, where we continue to assume that the label spaces $\Lambda_i$ are Hilbert spaces. As above, we write $X = \sqcup_i X_i$ and endow it with the measure $\mu = \sum_i \mu_i$. Given another element $P' \in \PL_k$ and a $k$-partitioned coupling $\pi \in \Pi_k((\mu_i),(\mu_i'))$, we write $\pi = \sum_i \pi_i$, and consider this as a measure on $X \times X'$ (where $X' = \sqcup_i X_i')$. Finally, we define
 \[
 \Lambda_X \coloneqq \bigoplus_{i=1}^k L^2(\mu_i,\Lambda_i).
 \]
 We denote an element of $\Lambda_X$ as $g$, where we can canonically write $g = (g_1,\ldots,g_k)$ with $g_i \in L^2(\mu_i,\Lambda_i)$. In this way, we consider the label function data as an element $\iota = (\iota_1,\ldots,\iota_k) \in \Lambda_X$. 

\begin{defn}[Synthetic tangent space]
  We define the \define{synthetic tangent space} of $[\PL_k]$ at a point $[L]$ to be  
  \begin{align*}
    T_{[L]} [\PL_k] \coloneqq \left(\bigcup_{((X_i, \mu_i), \omega, (\iota_i)) \in [L]} (L^2(\mu \otimes \mu) \oplus \Lambda_X)\right) / \sim.
  \end{align*}
In the above, the union is taken over all labelled $k$-partitioned measure networks $((X_i, \mu_i), \omega, (\iota_i))$ in the weak isomorphism equivalence class $[L]$. The equivalence relation is defined as follows. For two representatives 
\[
((X_i, \mu_i), \omega,(\iota_i)), \quad  ((X_i', \mu_i'), \omega', (\iota_i')) \in [L]
\]
and functions 
\[
(f, g) \in L^2(\mu \otimes \mu) \oplus \Lambda_X, \quad  (f', g') \in L^2(\mu'\otimes\mu') \oplus \Lambda_{X'},
\]
we write $(f, g) \sim (f', g')$ if and only if there exists a $k$-partitioned coupling $(\pi_i) \in \Pi_k(\mu, \mu')$ such that 
\[
f(x,y) = f'(x',y') \quad \mbox{for} \quad \pi\otimes \pi-a.e. \; (x,x',y,y') \in X \times X' \times X \times X'
\]
and, writing $g = (g_1,\ldots,g_k)$ and $g' = (g_1',\ldots,g_k')$, 
\[
g_i(x_i) = g_i'(x_i') \quad \mbox{for} \quad \pi_i-a.e. \;  (x_i,x_i') \in X \times X'.
\]
The equivalence class of $(f,g)$ is denoted $[f,g]$.
\end{defn}

The space $[\PL_k]$ has a natural notion of an exponential map, defined  as follows.  

\begin{defn}[Exponential map] \label{defn:exponential_map}
    For a labelled $k$-partitioned measure network $[L] \in [\PL_k]$, let $[f, g] \in T_{[L]}[\PL_k]$ be a tangent vector with $(f, g) \in L^2(\mu \otimes \mu) \oplus \Lambda_X$. We define the \define{exponential map} by
    \begin{equation*}
        \exp_{[L]} : T_{[L]} [\PL_k] \mapsto [\PL_k], \quad \exp_{[L]}([f, g]) \coloneqq [ ( (X_i, \mu_i), \omega + f ), \iota + g ]. 
    \end{equation*}
\end{defn}

We can now provide a geodesic characterization of the exponential map on $([\PL_k], d_{\PL_k})$, analogous to the one given for measure networks in \cite{chowdhury2020gromov}, at least for labelled partitioned measure networks which are ``inherently finite". That is, we say that an element $[L] \in [\PL_k]$ is \define{finite} if the equivalence class $[L]$ contains a representative $L' \in [L]$ such that all sets $X_i'$ are finite. In the following, take the following terminology convention: if we refer to $[L] \in [\PL_k]$ as finite, we implicitly assume without loss of generality that $L$ is finite. Observe that, even if $L$ is finite, the equivalence class $[L]$ contains elements which are not finite, hence the need for care in the terminology here.

\begin{prop}
  \label{prop:geodesic_exp}
  Let $[L] \in [\PL_k]$ be a finite labelled $k$-partitioned measure network. There exists $\epsilon_{[L]} > 0$ and $\eta_{[L]} > 0$ such that for any tangent vector represented by $(f, g) \in L^2(\mu \otimes \mu) \oplus \Lambda_X$ satisfying $|f(x, y)| < \epsilon_{[L]}$ and $|g(x)| < \eta_{[L]}$ for all $(x, y) \in X \times X$, the path defined by
  \[
    [\gamma_t] = [((X_i, \mu_i), \omega + tf), \iota + tg], \quad t \in [0, 1]
  \]
  is a geodesic from $[L]$ to $\exp_{[L]}([f,g])$.
\end{prop}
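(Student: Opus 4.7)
The strategy reduces the proposition to establishing that, for $(f, g)$ sufficiently small, the ``diagonal'' coupling $\pi^\Delta_i := (\mathrm{id} \times \mathrm{id})_\# \mu_i$ realises $d_{\PL_k}(L, L')$ with $L' := \exp_{[L]}([f, g])$; the result then follows from Proposition~\ref{prop:interpolation_geodesic_labelled}. Evaluating the cost at $\pi^\Delta$ gives
\[
4 d_{\PL_k}(L, L')^2 \leq C^2 := \sum_{i,j} \|f\|_{L^2(\mu_i \otimes \mu_j)}^2 + \sum_i \|g_i\|_{L^2(\mu_i, \Lambda_i)}^2,
\]
and the same calculation between $\gamma_s$ and $\gamma_t$ (replacing $f, g$ by $(t-s)f, (t-s)g$) yields $d_{\PL_k}(\gamma_s, \gamma_t) \leq \tfrac{t-s}{2} C$. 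Granted optimality of $\pi^\Delta$, one has $d_{\PL_k}(L, L') = C/2$; the triangle inequality along arbitrary partitions of $[0, 1]$ then forces equality in all the upper bounds, which is the constant-speed geodesic condition. Moreover, Proposition~\ref{prop:interpolation_geodesic_labelled} applied with $\pi^\Delta$ produces an interpolation supported on $X_i \times X_i$ with $\omega^t((x,x),(y,y)) = \omega(x,y) + t f(x,y)$ and $\iota^t_i(x,x) = \iota_i(x) + t g_i(x)$ (using that in a Hilbert space the geodesic from $\iota_i(x)$ to $\iota_i(x) + g_i(x)$ is linear); projection onto the first coordinate is a weak isomorphism of this interpolation with $\gamma_t$.

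The crux is thus the diagonal-optimality claim, which I would approach via compactness and continuity. The functional
\[
\Phi(\pi; f, g) := \sum_{i,j} \|\omega - (\omega + f)\|_{L^2(\pi_i \otimes \pi_j)}^2 + \sum_i \|d_{\Lambda_i} \circ (\iota_i, \iota_i + g_i)\|_{L^2(\pi_i)}^2
\]
is jointly continuous in $(\pi, f, g)$, and $\Pi_k((\mu_i),(\mu_i))$ is compact (Lemma~\ref{lem:infimum_achieved}). At $(f, g) = (0, 0)$, the zero-set $S \subset \Pi_k$ of $\Phi(\cdot; 0, 0)$ consists of couplings $\pi^\phi = (\mathrm{id} \times \phi)_\# \mu$ induced by self-weak-isomorphisms $\phi$ of the unlabelled $P$, together with their convex combinations. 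Outside any fixed neighbourhood of $S$ the cost exceeds some $\delta > 0$ and, by continuity, still exceeds $\delta/2$ for small $(f, g)$, whereas $C^2 \to 0$; hence any minimiser must lie near $S$. For $\pi^\phi$ with $\phi$ additionally preserving the labels, measure preservation gives $\Phi(\pi^\phi; f, g) = C^2$; if $\phi$ fails to preserve labels then $\iota_i - \iota_i \circ \phi_i \not\equiv 0$ on a set of positive measure, so by the triangle inequality in $L^2(\mu_i, \Lambda_i)$ the label term is bounded below by $(\|\iota_i - \iota_i \circ \phi_i\|_{L^2(\mu_i, \Lambda_i)} - \|g_i\|_{L^2(\mu_i, \Lambda_i)})^2$, which exceeds $C^2$ for $g$ sufficiently small.

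The main obstacle is the local analysis near $S$: the zero-set may be a genuine polytope (rather than an isolated point) when $L$ has non-trivial self-weak-isomorphisms, so one cannot simply invoke strict local minimality of $\pi^\Delta$ in $\Phi(\cdot; 0, 0)$. The finiteness assumption on $L$ permits reducing the analysis to the finite set of extreme points of $S$ and expanding $\Phi$ quadratically around each, yielding a uniform lower bound $\Phi(\pi; f, g) \geq C^2$ on a neighbourhood of $S$. Choosing $\epsilon_{[L]}, \eta_{[L]}$ small enough to absorb the higher-order remainders and to ensure $C^2 < \delta/2$ completes the reduction, and hence the proof.
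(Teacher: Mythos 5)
Your reduction is the same as the paper's: exhibit $\exp_{[L]}([f,g])$ via the diagonal couplings $(\Delta_{\mu_i,\mu_i})$, show these are optimal for small $(f,g)$, and invoke Proposition~\ref{prop:interpolation_geodesic_labelled}. The divergence, and the problem, is in how you prove diagonal optimality. The paper does this with a single global pointwise estimate: for an \emph{arbitrary} competitor coupling $\pi$, the cost decomposes as (diagonal cost) plus $\int \big[\tfrac{1}{2}|\omega(x,y)-\omega(x',y')|^2 - f(x',y')(\omega(x,y)-\omega(x',y'))\big]\,d\pi\otimes\pi$ (and the analogous label term), and each integrand is non-negative once $|f|\le \epsilon_{[L]} := \tfrac{1}{2}\min\{|\omega(x,y)-\omega(x',y')| : |\omega(x,y)-\omega(x',y')|>0\}$ --- finiteness guarantees this minimum is positive. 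No compactness, no case split on proximity to the zero set.

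Your soft compactness argument has a genuine hole exactly where you flag it. In the regime where $\pi$ is off the zero set $S$ by a transverse displacement $s$ with $0 < s \lesssim \|f\|_\infty$, the quadratic expansion gives a transverse gain of order $a(\delta)s^2$ against a cross term $-\int f\,(\omega(x,y)-\omega(x',y'))\,d\pi\otimes\pi$ of order $s\|f\|_\infty$; the quadratic term does not dominate the linear one for small $s$, so the claimed uniform bound $\Phi(\pi;f,g)\ge C^2$ near $S$ does not follow from the expansion plus ``absorbing remainders.'' The only way to close this is to observe that $|\omega(x,y)-\omega(x',y')|$ is either zero or bounded below by $2\epsilon_{[L]}$, so that $\int|\omega-\omega'|\,d\pi\otimes\pi \le \tfrac{1}{2\epsilon_{[L]}}\int|\omega-\omega'|^2\,d\pi\otimes\pi$ --- but that is precisely the paper's pointwise argument, at which point the entire near-$S$/far-from-$S$ machinery becomes unnecessary. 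Secondarily, your description of $S$ is wrong: the zero set of the self-distortion is not the convex hull of map-induced couplings (convex combinations of two automorphism couplings generally fail the condition because of the $\pi\otimes\pi$ cross terms; and for constant $\omega$ the zero set is all of $\Pi_k$, which contains many non-map couplings). This does not by itself sink the far-from-$S$ step, but it signals that the structure of $S$ cannot be leaned on in the way your local analysis requires.
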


\begin{proof}
  Up to weak isomorphism, we may assume that $[\gamma_t]$ takes the form
  \[
    [\gamma_t] = \left[\left( (X_i \times X_i, \Delta_{\mu_i, \mu_i}), \omega_t\right), \iota_t\right],
  \]
  where
  \begin{align*}
    \omega_t : (x, x', y, y') &\mapsto (1-t) \omega(x, y) + t (\omega(x', y') + f(x', y')) \\ 
    \iota_t : (x, x') &\mapsto (1-t) \iota(x) + t (\iota(x') + g(x'))
  \end{align*}
  and $\Delta_{\mu_i, \mu_i}$ denotes the \define{diagonal coupling} of $(\mu_i, \mu_i)$; that is, $\Delta_{\mu_i,\mu_i} = (\mathrm{id}_{X_i} \times \mathrm{id}_{X_i})_\# \mu_i$.  
  By Proposition~\ref{prop:interpolation_geodesic_labelled}, verifying that $[\gamma_t]$ is a geodesic amounts to deducing the condition on $(f, g)$ for $(\Delta_{\mu_i, \mu_i})_{i = 1}^k$ to be the optimal couplings between $L$ and $\gamma(1)$.
  Let $(\pi_i) \in \Pi_{\Pc_k}(\mu, \mu)$ be any competitor coupling. The corresponding matching cost is then
  \begin{align*}
    &\frac{1}{2} \sum_{X^4} \pi(x, x') \pi(y, y') | \omega(x, y) - \omega(x', y') - f(x', y') |^2 + \frac{1}{2} \sum_{X^2} \pi(x, x') \| \iota(x) - \iota(x') - g(x') \|^2 \\
    &= \frac{1}{2} \sum_{X^4} \pi(x, x') \pi(y, y') f(x', y')^2 \\
    & \hspace{0.5in} + \sum_{X^4} \pi(x, x') \pi(y, y') \left[ \frac{1}{2} |\omega(x, y) - \omega(x', y')|^2 - f(x', y') (\omega(x, y) - \omega(x', y')) \right] \\
    & \hspace{0.5in} + \frac{1}{2} \sum_{X^2} \pi(x, x') \| g(x') \|^2 + \sum_{X^2} \pi(x, x') \left[ \frac{1}{2} \| \iota(x) - \iota(x') \|^2 - \langle g(x'), \iota(x) - \iota(x') \rangle \right].
  \end{align*}
  In the above, the inner product and norms are the induced structures on $\oplus_i \Lambda_i$. The first and third terms amount to the matching cost between $L$ and $\gamma(1)$ under the diagonal couplings $(\Delta_{\mu_i, \mu_i})_i$ and so it is sufficient to deduce conditions on $(f, g)$ so that the second and fourth terms are non-negative. 

  Consider first the sum
  \[
    \sum_{X^4} \pi(x, x') \pi(y, y') \left[ \frac{1}{2} | \omega(x, y) - \omega(x', y') |^2 - f(x', y') (\omega(x, y) - \omega(x', y')) \right]. 
  \]
  Clearly, if $|\omega(x, y) - \omega(x', y')|^2 = 0$ $\pi \otimes \pi$-a.e. then this term vanishes. Otherwise there exists at least one $(x, x', y, y') \in X^4$ such that $|\omega(x, y) - \omega(x', y')| > 0$, since we consider finite networks.
  Among such values, pick
  \[
    \epsilon_{[L]} = \frac{1}{2} \min \left\{ |\omega(x, y) - \omega(x', y')| : |\omega(x, y) - \omega(x', y')| > 0 \}\right\}
  \]
  Then, for $f$ satisfying $|f(x', y')| \leq \epsilon_{[L]}$ for all $(x', y') \in X \times X$, we have that 
  \begin{align*}
    |f(x', y') (\omega(x, y) - \omega(x', y'))| &= |f(x', y')||\omega(x, y) - \omega(x', y')| \\
    &\leq \epsilon_{[L]} |\omega(x, y) - \omega(x', y')|  \\
    &\leq \frac{1}{2} |\omega(x, y) - \omega(x', y')|^2,
  \end{align*}
  and so the sum is non-negative. 
  Next consider the labels
  \[
    \sum_{X^2} \pi(x, x') \left[ \frac{1}{2} \| \iota(x) - \iota(x') \|^2 - \langle g(x'), \iota(x) - \iota(x') \rangle \right]. 
  \]
  Applying the same reasoning as previously, we note that if $\| \iota(x) - \iota(x') \|^2 = 0$ $\pi$-a.e. then the sum vanishes. Otherwise, there exists at least one $(x, x') \in X^2$ for which $\| \iota(x) - \iota(x') \|^2 > 0$, and pick
  \[
    \eta_{[L]} = \frac{1}{2} \min \left\{ \| \iota(x) - \iota(x') \| : \| \iota(x) - \iota(x') \| > 0 \right\}. 
  \]
  Then for $g$ such that $\| g(x') \| \leq \eta_{[L]}$ for $x \in X$, we have that
  \begin{align*}
    \langle g(x'), \iota(x) - \iota(x') \rangle &\leq \| g(x') \| \| \iota(x) - \iota(x') \| \leq \leq \eta_{[L]} \| \iota(x) - \iota(x') \| \leq \frac{1}{2} \| \iota(x) - \iota(x') \|^2,
  \end{align*}
  and this sum is also non-negative.
\end{proof}

\begin{defn}[Logarithm map]
    Let $[L],[L'] \in [\PL_k]$ and let $(\pi_i) \in \Pi_k((\mu_i), (\mu'_i))$ be an optimal $k$-partitioned coupling of $L$ to $L'$. Consider the representative $\hat{L} = ((\hat{X}_i,\hat{\mu}_i),\hat{\omega},(\hat{\iota}_i)) \in [L]$  with
    \[
    \hat{X}_i = X_i \times X_i', \quad \hat{\mu}_i = \pi_i, \quad \hat{\omega}(x,x',y,y') = \omega(x,y), \quad \hat{\iota}_i(x,x') = \iota_i(x).
    \]
    Similarly, define the representative $\hat{L}' = ((\hat{X}_i,\hat{\mu}_i),\hat{\omega}',(\hat{\iota}'_i)) \in [L']$, where 
    \[
    \hat{\omega}'(x,x',y,y') = \omega'(x',y'), \quad \hat{\iota}'_i(x,x') = \iota'_i(x').
    \]
    We define
    \begin{equation}
        \log_{[L]}^{\pi}([L']) = [\omega' - \omega, \iota' - \iota] \in T_{[L]} [\PL_k].
    \end{equation}
    It follows by definition of $\exp_{[L]}$ that
    \[
    \exp_{[L]} ( \log_{[L]}^{\pi} ([L'])) = [L'].
    \]
\end{defn}

\begin{prop}{}
  \label{prop:exp_injective}
  Let $[L] \in [\PL_k]$ be a finite labelled $k$-partitioned measure network and let $\epsilon_{[L]}$ and $\eta_{[L]}$ be as in Proposition~\ref{prop:geodesic_exp}. The exponential map $\exp_{[L]}$ is injective on the set of tangent vectors admitting representations $(f, g) \in L^2(\mu \otimes \mu) \oplus \Lambda_X$ satisfying $|f(x, y)| < \epsilon_{[L]} / 2$ and $|g(x)| < \eta_{[L]} / 2$. 
\end{prop}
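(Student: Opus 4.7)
The plan is to show that if $\exp_{[L]}([f, g]) = \exp_{[L]}([f', g'])$ for representatives satisfying the stated bounds, then any $k$-partitioned self-coupling witnessing the weak isomorphism of the two images is forced, by those bounds, to also be a self-weak-isomorphism of $L$ itself; this same coupling will then witness the equivalence relation defining $T_{[L]}[\PL_k]$.

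First, I would apply Theorem~\ref{thm:partitioned_metric_labelled} together with Lemma~\ref{lem:infimum_achieved} to obtain an optimal self-coupling $(\pi_i) \in \Pi_k((\mu_i), (\mu_i))$ realizing $d_{\PL_k}(L_f, L_{f'}) = 0$, where $L_f = ((X_i, \mu_i), \omega + f, \iota + g)$ and $L_{f'} = ((X_i, \mu_i), \omega + f', \iota + g')$. Setting the matching cost to zero and rearranging the contributions in the first and second summands yields, almost everywhere on the supports of $\pi_i \otimes \pi_j$ and $\pi_i$ respectively,
\[
\omega(x, y) - \omega(x', y') = f'(x', y') - f(x, y), \qquad \iota_i(x) - \iota_i(x') = g'_i(x') - g_i(x).
\]
The key step is then the spectral-gap-style argument exploiting the halving of the constants in the hypothesis. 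Using the triangle inequality with $|f|, |f'| < \epsilon_{[L]}/2$ in the first identity gives $|\omega(x, y) - \omega(x', y')| < \epsilon_{[L]}$ almost surely. By the definition of $\epsilon_{[L]}$ in Proposition~\ref{prop:geodesic_exp}, namely that $\epsilon_{[L]}$ is half the minimum positive value attained by $|\omega(x, y) - \omega(x', y')|$ (a minimum that exists since $[L]$ is finite), this strict inequality forces $\omega(x, y) = \omega(x', y')$ almost surely. The analogous argument with $\|g_i\|, \|g'_i\|_{\Lambda_i} < \eta_{[L]}/2$ applied to the second identity forces $\iota_i(x) = \iota_i(x')$ almost surely. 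Substituting these equalities back into the two displayed identities yields $f(x, y) = f'(x', y')$ and $g_i(x) = g'_i(x')$ on the appropriate supports, which is precisely the condition that $(\pi_i)$ witnesses $(f, g) \sim (f', g')$ in $T_{[L]}[\PL_k]$.

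The main obstacle is the spectral-gap step that converts the perturbation bounds into \emph{exact} pointwise equalities on $\omega$ and $\iota$. This step crucially requires both the finiteness of $[L]$ (so that the minima defining $\epsilon_{[L]}$ and $\eta_{[L]}$ are attained and strictly positive) and the halving of the perturbation bounds (so that two applications of the triangle inequality to $f' - f$ and $g'_i - g_i$ still produce a strict inequality strictly below the spectral gap). Once this threshold argument is executed, the rest of the proof is a direct rearrangement, with the same coupling $(\pi_i)$ simultaneously playing the role of a weak self-isomorphism of the base point $L$ and of the equivalence witness in the tangent space.
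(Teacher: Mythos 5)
Your proof is correct and follows essentially the same route as the paper's: both take an optimal self-coupling realizing $d_{\PL_k}=0$ between the two exponential images and use the halved bounds together with the finiteness gap defining $\epsilon_{[L]}$ and $\eta_{[L]}$ to force $f(x,y)=f'(x',y')$ and $g_i(x)=g'_i(x')$ along the coupling, which is exactly the tangent-space equivalence. The only difference is cosmetic: the paper expands the square and bounds the cross terms by $\tfrac{1}{2}|\omega(x,y)-\omega(x',y')|^2$, whereas you first extract the pointwise identity $\omega(x,y)-\omega(x',y')=f'(x',y')-f(x,y)$ from the vanishing distortion and then apply the gap directly; both arguments are sound.
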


\begin{proof}
  Recall that $\exp_{[L]}([f, g]) = \exp_{[L]}([f', g'])$ if and only if $d_{\PL_k}(\exp_{[L]}([f, g]), \exp_{[L]}([f', g'])) = 0$.
  Let $\pi$ be an optimal coupling between $\exp_{[L]}([f, g])$ and $\exp_{[L]}([f', g'])$. Then the corresponding distortion is 
  \begin{align*}
    &\frac{1}{2} \sum_{X^4} \pi(x, x') \pi(y, y') | \omega(x, y) + f(x, y) - \omega(x', y') - f'(x', y') |^2 \\
    &\hspace{1.5in} + \frac{1}{2} \sum_{X^2} \pi(x, x') \| \iota(x) + g(x) - \iota(x') - g'(x') \|^2  \\
    &= \sum_{X^4} \pi(x, x') \pi(y, y') \left[ \frac{1}{2} |f(x, y) - f'(x', y')|^2 + \frac{1}{2} |\omega(x, y) - \omega(x', y')|^2 \right. \\
    &\hspace{1.5in} + (\omega(x, y) - \omega(x', y')) (f(x, y) - f'(x', y')) \bigg] \\
    &\qquad + \sum_{X^2} \pi(x, x') \left[ \frac{1}{2} \| g(x) - g'(x') \|^2 + \frac{1}{2} \| \iota(x) - \iota(x') \|^2 + \langle \iota(x) - \iota(x'), g(x) - g'(x') \rangle \right]
  \end{align*}
  Assuming we have that $|f(x, y)|, |f'(x,y)| < \epsilon_{[L]} / 2$ and $|g(x)|, |g'(x)| < \eta_{[L]} / 2$, then for all $(x, x', y, y') \in X^4$:
  \[
    |f(x, y) - f'(x', y')| < \epsilon_{[L]}, \quad \| g(x) - g'(x') \| < \eta_{[L]}. 
  \]
  Then
  \begin{align*}
    (\omega(x, y) - \omega(x', y')) (f(x, y) - f'(x', y')) &\leq |\omega(x, y) - \omega(x', y')| |f(x, y) - f'(x', y')| \\ 
                                                           &\leq \epsilon_{[L]} |\omega(x, y) - \omega(x', y')| \\
                                                           &\leq \frac{1}{2} | \omega(x, y) - \omega(x', y') |^2 ,
  \end{align*}
  and by the same reasoning we have that 
  \begin{align*}
    \langle \iota(x) - \iota(x'), g(x) - g(x') \rangle \leq \frac{1}{2} \| \iota(x) - \iota(x') \|^2 . 
  \end{align*}
  It follows that the second two terms in each of the sums are non-negative:
  $\frac{1}{2} |\omega(x, y) - \omega(x', y')|^2 + (\omega(x, y) - \omega(x', y')) (f(x, y) - f'(x', y')) \ge 0$ and
  $\frac{1}{2} \| \iota(x) - \iota(x') \|^2 + \langle \iota(x) - \iota(x'), g(x) - g'(x') \rangle \ge 0$.

  As a result, $d_{\PL_k}(\exp_{[L]}([(f, g)]), \exp_{[L]}([(f', g')])) = 0$ implies that $|f(x, y) - f'(x', y')| = 0$ $\pi\otimes\pi$-a.e. and $\| g(x) - g'(x') \|^2 = 0$ $\pi$-a.e. Therefore we conclude that $[f, g] = [f', g']$. 
\end{proof}

\subsubsection{The unlabelled case}
\label{sec:unlabelled-case}

Recall from Section~\ref{sec:consequences_and_comparisons} that $[\Pc_k]$ can be considered as a subspace of $[\PL_k]$, where the attribute spaces $\Lambda_i$ are 0-dimensional Hilbert spaces. Under this identification, the concepts and results from Section~\ref{sec:labelled-case} can be specialized to $[\Pc_k]$. It is more convenient to express the specialized concepts directly in the notation of $[\Pc_k]$, rather than in the notation coming from the embedding $[\Pc_k] \hookrightarrow [\PL_k]$. For the sake of convenience, we summarize these expressions in the language of partitioned networks below.

\begin{defn}[Tangent space for partitioned networks]
  We define the \define{synthetic tangent space} of $[\Pc_k]$ at a point $[P]$ to be  
  \begin{align*}
    T_{[P]} [\Pc_k] \coloneqq \left(\bigcup_{((X_i, \mu_i), \omega) \in [P]} L^2(\mu \otimes \mu)\right) / \sim.
  \end{align*}
In the above, the union is taken over all $k$-partitioned measure networks $((X_i, \mu_i), \omega)$ in the weak isomorphism equivalence class $[P]$. The equivalence relation is defined as follows. For two representatives 
\[
\left((X_i, \mu_i), \omega\right), \quad  \left((X_i', \mu_i'), \omega'\right) \in [P]
\]
and functions 
\[
f \in L^2(\mu \otimes \mu), \quad  f' \in L^2(\mu'\otimes\mu'),
\]
we write $f \sim f'$ if and only if there exists a $k$-partitioned coupling $(\pi_i) \in \Pi_k(\mu, \mu')$ such that 
\[
f(x,y) = f'(x',y') \quad \mbox{for} \quad \pi\otimes \pi-a.e. \; (x,x',y,y') \in X \times X' \times X \times X'.
\]
The equivalence class of $f$ is denoted $[f]$.
\end{defn}

\begin{defn}[Exponential map for partitioned networks]
    For a $k$-partitioned measure network $[P] \in [\Pc_k]$, let $[f] \in T_{[P]}[\Pc_k]$ be a tangent vector with $f \in L^2(\mu \otimes \mu)$. We define the \define{exponential map} by
    \begin{equation*}
        \exp_{[P]} : T_{[P]} [\Pc_k] \mapsto [\Pc_k], \quad \exp_{[P]}([f]) \coloneqq [ (X_i, \mu_i), \omega + f ]. 
    \end{equation*}
\end{defn}

\begin{defn}[Logarithm map for partitioned networks]
    Let $[P],[P'] \in [\Pc_k]$ and let $(\pi_i) \in \Pi_k((\mu_i), (\mu'_i))$ be an optimal $k$-partitioned coupling of $P$ to $P'$. Consider the representative $\hat{P} = ((\hat{X}_i,\hat{\mu}_i),\hat{\omega}) \in [P]$  with
    \[
    \hat{X}_i = X_i \times X_i', \quad \hat{\mu}_i = \pi_i, \quad \hat{\omega}(x,x',y,y') = \omega(x,y).
    \]
    Similarly, define the representative $\hat{L}' = ((\hat{X}_i,\hat{\mu}_i),\hat{\omega}') \in [L']$, where 
    \[
    \hat{\omega}'(x,x',y,y') = \omega'(x',y').
    \]
    We define
    \begin{equation}
        \log_{[P]}^{\pi}([P']) = [\omega' - \omega] \in T_{[P]} [\Pc_k].
    \end{equation}
    It follows by definition of $\exp_{[P]}$ that
    \[
    \exp_{[P]} ( \log_{[P]}^{\pi} ([P'])) = [P'].
    \]
\end{defn}

\subsection{Gradients}

Tasks in geometric statistics such as Fr\'echet means are often formulated in terms of minimization of functionals  over a manifold~\cite{pennec2006intrinsic}. To make sense of gradient flows, we need a notion of gradients. For simplicity, we will discuss the case of $k$-partitioned measure networks $\Pc_k$ (i.e.,~without labels). However, we remark that analogous results can be obtained for labelled graphs where labels are valued in Hilbert spaces.

\begin{defn}[Gradients of functionals]
  Let $F : [\Pc_k] \to \mathbb{R}$ be a functional on the space of $k$-partitioned measure networks. For a network $[P] \in [\Pc_k]$ and a tangent vector $[f] \in T_{[P]} [\Pc_k]$, we define the \define{directional derivative} of $F$, if it exists, to be
\[
  D_{[f]} F([P]) \coloneqq \lim_{t \downarrow 0} \frac{F(\exp_{[P]}([tf])) - F([P])}{t}.
\]
 A functional $F$ is said to be \define{strongly differentiable} (following \cite[Definition 6.23]{sturm2023space}) at a point $[P] \in [\Pc_k]$ if all of its directional derivatives exist, and if there exists a tangent vector $[g] \in T_{[P]} [\Pc_k]$ such that for any $[f] \in T_{[P]} [\Pc_k]$ and for every $(\pi_i) \in \Pi_k(\mu, \mu')$ such that $\| \omega - \omega' \|_{L^2((X \times X')^2, \pi \otimes \pi)} = 0$, it holds that 
\[ 
  D_{[f]} F([P]) = \langle f, g \rangle_{L^2(\pi \otimes \pi)}. 
\]
Here, $((X_i, \mu_i), \omega)$ and $((X_i', \mu_i'), \omega')$ are two representatives of $[P]$, and $f \in L^2(X^2, \mu \otimes \mu)$ and $g \in L^2(X'^2, \mu' \otimes \mu')$ are representatives of $[f]$ and $[g]$ respectively.
 We then write $[\nabla F(P)] \coloneqq [g]$ and refer to $[\nabla F(P)]$ as the \define{gradient} of $F$ at $[P]$. 
\end{defn}

The following proposition characterizes some basic properties of the gradient. It uses the concept of the \define{norm} of a tangent vector. For $[f] \in T_{[P]}[\Pc_k]$, with $P = ((X_i,\mu_i),\omega)$ and $f \in L^2(\mu \otimes \mu)$, define
\[
\| [f] \|_{T_{[P]}[\Pc_k]} \coloneqq \|f\|_{L^2(\mu \otimes \mu)}.
\]
The fact that this is well-defined follows easily from the nature of the equivalence relation used to construct the tangent space. 

\begin{prop}
  If $F : [\Pc_k] \to \mathbb{R}$ is strongly differentiable at $[P] \in [\Pc_k]$, then the gradient $[\nabla F(P)] \in T_{[P]} [\Pc_k]$ is unique and
  \[
    \| [\nabla F (P)] \|_{T_{[P]} [\Pc_k]} = \sup\: \{ D_{[f]} F([P]) \: : \: [f] \in T_{[P]} [\Pc_k], \| [f] \|_{T_{[P]}[\Pc_k]} = 1 \}. 
  \]
\end{prop}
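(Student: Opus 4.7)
The plan is to reduce everything to a standard Hilbert space argument in $L^2(\mu \otimes \mu)$ for a fixed representative of $[P]$, using the diagonal coupling of that representative with itself as the ``test'' coupling appearing in the definition of strong differentiability. All of the subtleties involve bookkeeping with the equivalence relation defining the tangent space; the analytic content is simply the Riesz–type characterization of a linear functional on a Hilbert space.

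First, I would fix one representative $((X_i,\mu_i),\omega) \in [P]$. By the definition of $\sim$, I can pull back any representative of a tangent vector to a function in $L^2(\mu \otimes \mu)$, and the value $\|f\|_{L^2(\mu\otimes\mu)}$ is independent of the chosen representative (this follows by marginalization: if $f(x,y) = f'(x',y')$ for $\pi \otimes \pi$-a.e., then $\int |f|^2 d(\mu\otimes \mu) = \int |f(x,y)|^2 d(\pi \otimes \pi) = \int |f'(x',y')|^2 d(\pi\otimes\pi) = \int|f'|^2 d(\mu'\otimes\mu')$), so the norm on the tangent space is well-defined. Next, take the trivial coupling $\pi = \sum_i (\mathrm{id}_{X_i} \times \mathrm{id}_{X_i})_\# \mu_i$, a $k$-partitioned coupling of $(\mu_i)$ with itself satisfying $\|\omega - \omega\|_{L^2(\pi \otimes \pi)} = 0$. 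Under this coupling, the inner product $\langle f, g \rangle_{L^2(\pi \otimes \pi)}$ reduces, by a change of variables along the diagonal, to the standard inner product $\langle f, g \rangle_{L^2(\mu \otimes \mu)}$.

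For uniqueness, suppose both $[g]$ and $[g']$ serve as gradients. Choosing representatives $g, g' \in L^2(\mu \otimes \mu)$ and applying the strong differentiability identity with the diagonal coupling above yields
\[
\langle f, g - g' \rangle_{L^2(\mu \otimes \mu)} = D_{[f]} F([P]) - D_{[f]} F([P]) = 0
\]
for every $f \in L^2(\mu \otimes \mu)$. Taking $f = g - g'$ forces $g = g'$ in $L^2(\mu \otimes \mu)$, and hence $[g] = [g']$ in $T_{[P]}[\mathcal{P}_k]$.

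For the norm identity, pick a representative $g$ of $[\nabla F(P)]$ in $L^2(\mu \otimes \mu)$. For any unit tangent vector $[f]$ with representative $f \in L^2(\mu \otimes \mu)$, Cauchy–Schwarz applied in $L^2(\mu \otimes \mu)$ gives
\[
D_{[f]} F([P]) = \langle f, g \rangle_{L^2(\mu \otimes \mu)} \leq \|f\|_{L^2(\mu \otimes \mu)} \|g\|_{L^2(\mu \otimes \mu)} = \|[\nabla F(P)]\|_{T_{[P]}[\mathcal{P}_k]}.
\]
The supremum is attained (assuming $g \neq 0$; otherwise both sides vanish trivially) by taking $f = g/\|g\|_{L^2(\mu\otimes\mu)}$, giving $D_{[f]} F([P]) = \|g\|_{L^2(\mu \otimes \mu)}$. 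This proves the claimed equality. There is no real obstacle beyond carefully checking that the equivalence relation allows the simultaneous choice of representatives of $f$ and $g$ over the same $(X_i,\mu_i)$—which is immediate by definition—after which the argument becomes standard Hilbert space theory.
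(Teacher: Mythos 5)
Your argument is essentially correct, and it is a genuinely different route from the paper's: the paper disposes of this proposition in one line by citing Sturm's Lemma 6.24 and remarking that the couplings must be required to respect partitions, whereas you give a self-contained Riesz/Cauchy--Schwarz argument. What your approach buys is transparency --- the reader sees exactly where the Hilbert-space structure of $L^2(\pi \otimes \pi)$ enters, and why the diagonal coupling is the right test coupling in the definition of strong differentiability; the paper's citation buys brevity and consistency with Sturm's framework. Your norm identity is clean: the upper bound follows from Cauchy--Schwarz in $L^2(\pi\otimes\pi)$ together with marginalization (so you do not even need $f$ and $g$ to live over the same representative there), and the supremum is attained at $f = g/\|g\|$.

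The one place where you are too quick is the final parenthetical claim that the simultaneous choice of representatives of two tangent vectors over the same $(X_i,\mu_i)$ is ``immediate by definition.'' It is not: a priori the two candidate gradients $[g]$ and $[g']$ are represented by functions over two \emph{different} representatives $((X_i,\mu_i),\omega)$ and $((X_i',\mu_i'),\omega')$ of $[P]$, and a function on $X' \times X'$ cannot in general be pulled back to $X \times X$ along a coupling unless it is constant on the fibers. The fix is the standard blow-up: choose a $k$-partitioned coupling $(\pi_i)$ with $\|\omega - \omega'\|_{L^2(\pi\otimes\pi)} = 0$ (which exists by Lemma \ref{lem:infimum_achieved} since both are in $[P]$), pass to the representative $((X_i \times X_i', \pi_i), \hat\omega)$ with $\hat\omega((x,x'),(y,y')) = \omega(x,y)$ as in Proposition \ref{prop:aligned}, and lift $g$ and $g'$ to $\hat g((x,x'),(y,y')) = g(x,y)$ and $\hat g'((x,x'),(y,y')) = g'(x',y')$, which are equivalent to $g$ and $g'$ respectively under the projection couplings. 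Once both candidates sit over this common representative, your computation $\langle f, \hat g - \hat g'\rangle = 0$ for all $f$, with $f = \hat g - \hat g'$, closes the uniqueness argument exactly as you wrote it.
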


\begin{proof}
  This follows directly from \cite[Lemma 6.24]{sturm2023space} for the case of partitioned measure networks by requiring couplings to respect partitions where necessary. 
\end{proof}

\subsection{Calculating gradients}\label{sec:calculating_gradients}

Motivated by some practical applications, in this section, we compute expressions for gradients of two functionals defined over $[\Pc_k]$. Namely, we consider the Fr\'echet functional and its generalization to the problem of geodesic dictionary learning. We will put these expressions to use in Section \ref{sec:frechet_means}, where we conduct some numerical computations with partitioned measure networks. Since in this section we focus on practical utility for numerical applications, some computations are done formally. A rigorous theoretical treatment of gradient flows has been addressed in the context of measure networks by Sturm \cite{sturm2023space}. In particular, a rigorous analysis of the dictionary learning problem may be a useful area for future study. More generally, the barycenter computation problem remains an active area of research even in the case of measures in $\mathbb{R}^d$ (e.g. \cite{chizat2023computational, altschuler2022wasserstein}).

Our gradient computations will make repeated use of the following result.

\begin{prop}\label{prop:aligned}
    Let $[P],[P'] \in [\Pc_k]$ and let $(\pi_i)$ be an optimal $k$-partitioned coupling. There exist representatives $\overline{P} \in [P]$ and $\overline{P}' \in [P']$ whose underlying sets and measures are the same and the diagonal couplings give an optimal $k$-partitioned coupling. If $[P]$ and $[P']$ are finite, then the representatives can also be taken to be finite.

    In particular, if $(\pi_i)$ is an optimal $k$-partitioned coupling of $P$ and $P'$, then such representatives are given by
    \[
    \overline{P} = \big((X_i \times X'_i, \pi_i), ((x, x'), (y, y')) \mapsto \omega(x, y)\big) \quad \mbox{and} \quad \overline{P}' =   \big((X_i \times X'_i, \pi_i), ((x, x'), (y, y')) \mapsto \omega'(x', y')\big).
    \]
\end{prop}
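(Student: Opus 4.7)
The plan is to verify directly that the explicit construction in the statement does the job, by constructing weak isomorphisms via projection maps and then computing the distortion under the diagonal couplings.

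First, I would show that $\overline{P} \sim P$ (and, by a symmetric argument, $\overline{P}' \sim P'$). Using $\overline{P}$ itself as the intermediate object in Definition \ref{defn:strong_and_weak_isomorphism}, take the identity $(\mathrm{id}_{X_i \times X_i'})$ as the weak isomorphism from $\overline{P}$ to itself, and define $\phi_i : X_i \times X_i' \to X_i$ by $(x,x') \mapsto x$. The marginal condition on $(\pi_i)$ gives $(\phi_i)_\# \pi_i = \mu_i$, and the definition of $\overline{\omega}$ immediately yields $\overline{\omega}((x,x'),(y,y')) = \omega(x,y) = \omega(\phi_i(x,x'), \phi_j(y,y'))$ for all $(x,x') \in X_i \times X_i'$ and $(y,y') \in X_j \times X_j'$. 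Hence $(\phi_i)$ is a weak isomorphism from $\overline{P}$ to $P$, proving $\overline{P} \sim P$. The same argument with projection onto the second coordinate handles $\overline{P}' \sim P'$.

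Next, I would verify that the tuple of diagonal couplings $\big(\Delta_{\pi_i, \pi_i}\big)_{i=1}^k$, where $\Delta_{\pi_i,\pi_i} = (\mathrm{id} \times \mathrm{id})_\#\pi_i$, is an optimal $k$-partitioned coupling between $\overline{P}$ and $\overline{P}'$. Since $\overline{P}$ and $\overline{P}'$ share the same underlying probability spaces, $\Delta_{\pi_i,\pi_i} \in \Pi(\pi_i,\pi_i)$, so $(\Delta_{\pi_i,\pi_i}) \in \Pi_k((\pi_i),(\pi_i))$. A direct computation pushes the distortion back to $\pi_i \otimes \pi_j$:
\[
\|\overline{\omega} - \overline{\omega}'\|_{L^2(\Delta_{\pi_i,\pi_i} \otimes \Delta_{\pi_j,\pi_j})}^2 = \int |\omega(x,y) - \omega'(x',y')|^2 \, d\pi_i(x,x') \, d\pi_j(y,y') = \|\omega - \omega'\|_{L^2(\pi_i \otimes \pi_j)}^2.
\]
Summing over $i,j$ and taking square roots, the total distortion equals $2 \cdot d_{\Pc_k^2}(P,P')$ by the assumed optimality of $(\pi_i)$. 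Since $\overline{P} \sim P$ and $\overline{P}' \sim P'$ and the partitioned network distance is well-defined on equivalence classes (Theorem \ref{thm:partitioned_metric}), we have $d_{\Pc_k^2}(\overline{P}, \overline{P}') = d_{\Pc_k^2}(P, P')$. Therefore $(\Delta_{\pi_i,\pi_i})$ achieves this infimum and is optimal.

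Finally, if $[P]$ and $[P']$ are finite, we can pick finite representatives $P,P'$ to begin with; then each $X_i \times X_i'$ is finite, so $\overline{P}$ and $\overline{P}'$ are also finite. There is no real obstacle to the proof — it is essentially a bookkeeping argument — but the one step deserving care is confirming that weak isomorphism is witnessed correctly by the projection maps (rather than inadvertently needing bijectivity, which is only required for strong isomorphism).
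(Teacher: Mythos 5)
Your proof is correct and is precisely the ``straightforward verification'' that the paper alludes to (the paper itself omits the details, deferring to Lemma 12 of the hypernetwork paper): you witness the weak isomorphisms via coordinate projections out of $\overline{P}$ and $\overline{P}'$, and then check that the diagonal couplings reproduce the optimal distortion. No gaps; the one subtle point you flag --- that weak isomorphism does not require bijectivity of the projections --- is handled correctly.
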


The proof is a straightforward verification that the proposed $\overline{P}$ and $\overline{P}'$ satisfy the conditions; see \cite[Lemma 12]{chowdhury2023hypergraph} for details in the case of hypernetworks. When $k$-partitioned measure networks satisfy the conditions in the first paragraph, we say that the networks are \define{aligned}; the conclusion of the proposition is that, when considered up to weak isomorphism, we can assume without loss of generality that any pair of partitioned measure networks is aligned. Moreover, given a finite collection of partitioned measure networks $\{ [P_i], 1 \leq i \leq N \}$, repeated application of the proposition shows that we can assume without loss of generality that each $P_i$, $i \geq 2$, is aligned to $P_1$.

\subsubsection{Fr\'echet functional}
  Define the \define{Fr\'echet functional} for a finite collection  of partitioned measure networks $[P_1], \ldots, [P_N] \in [\Pc_k]$ to be the maps $F : [\Pc_k] \to \mathbb{R}$ given by
  \begin{equation}\label{eq:frechet_functional}
    F([R]) = \frac{1}{N} \sum_{i = 1}^N d_{\Pc_k}([R], [P_i])^2. 
  \end{equation}
  Based on the discussion following Proposition \ref{prop:aligned}, we can assume without loss of generality that $R$ and the $P_i$ are aligned, so that they are of the form $((X_i, \mu_i), \omega_R)$ and $((X_i, \mu_i), \omega_{P_i})$, respectively, and an optimal $k$-partitioned coupling of $R$ to each $P_i$ is given by diagonal couplings $(\Delta_{\mu_i,\mu_i})$. We claim that, under the assumption that all networks $R,P_1,\ldots,P_N$ are finite, the gradient of $F$ is represented by 
  $\nabla F(R) \in L^2(X^2, \mu^{\otimes 2})$, with
  \begin{equation}\label{eq:gradient_frechet_functional}
    \nabla F(R) = \omega_R - \frac{1}{N} \sum_{i = 1}^N \omega_{P_i}.
  \end{equation}

  The Fr\'echet functional was studied in \cite{chowdhury2020gromov} in the setting of measure networks. Although the case of partitioned measure networks can be treated by the same approach, we include a derivation of \eqref{eq:gradient_frechet_functional} for completeness. We first consider the setting where $N = 1$, in which case the Fr\'echet functional simplifies to
  \[
    F([R]) = d_{\Pc_k}([P], [R])^2. 
  \]
  We fix two representatives $R = ((X_i, \mu_i), \omega_R)$ and $P = ((X_i, \mu_i), \omega_P)$ such that an optimal $k$-partitioned coupling between $R$ and $P$ is given by diagonal couplings $(\Delta_i) = (\Delta_{\mu_i,\mu_i})$. Following our usual convention, we let $\Delta = \sum_i \Delta_i$, considered as a measure on $\sqcup_i X_i$. Let $f \in L^2(X^2, \mu\times \mu)$ be a representative of a tangent vector in $T_{[R]} [\Pc_k]$. Then by definition of the directional derivative,
  \begin{align*}
    D_{[f]} F([R]) &= \lim_{t \downarrow 0} \frac{F(\exp_{[R]}(t[f])) - F([R])}{t}. 
  \end{align*}
  By the proof of Proposition~\ref{prop:geodesic_exp} (specialized to the case of partitioned networks), we may assume without loss of generality that $(\Delta_i)$ is an optimal $k$-partitioned coupling between $\exp_R(tf)$ and $P$, for $t$ sufficiently small. Then, for small enough $t$,
  \begin{align*}
    &\frac{1}{t} (F(\exp_R (tf)) - F(R) ) \\
    &\qquad \qquad = \frac{1}{t} \left( d_{\Pc_k}(\exp_R(tf), P)^2 - d_{\Pc_k}(R, P)^2 \right) \\ 
                                         &\qquad \qquad = \sum_{i=1}^k \frac{1}{t} \left( \frac{1}{2} \| \omega_R + tf - \omega_P \|_{L^2(\Delta_i^{\otimes 2})}^2 - \frac{1}{2} \| \omega_R - \omega_P \|_{L^2(\Delta_i^{\otimes 2})}^2  \right) \\
                                         &\qquad \qquad = \sum_{i=1}^k \frac{t}{2} \| f \|^2_{L^2(\mu_i^{\otimes 2})} + \langle f, \omega_R - \omega_P \rangle_{L^2(\Delta_i^{\otimes 2})} + \frac{1}{2t} \left( \| \omega_R - \omega_P \|_{L^2(\Delta_i^{\otimes 2})}^2 - \| \omega_R - \omega_P \|_{L^2(\Delta_i^{\otimes 2})}^2 \right).
  \end{align*}
  Taking $t \rightarrow 0$ yields
  \[
  D_{[f]} F([R]) = \sum_{i=1}^k \langle f, \omega_R - \omega_P \rangle_{L^2(\Delta_i^{\otimes 2})} = \langle f, \omega_R - \omega_P \rangle_{L^2(\Delta^{\otimes 2})},
  \]
  hence a representative of $[\nabla F(R)]$ is
  \begin{align*}
    \nabla F(R) = \omega_R - \omega_P \in L^2(\mu \otimes \mu). 
  \end{align*}
  The formula \eqref{eq:gradient_frechet_functional} follows by linearity.

\subsubsection{Geodesic dictionary learning}\label{sec:geodesic_dictionary_learning}
  Let $[P_1], \ldots, [P_N] \in [\Pc_k]$ be a collection of finite $k$-partitioned measure networks. We consider a generalization of the Fr\'echet mean, which seeks to find a \define{dictionary} of $m$ \define{atoms} (i.e. representatives or archetypes) $[D_1], \ldots, [D_m] \in [\Pc_k]$ (each of which we assume to be finite) and a collection of $N$ vectors in the $(m-1)$-dimensional \define{probability simplex} $\alpha_1, \ldots, \alpha_N \in \Delta^{m}$ (that is, each $\alpha_j \in \R^m$ has nonnegative entries which sum to one), that provide a useful set of reference points for summarizing the original dataset. In what follows, we give a brief heuristic derivation of gradient expressions which can be used for approximating such a dictionary (see Section~\ref{sec:dictionary-learning} for applications involving such a derivation). A rigorous treatment of this difficult bi-level optimization problem is out of the scope for our present paper. We point out that, compared to the better-understood analogous problem of learning Wasserstein barycenters or dictionaries in Euclidean space \cite{schmitz2018wasserstein, chizat2023computational, altschuler2022wasserstein}, in our setting, issues such as uniqueness of barycenters have not been rigorously addressed.

  We first informally define a \define{barycenter operator} to be any assignment taking a proposed dictionary $\{D_j\}_{j=1}^m$ together with an vector $\alpha \in \Delta^m$, whose entries are denoted $\alpha(1),\ldots,\alpha(m)$, to
  \begin{equation}\label{eq:barycenter_operator}
    B\left( \{ D_j \}_{j = 1}^m, \alpha\right) \in \argmin_{R \in \Pc_k} \sum_{j = 1}^m \alpha(j) d_{\Pc_k}(D_j, R)^2.
  \end{equation}
  That is, there is not necessarily a unique minimizer, so a barycenter operator must involve a choice. In practice, barycenters are approximated by some algorithm, so the necessity of making a choice models a realistic situation (although the approximators are likely to only return a local minimizer). 
  
  Next, the loss functional for the \define{geodesic dictionary learning problem} is the bi-level optimization problem:
  \begin{equation}
    F\left( \{ D_j \}_{j = 1}^m, \{ \alpha_i \}_{j = 1}^N \right) = \frac{1}{N} \sum_{j = 1}^N d_{\Pc_k} \left( B(\{D_j\}, \alpha_j ), P_j \right)^2 . 
  \end{equation}
  For practical purposes, we replace this with an easier problem by taking the following heuristic approximation of the barycenter operator. Given a dictionary $\{D_j\}$, a basepoint $P$ and a weight vector $\alpha$, we define the \define{local barycenter operator} to return $B_P(\{D_j\},\alpha) = ((X_i,\mu_i),\omega_B)$, where it is assumed that all atoms have been aligned to $P = ((X_i,\mu_i),\omega)$, so that $D_j = ((X_i,\mu_i),\omega_{D_j})$, and $\omega_B$ is defined by
  \[
  \omega_B = \sum_{j=1}^m \alpha(j) \omega_{D_j},
  \]
  We then consider
  \[
    F\left( \{ D_j \}_{j = 1}^m, \{ \alpha_i \}_{j = 1}^N \right) = \frac{1}{N} \sum_{j = 1}^N d_{\Pc_k} \left( B_{P_j}(\{D_j\}, \alpha_j ), P_j \right)^2 . 
  \]

  To approximate this via gradient descent, we hold all arguments constant besides one and run a gradient step on the induced functional, and iterate this process through arguments. We claim that, for $\{ \alpha_i \}_{i = 1}^N$ held constant, the gradient of $F$ in each of the $D_i$ (that is, holding other atoms constant) is given by 
  \begin{equation}\label{eqn:dictionary_gradient_1}
    \nabla_{D_i} F(D_i) = \frac{1}{N} \sum_{j = 1}^m \omega_{D_j} \left( \sum_{\ell = 1}^N \alpha_{\ell}(i) \alpha_{\ell}(j) \right) - \frac{1}{N} \sum_{\ell = 1}^N \alpha_{\ell}(i) \omega_{P_\ell}.  
  \end{equation}
  where we have assume that all spaces are aligned to a common $k$-partitioned measures space, as in Proposition \ref{prop:aligned}.
  Similarly, the Fr\'echet gradient in each of the $\alpha_\ell$ is given by
  \begin{equation}\label{eqn:dictionary_gradient_2}
    \nabla_{\alpha_\ell} F(\alpha_\ell) = \frac{1}{N} \left[\langle \omega_{D_i}, \omega_{B} - \omega_{P_\ell} \rangle \right]_{i = 1}^m.
  \end{equation}
  Heuristic derivations of these expressions are provided below.

  By linearity (as in the case of Fr\'{e}chet means), let us take $N = 1$. Then the functional simplifies to
  \[
    F\left( \{ D_j \}_{j = 1}^m, \alpha \right) = d_{\Pc_k} (B_P(D, \alpha), P)^2. 
  \]
  We wish to consider the functional induced by holding all but one of the atoms fixed; without loss of generality, suppose that only $D_1$ varies and $D_2,\ldots,D_m$ and $\alpha$ are fixed. Then we consider the functional
  \[
  \tilde{F}(D_1) = F(\{D_i\}_{i=1}^m, \alpha). 
  \]
  Let $f \in L^2(X^2, \mu^{\otimes 2})$ be a tangent vector at $D_1$ and let $\omega_{B(t)}$ denote the network kernel of $\mathrm{exp}_{D_1}(tf)$---explicitly, $\omega_{B(t)} = \omega_B + \alpha_1 t f$. By a similar computation to the one used in the derivation of the gradient to the Fr\'{e}chet functional, we have, for sufficiently small $t$ (so that the ideas of Proposition~\ref{prop:geodesic_exp} apply),
  \begin{align*}
    &\frac{1}{t} (\tilde{F}(\exp_{D_1} (tf)) - \tilde{F}(D_1) ) \\
    &\qquad = \frac{1}{t} \left( d_{\Pc_k}(B_P(\{\exp_{D_1}(tf),D_2,\ldots,D_m\},\alpha), P)^2 - d_{\Pc_k}(B_P(\{D_j\},\alpha), P)^2 \right) \\ 
                                         &\qquad = \sum_{i=1}^k \frac{1}{t} \left( \frac{1}{2} \| \omega_{B(t)} - \omega_P \|_{L^2(\Delta_i^{\otimes 2})}^2 - \frac{1}{2} \| \omega_{B} - \omega_P \|_{L^2(\Delta_i^{\otimes 2})}^2  \right) \\
                                         &\qquad = \sum_{i=1}^k \frac{1}{t} \left( \frac{1}{2} \| \omega_{B} + \alpha_1 t f - \omega_P \|_{L^2(\Delta_i^{\otimes 2})}^2 - \frac{1}{2} \| \omega_{B} - \omega_P \|_{L^2(\Delta_i^{\otimes 2})}^2  \right) \\
                                         &\qquad = \sum_{i=1}^k \frac{\alpha_1^2t}{2} \| f \|^2_{L^2(\mu_i^{\otimes 2})} + \alpha_1 \langle f, \omega_{B} - \omega_P \rangle_{L^2(\Delta_i^{\otimes 2})} + \frac{1}{2t} \left( \| \omega_{B} - \omega_P \|_{L^2(\Delta_i^{\otimes 2})}^2 - \| \omega_{B} - \omega_P \|_{L^2(\Delta_i^{\otimes 2})}^2 \right).
  \end{align*}
  The claimed formula \eqref{eqn:dictionary_gradient_1} then follows by a straightforward calculation.

  When deriving the formula \eqref{eqn:dictionary_gradient_2}, we observe that this amounts to computing the derivative of a function defined on $\R^m$. The derivation then follows by elementary methods.

\section{Applications and algorithms}
\label{sec:applications}

In this section, we discuss a large number of applications using our formulation of the partitioned measure networks. 
In Section~\ref{sec:algos_intro}, we first give an overview of numerical algorithms involved in these applications. 
In Section~\ref{sec:matching_and_comparison}, we discuss network matching and comparison using partitioned measure networks. We comment on the connection between measure network matching using Gromov-Wasserstein type distances and spectral network alignment such as EigenAlign (Section~\ref{sec:relation_to_spectral_alignment}). We support our analysis using experiments involving synthetically generated graphs and hypergraphs (Section~\ref{sec:match-random}), real-world metabolic networks (Section~\ref{sec:metabolic}), and multi-omics data (Section~\ref{sec:multi-omics}). 
In Section~\ref{sec:multiscale}, we further expand the applications to study multiscale network matching using partitioned measure networks, by studying networks that arise from 3D mesh objects (Section~\ref{sec:multiscale-point-cloud}) and protein-protein intersections (Section~\ref{sec:multiscale-protein}). 
In Section~\ref{sec:frechet_means}, we demonstrate via simple examples in computing geodesic interpolations between three hypergraphs and their barycenter. 
Finally in Section~\ref{sec:dictionary-learning}, we further extend the study of barycenter to the problem of dictionary learning using partitioned measure networks, that is, given an ensemble of partitioned measure networks, learn a basis such that each ensemble member could be described as a convex combination of the basis elements. 
We study nonlinear and linear dictionary learning in Section~\ref{sec:nonlinear-dictionary-learning} and Section~\ref{sec:linear-dictionary-learning} respectively, with examples that arise from hypergraph stochastic block model (Section~\ref{sec:stochastic-block-model}) and mutagenicity data (Section~\ref{sec:mutagenicity}).  
For implementation details, see Appendix~\ref{sec:algorithms_appendix}.   

\subsection{Numerical algorithms}
\label{sec:algos_intro}

Practical applications of our framework in machine learning and statistics hinges upon numerical solution of quadratic programs that arise from the matching problem introduced in Definition \ref{defn:partitioned_network_distance} and its extensions. While significant progress has been made developing and analyzing numerical approaches for the case of Gromov-Wasserstein matchings (for measure networks) \cite{peyre2016gromov, chowdhury2019gromov, thual2022aligning}, co-optimal transport (for measure hypernetworks) \cite{titouan2020co, tran2023unbalanced}, and augmented Gromov-Wasserstein 
 (for augmented measure networks) \cite{demetci2023revisiting}, our framework allows us to consider in generality (labelled) $k$-partitioned measure networks, from which each of these algorithms emerges as a special case. We provide a brief overview in what follows, and we defer technical details of specific algorithms to Appendix \ref{sec:algorithms_appendix} due to space considerations. 

As we are interested in numerical calculations related to generalized networks, we will assume all networks to be finite in this section and where appropriate use matrix notation to represent functions defined on finite spaces. That is, for a $k$-partitioned measure network $P = ((X_i, \mu_i), \omega)$ (see \eqref{defn:partitioned_measure_network}) we take each $X_i$ to be a finite set and denote its cardinality by $|X_i|$. Thus $\mu_i$ belonging to the probability simplex  $\Delta^{|X_i|}$ (see Section \ref{sec:geodesic_dictionary_learning}) is a discrete probability distribution which we often write as a column vector, and $\omega$ a matrix of dimensions $|\sqcup_i X_i |\times| \sqcup_i X_i |$. Furthermore, we write
\[
  \omega = \begin{bmatrix} \omega_{11} && \cdots && \omega_{1k} \\ \vdots && \ddots && \vdots \\ \omega_{k1} && \cdots && \omega_{kk} \end{bmatrix} = \left[ \omega_{ij} \right]_{1 \leq i, j \leq k}, 
\]
where $\omega_{ij}$ corresponds to the restriction of $\omega$ onto $X_i \times X_j$, i.e. a submatrix of dimensions $|X_i| \times |X_j|$. In what follows, we use angle brackets to denote the Frobenius inner product between vectors or matrices, i.e. $\langle A, B \rangle = \tr(A^\top B)$. 

We take $p = 2$ in the definition of the (labelled) partitioned network distance (see \eqref{defn:labelled_partitioned_distance} and Definitions \ref{defn:partitioned_network_distance}), as this allows for a efficient scheme for evaluating computationally the value of $\| \omega - \omega' \|_{L^2(\pi \otimes \xi)}^2$. For ease of notation, we will re-write the (squared) objective from \eqref{eqn:labelled_partitioned_distance}:
\begin{align}
  \min_{\pi_i \in \Pi(\mu_i, \mu_i'), 1 \leq i \leq k} \frac{1}{2} \sum_{i, j = 1}^k \| \omega - \omega' \|_{L^2(\pi_i \otimes \pi_j)}^2 + \sum_{i = 1}^k \| d_{\Lambda_i} \circ (\iota_i, \iota_i') \|_{L^2(\pi_i)}^2.
  \label{eq:discrete_labelled_partitioned_distance_a}
\end{align}
Up to scaling $(\omega, \omega')$ and $d_{\Lambda_i}$ by constant factors, this is equivalent to the problem as written in \eqref{eqn:labelled_partitioned_distance} with $p = 2$. 
In particular, we note that the factor of $1/2$ is associated to terms quadratic in the $\pi_i$, which will simplify expressions later.
Adopting the notation of \cite{peyre2016gromov}, we define $L(\omega, \omega')$ to be the 4-way \define{distortion tensor}
\begin{equation}
  L(\omega, \omega')_{ijkl} = \frac{1}{2} | \omega_{ik} - \omega'_{jl} |^2.
  \label{eq:distortion_tensor}
\end{equation}
Introduce also for the labelled setting
\begin{equation}
  C_i(x, x') = \frac{1}{2} d_{\Lambda_i}(\iota_i(x), \iota_i'(x') )^2, 1 \leq i \leq k
\end{equation}
as cost matrices for matching labels in each label metric space $\Lambda_i$.
Although our theoretical setup in Definition \ref{defn:labelled_partitioned_measure_network} assumes the existence of labelling functions $\iota_i$ and label metric spaces $\Lambda_i$, in practice our computations depend only on the matrices $C_i$ and so the labellings are not made explicit. For instance, $C_i$ may be constructed from kernels and thus understood to correspond to squared pairwise distances in a reproducing kernel Hilbert space.

Using the quantities we have now introduced, the problem \eqref{eq:discrete_labelled_partitioned_distance_a} can be written as
\begin{align}
  \min_{\pi_i \in \Pi(\mu_i, \mu_i'), 1 \leq i \leq k} \frac{1}{2} \sum_{i, j = 1}^k \langle L(\omega_{ij}, \omega_{ij}'), \pi_i \otimes \pi_j \rangle + \sum_{i = 1}^k \langle C_i, \pi_i \rangle.
  \label{eq:discrete_labelled_partitioned_distance}
\end{align}
By setting appropriate terms to zero (following the lines of Definition \ref{defn:generalized_network_embeddings}), we can recover the optimal transport matching problems on generalized measure networks such as measure hypernetworks and measure networks (both labelled and unlabelled), as well as the standard optimal transport.
Additionally, we can consider regularized variants of this problem which may yield favourable results in practice \cite{cuturi2013sinkhorn, peyre2016gromov, blondel2018smooth}, both in terms of numerical solution schemes as well as properties of the solution:
\begin{align}
  \min_{\pi_i \in \Pi(\mu_i, \mu_i'), 1 \leq i \leq k} \frac{1}{2} \sum_{i, j = 1}^k \langle L(\omega_{ij}, \omega_{ij}'), \pi_i \otimes \pi_j \rangle + \sum_{i = 1}^k \langle C_i, \pi_i \rangle + \sum_{i = 1}^k \varepsilon_i \Omega_i(\pi_i).
  \label{eq:entropic_discrete_labelled_partitioned_distance}
\end{align}
A common choice of $\Omega$ is the relative entropy $\Omega(\pi) = \KL(\pi | \mu \otimes \mu')$, which is consistent with the existing formulations of regularized co-optimal transport \cite{titouan2020co} and Gromov-Wasserstein transport \cite{peyre2016gromov}.

Solving \eqref{eq:discrete_labelled_partitioned_distance} (or \eqref{eq:entropic_discrete_labelled_partitioned_distance}) amounts to finding minimizers of a (regularized) constrained, non-convex quadratic program in the couplings $(\pi_i)_{i = 1}^k$. Na\"ive solutions of these problems using general purpose solvers is not scalable \cite{han2023covariance}. We develop several iterative algorithms to this end: since the problem \eqref{eq:discrete_labelled_partitioned_distance} is non-convex, different choices of algorithm may converge to different minima. 
In summary, algorithmic approaches to solving \eqref{eq:discrete_labelled_partitioned_distance} or \eqref{eq:entropic_discrete_labelled_partitioned_distance} can be divided into (a) approaches relying on iterative solution of the standard linear optimal transport or Gromov-Wasserstein transport as algorithmic primitives, and (b) approaches based on gradient descent. We refer the interested reader to Appendix \ref{sec:algorithms_appendix} for details. As our examples illustrate, the algorithm of choice depends heavily on the application at hand. 

\subsection{Network matching and comparison}
\label{sec:matching_and_comparison}

In this section, we illustrate the utility of our theoretical and algorithmic framework via network matching and comparison.  We first discuss a connection between Gromov-Wasserstein measure network alignment and a spectral network alignment method, as well as their respective generalizations to hypergraphs. Together with numerical results, we show that the optimal transport framework has a better behaviour, in terms of both accuracy and scalability.

We next consider an application to metabolic network alignment. We model this problem as one of labelled hypergraph matching (i.e. $k = 2$ for our partitioned setup), and solve an \emph{unbalanced} transport problem due to the lack of a one-to-one matching between network elements. We find that, while incorporating label information alongside the hypergraph structure is essential to obtaining meaningful alignments, the hypergraph relational structure provides information that is crucial for refining the alignment. That is, incorporating the hypergraph structure improves significantly upon using labels alone. 

Last, we turn to a problem of simultaneous sampling and feature alignment in multi-omics data, wherein networks are derived from general data matrices (see Example~\ref{ex:generalized_network_examples}). This is a problem for which co-optimal transport and augmented Gromov-Wasserstein have been previously developed \cite{titouan2020co, tran2023unbalanced, demetci2023revisiting}, viewing data matrices as hypergraphs where nodes are samples and hyperedges are features. These algorithms fall under our partitioned framework with $k = 2$. We show that partitioned networks are a flexible and more general tool for modelling multi-omics  data, and results in improved alignment accuracy.

\subsubsection{Relation to spectral network alignment} 
\label{sec:relation_to_spectral_alignment}

We first comment on the connection of Gromov-Wasserstein measure network matching to a (perhaps widely known) family of \emph{spectral alignment} approaches. As introduced in Definition \ref{defn:generalized_network_distances}, for $p = 2$ and measure networks $(X, \mu, \omega), (X', \mu', \omega')$, the Gromov-Wasserstein (measure network) alignment problem is to solve
\begin{equation}
  \min_{\pi \in \Pi(\mu, \mu')} \frac{1}{2} \langle L(\omega, \omega'), \pi \otimes \pi \rangle, \quad L(\omega, \omega')_{ijkl} = \frac{1}{2} | \omega_{ik} - \omega'_{jl} |^2,
  \label{eq:gromov_alignment_a}
\end{equation}
which corresponds to partitioned measure network matching of Definition \ref{defn:partitioned_network_distance} with $k = 1$. This approach was studied in depth by \cite{xu2019scalable} for network alignment. Spectral alignment methods are a family of approaches that have gained attention for graph alignments \cite{feizi2016spectral, feizi2019spectral, nassar2018low, onaran2017projected} and also for hypergraphs \cite{shen2018genome}. Briefly, for two input graphs $G = (V, E), G' = (V', E')$, spectral network alignment seeks a node matching between $V$ and $V'$ that optimally preserves graph structure in a way similar to the Gromov-Wasserstein problem. This leads to a quadratic assignment problem (QAP), which upon being relaxed amounts to solving for the Perron-Frobenius eigenvector of a square matrix with dimensions $|V \times V'| \times |V \times V'|$ with all positive entries. We now make this problem description  concrete. We abuse notation and also write $G_{ij}, G_{ij}'$ to denote the (binary) adjacency matrices of the graphs $G, G'$ respectively. 

Feizi et al. \cite{feizi2016spectral} defined a matching score, for $(i, j), (k, l) \in (V \times V')^2$:
\begin{equation}
  A_{ijkl} = \begin{cases}
    s_1 &(i, k) \in E \wedge (j, l) \in E'; \\ 
    s_2 &(i, k) \notin E \wedge (j, l) \notin E'; \\
    s_3 &\text{otherwise}. 
  \end{cases}
\end{equation}
Here, $s_1, s_2, s_3 > 0$.  
The first case corresponds to matching edges to edges (referred to as ``matches'' in \cite{feizi2016spectral}) with score $s_1$, the second case corresponds to matching non-edges with non-edges (``neutrals'') with score $s_2$, and the final case corresponds to matching non-edges to edges, or vice versa (``mismatches'') with score $s_3$. 

It is immediately clear that $A_{ijkl}$ plays the same role (but with opposite sign, since in \cite{feizi2016spectral} the aim is to \emph{maximize} the matching score), as the tensor $L(G_{ik}, G'_{jl})$ in the Gromov-Wasserstein network alignment setting. While $A_{ijkl}$ is a \emph{matching score} (larger is better), $L_{ijkl}$ is a \emph{distortion} (smaller is better). The authors further derived an identity for $A$ (Equation 3.3 of \cite{feizi2016spectral}):
\[
  A = (s_1 + s_2 - 2s_3) (G \otimes G') + (s_3 - s_2)(G \otimes \ones + \ones \otimes G' ) + s_2 (\ones \otimes \ones), 
\]
which is also a convenient formula for the Gromov-Wasserstein setup, for computing $\langle A, x \otimes x \rangle$.

The graph alignment problem is then formulated as a QAP in terms of an unknown alignment matrix $y \in \mathbb{R}^{|V| \times |V'|}$ (which by an abuse of notation, we will also write as a vector of length $|V \times V'|$):
\begin{align}
  \begin{split}
    &\max_{y} y^\top A y\\
    \text{ s.t. } \qquad &y \ones \leq 1, y^\top \ones \leq 1, y \in \{ 0, 1 \}.
  \end{split}\label{eq:feizi_alignment_qap}
\end{align}
Since direct solution of this problem is intractable, Feizi et al. proposed an algorithm called \emph{EigenAlign} which first solves a relaxation of \eqref{eq:feizi_alignment_qap} where the integer and row/column-sum constraints are replaced with non-negativity and unit-ball constraints:
\begin{align}
  \begin{split}
    &\max_{y} y^\top A y \\
    \text{ s.t. } \qquad &y \ge 0, \| y \|_2 \leq 1.
  \end{split}\label{eq:feizi_alignment_eigenproblem}
\end{align}
The solution to this problem is shown to be $v$, the Perron-Frobenius eigenvector of the positive alignment score matrix $A$. In the second step of the algorithm, the solution $v$ to the relaxed problem \eqref{eq:feizi_alignment_eigenproblem} is projected back onto the constraint set by solving a linear assignment problem:
\begin{align}
\begin{split}
    &\max_{y} v^\top y \\
    \text{ s. t. } \qquad & y \ge 0 : y \ones \leq 1, y^\top \ones \leq 1, y \in \{ 0, 1 \}. 
    \end{split}\label{eq:feizi_alignment_linear_assignment}
\end{align}
This can be understood as solving for $y$ that maximizes its similarity to the relaxed solution $v$ that also satisfies the bijectivity constraints. 
The objective function for the Gromov-Wasserstein alignment problem has the exact same form as \eqref{eq:feizi_alignment_qap}, since $\langle L(G, G'), \pi \otimes \pi \rangle = \vec(\pi)^\top \mat(L(G, G')) \vec(\pi)$. 

  The correspondence between the Gromov-Wasserstein \eqref{eq:gromov_alignment_a} and spectral alignment problems (\ref{eq:feizi_alignment_qap}, \ref{eq:feizi_alignment_eigenproblem}, \ref{eq:feizi_alignment_linear_assignment}) has some subtlety. Under the simplex constraint $\pi \in \Pi(\mu, \mu')$ the problem \eqref{eq:gromov_alignment_a} is invariant to constant shifts in the distortion tensor $L$, since for any $c \in \mathbb{R}$, 
\[
  \langle L(\omega, \omega') + c, \pi \otimes \pi \rangle = \langle L(\omega, \omega'), \pi \otimes \pi \rangle + c. 
\]
For binary incidence matrices $\omega_{ij}, \omega'_{ij} \in \{0, 1\}$, we can introduce a \emph{shifted} version of the distortion tensor, where $\eta > 0$ is a small constant:
\[
  \overline{L}(\omega, \omega')_{ijkl} = L(\omega, \omega)_{ijkl} - \frac{1}{2} - \eta.
\]
Then $\overline{L}$ is strictly negative. We can therefore re-write the problem \eqref{eq:gromov_alignment_a} equivalently as
\begin{align*}
  \min_{\pi \in \Pi(\mu, \mu')} \langle \overline{L}, \pi \otimes \pi \rangle = \vec(\pi)^\top \mat(\overline{L}) \vec(\pi) \Longleftrightarrow \max_{\pi \in \Pi(\mu, \mu')} \vec(\pi)^\top \mat(-\overline{L}) \vec(\pi).
\end{align*}
Therefore, we may choose $A = \mat(-\overline{L})$ in (\ref{eq:feizi_alignment_qap}, \ref{eq:feizi_alignment_eigenproblem}, \ref{eq:feizi_alignment_linear_assignment}) since it is a positive matrix. This corresponds to $s_1 = s_2 = 1/2 + \eta$ and $s_3 = \eta$. On the other hand, crucially the objective \eqref{eq:feizi_alignment_eigenproblem} is \emph{not} invariant under additive shifts to the matrix $A$, since
\[
  y^\top (A + c \ones \ones^\top) y = y^\top A y + c | \ones^\top y |^2
\]
and $|\ones^\top y|$ is not constant on the 2-norm ball. Therefore, while additive shifts of the distortion tensor leave the Gromov-Wasserstein problem \eqref{eq:gromov_alignment_a} unchanged, different choices of the shift lead to different relaxed spectral problems \eqref{eq:feizi_alignment_eigenproblem}. We remark that, since the Perron-Frobenius theorem restricts EigenAlign to positive alignment matrices, one cannot straightforwardly take $A = -\mat(L)$. 

The main remaining difference between Gromov-Wasserstein and EigenAlign lies in the constraints: inequality constraints on the row and column sums of $\pi$ are replaced instead with equality constraints. When $|V| = |V'|$ and node weights are chosen to be uniform, this amounts to the set of bi-stochastic matrices.
In a sense, the relaxed problem solved by Gromov-Wasserstein departs less from \eqref{eq:feizi_alignment_qap} than EigenAlign. Noting that $\Pi(\mu, \mu') \subseteq \operatorname{Prob}(X \times X')$ and that $\{ x \in \mathbb{R}^k : |x| = 1, x \geq 0 \} \subseteq \{ x \in \mathbb{R}^k : \| x \| \leq 1, x \geq 0 \} $, the spectral problem solved by EigenAlign is in fact itself a relaxation of the corresponding Gromov-Wasserstein problem.
Together with the observation that Gromov-Wasserstein finds a solution in a single step while EigenAlign requires two consecutive steps, this suggests that Gromov-Wasserstein network alignment may behave more favorably since the matching constraints are retained throughout the algorithm and can better inform the alignment. 

This spectral alignment framework can be extended to the problem of hypergraph alignment \cite{shen2018genome, lauziere2022exact, michoel2012alignment}, although hypergraphs introduce the additional complication that in general, hyperedges of a hypergraph may have edges of differing degree. For the simpler case of $K$-uniform hypergraphs (hypergraphs in which each hyperedge spans exactly $K$ nodes), the matching score matrix $A_{ijkl}$ can be extended to a matching score \emph{tensor} $A_{(i_1,j_1), \ldots, (i_K, j_K)}$ which has dimensions $|V \times V'|^K$. Writing $y$ as a $|V \times V'|$ matching vector, a generalized matching objective is
\begin{align}
\begin{split}
    &\max_y \left\langle A, \otimes_{i = 1}^K y \right\rangle\\
    \text{ s. t. } \qquad & y \ones \leq 1, y^\top \ones \leq 1, y \in \{ 0, 1 \}.
\end{split}\label{eq:hypergraph_alignment_qap}
\end{align}
In \cite{shen2018genome}, this problem is tackled in an analogous way to the EigenAlign algorithm (\ref{eq:feizi_alignment_qap}, \ref{eq:feizi_alignment_eigenproblem}, \ref{eq:feizi_alignment_linear_assignment}), that is, a relaxation of \eqref{eq:hypergraph_alignment_qap} onto the unit norm ball is derived which amounts to a generalized tensor eigenproblem which can be approximately solved using higher-order power iterations \cite{kolda2011shifted}. This is then projected back onto the constraint set by solving a linear assignment problem. Non-uniform hypergraphs are converted to uniform hypergraphs by introducing a dummy node repeatedly to hyperedges as needed until all hyperedges have the same degree. In contrast, co-optimal transport-based matchings of hypergraphs still boils down to a quadratic problem (as opposed to higher-order) in the coupling $\pi$, regardless of hypergraph degree. Furthermore, optimal transport handles non-uniform hypergraphs naturally. 

\begin{figure}[!ht]
  \centering 
  \includegraphics[width = \linewidth]{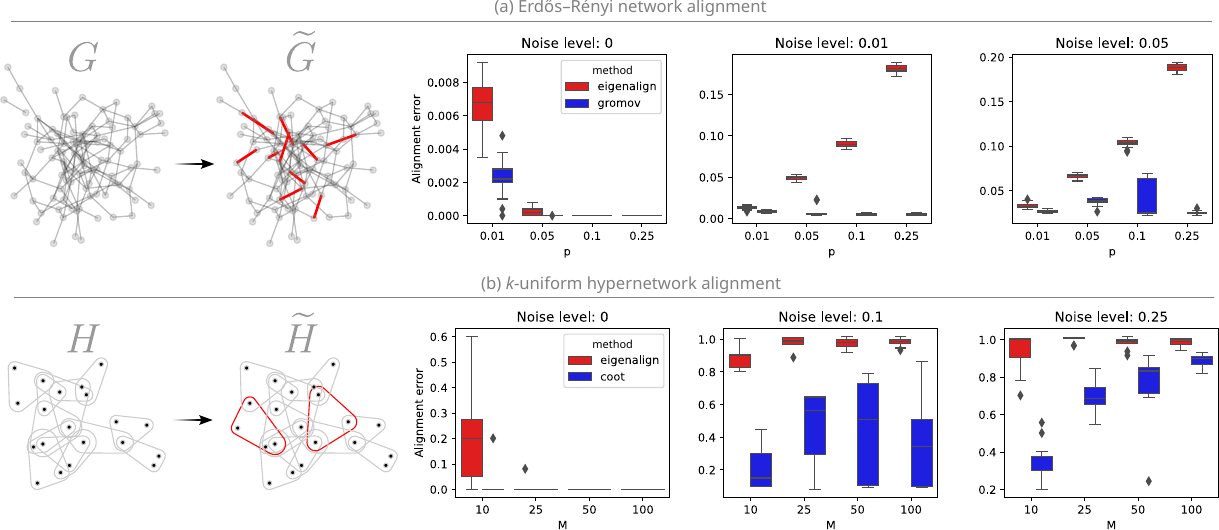}
  \caption{(a) Illustration of Erd\"os-R\'enyi random graph alignment problem with noise (edges due to noise shown in red); alignment errors (measured in terms of the distortion functional \eqref{eq:gromov_alignment_a}) achieved by EigenAlign and Gromov-Wasserstein under permutation and noise.
    (b) Illustration of random 3-uniform hypergraph alignment problem with noise (hyperedges due to noise shown in red); alignment errors (measured in terms of the objective \eqref{eq:hypergraph_alignment_qap}) achieved by higher-order EigenAlign and COOT under permutations and noise.
    }
  \label{fig:spectral_alignment_comparison}
\end{figure}

\subsubsection{Comparison to spectral network alignment for random graphs and hypergraphs}
\label{sec:match-random}

For this first set of experiments, we use synthetic datasets of graphs and hypergraphs.
In Figure \ref{fig:spectral_alignment_comparison}(a) we investigate the relative performance of spectral alignment and Gromov-Wasserstein alignment, considering Erd\"os-R\'enyi (ER) graphs of size $N = 100$ with parameter $p \in \{ 0.01, 0.05, 0.1, 0.25 \}$. For a randomly sampled ER graph $G$, we form a copy $\widetilde{G}$ in which nodes have been relabelled via a random permutation. Optionally, we also add noise in the form of random addition or deletion of edges independently with probability $q \in \{ 0, 0.01, 0.05 \}$. We align $G$ to $\widetilde{G}$ using both the implementation of EigenAlign from \cite{feizi2016spectral} and Gromov-Wasserstein using a proximal gradient algorithm (see Algorithm \ref{alg:proximal_partitioned}), similar to the approach taken by \cite{xu2019gromov}. Since the proximal gradient algorithm yields a coupling that is dense but potentially vanishingly small for most entries (i.e. strictly on the interior of the constraint set), we apply a ``rounding'' of the result onto an extreme point of the coupling polytope to yield a sparse permutation matrix. For each alignment, we measure the \emph{alignment error} by calculating the corresponding distortion functional \eqref{eq:gromov_alignment_a} to measure the alignment quality. In the absence of noise, $\widetilde{G}$ and $G$ are isomorphic since they are represented by adjacency matrices that are identical up to permutation, and a distortion of zero corresponds to a perfect matching. Non-zero noise breaks this isomorphism (so that the ground truth node matching may no longer be the ``right'' one after adding noise), so the lower the distortion the better the alignment. In this sense, the distortion is an objective measure of alignment quality rather than the coupling itself. In all cases we consider, we find that Gromov-Wasserstein finds an alignment that yields a lower distortion than EigenAlign, shown in Figure \ref{fig:spectral_alignment_comparison}(a). At a conceptual level, this can be understood since the Gromov-Wasserstein problem arises as a relaxation of the quadratic assignment problem \eqref{eq:feizi_alignment_qap} that accounts for the quadratic objective and the assignment constraints jointly, whereas the EigenAlign approach adopts a two step approach, first relaxing the assignment constraint to a norm ball constraint \eqref{eq:feizi_alignment_eigenproblem} and then projecting back onto the assignment polytope \eqref{eq:feizi_alignment_linear_assignment}. Because of this, the assignment constraints in the second step cannot inform the quadratic program in the first step. 

In Figure \ref{fig:spectral_alignment_comparison}(b) we turn to hypergraph alignments. For hypergraphs, the scope of the higher-order spectral alignment approach is limited to dealing with uniform hypergraphs, and furthermore the time and space complexity scale exponentially in the order of the hypergraph. We therefore consider random 3-uniform graphs for $N = 25$ nodes and $M = \{ 10, 25, 50, 100 \}$ hyperedges. Each hyperedge is obtained by sampling 3 nodes uniformly without replacement from the node set. Given a hypergraph $H$, we form a copy $\widetilde{H}$ by randomly relabelling nodes and hyperedges, and then replacing a fraction $q \in \{ 0, 0.1, 0.25 \}$ of hyperedges with independently sampled hyperedges. The spectral alignment approach only aligns nodes (since for uniform hypergraphs a node alignments also induces hyperedge alignments), so we quantify the quality of alignments in terms of the objective of \eqref{eq:hypergraph_alignment_qap} rather than the co-optimal transport distortion which depends on both node and hyperedge couplings. As in the graph alignment case, we find that co-optimal transport alignments (using again the proximal method of Algorithm \ref{alg:proximal_partitioned}) perform as well or better compared to spectral alignments in all cases.

Spectral hypergraph alignments are restricted to uniform hypergraphs and are computationally expensive, while co-optimal transport does not have these limitations.
Measuring the computation time for spectral alignment and co-optimal transport alignment for $N \in \{ 5, 10, 25\}$, we find that spectral alignment is several orders of magnitude more expensive in terms of runtime. Runs for $N = 50$ with spectral alignment have failed due to memory usage exceeding the available 32 GB. 

\subsubsection{Metabolic network alignment}
\label{sec:metabolic}

Metabolic networks (and chemical reaction networks more generally) are an example of systems in which higher-order relations are essential to retain information: chemical species may be modelled as nodes and reactions as hyperedges, which may involve any number of reactants simultaneously \cite{jost2019hypergraph}. 
We consider the metabolic networks of \emph{E. coli} and halophilic archaeon DL31, retrieved from the Kyoto Encyclopedia of Genes and Genomes (KEGG) database \cite{kanehisa2000kegg} with accession numbers \texttt{eco01100} and \texttt{hah01100} respectively.
We model each metabolic network as a labelled measure hypernetwork, where nodes are identified with metabolite compounds and hyperedges are identified with enzymes which catalyze reactions involving multiple compounds (multiple reactants and products). For simplicity, we discard directionality information and model the metabolic networks as undirected hypergraphs (i.e. we do not distinguish between reactants and products within each hyperedge).
For \texttt{eco01100} (the source network) we construct a measure hypernetwork with 984 metabolites and 1005 reaction terms, and for \texttt{hah01100} (the target network) a measure hypernetwork with 679 metabolites and 558 reaction terms. We find that the minimum and maximum hyperedge sizes are 2 and 9, respectively, in both the source and target hypergraphs. This verifies the heterogeneous, non-uniform nature of these hypernetworks. We visualize each network in Figure \ref{fig:metabolic_alignment}(a), showing the associations between compounds (nodes, red) and reactions (hyperedges, blue). In contrast to the previous synthetic example, we now must align two hypergraphs that are non-uniform and different in size. Within our framework, the unbalanced, fused hypergraph alignment scheme is the most suitable approach and we demonstrate the effectiveness of this method.

\begin{figure}[!ht]
  \centering
  \includegraphics[width = 0.9\linewidth]{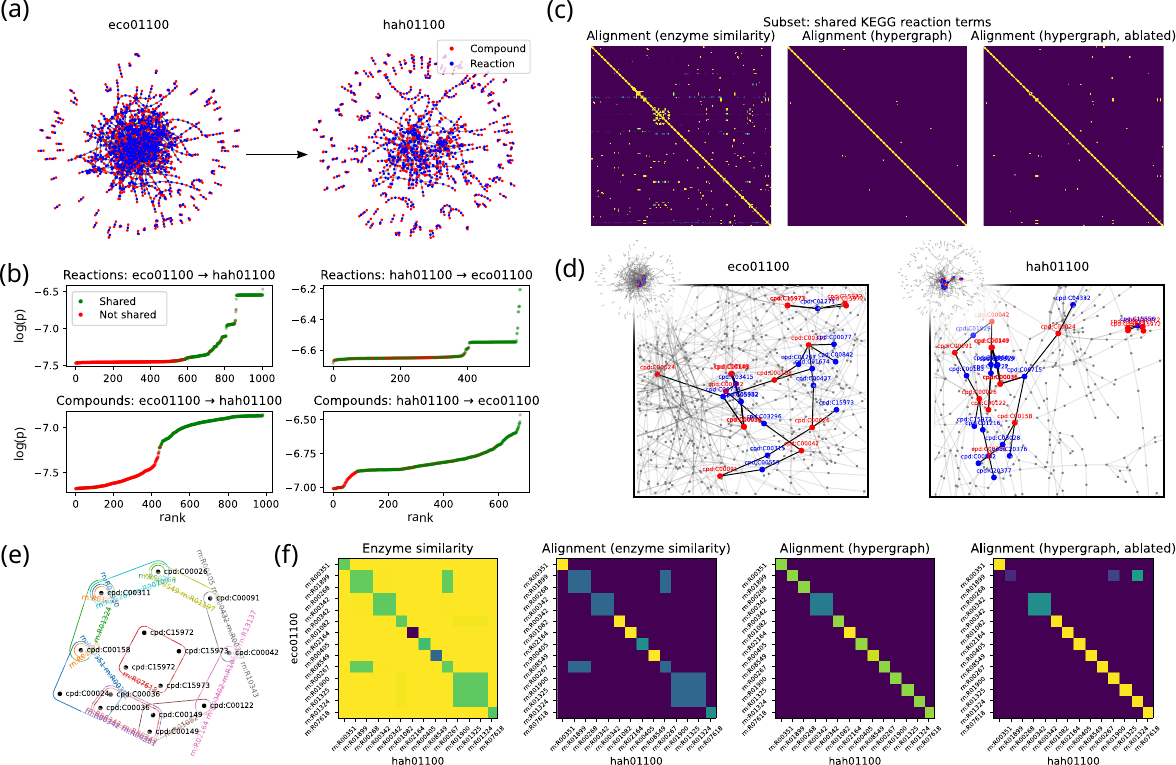}
  \caption{(a) Genome-scale metabolic networks of \emph{E. coli} (\texttt{eco01100}) and halophilic archaeon DL31 (\texttt{hah01100}).
    (b) Nodes (compounds) and hyperedges (enzymes) ranked by total out-going probability mass as found by unbalanced alignment, coloured by whether its true match is shared in the target network.
    (c) Hyperedge couplings for the subset of reaction terms (enzymes) common to both organisms, found using (left) enzyme similarity only, (middle) labelled hypergraph alignment (with both metabolite and enzyme similarities provided), and (right) ablated, labelled hypergraph alignment (with metabolite similarities provided, but not enzyme similarities). 
    (d) Zoom-in on conserved tricarboxylic acid (TCA) cycle subnetwork as shown in genome-scale metabolic network layout from (a). 
    (e) Hypergraph layout of TCA cycle subnetwork, shown as a rubber band diagram.
    (f) Alignment of reaction terms in TCA cycle subnetwork, from left to right: enzyme similarity matrix for TCA cycle reaction terms; and alignments found using enzyme similarity, labelled hypergraph alignment (with both metabolite and enzyme similarities provided), and ablated labelled hypergraph alignment (with metabolite similarities provided, but not enzyme similarities), respectively.
  }
  \label{fig:metabolic_alignment}
\end{figure}

As we mentioned previously in Section \ref{sec:relation_to_spectral_alignment}, this hypergraph alignment problem was addressed using a spectral approach by \cite{shen2018genome}. As the metabolic hypernetworks are non-uniform, dummy nodes are added to produce a $d$-uniform hypergraph, where $d$ is the maximum hyperedge degree in the original non-uniform hypergraph. This uniform hypergraph is then represented as a $|V|^d$ adjacency tensor. Due to the size of the alignment tensor, in \cite{shen2018genome} a fairly involved computational scheme is described that exploits its symmetry properties. Even so, distributed computing is necessary to speed up the alignment, which was reported to take over two hours to match the two networks (559 metabolites and 537 reactions for \texttt{hah01100}, 794 metabolites and 923 reactions for \texttt{eco01100}) \cite[Supplementary Materials]{shen2018genome}. 

For metabolite compounds and reaction terms, we construct pairwise cost matrices between the source and target using a similar approach to \cite{shen2018genome}. For metabolites, similarity scores are calculated using the cheminformatics package ChemmineR \cite{cao2008chemminer}. For any two enzymes $(e_1, e_2)$, the similarity score is taken to be $1/N(e_1, e_2)$ where $N(e_1, e_2)$ is the number of enzyme entries in the lowest common level in the Enzyme Commission (EC) classification \cite{bairoch2000enzyme}. 
Since there is not a one-to-one correspondence between the two metabolic networks, we solve an unbalanced variant (see Section \ref{sec:unbalanced_matchings}) of the labelled hypergraph alignment problem between the two hypernetworks using a proximal gradient variant of Algorithm \ref{alg:unbalanced_hypernetwork}, using $\alpha = 0.9, \varepsilon = 10^{-3}, \lambda = 0.1$ and 250 and 1000 inner and outer iterations respectively. This takes less than a minute utilizing 4 cores of an Intel Xeon Gold 6242 system, several orders of magnitude faster than the higher-order spectral method employed by \cite{shen2018genome}. 

To assess the quality of the matching, we use the fact that compounds and enzymes conserved between these two organisms are known. For a subset of nodes and hyperedges therefore, we have a biological ``ground truth'' correspondence to compare against. Since only parts of the two metabolic networks are shared, we expect the unbalanced matching to reflect this and assign more mass to shared components. In Figure \ref{fig:metabolic_alignment}(b) we show reactions and compounds in each organism, ranked by the (log) total mass assigned to it by the unbalanced matching algorithm. Components which have a true match in the target network are shown in green: it is clear that more mass is assigned to components with a true match, and non-overlapping components tend to be down-weighted. These weights can be thought of as a measure of confidence in the alignment. We remark that the \texttt{eco01100}  network is larger, so a significant fraction of its components do not have true matches in the \texttt{hah01100} network. Despite this, we still observe a separation in \texttt{hah01100} between components with and without true matches.

To understand how our alignment method depends on the node and hyperedge labels and relational information encoded in the hypergraph structure, we perform two additional alignments where some of this information is hidden (i.e.,~ablation study).
To study the performance of alignment using hyperedge label information alone without the hypergraph structure, we directly align enzymes (hyperedges) using the enzyme similarity matrix by solving a standard optimal transport problem using the proximal point method \cite{xie2020fast}. 
We also consider hyperedge alignment using node labels and the hypergraph structure, when hyperedge labels are hidden. To do this, we set the hyperedge-hyperedge cost matrix to zero and recompute the alignment with the same parameters using only the node-hyperedge incidence matrix and the node-node cost matrix.
In Figure \ref{fig:metabolic_alignment}(c), for the subset of enzymes or reaction terms shared between both organisms, we show alignments obtained using only hyperedge labels \emph{(enzyme similarity)}, the full labelled hypergraphs \emph{(hypergraph)}, and only node labels \emph{(hypergraph, ablated)}. Compared to the full labelled hypergraph alignment result, we find that using only enzyme similarity leads to a much more noisy alignment, with large amounts of mass assigned away from the diagonal. Suppressing hyperedge labels leads to a slightly worse alignment of hyperedges compared to the full labelled alignment, but still significantly cleaner than using hyperedge labels alone. 

Finally, we focus on the tricarboxylic acid (TCA) cycle, a fundamental metabolic process that is conserved between both organisms. In Figure \ref{fig:metabolic_alignment}(d) we highlight this subnetwork, and in Figure \ref{fig:metabolic_alignment}(e) we show its rubber band visualization. In Figure \ref{fig:metabolic_alignment}(f), we find that a full labelled hypergraph alignment near-perfectly matches the components, while the ablated hypergraph alignment without hyperedge labels again does slightly worse. In contrast, the enzyme similarity score does not provide full information about the matching, and hence alignment based on enzyme similarity alone performs much worse.
Together, these results indicate that utilizing the hypergraph structure in combination with label information is crucial for achieving a good alignment between the two metabolic networks, outperforming alignments where either label or relational information are suppressed.

\subsubsection{Multi-omics sample and feature alignment}
\label{sec:multi-omics}

Co-optimal transport has previously been employed for simultaneously matching samples and features between heterogeneous datasets \cite{titouan2020co}. One particularly popular example is that of multi-omics datasets, where two or more sets of features (e.g. gene expression and protein marker expression) are observed in samples (cells) \cite{tran2023unbalanced}. This problem can be cast in the setting of hypernetwork alignment by interpreting samples and features as nodes and hyperedges respectively, and the sample-by-feature data matrix as the membership function. In \cite{tran2023unbalanced} the application of unbalanced co-optimal transport was demonstrated to improve alignment quality, and in \cite{demetci2023revisiting} the augmented Gromov-Wasserstein matching (see Definition \ref{defn:generalized_network_distances}) is introduced: this corresponds to partitioned network matching when only node-node information is provided in addition to node-hyperedge relations, but not hyperedge-hyperedge information. Here we consider $1,000$ cells sampled from the same CITE-seq dataset as in \cite{demetci2023revisiting}, in which 15 genes and their corresponding marker proteins were measured. 

Partitioned measure networks allow pairwise relations \emph{within} as well as \emph{between} partitions to be modelled, so we incorporate pairwise similarities in each domain. This is in addition to the sample-feature information contained directly in the data matrix. We choose to capture this by calculating sample-sample and feature-feature correlations. Under the partitioned measure network alignment framework, we expect that pairwise structure between samples (respectively features) should be preserved by the alignment. To get an initial understanding of the data, we show the PCA embeddings and the partitioned network data constructed from each modality in Figure \ref{fig:cite_seq}(a): $\omega_{00}^{(\mathrm{RNA}, \mathrm{ADT})}, \omega_{11}^{(\mathrm{RNA}, \mathrm{ADT})}$ are correlation matrices between samples and features respectively, and $\omega_{01}^{(\mathrm{RNA}, \mathrm{ADT})}$ are the data matrices capturing cell-wise gene or marker expression. Put simply, we expect that correlated (anti-correlated) pairs of genes should be matched to correlated (anti-correlated) pairs of proteins, and similarly for cells between modalities. 

\begin{figure}[!ht]
  \centering
  \includegraphics[width = \linewidth]{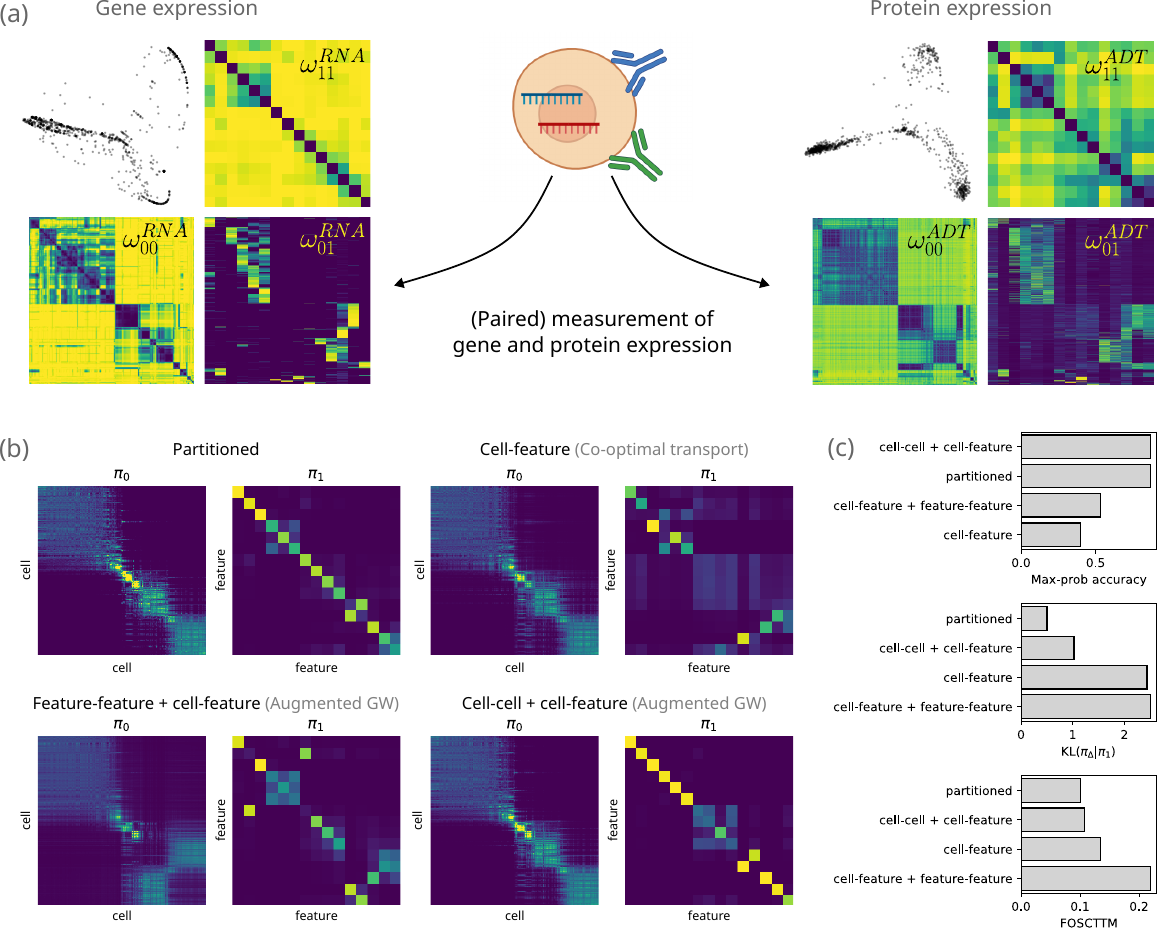}
  \caption{(a) Illustration of CITE-seq dataset shown in transcriptomic (gene expression or RNA) and surface marker (protein expression or ADT) modalities. For modality $m$, $\omega_{01}^m, \omega_{00}^m, \omega_{11}^m$ denote the data matrix, sample-sample similarities, and feature-feature similarities, respectively. (b) Sample and feature alignments obtained using partitioned network matching, co-optimal transport, and augmented Gromov-Wasserstein with sample-sample information and feature-feature information respectively. The ground truth matching for samples and features corresponds to the diagonal. (c) Quality of feature alignments in terms of (top) maximum probability assignment (higher is better);  (middle) reverse KL-divergence to the diagonal coupling (lower is better); and (bottom) quality of sample alignments in terms of FOSCTTM (lower is better).}
  \label{fig:cite_seq}
\end{figure}

We then compute an entropy-regularized alignment of the RNA and protein partitioned measure networks using Algorithm \ref{alg:projected_partitioned}. We introduce a parameter $\alpha \in [0, 1]$ to control the trade-off between the contribution of Gromov-Wasserstein type terms ($\omega_{00}, \omega_{11}$) and co-optimal transport type terms ($\omega_{01}$), scaling these inputs by $\sqrt{\alpha}$ and $\sqrt{1-\alpha}$ respectively. We choose values $\alpha \in \{ 0, 0.1, \ldots, 0.9, 1 \}$. For each partition, different levels of entropic regularization $\varepsilon_{0, 1} \in \{ \texttt{5e-4}, \texttt{1e-3}, \texttt{5e-3}, \texttt{1e-2}, \allowbreak \texttt{5e-2}, \texttt{0.1}, \texttt{0.5} \}$ are used, as it is well known that regularization level may play a role in the alignment quality \cite{demetci2023revisiting}. Finally, we consider the special cases where pairwise information (i.e. the Gromov-Wasserstein term) on samples, features, or both, are suppressed. We implement this by setting $\omega_{00}^{\mathrm{RNA}, \mathrm{ADT}}$, $\omega_{11}^{\mathrm{RNA}, \mathrm{ADT}}$ to zero as needed.

For each set of parameter values we compute the alignment, and then calculate the fraction of gene transcripts which are correctly matched to their corresponding protein in terms of maximum assigned probability. We show in Figure \ref{fig:cite_seq}(b) the best matchings obtained for the settings of partitioned matching, co-optimal transport, and augmented Gromov-Wasserstein (AGW) on samples and features respectively. In terms of identification of features (Figure \ref{fig:cite_seq}(c)), we found that the partitioned alignments and AGW with sample-sample information were both able to correctly assign 13/15 (87\%) features, in terms of maximum probability. On the other hand, AGW with feature-feature information and co-optimal transport correctly assigned 8/15 and 6/15 features respectively. While the fraction of correctly matched features by maximum probability gives an indication of the alignment accuracy, it does not account for the level of uncertainty in the matching. To account for this, we also calculate the KL divergence of the diagonal (ground truth) coupling relative to the alignment, reasoning that alignments that produce the correct matching with a higher certainty should have a lower divergence (i.e.,~lower divergence is better). We find that the partitioned matching produces a more informative alignment ($\KL = 0.502$) compared to AGW ($\KL = 1.027$), which can also be assessed visually from the couplings. Finally, in Figure \ref{fig:cite_seq}(c) bottom, we assess the quality of sample matchings in terms of the fraction of samples closer than true match (FOSCTTM) which is a standard performance metric in the single cell alignment literature \cite{demetci2023revisiting}, for which a lower value indicates a better alignment. Both the partitioned alignment and AGW with sample-sample information produce sample alignments of similar quality, whereas AGW with feature-feature information and co-optimal transport have worse performance.

\subsection{Partitioned networks for multiscale network matching}
\label{sec:multiscale}

Whereas the previous examples focus on hypergraphs (i.e.,~partitioned networks with $k = 2$), our framework can be used to model multiscale data by setting $k \ge 2$. This insight was obtained in \cite{chowdhury2023hypergraph}: a multi-scale graph with $k$ simplification levels can be modelled as $k$ coupled hypergraphs. In Section~\ref{sec:multiscale_matching_algorithm}, we show how the work of~\cite{chowdhury2023hypergraph} can be framed and extended in terms of $k$-partitioned measure networks. Specifically, we can model relations between nodes in the same simplification level, as well as between simplification levels using partitioned  measure networks. We demonstrate the application of our framework 
for matching geometric networks (obtained from 3D objects), as well as non-geometric protein-protein interaction networks. 

\subsubsection{Multiscale point cloud matching}
\label{sec:multiscale-point-cloud}

We apply multiscale matching to networks derived from 3D models of a wolf and a centaur from the TOSCA object database \cite{bronstein2008numerical}. In \cite{chowdhury2023hypergraph}, co-optimal transport was employed to find semantic matchings between two poses of the centaur graph across multiple scales. In their experiments, the two poses of the centaur graph have the same number of nodes at each level and are nearly identical in structure. The co-optimal transport framework of~\cite{chowdhury2023hypergraph} is also applicable to finding semantic matchings between a wolf and a centaur, where the two graphs are significantly different in their size, connectivity, and semantic components. Indeed, the ``true'' semantic correspondence between the objects is not one-to-one, since the wolf has four limbs and the centaur has six. In this section, we solve the multiscale object matching problem using a partitioned measure network formulation and compare it against previous approaches.  

For each input graph, a multi-scale topological simplification was produced using the heat kernel multiscale reduction of \cite[Section 5.3]{chowdhury2023hypergraph} with $k = 3$ simplification levels. We take each of the $\omega_{i, i+1}, \omega_{i, i+1}'$, for $0 \leq i < 2$ to be binary incidence matrices of node-hyperedge relations between successive reductions. Pairwise relations $\omega_{ii}, \omega_{ii}', 0 \leq i < 3$ are constructed from the graph shortest path distances on each simplification level.

\begin{figure}[!ht]
  \centering
  \includegraphics[width = \linewidth]{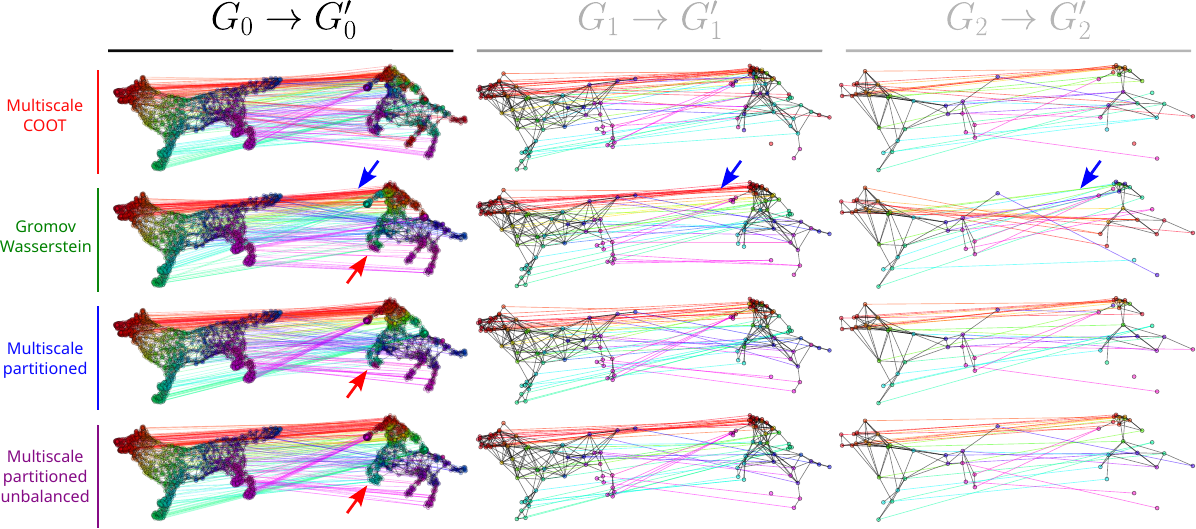} 
  \caption{Multiscale network matchings: TOSCA wolf and centaur. Each row corresponds to a different algorithm, and each column corresponds to a graph simplification level. 1st row: multiscale COOT~\cite{chowdhury2023hypergraph}. 2nd row: Gromov-Wasserstein measure network coupling obtained  independently on each simplification level. 3rd and 4th row: the multiscale alignment using $k$-partitioned networks (3rd row) and its unbalanced variant (4th row). Blue arrows: matching of the wolf's head is inconsistent across scales for Gromov-Wasserstein alignment. Red arrows: discontinuity in centaur's front leg disappears with an unbalanced alignment.}
  \label{fig:tosca_multiscale_alignment}
\end{figure}

In Figure \ref{fig:tosca_multiscale_alignment} we visualize the alignments obtained using multiple methods: (1st row) multiscale COOT (using the algorithm of \cite{chowdhury2023hypergraph}); (2nd row) Gromov-Wasserstein measure network matching obtained independently on each simplification level; (3rd row) the multiscale alignment using $k$-partitioned networks (Algorithm \ref{alg:projected_partitioned}) and (4th row) its unbalanced variant (Algorithm \ref{alg:unbalanced_partitioned}). At each simplification level, each node in the centaur graph is connected to a node in the wolf graph with the maximum matching probability.
While multiscale COOT finds a matching that is largely consistent across scales, the matching is very noisy especially at fine scales as evidenced by discontinuities in the colour gradient in the centaur. On the other hand, independent Gromov-Wasserstein matchings at each simplification level leads to more locally consistent matchings (continuous colour gradients), but the matching fails to be consistent across scales; this is apparent, for instance, by looking at the matchings of the head/neck regions across simplification levels (blue arrows), as well as the arms of the centaur. The multiscale partitioned network alignment produces a semantically reasonable, consistent matching at each scale, correctly matching the head, neck, hind legs, as well as tail regions. Due to the difference in the number of limbs across each model, we observe mismatches among the arms of the centaur, which is not entirely surprising. Interestingly, a small portion of the front legs of the centaur model is matched to the hind legs of the wolf (red arrows), seen as a discontinuity in the colour gradient. However, these mismatches disappear in the unbalanced partitioned measure network alignment. In other words, there is a lack of one-to-one correspondence between the limbs of a centaur and those of a wolf causing a number of mismatches, and unbalanced matching may alleviate these issues.  

In addition to the visual assessment of semantic matchings, we show in Table \ref{tab:loss_values}(a) different components of the objective \eqref{eq:multiscale_alignment_problem} for the matchings found by multiscale COOT, Gromov-Wasserstein, and partitioned measure network alignment, respectively. This table provides us with an unbiased quantification of alignment quality directly in terms of the distortion. These results confirm our visual observations from Figure \ref{fig:tosca_multiscale_alignment}: Gromov-Wasserstein alignment at each scale produces the minimal Gromov-Wasserstein loss reflecting preservation of pairwise relations at each individual scale, but a very high COOT loss indicates a lack of consistency across scales. Conversely, multiscale COOT minimizes the COOT loss while producing the highest Gromov-Wasserstein loss, which suggests the reverse. The partitioned alignment on the other hand yields a much lower Gromov-Wasserstein loss, while achieving a COOT loss only marginally worse than that found by multiscale COOT. These results demonstrate that the partitioned multiscale alignment is able to incorporate both pairwise and multiscale information effectively to simultaneously align networks at multiple scales. 

\begin{table}[!ht]
  \begin{subtable}{\linewidth}
    \centering
    \begin{tabular}{lcc}
        \hline
        & \textbf{COOT\_loss} & \textbf{GW\_loss} \\
        \hline
        Gromov-Wasserstein & 0.036033 & 0.010600 \\
        Multiscale COOT & 0.018261 & 0.019294 \\
        Partitioned alignment (projected gradient) & 0.019534 & 0.012787 \\
        \hline
    \end{tabular}
    \caption{TOSCA wolf and centaur: COOT and GW distortion losses.}
  \end{subtable}
  \begin{subtable}{\linewidth}
    \centering
    \begin{tabular}{lcccc}
        \hline
        \textbf{Method} & \textbf{COOT\_loss} & \textbf{GW\_loss} & \textbf{Node correctness} & \textbf{Edge correctness} \\
        \hline
        Gromov-Wasserstein               & 0.058684   & 0.016181 & 0.613546 & 0.967680 \\
        Multiscale COOT             & 0.028927   & 0.021701 & 0.036853 & 0.565782 \\ 
        Partitioned (proximal)       & 0.030732   & 0.016125 & 0.585657 & 0.964916 \\
        Partitioned (block) & 0.032357   & 0.014105 & 0.597610 & 0.951580 \\
        \hline
    \end{tabular}
    \caption{Protein-protein interaction network: COOT and GW distortion losses, as well as node and edge correctness.}
  \end{subtable}
\caption{Gromov-Wasserstein (GW) and co-optimal transport (COOT) loss for multiscale network matching. (a) TOSCA object matching. (b) Protein-protein interaction network matching.}
  \label{tab:loss_values}
\end{table}

\subsubsection{Multi-scale biological network matching}
\label{sec:multiscale-protein}

Our multi-scale network matching approach is not limited to geometric graphs, e.g., those constructed from a point cloud sampled from 3D objects. We now consider a dataset of protein-protein interaction (PPI) networks \cite{vijayan2015magna++}, in which nodes and edges correspond to protein species and biochemical interactions respectively. We take $G_0$ to be the PPI network of high-confidence interactions among $1,004$ proteins, and $G_0'$ to be the PPI network with $20\%$ more low-confidence interactions. For each of $G_0, G_0'$ we construct a progressive topological simplification using the heat kernel reduction described in \cite{chowdhury2023hypergraph}, yielding multi-scale reductions $\{ G_i \}_{i = 0}^2$ and $\{ G_i' \}_{i = 0}^2$. We visualize in Figure~\ref{fig:ppi}(a) each multiscale reduction, in which nodes are coloured by the leading non-trivial eigenvector of $L_{G_0}$, the graph Laplacian of $G_0$. Nodes in the low-confidence networks $G_0'$ are also coloured using the ground truth node correspondence. We then calculate matchings at each scale by employing partitioned measure network alignment, multiscale co-optimal transport, as well as independent Gromov-Wasserstein matchings at each simplification level. 

In this example, we care about the \emph{exact} node matchings and so we opt to solve the exact, unregularized network matching problem: for the partitioned measure network alignment as well as Gromov-Wasserstein measure network alignment, we use a proximal gradient algorithm (Algorithm \ref{alg:proximal_partitioned}). In Figure~\ref{fig:ppi}(b), the matchings from each of these algorithms are shown across all three simplification levels. We also show a set of ``ground truth'' couplings: between $G_0$ and $G_0'$, this is the identity coupling, whereas between $G_i$ and $G_i'$ (for $i = 1, 2$) an approximate ground truth coupling is found by calculating a matrix of pairwise correlations between coarse-grained nodes and then solving a linear assignment problem. 
From a visual assessment of each of the matchings, we observe that both the partitioned and Gromov-Wasserstein alignment find matchings between $G_0$ and $G_0'$ that resemble the ground truth, while multiscale COOT performs quite poorly: this reflects the fact that multiscale COOT is unaware of the pairwise adjacency information at each scale. Between $G_1$ and $G_1'$, we find that the partitioned alignment continues to resemble the ground truth, but both Gromov-Wasserstein and multiscale COOT matchings begin to deviate significantly. Finally, between $G_2$ and $G_2'$, we find that Gromov-Wasserstein continues to appear differently from the ground truth.

\begin{figure}[!ht]
  \centering 
  \includegraphics[width = \linewidth]{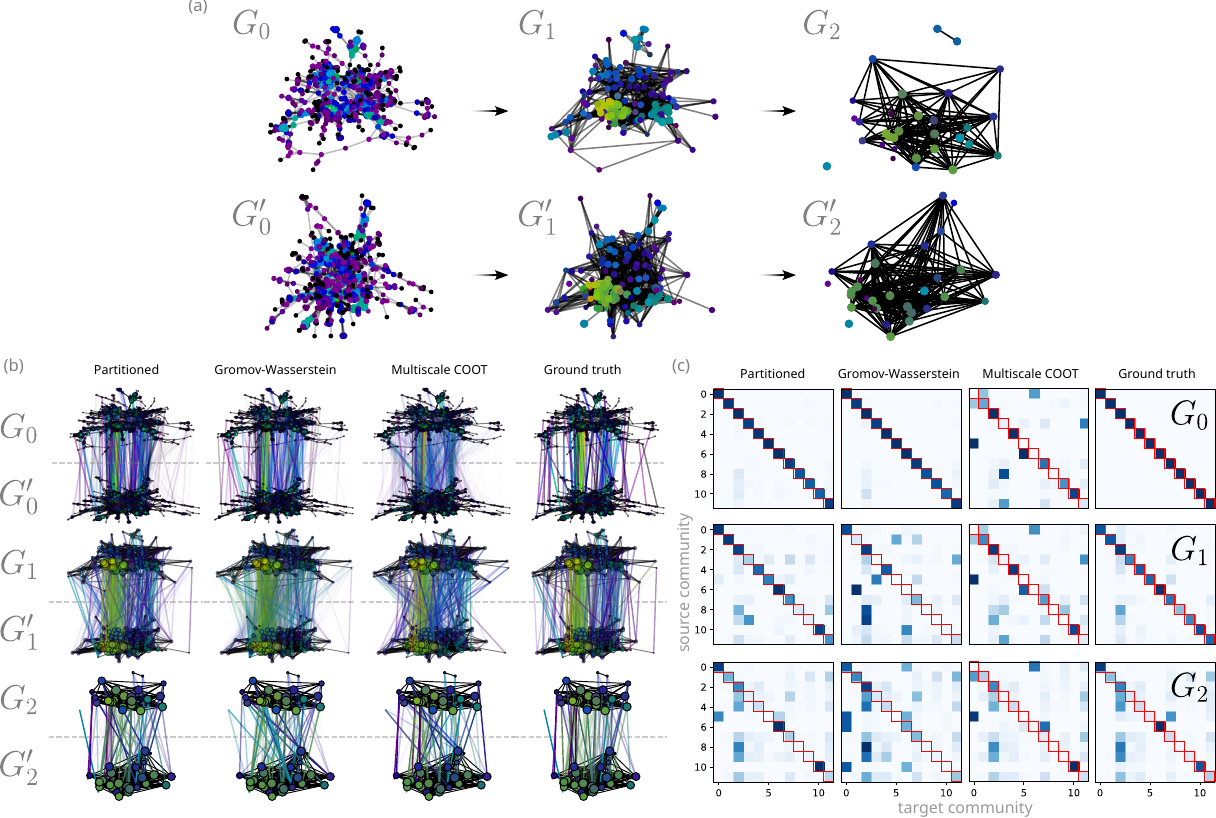}
  \caption{Multiscale matchings for protein-protein interaction networks: (a) Network layouts of successive simplifications $\{G_i \}$ and $\{ G_i' \}$ for $i = 0, 1, 2$ with nodes coloured by the leading nontrivial eigenvector of $L_G$. (b) Matchings found at each scale using (from left to right):  partitioned measure network matching; multiscale COOT matching; Gromov-Wasserstein measure network matching at each level; and the ground truth matching. (c) Matchings of Leiden communities induced by the node-level matching between $G_i$ and $G_i'$ at each simplification level $i$. ``Ground truth'' shows the best possible matching of Leiden communities using couplings at this level of granularity.}
  \label{fig:ppi}
\end{figure}

Due to the non-geometric nature of the input graphs, an effective direct visualization of the matchings is very difficult. To demonstrate the difference in matching results in a clearer way, we employ the Louvain community detection algorithm~\cite{blondel2008fast} which finds a partitioning of $G_0$ (and hence $G_0'$) into $m = 12$ communities. Together with the hypergraph coupling between simplification levels $i$ and $i+1$, the matching $\pi_i$ at simplification level $i$ induces a matching of communities. In Figure~\ref{fig:ppi}(c), we show the induced community matchings for each alignment method, as well as for the ground truth. At level 0, the partitioned alignment and Gromov-Wasserstein both produce nearly perfect alignments, while multiscale COOT performs poorly. At level 1, however, both Gromov-Wasserstein and multiscale COOT perform poorly, while partitioned alignment continues to perform well. Finally, at level 2 from the ground truth matching, it is apparent that too much information is lost by applying coarsening to the graphs to correctly identify communities. However, the matching induced by the partitioned alignment is still closer to the ground truth, for instance in $L_1$ norm ($L_1 = 0.265$) compared to Gromov-Wasserstein ($L_1 = 0.319$) and multiscale COOT ($L_1 = 0.508$).

In Table~\ref{tab:loss_values}(b), we show the loss terms similar to the previous TOSCA example: we observe that Gromov-Wasserstein fails to find consistent matchings across scales as evidenced by a high COOT loss. On the other hand, multiscale COOT leads to poor preservation of pairwise relations within each simplification level, indicated by a high Gromov-Wasserstein loss. In contrast, partitioned alignments are able to find multiscale matchings that are consistent within each scale as well as across scales. Furthermore, partitioned alignment methods yield node and edge correctness scores for level 0 that are comparable to Gromov-Wasserstein, which was found to outperform most other competing alignment methods in~\cite{xu2019scalable}. 

\subsection{Geodesics and Fr\'echet means} 
\label{sec:frechet_means}

In Section~\ref{sec:calculating_gradients}, we introduce the Fr\'echet functional \eqref{eq:frechet_functional} on the space of partitioned measure networks and calculate its gradient. Recall from Theorem~\ref{thm:isometric_embeddings} that the spaces of measure networks and measure hypernetworks isometrically embed into the space of partitioned measure networks, we recover from \eqref{eq:frechet_functional} the Fr\'echet functional on measure networks~\cite{chowdhury2020gromov, peyre2016gromov} and measure hypernetworks~\cite{chowdhury2023hypergraph} as special cases. For simplicity we consider the unlabelled case here, although in general our results can be straightforwardly extended to measure networks with labels valued in an inner product space (see e.g.,~\cite{vincent2021online}). 

In practice, a stationary point of the functional \eqref{eq:frechet_functional} can be found via gradient descent on the space of partitioned measure networks using the ``blow-up'' scheme of~\cite{chowdhury2020gromov} which progressively carries out alignment of network representatives as per Proposition~\ref{prop:aligned}. This approach exploits the empirical observation that optimal couplings of measure networks tend to be sparse (see e.g.,~\cite[Appendix C]{chowdhury2020gromov} and \cite[Theorem 2]{chowdhury2021generalized}), allowing the geodesics of Proposition~\ref{prop:interpolation_geodesic_labelled} to be explicitly constructed. While the approach of \cite{chowdhury2020gromov} handles only measure networks, we implement an extension of \cite[Algorithms 1-3]{chowdhury2020gromov} in order to handle measure hypernetworks. We remark here that, although the question of sparsity of optimal couplings is open in the general quadratic case, in the setting of co-optimal transport between measure hypernetworks, the alternating scheme of Algorithm~\ref{alg:coot} is guaranteed to yield a sparse coupling. This is because each iterate is the solution of a \emph{linear} program and is therefore sparse. In what follows, we refer to this approach as the \emph{free support} method, since the size of the networks is determined as part of the optimization procedure. 

As an alternative to the more involved free support method, we also consider fixing the networks to those described by matrices of a fixed size, as done in~\cite{peyre2016gromov}. This makes the optimization much easier: as detailed in Section \ref{sec:barycenters_fixed_support}, this problem can be solved by alternating between solving independent alignment problems and a closed form update for the barycenter. We refer to this approach as the \emph{fixed support} method. 

\begin{figure}[!ht]
  \centering 
  \includegraphics[width = \linewidth]{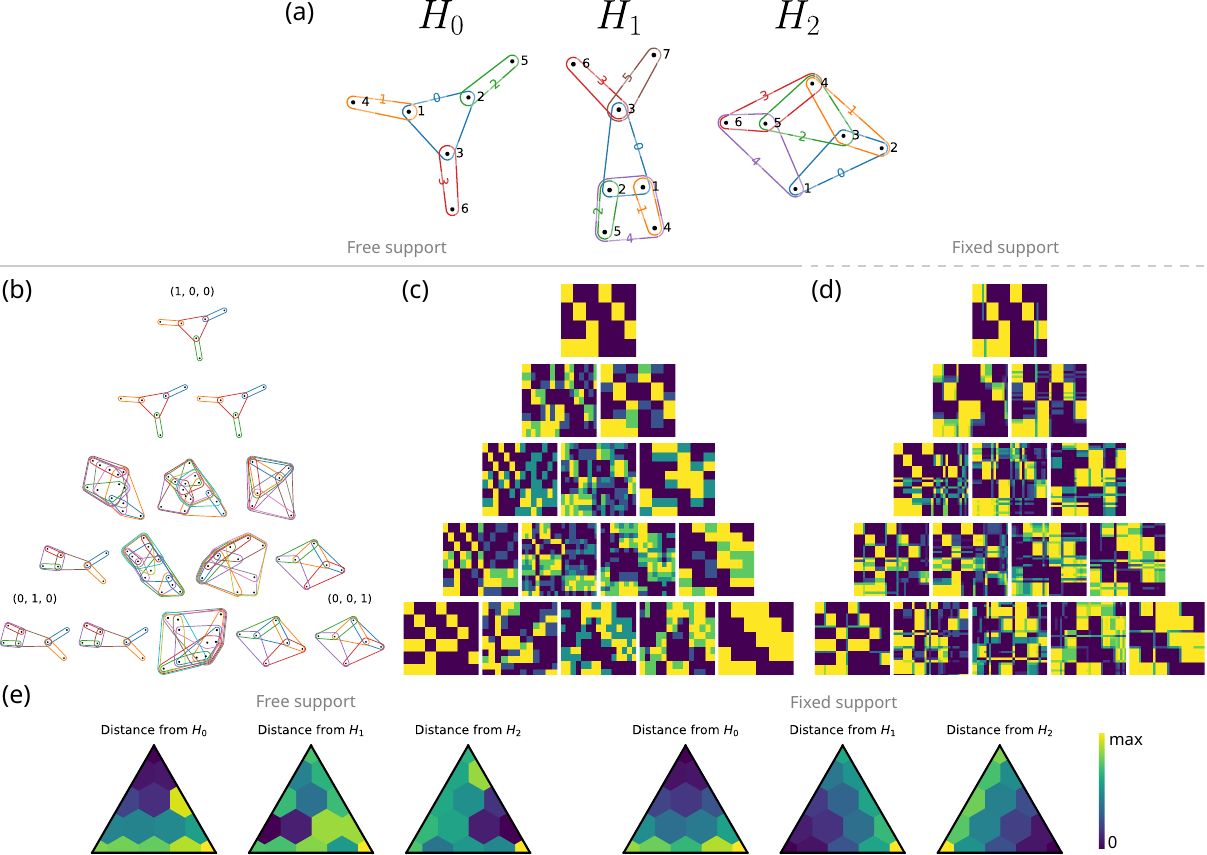}
  \caption{Geodesic interpolations between three hypergraphs $H_0, H_1, H_2$, shown in (a) as ``rubber-band'' diagrams. Interpolations found using the free-support method are visualized in (b) as rubber band diagrams and also directly as their membership functions $\omega$ in (c). (d) Fixed support (using $32 \times 32$ matrices) interpolations visualized as membership functions. (e) Hypergraph distance $d_{\Hc, 2}(H_i, H_\mathrm{interp})$ between each of the three input hypergraphs and geodesic interpolations, for the free support and fixed support barycenters, respectively.}
  \label{fig:geodesics_hypergraphs}
\end{figure}

In Figure~\ref{fig:geodesics_hypergraphs}(a), we take three hypergraphs $H_0, H_1$, and $H_2$ shown as ``rubber-band'' diagrams in which nodes are represented as points, and hyperedges are shown as ``rubber bands'' around the convex hull of contained nodes. 
We interpolate between these hypergraphs by computing the Fr\'echet means of $\{ H_0, H_1, H_2 \}$ with various weights $(w_0, w_1, w_2)$, using both the free support and fixed support methods. Since the rubber band visualization is unable to represent weighted node-hyperedge relationships, a threshold (in this case we chose $\omega(x, y) > 1/4$) is applied to the membership function $\omega(x, y)$ before visualization. As a result, Figure~\ref{fig:geodesics_hypergraphs}(b) cannot reflect the true nature of the interpolation in the measure hypernetwork space. In Figure~\ref{fig:geodesics_hypergraphs}(c), we show the interpolated function $\omega$, making the interpolation readily apparent. We remark that the size (number of nodes and hyperedges) of each interpolating measure hypernetwork is determined by the blowup scheme (in general, larger than each of the inputs), and that the ordering of rows and columns in the visualization is arbitrary. In Figure~\ref{fig:geodesics_hypergraphs}(d), we show the membership functions of barycenters computed using the fixed support method. We note a close resemblance between the results of the free and fixed support approaches.

Finally, in Figure~\ref{fig:geodesics_hypergraphs}(e), we compute the measure hypernetwork distance (per Definition~\ref{defn:generalized_network_distances}) from each interpolated hypernetwork to each input hypernetwork. For true interpolations, we would expect the distance to vary affinely across the simplex. However, in the case of free support barycenters, we observe that is not always the case. This suggests that local minima may have played a role in the calculation of the weighted Fr\'echet means and calculating the distances to each endpoint, reflecting the non-convex nature of the distance and alignment computation. On the other hand, for fixed support barycenters, we recover the expected trend. This reflects the computationally simpler nature of the fixed support method, and may indicate that the fixed support barycenters are more accurate representations of the true hypergraph barycenter.

\subsection{Linear and non-linear dictionary learning}
\label{sec:dictionary-learning}

We may extend the barycenter question and ask for \emph{several} characteristic partitioned measure networks that best describe an ensemble of partitioned measure networks, rather than a single barycenter or {Fr\'echet} mean. One common method is to learn a small basis of representatives (or archetypes), such that each ensemble member can be approximated by a convex combination of these basis elements. Also known as dictionary learning, this has become a classical analysis approach for vector-valued data~\cite{lee1999learning} and has recently been extended to graphs~\cite{vincent2021online, xu2022representing} and topological descriptors such as merge trees~\cite{LiPalandeYan2023} using the Gromov-Wasserstein framework. We extend dictionary learning to the setting of partitioned measure networks, which also covers the settings of measure networks and measure hypernetworks. 

\subsubsection{Nonlinear (geodesic) dictionary learning}
\label{sec:nonlinear-dictionary-learning}

We first recall the geodesic dictionary learning problem first stated in Section \ref{sec:geodesic_dictionary_learning}. 
Given an ensemble of $N$ $k$-partitioned measure networks $\{P_1, \dots, P_N\}=\{P_i \in \Pc_k\}_{i=1}^{N}$, we aim to find a dictionary $\Dc=\{D_1, \dots, D_m\} =\{D_j \in \Pc_k\}_{j=1}^{m}$ (where $m \ll N$) such that each $P_i$ could be described by elements in $\Dc$. 
Formally, we denote each $P_i = \big(X_i, \mu_i, \omega_i \big)$, and the geodesic dictionary learning problem is
\begin{align}
  \min_{ \{ D_j \}_{j = 1}^m \in \Pc_k,  \{ \alpha_i \}_{i = 1}^N \in \Delta^m } \frac{1}{N} \sum_{i = 1}^N d_{\Pc_k} \left( B(\Dc, \alpha_i), P_i \right)^2,
  \label{eq:nonlinear_dictionary_learning}
\end{align}
where $B(\Dc, \alpha)$ is the barycenter operator \eqref{eq:barycenter_operator} for $\Dc$ and each $\alpha \in \Delta^m$ encodes the corresponding coefficients. We derive formal expressions for the gradients of this function in Section \ref{sec:geodesic_dictionary_learning}. Like related methods \cite{schmitz2018wasserstein, xu2020gromov}, solving problem \eqref{eq:nonlinear_dictionary_learning} is a non-convex, bi-level minimization problem which is not straightforward even to find a local optimum. While problems of this nature can be solved using more involved schemes such as \cite{xu2022representing}, we propose to simplify the problem by taking $B(\Dc, \alpha)$ to be the \emph{fixed support} barycenter operator, where we fix the size (i.e. number of nodes) of the barycenter \emph{a priori} and approximate it iteratively (see Section \ref{sec:barycenters_fixed_support}, and \cite{peyre2016gromov}). In practice, we also fix the sizes of the dictionary atoms $\{ D_j \}_{j = 1}^m$ \emph{a priori} and seek a local minimum solution for dictionary networks of fixed size by a simple gradient descent.

\subsubsection{Linear dictionary learning}
\label{sec:linear-dictionary-learning}

Even after fixing the support size of barycenters and dictionary atoms, solution of the bi-level problem \eqref{eq:nonlinear_dictionary_learning} is computationally demanding due to the need for inner-loop computations of the barycenter operator. As an alternative, \emph{linear} dictionary learning approaches have been proposed \cite{vincent2021online, rolet2016fast}, in which the (Fr\'echet) barycenter operator $B(\Dc, \alpha)$ is replaced with its Euclidean equivalent, a weighted superposition of the atoms $\Dc[\alpha] = (X, \mu, \sum_{j = 1}^m \alpha_j \omega_{\Dc_j})$. 
Since reconstructions from the dictionary are carried out in the Euclidean space, this eliminates the nested optimization arising from the barycenter computation, at the cost of departing from the natural geodesic structure of the space of (partitioned) measure networks. We remark that in the setting where the data $P_i \in \Pc_k, 1 \leq i \leq N$ are sufficiently close together (in the sense of the injectivity radius of the exponential map), that linear dictionary learning is equivalent to the nonlinear case. 

The linearized equivalent of \eqref{eq:nonlinear_dictionary_learning} is
\begin{align}
  \min_{ \{ \omega_{D_j} \}_{j = 1}^m \in L^2(X^2, \mu^{\otimes 2}), \: \{ \alpha_i \}_{i = 1}^N \in \Delta^m} \frac{1}{N} \sum_{i = 1}^N d_{\Pc_k} \left( \Dc[\alpha_i], P_i \right)^2, \label{eq:dictionary_learning}
\end{align}
where, for brevity, we denote the linear combination of atoms in $\Dc$ with coefficients in $\alpha$ to be 
\[
  \Dc[\alpha] = \left(X, \mu, \sum_{j = 1}^m \alpha_j \omega_{D_j}\right) \in \Pc_k. 
\]
In this formulation, we ask for $m \ll N$ atoms, set $\{ \omega_{D_j} \}_{j = 1}^m \in L^2(X^2, \mu^{\otimes 2})$, and for each input network $P_i$, we work with a corresponding set of coefficients $ \{ \alpha_{ij} \}_{j = 1}^m$ such that the reconstructed network described by incidence matrix $\Dc[\alpha_i]$ is close to $P_i$ in the sense of the optimal transport metric. In the above, we fix the partitioned measure space to $(X, \mu)$ respectively. We note that the computation of $d_{\Pc_k}$ involves solution of a nonlinear program for the coupling $\pi$. We can expand the $d_{\Pc_k}$ terms within the objective \eqref{eq:dictionary_learning}:
\begin{align}
  \min_{ \{\omega_{D_j} \}_{j = 1}^m \in L^2(X^2, \mu^{\otimes 2}), \: \{ \alpha_i \}_{i = 1}^N \in \Delta^m, \{ \pi_i \in \Pi_k(\mu, \mu_i) \}_{i=1}^N } \frac{1}{N} \sum_{i = 1}^N \left\langle L\left( \Dc[\alpha_i], \omega_{P_i} \right), \pi_i \otimes \pi_i \right\rangle.
\end{align}
Minimizing in $\pi_i$ can be done independently and in parallel for each of the $1 \leq i \leq N$ inputs. Fixing $\pi$, we have a non-convex quadratic program in $\omega_{D_j}$ and the coefficients $\alpha_i$. We solve the problem \eqref{eq:dictionary_learning} using a stochastic projected gradient descent.

\begin{figure}[!ht]
  \centering
  \includegraphics[width = \linewidth]{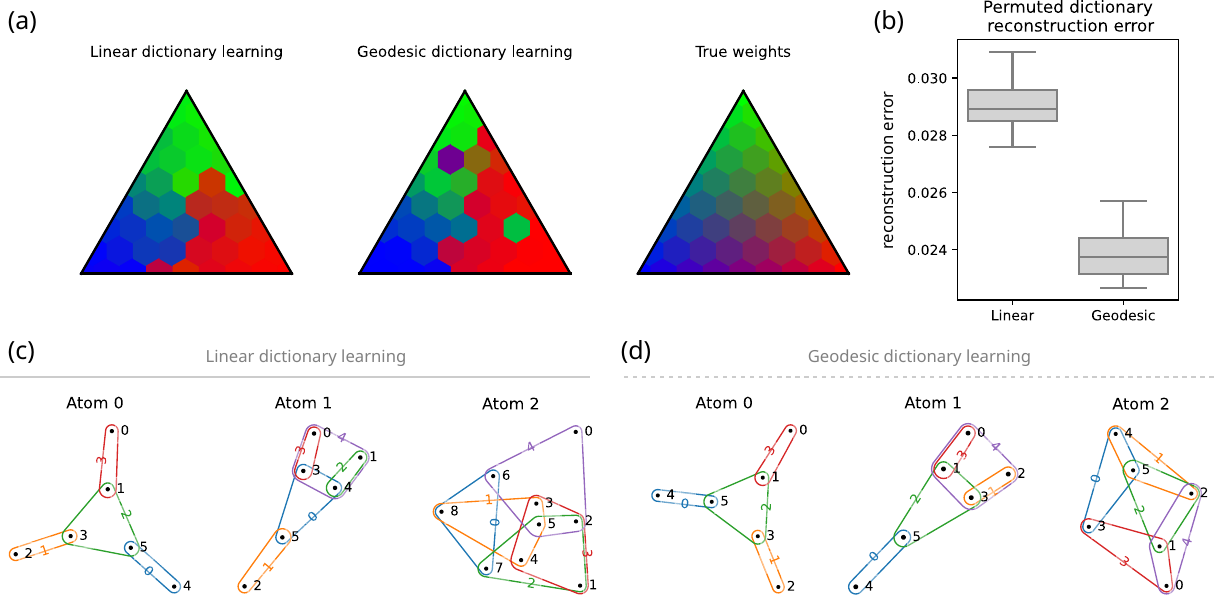}
  \caption{(a) Learned dictionary weights for each interpolation instance using linear dictionary learning (correlation = 0.894) and geodesic dictionary learning (correlation = 0.809), compared to the true weights. (b) Measure hypernetwork distances between true networks and dictionary learning reconstructions, under random permutation of dictionary elements. (c) Atoms learned by linear dictionary learning. (d) Atoms learned by geodesic dictionary learning.}
  \label{fig:dictionary_learning}
\end{figure}

For an experiment, we consider again the example from Section~\ref{sec:frechet_means} involving three hypergraphs. We generate barycenters of these three hypergraphs with mixture weights uniformly spaced across a barycentric grid inside a simplex (i.e., a triangle in this example) using the ``blow-up'' algorithm. This procedure produces an ensemble of 45 hypergraphs across the grid, in which the number of nodes ranged from 6 to 23, and the number of hyperedges between 4 and 17. By construction, the input hypergraphs $H_0, H_1, H_2$ are the ground truth atoms, and weights $\{ w_{ij} \}_{1 \leq i \leq N, 0 \leq j \leq 2}$ serve as the ground truth mixture coefficients. In Figure \ref{fig:dictionary_learning}(a) we show the dictionary weights learned by the linear and geodesic dictionary learning respectively, compared to the true weights. Although geodesic dictionary learning (at least in theory) accounts correctly for the underlying geometry of hypergraphs, we find that linearized dictionary learning yields better results in practice. A potential explanation for this result is that linearization avoids the need to solve a non-convex inner loop problem, replacing this with Euclidean averaging.

To illustrate what we gain by using geodesic over linearized dictionary learning, for each method we randomly permute the learned dictionary atoms and re-calculate the reconstruction error. We show in Figure \ref{fig:dictionary_learning}(b) the reconstruction error across 10 independent permutations. In the space of measure hypernetworks, this amounts to changing the choice of representative for the dictionary atoms. This leads to an increased reconstruction error for linearized dictionary learning, relative to geodesic dictionary learning: this tallies with geodesic dictionary learning objective being defined independent of choices of representative. Overall, we find that linearized dictionary learning has an advantage over its geodesic variant due to computational simplicity, and this is the algorithm we use in the following sections. 

Finally, we show in Figure \ref{fig:dictionary_learning}(c) and (d) the atoms learned by linear and geodesic dictionary learning respectively. Both methods could reliably reconstruct the atoms shown in Figure \ref{fig:geodesics_hypergraphs}(a), which are used to generate the input ensemble. 

\begin{figure*}
  \centering 
  \includegraphics[width = \linewidth]{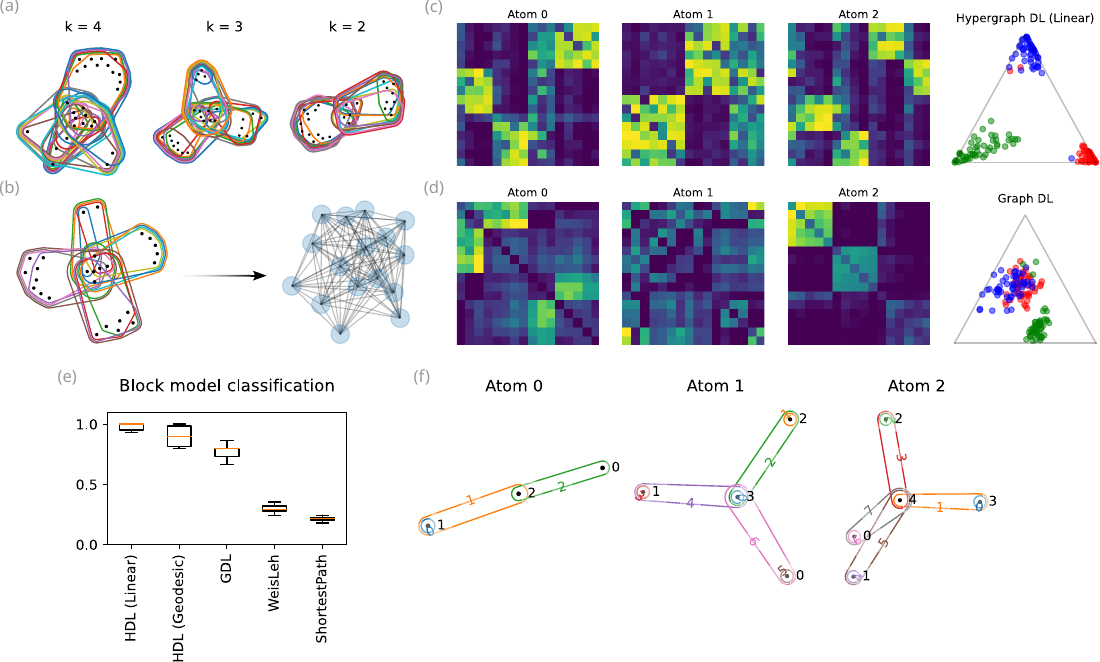}
  \caption{(a) Sample instances from the hypergraph stochastic block model for $k = 4, 3, 2$ blocks respectively.
    (b) Illustration of hypergraph flattening.
    (c) left: learned hypergraph atoms capture the three underlying block models; note the presence of hyperedges that span the node set between blocks; right: Barycentric projection of learned weights, coloured by the ground truth label. 
    (d) Learned graph atoms (left) and weights (right) from Gromov-Wasserstein dictionary learning with flattened hypergraphs as input.
    (e) 10-fold cross-validation results for SVM classification of hypergraph samples.
    (f) Coarse-grained hypergraph representations of dictionary atoms learned by linear HDL.}
  \label{fig:blockmodel}
\end{figure*}

\subsubsection{Example: stochastic block model for hypergraphs}
\label{sec:stochastic-block-model}

To further validate our numerical algorithm using synthetic data, we devise a simple generative model for random non-uniform hypergraphs that is analogous to the stochastic block model in the case of simple graphs, similar to the model introduced in~\cite{dumitriu2021partial}. For a (possibly random) number of nodes $|V|$ and a fixed number of blocks $k$, we partition the node set $V$ into $k$ blocks of size $\lfloor |V| / k \rfloor$, with one block containing any remainders. A random number, $|E|$, of hyperedges are then sampled. With probability $p$, a sampled hyperedge will span $m < \lfloor N / k \rfloor$ nodes, all residing in one of the $k$ blocks chosen uniformly at random. Otherwise the hyperedge will consist of $m$ nodes sampled uniformly from all nodes in $V$. The key parameters at play are $k$ and $p$, controlling respectively the number of blocks and the level of noise.

We set up three block models with $k = 2, 3, 4$ blocks and $p = 2/3$; 50 hypergraphs are sampled from each model. For each hypergraph, the number of nodes and hyperedges are chosen uniformly from $[16, 32]$ and $[24, 32]$ respectively. We show examples of sampled hypergraphs for each $k$ in Figure \ref{fig:blockmodel}(a). In total, we generate a dataset of 150 hypergraphs. 
We apply both linearized and geodesic hypergraph dictionary learning (HDL) algorithms to this dataset to learn three atoms.

To compare against graph-based methods, we apply the graph dictionary learning (GDL) method of \cite{vincent2021online} using flattened hypergraphs. 
These flattened hypergraphs are obtained by taking the sum of hyperedges, i.e.,~$\frac{1}{|E|} \sum_{i = 1}^{|E|} \ones_{e_i} \ones_{e_i}^\top$, that is, putting a weighted connected component in place of each hyperedge; see Figure \ref{fig:blockmodel}(b). As can be seen from the learned atoms and coefficients in Figure \ref{fig:blockmodel}(c) and (d), HDL is able to accurately learn distinct atoms corresponding to different values of $k$. By comparison, GDL finds atoms that appear to be mixed.

We use the learned coefficients from HDL and GDL as features for support vector machine (SVM) classification. In Figure \ref{fig:blockmodel}(e) we show the accuracy in 10-fold cross validation for both (linear and geodesic) HDL and GDL. For comparison, we also show accuracy achieved by two commonly used graph kernels, the Weisfeiler-Lehmann isomorphism test and the shortest path kernel \cite{siglidis2020grakel}, applied to the flattened hypergraphs. We find that both variants of HDL achieve high accuracy, outperforming GDL. We also observe that linearized HDL outperforms geodesic HDL. Strikingly, the two graph kernel approaches perform much worse than either HDL or GDL. Finally, in Figure \ref{fig:blockmodel}(f), we show rubber-band diagrams of the atoms found by linearized HDL.

\begin{figure}[!ht]
  \centering
  \includegraphics[width = \linewidth]{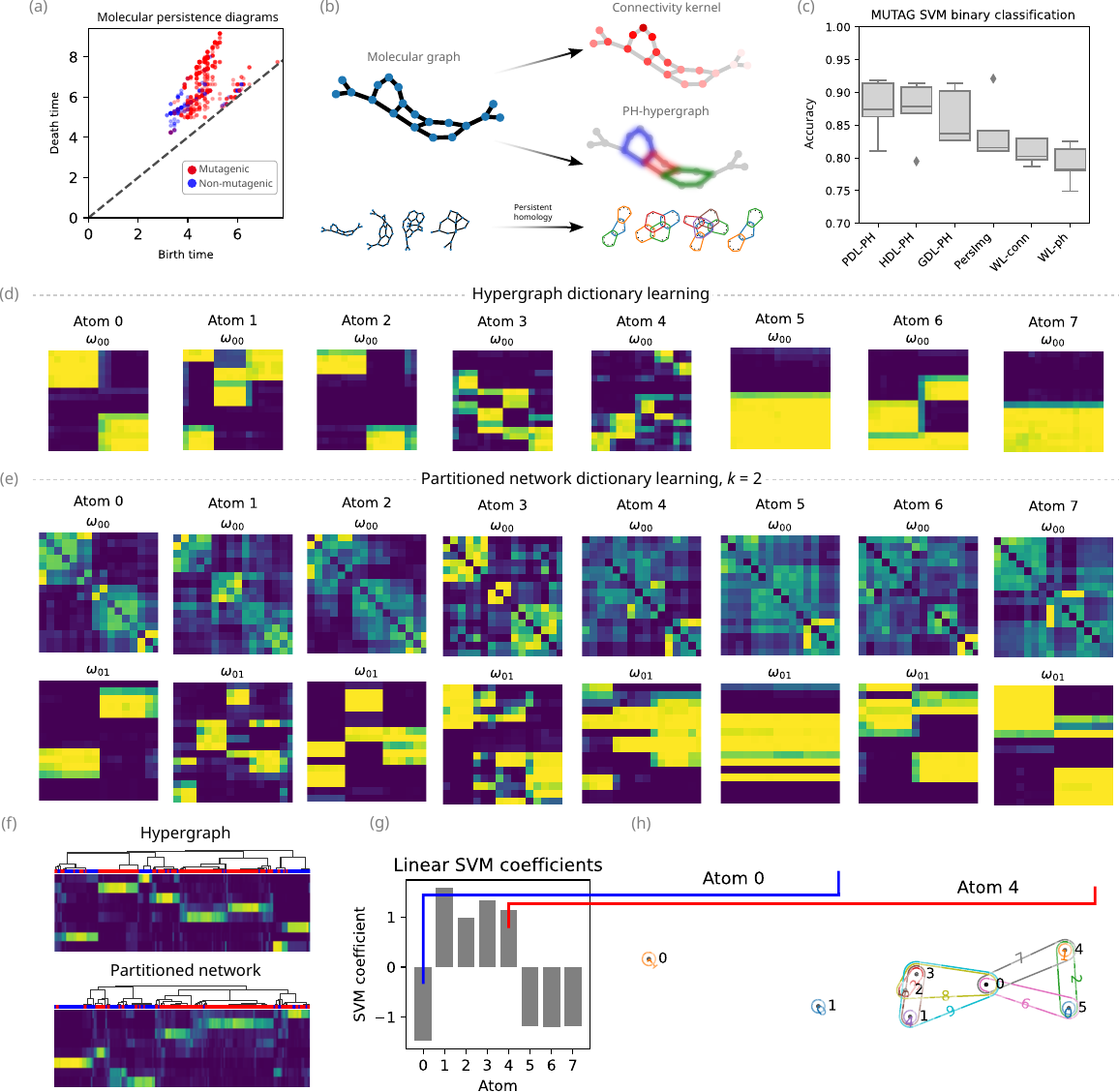}
  \caption{(a) Superimposed persistence diagrams (1-dimensional persistent homology or $H_1$) for molecular structures in the MUTAG dataset, coloured by mutagenicity. (b) Illustration of connectivity kernel and persistent homology (PH) hypergraph construction from molecular graphs. (c) SVM binary classification accuracy for mutagenicity, using different representations or kernels, shown over 5-fold cross validation. (d) Atoms learned by hypergraph dictionary learning (HDL). (e) Atoms learned by partitioned network dictionary learning (PDL). (f) Dictionary weights learned from HDL and PDL, respectively. (g) Coefficients learned from HDL weights by linear SVM. (h) Atom 0 (predicted to be associated with non-mutagenic compounds) and Atom 4 (predicted to be associated with mutagenic compounds). }
  \label{fig:mutag}
\end{figure}

\subsubsection{Example: mutagenicity dataset}
\label{sec:mutagenicity}

A real-world example is the well known MUTAG dataset \cite{debnath1991structure} containing 188 small molecule structures, labelled by their mutagenicity (mutagenic or non-mutagenic). This is a standard benchmarking dataset that has been widely adopted for testing graph learning algorithms \cite{siglidis2020grakel}. 
Starting from a molecular graph where nodes are atoms and edges are chemical bonds, we propose to lift these chemical structures to partitioned measure networks as follows. Using a graph heat kernel \cite{chowdhury2021generalized}, we construct an atom-atom connectivity network encoding proximity of atoms in the molecule. We also construct a persistent homology (PH) hypergraph following the procedure described in \cite{barbensi2022hypergraphs} to encode information about geometric cycles: nodes and hyperedges correspond to atoms and PH generators, respectively.
We show the ensemble of persistence diagrams from this dataset in Figure \ref{fig:mutag}(a), where points are coloured by the ground truth mutagenicity label; see~\cite{otter2017roadmap} for an user-friendly introduction to persistent homology and~\cite{EdelsbrunnerLetscherZomorodian2002} for the seminal work on the topic. 
A separation between mutagenic and non-mutagenic compounds can be visually discerned from these persistence diagrams, suggesting that persistent homology may be sensitive to molecular features that play a role in mutagenicity.  
In Figure \ref{fig:mutag}(b) we illustrate the computation of the connectivity kernel and the PH-hypergraph from an example molecular graph. 

Taking either the PH-hypergraphs or (heat kernel, PH)-partitioned networks as input data, we run linearized partitioned network dictionary learning for $k = 8$ atoms and extract the learned atoms and coefficients. We reason that the learned dictionaries should capture aspects of the molecular structure that are predictive of molecular properties such as mutagenicity. Using the learned coefficients, we train a RBF-kernel SVM for binary classification of mutagenicity. We also apply the linear Gromov-Wasserstein dictionary learning algorithm of \cite{vincent2021online} using the clique expansion of the PH-hypergraph as input. 
For reference, we consider several other popular approaches for encoding geometric and topological information in machine learning tasks. Specifically, we compare to persistence images \cite{adams2017persistence} (PersImg), as well as the Weisfeiler-Lehmann graph kernel \cite{shervashidze2011weisfeiler} using either the molecular connectivity graphs (WL-conn) or clique-expansion of the persistent homology hypergraph (WL-ph).

We find that both hypergraph dictionary learning (HDL) and partitioned network dictionary learning (PDL)  achieved comparable performance, suggesting that information on PH generators alone is sufficient to distinguish between mutagenic and non-mutagenic compounds in the majority of cases; see Figure~\ref{fig:mutag}(c). On the other hand, Gromov-Wasserstein dictionary learning exhibits worse performance despite being provided with PH information in the form of the clique-expansion of the PH hypergraph. This reflects the loss of higher-order information incurred by ``flattening'' of a hypergraph to a graph~\cite{srinivasan2021learning}. Similarly, we find that classification using persistence images (encoding information about the persistence diagram, but not node-generator membership information) as well as Weisfeiler-Lehman kernels (encoding only pairwise relationships) perform relatively poorly. In order to ensure a fair comparison across the different methods we consider, we do not use node labels in this analysis. We remark that our dictionary learning algorithm can be straightforwardly extended to incorporate vector-valued node labels~\cite{vincent2021online} and we expect that including that additional information would further improve classification performance. 

Figures~\ref{fig:mutag}(d) and (e) illustrate the atoms found by HDL and PDL, respectively. The learned atoms of the partitioned network dictionary consist of pairwise node-node similarities, as well as node-hyperedge memberships. The partitioned network atoms capture both local detail as well as topological information in a coupled fashion. By comparison, with only hypergraph information, nodes with membership in the same homology generators may be indistinguishable. The dictionary coefficients in Figure~\ref{fig:mutag}(f) illustrate how each molecule in the dataset is decomposed in terms of soft membership to each of the $k = 8$ archetypes (atoms or representatives). From visual inspection, it is already clear that the learned topics are largely able to disentangle mutagenic compounds from non-mutagenic ones. To quantify this, we train a linear SVM on the hypergraph dictionary coefficients, and extract the contribution of each atom towards the ``mutagenic'' class; see Figure \ref{fig:mutag}(g). We find that atoms are clearly separated into mutagenicity and non-mutagenicity contributing factors. In Figure~\ref{fig:mutag}(h), we show example PH-hypergraph archetypes that are indicative of mutagenicity or non-mutagenicity. For instance, Atom 0, which contributes towards non-mutagenicity, displays a simple PH-hypergraph structure with two generators that do not share nodes. On the other hand, Atom 4 contributes towards mutagenicity and displays a complex PH-hypergraph structure with many interlinked generators. These findings suggest that compounds with more cycles are more likely to be mutagenic, and this is consistent with the chemical literature~\cite{debnath1991structure}. 

\section{Discussion}
\label{sec:discussion}

We develop a theoretical footing for the analysis of generalized network objects, modelled by the space of $k$-partitioned measure networks. 
We equip this space with a family of metrics $d_{\Pc^p_k}$ ($p \ge 1$) that extends the well-known $p$-Gromov-Wasserstein metric originally developed for measure metric spaces \cite{sturm2023space, memoli2011gromov} and recently applied to measure networks \cite{chowdhury2019gromov} and measure hypernetworks \cite{chowdhury2023hypergraph}. When $p = 2$, we further provide a geometric characterization of the space of partitioned measure networks in terms of geodesics, curvature bounds, as well as its tangent space.
We additionally consider the case where additional \emph{labels} (valued in a metric space) are available. We prove metric properties in the labelled setting, and for $p = 2$, we show that our geometric characterizations also apply (when labels are valued in an inner product space).
Based on these ideas, we provide a range of numerical examples illustrating the applicability of our framework across multiple domains in network analysis and data science.
We believe our work will be of broad interest to the network science, geometry, and statistics communities.

Our work leaves several open avenues for future research. Among these, providing a geometric characterization of labelled networks when labels are valued in more general spaces, such as Riemannian manifolds (for instance, conditions for uniqueness of geodesics and curvature). Additionally, a rigorous analysis of functionals on the space of partitioned measure networks and their gradient flows remains to be constructed. Whereas we have introduced notions of tangent vectors and gradients and have formally shown their calculation and application, we have not addressed issues such as the existence and uniqueness of minimizer in the general case.

On the application front, some of the example applications we have presented could be fruitful problems to study separately. Among these, we think that multiscale network alignment and partitioned network dictionary learning are particularly interesting.
As an example of a specific application requiring extensions of our theory, in \cite{zhang2024topological}, we develop a \emph{partial transport} variant for applications in topological data analysis. 
Finally, we remark that there are some alternative formulations of optimal transport applied to tensors \cite{kerdoncuff2022optimal} that our framework does not cover.
It remains an open question whether similar theoretical results to ours can be established for tensors.

\section*{Acknowledgement}
SYZ acknowledges support from the Australian Government Research Training Program and an Elizabeth and Vernon Puzey Scholarship from the University of Melbourne.
MPHS acknowledges funding through an Australian Research Council Laureate Fellowship (FL220100005). 
BW, FL, and YZ were partially supported by grants from U.S. Department of Energy (DOE) DE-SC0021015 and U.S. National Science Foundation (NSF) IIS-1910733 and NSF IIS-2145499. TN was supported by NSF grants DMS 2107808 and DMS 2324962. 

\bibliographystyle{plain}
\bibliography{refs-networks}

\appendix

\section{Details on numerical algorithms}
\label{sec:algorithms_appendix}

In this section, we follow the notations introduced in Section \ref{sec:algos_intro}.

\subsection{Co-optimal transport}\label{sec:rt}

By taking $k = 2$, $\omega_{ii} = 0$, $\omega_{ij} = \omega_{ji}$, 
and $L$ defined in \eqref{eq:distortion_tensor}, it is easy to verify the following identity:
\begin{align*}
  \langle L(\omega_{12}, \omega_{12}'), \pi_1 \otimes \pi_2 \rangle &= \langle L(\omega_{12}, \omega_{12}') \otimes \pi_2, \pi_1 \rangle \\
  &= \langle L(\omega_{12}^\top, (\omega_{12}')^\top) \otimes \pi_1, \pi_2\rangle \\
  &= \langle L(\omega_{12}^\top, {(\omega_{12}')}^\top), \pi_2 \otimes \pi_1 \rangle.  
\end{align*}
We obtain from \eqref{eq:discrete_labelled_partitioned_distance} the case of matching between labelled measure hypernetworks $H = (X_1, \mu_1, X_2, \mu_2, \omega_{12})$ and $H' = (X'_1, \mu'_1, X'_2, \mu'_2, \omega'_{12})$:
\begin{align}
  \min_{\substack{\pi_1 \in \Pi(\mu_1, \mu_1'), \pi_2 \in \Pi(\mu_2, \mu_2')}} \langle L(\omega_{12}, \omega'_{12}), \pi_1 \otimes \pi_2 \rangle + \langle C_1, \pi_1 \rangle + \langle C_2, \pi_2 \rangle + \varepsilon_1 \Omega_1(\pi_1) + \varepsilon_2 \Omega_2(\pi_2), 
\end{align}
where for full generality we include the possibility of a regularization of each of the $\pi_i$ when $\varepsilon_i > 0$, as per \eqref{eq:entropic_discrete_labelled_partitioned_distance}. 

The unregularized problem is a bilinear program in $(\pi_1, \pi_2)$ since the objective can be rewritten as $\langle L(\omega_{12}, \omega_{12}') + C_1 \oplus C_2, \pi_1 \otimes \pi_2 \rangle$ where $(A \oplus B)_{ijkl} = A_{ij} + B_{kl}$. In the case of unlabelled measure hypernetworks where $C_1 = C_2 = 0$, this has been studied in detail by \cite{titouan2020co, chowdhury2023hypergraph}, among others. 
The alternating scheme of \cite{titouan2020co} for finding a stationary point presents itself:
\begin{align}
  \begin{split}
    \pi_1 \gets \min_{\pi_1 \in \Pi(\mu_1, \mu_1')} \langle &L(\omega_{12}, \omega_{12}') \otimes \pi_2 + C_1, \pi_1 \rangle + \varepsilon_1 \Omega_1(\pi_1),   \\ 
    \pi_2 \gets \min_{\pi_2 \in \Pi(\mu_2, \mu_2')} \langle &L(\omega_{12}^\top, \omega_{12}'^\top) \otimes \pi_1 + C_2, \pi_2 \rangle + \varepsilon_2 \Omega_2(\pi_2). 
  \end{split}
\end{align}
Employing the identity \cite[Proposition 1]{peyre2016gromov} we have for $\pi \in \Pi(\mu, \mu')$ that 
\begin{equation}
  L(\omega, \omega') \otimes \pi = \sum_{kl} L(\omega, \omega')_{\cdot\cdot kl} \otimes \pi_{kl} = \eta(\omega, \omega') - \omega \pi ({\omega'})^\top, 
\end{equation}
where
\begin{equation}
  \eta(\omega, \omega') = \frac{1}{2} \left( \omega^{\wedge 2} \right) \mu \ones^\top + \frac{1}{2} \ones \left( {\omega'}^{\wedge 2} \mu' \right)^\top,
\end{equation}
and $\omega^{\wedge 2}, {\omega'}^{\wedge 2}$ are understood entrywise. $\eta(\omega, \omega')$ depends only upon $(\mu, \mu')$, the marginals of $\pi$.

\begin{algorithm}
\caption{Alternating minimization: labelled hypergraphs (co-optimal transport)}
\begin{algorithmic}[1]
  \State \textbf{Input:} Incidence matrices $\omega_{12}, \omega_{12}'$, probability measures $\mu_i, \mu_i'$, $i = 1, 2$, label cost matrices $C_{1, 2}$ (optional).
  \State \textbf{Parameters:} entropic regularization levels $\varepsilon_1, \varepsilon_2 \ge 0$ (optional)
\State Initialize couplings:  $\pi_i \gets \mu_i \otimes \mu_i', \: i = 1, 2.$
\For{$t = 1, 2, \dots, \texttt{max\_iter}$}
  \State \[
    \pi_1 \gets \argmin_{\pi_1 \in \Pi(\mu_1, \mu_1')} \langle L(\omega_{12}, \omega_{12}') \otimes \pi_2 + C_1, \pi_1 \rangle + \varepsilon_1 \Omega_1(\pi_1),
  \]
  \State \[
    \pi_2 \gets \argmin_{\pi_2 \in \Pi(\mu_2, \mu_2')} \langle L(\omega_{12}^\top, \omega_{12}'^\top) \otimes \pi_1 + C_2, \pi_2 \rangle + \varepsilon_2 \Omega_2(\pi_2), 
  \]
\EndFor
\State \textbf{Output:} couplings $(\pi_1, \pi_2)$
\end{algorithmic}
\label{alg:coot}
\end{algorithm}

\subsection{General matchings of partitioned measure networks}

For general $k$-partitioned measure networks, the problem \eqref{eq:discrete_labelled_partitioned_distance} is quadratic in $(\pi_i)_{i = 1}^k$.
We propose to obtain an approximate solution to this problem using block coordinate descent separately in each of the $\pi_i$, while holding $(\pi_j)_{j \ne i}$ fixed. Each block update amounts to solution of a problem closely resembling Fused Gromov-Wasserstein matching \cite{vayer2020fused}:
\begin{equation}
  \min_{\pi_i \in \Pi(\mu_i, \mu_i')} \frac{1}{2} \langle L(\omega_{ii}, \omega'_{ii}), \pi_i \otimes \pi_i \rangle + \langle M[\pi_{-i}] + C_i, \pi_i \rangle + \varepsilon_i \Omega_i(\pi_i), \quad 1 \leq i \leq k,
  \label{eq:partitioned_bcd_update}
\end{equation}
where $\pi_{-i} = (\pi_j)_{j \ne i}$ and 
\begin{align}
  \begin{split}
  M[\pi_{-i}] &= \sum_{j \ne i} \left(\frac{1}{2} L(\omega_{ij}, \omega_{ij}') + \frac{1}{2} L(\omega_{ji}^\top, \omega_{ji}'^\top)\right) \otimes \pi_j \\
              &= \frac{1}{2} \sum_{j \ne i} \left[ \eta(\omega_{ij}, \omega'_{ij}) - \omega_{ij} \pi_j ({\omega'}_{ij})^\top + \eta(\omega_{ji}^\top, {\omega'}_{ji}^\top) - \omega_{ji}^\top \pi_j \omega'_{ji} \right],
  \end{split}
  \label{eq:definition_M}
\end{align}
Each subproblem \eqref{eq:partitioned_bcd_update} in $\pi_i$ amounts to the minimization of a non-convex quadratic objective on a closed convex domain, and so a stationary point can be found using the conditional gradient algorithm of \cite[Algorithm 1]{vayer2020fused}.
We remark that a similar algorithm for the case of augmented measure networks was introduced in \cite{demetci2023revisiting}. 

\begin{algorithm}
\caption{Alternating minimization: labelled $k$-partitioned networks}
\begin{algorithmic}[1]
\State \textbf{Input:} Matrices $\{ \omega_{ij} \}_{i, j = 1}^k, \{ \omega_{ij}' \}_{i, j = 1}^k$, probability measures $\mu_i, \mu_i'$, $1 \leq i \leq k$, label cost matrices $(C_i)_{i = 1}^k$ (optional)
\State \textbf{Parameters:} entropic regularization levels $\varepsilon_i \ge 0, 1 \leq i \leq k$ (optional)
\State Initialize couplings:  $\pi_i \gets \mu_i \otimes \mu_i', \: 1 \leq i \leq k.$
\For{$t = 1, 2, \dots, \texttt{max\_iter}$}
  \For{$1 \leq i \leq k$}
    \State
    \[
      \pi_{i} \gets \argmin_{\pi_i \in \Pi(\mu_i, \mu_i')} \frac{1}{2} \langle L(\omega_{ii}, \omega'_{ii}), \pi_i \otimes \pi_i \rangle + \langle M[\pi_{-i}] + C_i, \pi_i \rangle + \varepsilon_i \Omega_i(\pi_i), \quad \text{with } M[\pi_{-i}] \text{ as per \eqref{eq:definition_M}}
    \]
  \EndFor
\EndFor
\State \textbf{Output:} couplings $\{ \pi_i \}_{i = 1}^k$
\end{algorithmic}
\label{alg:alternating_partitioned}
\end{algorithm}

\subsection{Proximal gradient methods}\label{sec:proximal_gradient_algorithms}

As an alternative to relying on exact solvers for the unregularized problem, an entropic proximal gradient algorithm can also be used to solve the problem \eqref{eq:discrete_labelled_partitioned_distance}. These algorithms have been shown to perform favorably in terms of computational complexity as well as empirical results \cite{xu2019gromov, xie2020fast}. Writing $\Lc$ to be the objective function of \eqref{eq:discrete_labelled_partitioned_distance}, 
\begin{equation}
  \Lc(\pi_1, \ldots, \pi_k) = \frac{1}{2} \sum_{i, j = 1}^k \langle L(\omega_{ij}, \omega_{ij}'), \pi_i \otimes \pi_j \rangle + \sum_{i = 1}^k \langle C_i, \pi_i \rangle,  
  \label{eq:discrete_labelled_partitioned_distance_obj}
\end{equation}
 and choosing a regularization level (inverse step size schedule) $\lambda^t > 0$, a proximal gradient descent on the objective $\Lc$ starting from an initialization $(\pi_i^{0})_{i = 1}^k$ generates the iterates for $t \ge 0$:
\begin{equation}\label{eq:proximal_point_algorithm}
  (\pi_i^{t+1})_{i = 1}^k \gets \argmin_{\pi_i \in \Pi(\mu_i, \mu_i'), \: 1 \leq i \leq k} \Lc(\pi_1, \ldots, \pi_k) + \lambda^t \KL(\otimes_{i} \pi_i | \otimes_{i} \pi_i^{t}), 
\end{equation}
where $\KL(\alpha | \beta)$ denotes the (generalized) Kullback-Leibler divergence between probability distributions (positive measures)
\begin{equation}
  \KL(\alpha | \beta) = \langle \alpha, \log (\diff\alpha / \diff\beta) \rangle - |\alpha| + |\beta|. \label{eq:kullback_leibler}
\end{equation}
Replacing $\Lc$ with its linearization about $(\pi_1^{t}, \ldots, \pi_k^{t})$ yields the proximal gradient method \cite{parikh2014proximal, xu2019scalable},
\begin{equation}
  ( \pi_i^{t+1} )_{i = 1}^k \gets \argmin_{\pi_i \in \Pi(\mu_i, \mu_i'), 1 \leq i \leq k} \sum_{i = 1}^k \langle \nabla_i \Lc(\pi_1^{t}, \ldots, \pi_k^{t}), \pi_i \rangle + \lambda^t \KL(\otimes_i \pi_i | \otimes_i \pi_i^{t}), 
\end{equation}
where $\nabla_i$ denotes the gradient of $\Lc$ in its $i$th argument. Since (for all probability measure inputs) $\KL(\otimes_i \pi_i | \otimes_i \pi_i^{t}) = \sum_{i} \KL(\pi_i | \pi_i^{t})$, the proximal gradient update decouples in each of the $\pi_i$:
\begin{equation}
  \pi_i^{t+1} \gets \argmin_{\pi_i \in \Pi(\mu_i, \mu_i')} \langle \nabla_i \Lc(\pi_1^{t}, \ldots, \pi_k^{t}), \pi_i \rangle + \lambda^t \KL(\pi_i | \pi_i^{t}), \quad 1 \leq i \leq k. 
\end{equation}
Rewriting each problem leads to an entropic optimal transport problem which can be solved via Sinkhorn iterations \cite{cuturi2013sinkhorn}:
\begin{align}
  \begin{split}
    &\pi_i^{t+1} \gets \argmin_{\pi_i \in \Pi(\mu_i, \mu_i')} \lambda^t \KL( \pi_i | \pi_i^{t} \odot e^{-M_i/\lambda^t}), \quad M_i = \nabla_i \Lc(\pi_1^{t}, \ldots, \pi_i^{t}), \quad 1 \leq i \leq k.
  \end{split}\label{eq:proximal_gradient_update}
\end{align}
Noting that 
\[
  \frac{\partial}{\partial \pi} \frac{1}{2} \langle L(\omega, \omega'), \pi \otimes \pi \rangle = \left( \frac{1}{2} L(\omega, \omega') + \frac{1}{2} L(\omega^\top, {\omega'}^\top) \right) \otimes \pi, 
\]
we have the following formula for $\nabla_i \Lc$:
\begin{align*}
  \nabla_i \Lc(\pi_1, \ldots, \pi_k) &= \frac{1}{2} \left( L(\omega_{ii}, \omega'_{ii}) + L(\omega_{ii}^\top, {\omega'}_{ii}^\top) \right) \otimes \pi_i + M[\pi_{-i}] + C_i 
\end{align*}
where $M[\pi_{-i}]$ is defined in \eqref{eq:definition_M}.

\begin{algorithm}
\caption{Proximal gradient: labelled $k$-partitioned networks}
\begin{algorithmic}[1]
\State \textbf{Input:} Matrices $\{ \omega_{ij} \}_{i, j = 1}^k, \{ \omega_{ij}' \}_{i, j = 1}^k$, probability measures $\mu_i, \mu_i'$, $1 \leq i \leq k$, label cost matrices $(C_i)_{i = 1}^k$ (optional).
\State \textbf{Parameters:} inverse step size schedule $\lambda^t, t \ge 0$.
\State Initialize couplings:  $\pi_i^1 \gets \mu_i \otimes \mu_i', \: 1 \leq i \leq k.$
\For{$t = 1, 2, \dots, \texttt{max\_iter}$}
  \For{$1 \leq i \leq k$}
    \State $M_i \gets \frac{1}{2} \left( L(\omega_{ii}, \omega'_{ii}) + L(\omega_{ii}^\top, {\omega'}_{ii}^\top) \right) \otimes \pi_i^t + M[\pi_{-i}^t] + C_i$ \quad (see \eqref{eq:definition_M})
    \State $\pi_i^{t+1} \gets \argmin_{\pi_i \in \Pi(\mu_i, \mu_i')} \lambda^t \KL( \pi_i | \pi_i^{t} \odot e^{-M_i/\lambda^t})$ \quad (solve using Sinkhorn algorithm). 
  \EndFor
\EndFor
\State \textbf{Output:} couplings $\{ \pi_i \}_{i = 1}^k$
\end{algorithmic}
\label{alg:proximal_partitioned}
\end{algorithm}
We remark that when all the $\omega_{ii} = 0$, then $\Lc$ coincides with its linearization and Algorithm \ref{alg:proximal_partitioned} is in fact a \emph{proximal point} method.

\begin{prop}[Convergence of proximal point method]
  The limiting iterates as $t \to \infty$ of the problem \eqref{eq:proximal_point_algorithm} converge to a stationary point of \eqref{eq:discrete_labelled_partitioned_distance}. 
\end{prop}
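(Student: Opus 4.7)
The plan is to adapt the standard proximal point convergence argument (see e.g.~\cite{xie2020fast} for a treatment in the optimal transport context) to the present setting. Observe first that the objective $\Lc$ in \eqref{eq:discrete_labelled_partitioned_distance_obj} is nonnegative, since it is a sum of inner products of nonnegative tensors (the distortion tensor $L(\omega_{ij},\omega_{ij}')$, the cost matrices $C_i$) against nonnegative couplings. The remark preceding the proposition ensures that under $\omega_{ii}=0$ the linearization used in Algorithm~\ref{alg:proximal_partitioned} is exact in each coordinate, so the update \eqref{eq:proximal_gradient_update} genuinely coincides with the proximal point update \eqref{eq:proximal_point_algorithm}. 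This is the setting in which convergence is claimed.

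First I would establish monotone decrease of the objective. By optimality of $(\pi_i^{t+1})$ in \eqref{eq:proximal_point_algorithm} and by testing with the feasible point $(\pi_i^{t})$, which makes the $\KL$ term vanish, one obtains
\[
\Lc(\pi_1^{t+1},\ldots,\pi_k^{t+1}) + \lambda^t \KL\!\left(\otimes_i \pi_i^{t+1} \,\big|\, \otimes_i \pi_i^{t}\right) \leq \Lc(\pi_1^{t},\ldots,\pi_k^{t}).
\]
Since $\KL \geq 0$, the values $\Lc(\pi_1^{t},\ldots,\pi_k^{t})$ form a non-increasing sequence bounded below by $0$, hence converge to some limit $\Lc^\star$. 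Telescoping the inequalities and using the identity $\KL(\otimes_i \alpha_i \,|\, \otimes_i \beta_i) = \sum_i \KL(\alpha_i \,|\, \beta_i)$ for product measures yields
\[
\sum_{t \geq 0} \lambda^t \sum_{i=1}^k \KL\!\left(\pi_i^{t+1} \,\big|\, \pi_i^{t}\right) \leq \Lc(\pi_1^{0},\ldots,\pi_k^{0}) - \Lc^\star < \infty.
\]
Assuming the step-size schedule satisfies $\inf_t \lambda^t > 0$ (which is the standard hypothesis), this forces $\KL(\pi_i^{t+1} \,|\, \pi_i^{t}) \to 0$ for each $i$, and hence by Pinsker's inequality $\|\pi_i^{t+1} - \pi_i^{t}\|_{\mathrm{TV}} \to 0$.

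Next I would exploit compactness: by \cite[Lemma 1.2]{sturm2023space}, each polytope $\Pi(\mu_i,\mu_i')$ is compact in the weak topology, so the product $\Pi_k\big((\mu_i),(\mu_i')\big)$ is compact and we can extract a subsequence $(\pi_i^{t_n})_i \to (\pi_i^\star)_i$. By the preceding paragraph $(\pi_i^{t_n+1})_i$ converges to the same limit. To conclude, I would pass to the limit in the first-order optimality conditions of the proximal subproblem \eqref{eq:proximal_gradient_update}: these take the form of an entropic optimal transport problem whose optimality conditions relate $\pi_i^{t+1}$ to the gradient $\nabla_i \Lc(\pi_1^t,\ldots,\pi_k^t)$ via the marginal constraints and a dual potential identity. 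Since $\|\pi_i^{t_n+1} - \pi_i^{t_n}\|_{\mathrm{TV}} \to 0$ and the gradient is continuous (it is a sum of matrix products in the couplings), the limiting conditions reduce to the KKT conditions of the original non-regularized problem \eqref{eq:discrete_labelled_partitioned_distance}, certifying that $(\pi_i^\star)$ is a stationary point.

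The main obstacle I anticipate is the last step: care is needed because the proximal subproblem's optimality conditions involve Sinkhorn-type exponential potentials that can degenerate as $\lambda^t$ varies, so one must either assume $\lambda^t$ remains bounded, or pass to the limit on the primal stationarity condition $\nabla_i \Lc(\pi_1^\star,\ldots,\pi_k^\star) \in N_{\Pi(\mu_i,\mu_i')}(\pi_i^\star)$ (normal cone) directly via the variational inequality
\[
\langle \nabla_i \Lc(\pi_1^{t},\ldots,\pi_k^{t}) + \lambda^t (\log(\pi_i^{t+1}/\pi_i^{t}) + \text{potentials}), \, \pi_i - \pi_i^{t+1}\rangle \geq 0 \quad \forall \pi_i \in \Pi(\mu_i,\mu_i'),
\]
and checking that the $\lambda^t$-term vanishes in the limit using the $\KL$-summability above. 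A secondary technicality is that $\Lc$ is non-convex, so only stationarity (rather than global optimality) can be claimed; this is already reflected in the statement of the proposition.
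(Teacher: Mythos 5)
Your argument is correct in outline but takes a genuinely different route from the paper. The paper's proof is a two-line verification that $u(\pi,\pi') = \Lc(\pi) + \lambda\,\KL(\otimes_i \pi_i \mid \otimes_i \pi_i')$ is a smooth, tight upper bound on $\Lc$ over the closed convex set $U = \times_i \Pi(\mu_i,\mu_i')$ (it touches $\Lc$ at $\pi = \pi'$ and dominates it elsewhere by non-negativity of $\KL$), after which convergence to a stationary point is delegated wholesale to the successive upper-bound minimization framework of Razaviyayn--Hong--Luo (\cite[Proposition~1 and Theorem~1]{razaviyayn2013unified}). You instead reconstruct the underlying descent argument by hand: sufficient decrease, summability of the $\KL$ increments, Pinsker, compactness of the coupling polytopes, and passage to the limit in the first-order conditions. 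Your route is more self-contained and makes the mechanism visible (in particular it exposes where $\inf_t \lambda^t > 0$ is used, a hypothesis the paper leaves implicit in the phrase ``inverse step size schedule''); the paper's route is shorter and, importantly, sidesteps the one place where your argument still has a residual technical wrinkle. Namely, in your final step the term $\lambda^t\log(\pi_i^{t+1}/\pi_i^t)$ need not vanish pointwise just because $\KL(\pi_i^{t+1}\mid\pi_i^t)\to 0$: entries of the iterates can approach the boundary of the polytope and the logarithms (and Sinkhorn potentials) can degenerate, exactly as you flag. The MM framework avoids this by never writing the KKT system explicitly --- it only uses that the \emph{directional} derivatives of the surrogate at the touching point agree with those of $\Lc$ along feasible directions (the $+1$ in $\nabla_\pi\KL(\pi\mid\pi')\big|_{\pi=\pi'}$ is constant and feasible directions have zero mass, so it drops out). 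If you want to complete your direct proof rigorously, you should argue via the variational inequality for the surrogate evaluated along feasible directions at the limit point, rather than via the dual-potential form of the entropic OT optimality conditions; with that adjustment your proof is a valid, more elementary alternative to the paper's citation-based one.
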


\begin{proof}
  Writing $\pi = (\pi_1, \ldots, \pi_k)$, \eqref{eq:proximal_point_algorithm} can be written as
  \[
    \min_{\pi \in U} u(\pi, \pi^t), 
  \]
  where
  \[
    u(\pi, \pi') = \Lc(\pi_1, \ldots, \pi_k) + \lambda \KL(\otimes_i \pi_i | \otimes_i \pi_i' )
  \]
  and the set $U = \times_{i = 1}^k \Pi(\mu_i, \mu_i')$ is closed and convex as a Cartesian product of closed, convex sets. Now we note that
  \begin{itemize}
    \item $u(\pi, \pi) = \Lc(\pi)$ for all $\pi \in U$.
    \item $u(\pi, \pi') \geq \Lc(\pi)$ by non-negativity of the $\KL$-divergence.
    \item $u(\pi, \pi')$ is smooth in both of its arguments. 
  \end{itemize}
  Taken together and applying \cite[Proposition 1]{razaviyayn2013unified}, we satisfy the conditions for \cite[Theorem 1]{razaviyayn2013unified}.

  We remark that the unbalanced case (where the hard marginal constraints are replaced by soft constraints) can be handled in the same way, if the marginal penalties are smooth. The constraint set is then $U = \times_{i = 1}^k \mathcal{M}_+$ which is also closed and convex. 
\end{proof}

\subsection{Projected gradient descent}\label{sec:projected_gradient_descent}

When solving the regularized problem \eqref{eq:entropic_discrete_labelled_partitioned_distance} and setting $\Omega_0 = \Omega_1 = \KL$, a projected gradient descent approach \cite{peyre2016gromov} can be used. Then the minimization problem has the form 
\begin{equation}
  \min_{\pi_i \in \Pi(\mu_i, \mu_i'), 1 \leq i \leq k} \Lc(\pi_1, \ldots, \pi_k) + \sum_{i = 1}^k \varepsilon_i \KL(\pi_i | \mu_i \otimes \mu_i'). 
\end{equation}
For a gradient step size $\eta_i > 0$, a projected mirror descent step in each $\pi_i$ reads
\begin{equation}
  \pi_i^{t+1} \gets \Proj_{\Pi(\mu_i, \mu_i')}^{\KL}\left[ \pi_i^t \odot \exp\left(-\eta_i (\nabla_i \Lc(\pi_1^t, \ldots, \pi_k^t) + \varepsilon_i \log(\pi_i^t / \mu_i \otimes \mu_i'))\right) \right], \quad 1 \leq i \leq k.
\end{equation}
Choosing $\eta_i = 1/\varepsilon_i$, we get
\begin{equation}
  \pi_i^{t+1} \gets \Proj_{\Pi(\mu_i, \mu_i')}^{\KL}\left[ e^{-\varepsilon_i^{-1} \nabla_i \Lc(\pi_1, \ldots, \pi_k)} \mu_i \otimes \mu_i' \right], \quad 1 \leq i \leq k.
\end{equation}
Each of these projections can be computed through the Sinkhorn algorithm \cite{peyre2016gromov, cuturi2013sinkhorn}. Similarly, one may consider $\Omega_i = \frac{1}{2}\| \cdot \|_{L^2(\mu_i \otimes \mu'_i)}^2$, in which case the projected $L^2$-gradient descent steps are
\begin{equation}
  \pi^{t+1}_i \gets \Proj_{\Pi(\mu_i, \mu_i')}^{F}\left[ -\frac{1}{\varepsilon_i} \nabla_i \Lc(\pi_1^t, \ldots, \pi_k^t ) (\mu_i \otimes \mu_i') \right], \quad 1 \leq i \leq k. 
\end{equation}
Here, both the gradient and projection steps are calculated in $L^2(\mu \otimes \mu')$. The $L^2$-projection $\Proj_{\Pi(\mu, \mu')}^F(A)$ can be carried out by solving a quadratically regularized optimal transport problem. This formulation has the notable advantage of producing couplings which are sparse, i.e. identically zero outside of a support set \cite{zhang2023manifold, lorenz2021quadratically}.

\begin{algorithm}
\caption{Projected gradient: labelled $k$-partitioned networks, regularized matchings}
\begin{algorithmic}[1]
\State \textbf{Input:} Matrices $\{ \omega_{ij} \}_{i, j = 1}^k, \{ \omega_{ij}' \}_{i, j = 1}^k$, probability measures $\mu_i, \mu_i'$, $1 \leq i \leq k$, label cost matrices $(C_i)_{i = 1}^k$ (optional). 
\State \textbf{Parameters:} entropic regularization levels $\varepsilon_i \ge 0, 1 \leq i \leq k$ (optional)
\State Initialize couplings:  $\pi_i^1 \gets \mu_i \otimes \mu_i', \: 1 \leq i \leq k.$
\For{$t = 1, 2, \dots, \texttt{max\_iter}$}
  \For{$1 \leq i \leq k$}
    \State $M_i \gets \frac{1}{2} \left( L(\omega_{ii}, \omega'_{ii}) + L(\omega_{ii}^\top, {\omega'}_{ii}^\top) \right) \otimes \pi_i^t + M[\pi_{-i}^t] + C_i$ \quad (see \eqref{eq:definition_M})
    \State $\pi_i^{t+1} \gets \argmin_{\pi_i \in \Pi(\mu_i, \mu_i')} \varepsilon_i \KL(\pi_i | e^{-\varepsilon_i^{-1} M_i } \mu_i \otimes \mu_i')$
  \EndFor
\EndFor
\State \textbf{Output:} couplings $\{ \pi_i \}_{i = 1}^k$
\end{algorithmic}
\label{alg:projected_partitioned}
\end{algorithm}

\subsection{Unbalanced matchings}\label{sec:unbalanced_matchings}

We now consider the setting of unbalanced transport, in which marginal constraints are relaxed and replaced with penalty functions that enforce a ``soft'' marginal constraint. Unbalanced transport has been well studied from both a theoretical and practical viewpoint for the transportation of measures \cite{chizat2018unbalanced, chizat2018scaling, liero2018optimal}, and has since been extended to the setting of (Fused) Gromov-Wasserstein matchings between metric measure spaces \cite{sejourne2021unbalanced, thual2022aligning} and co-optimal transport \cite{tran2023unbalanced}. An unbalanced formulation of the partitioned network alignment problem is valuable in practical settings when there may only be partial correspondences between networks, such as in the metabolic network alignment example of Figure \ref{fig:metabolic_alignment}. 

The unbalanced transport problem for labelled partitioned measure networks includes unbalanced (Fused) Gromov-Wasserstein \cite{sejourne2021unbalanced, thual2022aligning} and co-optimal transport \cite{tran2023unbalanced} as sub-cases. We let $\lambda_{1, 2} > 0$ enforce the soft marginal constraints for the source and target respectively, and we denote by $\mathcal{M}_+(X)$ the space of positive measures supported on $X$. For generality, and because this makes efficient computational schemes possible, we optionally allow an entropic regularization with a coefficient $\varepsilon \ge 0$. Then we pose the problem of (entropically regularized) \emph{unbalanced} matching as 
\begin{align}
  \begin{split}
    \min_{\substack{\pi_i \in \Mc_+(X_i \times X_i'), \: 1 \leq i \leq k, \\ m(\pi_i) = m(\pi_j), \: \forall 1 \leq i, j \leq k}} \: \widetilde{\Lc}(\pi_1, \ldots, \pi_k) &+ \varepsilon \sum_{i,j = 1}^k \KL(\pi_i \otimes \pi_j | \mu_i \otimes \mu_i' \otimes \mu_j \otimes \mu_j')  \\
                                                                                        &+ \lambda_1 \sum_{i, j = 1}^k \KL(\pi_i \ones \otimes \pi_j \ones | \mu_i \otimes \mu_j) \\
                                                                                        &+ \lambda_2 \sum_{i, j = 1}^k \KL(\pi_i^\top \ones \otimes \pi_j^\top \ones | \mu_i' \otimes \mu_j').
  \end{split}\label{eq:unbalanced_partitioned_measure_network_matching}
\end{align}
In the above, $\widetilde{\Lc}$ denotes a variant of the objective function \eqref{eq:discrete_labelled_partitioned_distance_obj}, modified to ensure that the function $\widetilde{\Lc}$ and overall objective to be minimized remains homogeneous in $(\pi_1, \ldots, \pi_k)$:
\begin{equation}
  \widetilde{\Lc}(\pi_1, \ldots, \pi_k) = \frac{1}{2} \sum_{i, j = 1}^k \langle L(\omega_{ij}, \omega'_{ij}), \pi_i \otimes \pi_j \rangle + \sum_{i = 1}^k m(\pi_i) \langle C_i, \pi_i \rangle,
  \label{eq:unbalanced_partitioned_measure_network_matching_obj}
\end{equation}
where $m(\pi) = \int \diff \pi$ is the total mass of $\pi$. 
This is different to the setup in \cite{thual2022aligning}, in which the quadratic nature of the Gromov-Wasserstein term in the coupling $\pi$ conflicts with the linearity of the fused term. Importantly, $\widetilde{\Lc}$ coincides with $\Lc$ when its inputs are restricted to be probability measures, i.e. $m(\pi_i) = 1$. 

Extending the definition of partitioned measure networks (Definition \ref{defn:partitioned_measure_network}), we will allow $\mu_i$ to be positive measures in $\mathcal{M}_+(X_i)$ for each $1 \leq i \leq k$, but require that $m(\mu_i) = m(\mu_j), i \ne j$. The motivation is to eliminate the non-uniqueness under scaling (e.g.,~$(\mu_i, \mu_j) \to (\lambda \mu_i, \lambda^{-1} \mu_j)$) that becomes a particular issue in the special case of co-optimal transport \cite{tran2023unbalanced}.

First, we state the following identity for the KL-divergence (see \eqref{eq:kullback_leibler}), 
\begin{align}
  \begin{split}
    \KL(\alpha \otimes \beta | \alpha' \otimes \beta') &= m(\beta) \Ent(\alpha | \alpha') + m(\alpha) \Ent(\beta | \beta') - m(\alpha) m(\beta) + m(\alpha') m(\beta') \\
                                                       &= m(\beta) \KL(\alpha | \alpha') + m(\alpha) \KL(\beta | \beta') + (m(\alpha) - m(\alpha')) (m(\beta) - m(\beta')),
  \end{split}
  \label{eq:kl_product_identity}
\end{align}
where we have defined the \emph{relative entropy} term 
\begin{equation}
  \Ent(\alpha | \alpha') = \int  \log \left( \frac{\diff\alpha}{\diff\alpha'}  \right) \diff\alpha.
  \label{eq:relative_entropy}
\end{equation}
Thus, $(\alpha, \beta) \mapsto \KL(\alpha \otimes \beta | \alpha' \otimes \beta')$ is 2-homogeneous up to additive constants \cite{sejourne2021unbalanced}. Since $\widetilde{\Lc}$ is 2-homogeneous, the objective of \eqref{eq:unbalanced_partitioned_measure_network_matching} is also 2-homogeneous. We remark that this is important, since if we used $\Lc$ instead of $\widetilde{\Lc}$, under the scaling $\pi_i \mapsto \lambda \pi_i$, quadratic terms would dominate when $\lambda \to +\infty$ and linear terms when $\lambda \to 0$.

\smallskip\noindent\textbf{Special case: measure hypernetworks.}~A special case is when $k = 2$ and $\omega_{ii} = \omega'_{ii} = 0$: this amounts to an \emph{unbalanced, fused} co-optimal transport problem. This was the focus of \cite{tran2023unbalanced} which considered the unlabelled setting, and we discuss it for completeness.
In this case, $\widetilde{\Lc}$ can be written in a bilinear form in $(\pi_1, \pi_2)$:
\begin{equation}
  \widetilde{\Lc}(\pi_1, \pi_2) = \left\langle \frac{1}{2}L(\omega_{12}, \omega_{12}') + \frac{1}{2} L(\omega_{21}^\top, \omega_{21}'^\top) + C_1 \otimes \ones + \ones \otimes C_2, \pi_1 \otimes \pi_2 \right\rangle . 
\end{equation}
Up to additive constants, the problem \eqref{eq:unbalanced_partitioned_measure_network_matching} for $k = 2$ and $\omega_{ii} = \omega'_{ii} = 0$ can be re-written as 
\begin{align}
  \begin{split}
  \min_{\pi_1, \pi_2} \widetilde{\Lc}(\pi_1, \pi_2) &+ 2\lambda_1 \left( (m(\pi_1) + m(\pi_2)) \Ent(\pi_1 \ones | \mu_1) + (m(\pi_1) + m(\pi_2)) \Ent(\pi_2 \ones | \mu_2)  \right) \\
                                                    &+ 2\lambda_2 \left( (m(\pi_1) + m(\pi_2)) \Ent(\pi_1^\top \ones | \mu_1') + (m(\pi_1) + m(\pi_2)) \Ent(\pi_2^\top \ones | \mu_2') \right) \\
                                                    &+ 2\varepsilon \left( (m(\pi_1) + m(\pi_2)) \Ent(\pi_1 | \mu_1 \otimes \mu_1') + (m(\pi_1) + m(\pi_2)) \Ent(\pi_2 | \mu_2 \otimes \mu_2')) \right) \\ 
                                                    & - (\lambda_1 + \lambda_2 + \varepsilon) \left( m(\pi_1)^2 + m(\pi_2)^2 + 2 m(\pi_1) m(\pi_2) \right),
  \end{split}
\end{align}
where the minimum is taken over $\pi_1 \in \mathcal{M}_+(X_1 \times X_1')$ and $\pi_2 \in \mathcal{M}_+(X_2 \times X_2')$ such that $m(\pi_1) = m(\pi_2)$.
Notice that $\widetilde{\Lc}$ is bilinear, but the terms corresponding to the soft marginal constraints contain quadratic terms in $\pi_1$ and $\pi_2$. However, since we have the constraint $m(\pi_1) = m(\pi_2)$, we could judiciously swap $m(\pi_1)$ and $m(\pi_2)$ to derive an objective that remains equivalent under the mass equality constraint:
\begin{align}
  \begin{split}
  \min_{\pi_1, \pi_2} \widetilde{\Lc}(\pi_1, \pi_2) & + 4\lambda_1 \left( m(\pi_2) \Ent(\pi_1 \ones | \mu_1) + m(\pi_1) \Ent(\pi_2 \ones | \mu_2)  \right) \\
                                                                                                                      &+ 4\lambda_2 \left( m(\pi_2) \Ent(\pi_1^\top \ones | \mu_1') + m(\pi_1) \Ent(\pi_2^\top \ones | \mu_2') \right) \\
                                                                                                                      &+ 4\varepsilon \left( m(\pi_2) \Ent(\pi_1 | \mu_1 \otimes \mu_1') + m(\pi_1) \Ent(\pi_2 | \mu_2 \otimes \mu_2') \right) \\ 
                                                    & - 4(\lambda_1 + \lambda_2 + \varepsilon) m(\pi_1) m(\pi_2).
  \end{split}
\end{align}
In what follows, we drop the factor of 4 as it can be absorbed into the coefficients:
\begin{align}
  \begin{split}
  \min_{\pi_1, \pi_2}  \widetilde{\Lc}(\pi_1, \pi_2) &+ \lambda_1 \left( m(\pi_2) \Ent(\pi_1 \ones | \mu_1) + m(\pi_1) \Ent(\pi_2 \ones | \mu_2)  \right) \\
                                                                                                                      &+ \lambda_2 \left( m(\pi_2) \Ent(\pi_1^\top \ones | \mu_1') + m(\pi_1) \Ent(\pi_2^\top \ones | \mu_2') \right) \\
                                                                                                                      &+ \varepsilon \left( m(\pi_2) \Ent(\pi_1 | \mu_1 \otimes \mu_1') + m(\pi_1) \Ent(\pi_2 | \mu_2 \otimes \mu_2') \right) \\ 
                                                     & - (\lambda_1 + \lambda_2 + \varepsilon) m(\pi_1) m(\pi_2).
  \end{split}
\end{align}
We will for now remove the constraint $m(\pi_1) = m(\pi_2)$. In this setting, we can tackle the problem by alternating block minimization in $(\pi_1, \pi_2)$. Fixing $\pi_2$ and rearranging the objective function, the update in $\pi_1$ amounts to
\begin{align}
  \min_{\pi_1 \in \mathcal{M}_+(X_1 \times X_1')} &\langle M[\pi_2], \pi_1 \rangle \\
  &+ \lambda_1 m(\pi_2) \KL(\pi_1 \ones | \mu_1) + \lambda_2 m(\pi_2) \KL(\pi_1^\top \ones | \mu_1') + \varepsilon m(\pi_2) \KL(\pi_1 | \mu_1 \otimes \mu_1'), 
  \label{eq:unbalanced_coot_update_1}
\end{align}
where
\begin{align}
\begin{split}
  M[\pi_2] = &\left( \frac{1}{2} L(\omega_{12}, \omega_{12}') + \frac{1}{2} L(\omega_{21}^\top, {\omega'}_{21}^\top) \right) \otimes \pi_2 \\
  &+ m(\pi_2) C_1 + \left( \langle C_2, \pi_2 \rangle + \lambda_1 \Ent(\pi_2 \ones | \mu_2) + \lambda_2 \Ent(\pi_2^\top \ones | \mu_2') + \varepsilon \Ent(\pi_2 | \mu_2 \otimes \mu_2')\right) \ones . 
  \end{split}   
\end{align}
Fixing $\pi_1$, the update in $\pi_2$ is
\begin{align}
  \min_{\pi_2 \in \mathcal{M}_+(X_2 \times X_2')} &\langle M[\pi_1], \pi_2 \rangle \\
  &+ \lambda_1 m(\pi_1) \KL(\pi_2 \ones | \mu_2) + \lambda_2 m(\pi_1) \KL(\pi_2^\top \ones | \mu_2') + \varepsilon m(\pi_1) \KL(\pi_2 | \mu_2 \otimes \mu_2'), 
  \label{eq:unbalanced_coot_update_2}
\end{align}
where
\begin{align}
\begin{split}
  M[\pi_1] = &\left( \frac{1}{2} L(\omega_{12}^\top, {\omega'}_{12}^\top) + \frac{1}{2} L(\omega_{21}, \omega'_{21}) \right) \otimes \pi_1 \\
  &+ m(\pi_1) C_2 + \left( \langle C_1, \pi_1 \rangle + \lambda_1 \Ent(\pi_1 \ones | \mu_1) + \lambda_2 \Ent(\pi_1^\top \ones | \mu_1') + \varepsilon \Ent(\pi_1 | \mu_1 \otimes \mu_1' )\right)\ones.
\end{split}
\end{align}

To enforce the equal mass constraint $m(\pi_1) = m(\pi_2)$, given $(\pi_1, \pi_2)$ with $m(\pi_1) \ne m(\pi_2)$, we use the following to project onto the constraint set:
\begin{equation}
  (\pi_1, \pi_2) \mapsto \left( \sqrt{\frac{m(\pi_2)}{m(\pi_1)}} \pi_1, \sqrt{\frac{m(\pi_1)}{m(\pi_2)}} \pi_2 \right). 
\end{equation}
This projection, also used in \cite{tran2023unbalanced}, can be shown to be equivalent to the $\KL$-projection of $(\pi_1, \pi_2)$ onto the set $m(\pi_1) = m(\pi_2)$. 

\begin{algorithm}
\caption{Unbalanced matchings: labelled measure hypernetworks}
\begin{algorithmic}[1]
  \State \textbf{Input:} Matrices $\omega_{12}, \omega_{12}'$, positive measures $\mu_i, \mu_i'$, $i = 1, 2$, label cost matrices $C_{1, 2}$ (optional).
  \State \textbf{Parameters:} Entropic regularisation parameter $\varepsilon \geq 0$, unbalanced parameters $\lambda_1, \lambda_2 > 0$.
\State Initialize couplings:  $\pi_i \gets \mu_i \otimes \mu_i' / \sqrt{m(\mu_i) m(\mu_i')}, \: i = 1, 2.$
\For{$t = 1, 2, \dots, \texttt{max\_iter}$}
  \State \begin{align*}
    \pi_1 \gets \argmin_{\pi_1 \in \mathcal{M}_+(X_1 \times X_1')} &\langle M[\pi_2], \pi_1 \rangle \\
    & + \lambda_1 m(\pi_2) \KL(\pi_1 \ones | \mu_1) + \lambda_2 m(\pi_2) \KL(\pi_1^\top \ones | \mu_1') + \varepsilon m(\pi_2) \KL(\pi_1 | \mu_1 \otimes \mu_1')
  \end{align*}
  \State $\pi_1 \gets \sqrt{\dfrac{m(\pi_2)}{m(\pi_1)}} \pi_1$
  \State \begin{align*}
    \pi_2 \gets \argmin_{\pi_2 \in \mathcal{M}_+(X_2 \times X_2')} &\langle M[\pi_1], \pi_2 \rangle \\
    &+ \lambda_1 m(\pi_1) \KL(\pi_2 \ones | \mu_2) + \lambda_2 m(\pi_1) \KL(\pi_2^\top \ones | \mu_2') + \varepsilon m(\pi_1) \KL(\pi_2 | \mu_2 \otimes \mu_2')
  \end{align*}
  \State $\pi_2 \gets \sqrt{\dfrac{m(\pi_1)}{m(\pi_2)}} \pi_2$
\EndFor
\State \textbf{Output:} couplings $\{ \pi_i \}_{i = 1}^k$
\end{algorithmic}
\label{alg:unbalanced_hypernetwork}
\end{algorithm}

\smallskip\noindent\textbf{General case.}~For general partitioned measure networks, the objective function $\widetilde{\Lc}$ introduces terms that are non-trivially quadratic in $\pi_i$ and is therefore less straightforward to solve. While in the balanced case these kinds of problems are typically tackled using a Frank-Wolfe algorithm \cite{memoli2011gromov}, such an approach is not feasible for problems with soft constraints. As done in related works \cite{thual2022aligning, sejourne2021unbalanced}, we propose to solve the problem instead via a biconvex relaxation. Consider \emph{two} partitioned couplings $(\pi_1, \ldots, \pi_k)$ and $(\xi_1, \ldots, \xi_k)$. We us the relaxation of \eqref{eq:unbalanced_partitioned_measure_network_matching}:
\begin{align}
  \begin{split}
  \min_{\substack{\pi_i, \xi_i \in \Mc_+(X_i \times X_i'), 1 \leq i \leq k \\ m(\pi_i) = m(\xi_j), 1 \leq i, j \leq k}} \: &\widetilde{\Lc}(\pi_1, \ldots, \pi_k; \xi_1, \ldots, \xi_k) + \varepsilon \sum_{i,j = 1}^k \KL(\pi_i \otimes \xi_j | \mu_i \otimes \mu_i' \otimes \mu_j \otimes \mu_j')  \\
                                                                                                                           &+ \lambda_1 \sum_{i, j = 1}^k \KL(\pi_i \ones \otimes \xi_j \ones | \mu_i \otimes \mu_j) + \lambda_2 \sum_{i, j = 1}^k \KL(\pi_i^\top \ones \otimes \xi_j^\top \ones | \mu_i' \otimes \mu_j'),
  \end{split}
  \label{eq:unbalanced_biconvex_relaxation}
\end{align}
where we define the relaxed version of \eqref{eq:unbalanced_partitioned_measure_network_matching_obj}:
\begin{align*}
  \widetilde{\Lc}(\pi_1, \ldots, \pi_k; \xi_1, \ldots, \xi_k) &= \frac{1}{2} \sum_{i, j = 1}^k \langle L(\omega_{ij}, \omega_{ij}'), \pi_i \otimes \xi_j \rangle + \sum_{i = 1}^k \left\langle \frac{1}{2}(C_i \otimes \ones + \ones \otimes C_i), \pi_i \otimes \xi_i \right\rangle . 
\end{align*}
The form of the second term ensures symmetry under the exchange of $(\pi, \xi)$ and that $\widetilde{\Lc}(\pi_1, \ldots, \pi_k; \pi_1, \ldots, \pi_k) = \Lc(\pi_1, \ldots, \pi_k)$. 
The problem \eqref{eq:unbalanced_biconvex_relaxation} is now convex separately in $(\pi_i)_{i = 1}^k$ and $(\xi_i)_{i = 1}^k$ respectively. Fixing $(\xi_i)_{i = 1}^k$ and minimizing in $(\pi_i)_{i = 1}^k$, we find that the problem decouples across partitions in each of the $\pi_i$:
\begin{align}
  \begin{split}
  \min_{\substack{\pi_i \in \Mc_+(X_i \times X_i'), 1 \leq i \leq k \\ m(\pi_i) = m(\pi_j), 1 \leq i, j \leq k}} &\widetilde{\Lc}(\pi_1, \ldots, \pi_k ; \xi_1, \ldots, \xi_k) \\
  &+ \left( \sum_i m(\pi_i) \right) \left( \varepsilon \sum_{j} \Ent(\xi_j | \mu_j \otimes \mu_j') + \lambda_1 \sum_j \Ent(\xi_j \ones | \mu_j ) + \lambda_2 \sum_j \Ent(\xi_j^\top \ones | \mu_j') \right) \\
  &+ \lambda_1 \left( \sum_j m(\xi_j) \right) \sum_i \KL(\pi_i \ones | \mu_i) + \lambda_2 \left( \sum_j m(\xi_j) \right) \sum_i \KL(\pi_i^\top \ones | \mu_i') \\ 
                                                                                                                 &+ \varepsilon \left( \sum_j m(\xi_j) \right) \sum_i \KL(\pi_i | \mu_i \otimes \mu_i').
  \end{split}
  \label{eq:unbalanced_biconvex_pi}
\end{align}
Relaxing the mass equality constraint $m(\pi_i) = m(\pi_j), 1 \leq i, j \leq k$, the above problem amounts to $k$ regularized unbalanced optimal transport problems that can be solved independently and in parallel. The resulting couplings can be projected onto the set $\{ m(\pi_i) = m(\pi_j), 1 \leq i, j \leq k \}$:
\begin{equation}
  (\pi_i)_{i = 1}^k \mapsto \left( \frac{(m(\pi_1) \ldots m(\pi_k))^{1/k}}{m(\pi_i)} \pi_i \right)_{i = 1}^k.
  \label{eq:mass_projection}
\end{equation}
Similarly, fixing $(\pi_i)_{i = 1}^k$, the problem in $(\xi_i)_{i = 1}^k$ is 
\begin{align}
  \begin{split}
  \min_{\substack{\xi_i \in \Mc_+(X_i \times X_i'), 1 \leq i \leq k \\ m(\xi_i) = m(\xi_j), 1 \leq i, j \leq k}} &\widetilde{\Lc}(\pi_1, \ldots, \pi_k ; \xi_1, \ldots, \xi_k) \\
  &+ \left( \sum_j m(\xi_j) \right) \left( \varepsilon \sum_{i} \Ent(\pi_i | \mu_i \otimes \mu_i') + \lambda_1 \sum_i \Ent(\pi_i \ones | \mu_i ) + \lambda_2 \sum_i \Ent(\pi_i^\top \ones | \mu_i') \right) \\
                                                                                                                                                                &+ \lambda_1 \left( \sum_i m(\pi_i) \right) \sum_j \KL(\xi_j \ones | \mu_j) + \lambda_2 \left( \sum_i m(\pi_i) \right) \sum_j \KL(\xi_j^\top \ones | \mu_j')  \\ 
                                                                                                                 &+ \varepsilon \left( \sum_i m(\pi_i) \right) \sum_j \KL(\xi_j | \mu_j \otimes \mu_j'),
  \end{split}
  \label{eq:unbalanced_biconvex_xi}
\end{align}
and the same projection \eqref{eq:mass_projection} can be used to enforce the mass equality constraint in $(\xi_i)_{i = 1}^k$.
It is important to note that this scheme aims to solve the biconvex relaxation \eqref{eq:unbalanced_biconvex_relaxation} which is in general only a lower bound for \eqref{eq:unbalanced_partitioned_measure_network_matching}. In particular, at convergence, we may have $\pi_i \ne \xi_i$ in general. While this biconvex relaxation scheme was studied for the Gromov-Wasserstein setting by \cite{sejourne2021unbalanced}, they were unable to prove tightness or that the two sets of couplings $(\pi_i)_i, (\xi_i)_i$ coincide.

\begin{algorithm}
\caption{Unbalanced matchings: labelled partitioned measure networks via biconvex relaxation}
\begin{algorithmic}[1]
  \State \textbf{Input:} Matrices $\{ \omega_{ij} \}_{i, j = 1}^k, \{ \omega_{ij}' \}_{i, j = 1}^k$, positive measures $\mu_i, \mu_i'$, $1 \leq i \leq k$, label cost matrices $(C_i)_{i = 1}^k$ (optional)
  \State \textbf{Parameters:} Marginal penalties $\lambda_1, \lambda_2 > 0$, entropic regularization $\varepsilon \ge 0$ (optional).
\State Initialize couplings:  $\pi_i \gets \mu_i \otimes \mu_i' / \sqrt{m(\mu_i) m(\mu_i')}, \: 1 \leq i \leq k.$
\State Initialize additional couplings:  $\xi_i \gets \pi_i, \: 1 \leq i \leq k.$
\For{$t = 1, 2, \dots, \texttt{max\_iter}$}
  \State Update $(\pi_1, \ldots, \pi_k)$ by solving \eqref{eq:unbalanced_biconvex_pi} independently for each $1 \leq i \leq k$. 
  \State Rescale $(\pi_i)_{i = 1}^k$ following \eqref{eq:mass_projection}
  \State Update $(\xi_1, \ldots, \xi_k)$ by solving \eqref{eq:unbalanced_biconvex_xi} independently for each $1 \leq i \leq k$. 
  \State Rescale $(\xi_i)_{i = 1}^k$ following \eqref{eq:mass_projection}
  \State $(\pi_i)_{i = 1}^k, (\xi_i)_{i = 1}^k \gets \left( \sqrt{\frac{m(\xi)}{m(\pi)}} \pi_i \right)_{i = 1}^k, \left( \sqrt{\frac{m(\pi)}{m(\xi)}} \xi_i \right)_{i = 1}^k$
\EndFor
\State \textbf{Output:} couplings $\{ \pi_i \}_{i = 1}^k, \{ \xi_i \}_{i = 1}^k$
\end{algorithmic}
\label{alg:unbalanced_partitioned}
\end{algorithm}

\smallskip\noindent\textbf{A remark on partial transport.} On the other hand we may consider \emph{partial} transport, where some fraction $0 \leq s \leq 1$ of mass is required to be transported with the remainder being discarded and thus incurring zero cost. This problem was considered by \cite{chapel2020partial}  in the case of Gromov-Wasserstein transport. For two probability measures $\mu, \mu'$, define the set of partial couplings of mass $s$ to be
\[
  \Pi(\mu, \mu'; s) = \{ \pi \ge 0 : \pi \ones \leq \mu, \pi^\top \ones \leq \mu', m(\pi) = s\}. 
\]
Then, the partial matching problem amounts to solving
\begin{align}
  \begin{split}
    \min_{\pi_i \in \Pi(\mu_i, \mu_i'; s)} \Lc(\pi_1, \ldots, \pi_k). 
    \end{split}
\end{align}
This amounts to the minimization of a non-convex objective on a convex and compact constraint set, and similar to \cite{chapel2020partial}, we can tackle it via conditional gradient method. In particular, to compute the descent directions in each of the $\pi_i$, we need to solve:
\begin{align*}
  &\min_{\pi_i \in \Pi(\mu_i, \mu_i'; s)} \langle \nabla_i \Lc(\pi_1^t, \ldots, \pi_k^t), \pi_i \rangle.
\end{align*}
Each of these is a partial optimal transport problem which can be solved using the virtual point approach of \cite{chapel2020partial}. 

\subsection{Partitioned networks for multiscale network matching}\label{sec:multiscale_matching_algorithm}

Chowdhury et al. \cite{chowdhury2023hypergraph} introduced a generalized co-optimal transport problem for multiscale network matching. Given an input graph $G$, they produced successive topological simplifications $\mathcal{G} = \{ G = G_1, \ldots, G_k \}$. At each level $1 \leq i \leq k-1$, the nodes of $G_i$ are partitioned among the nodes of $G_{i+1}$. In this way, the coupling of $G_i$ to $G_{i+1}$ can be modelled as a hypergraph in which nodes and hyperedges are identified with nodes in $G_i$ and $G_{i+1}$ respectively. 
We now show that $\mathcal{G}$ can be formulated as a partitioned network with $k$ partitions. Let $X_i$ be the node set of the $i$th simplification level $G_i$.
Let $\omega_{i,i+1}$ (for $1 \leq i \leq k-1$) be the function encoding relations between nodes in the $i$th and nodes the $(i+1)$th simplification:
\[
  \omega(x, y) = \begin{cases}
    \omega_{i, i+1}(x, y), \quad &x \in X_i, y \in X_{i+1} \text{ for } i = 1, \ldots, k-1; \\
    \omega_{i+1, i}(x, y), \quad &x \in X_{i+1}, y \in X_{i} \text{ for } i = 1, \ldots, k-1; \\
    0, \quad &\text{otherwise}. 
  \end{cases}
\]
Together with a choice of weights $(\mu_i)_{i = 1}^k$, $((X_i, \mu_i)_{i = 1}^k, \omega)$ is a partitioned measure network encoding the multiscale network $\mathcal{G}$. 

Given two graphs $G$ and $G'$ and their respective simplifications $\mathcal{G}$ and $ \mathcal{G}'$, we can then construct two partitioned measure networks: $((X_i, \mu_i)_{i = 1}^k, \omega)$ and $((X_i', \mu_i')_{i = 1}^k, \omega')$. For a candidate coupling $(\pi_i)_{i = 1}^k$, the corresponding distortion functional is 
\begin{equation}
  \sum_{i=1}^{k-1} \| \omega_{i,i+1} - \omega_{i,i+1}' \|_{L^p(\pi_i \otimes \pi_{i+1})}^p. 
\end{equation}
The partitioned measure network alignment problem induced by this distortion is equivalent to the one proposed in \cite[Algorithm 1]{chowdhury2023hypergraph}, i.e.
\begin{align}
  \min_{\pi_i \in \Pi(\mu_i, \mu_i'), 1 \leq i \leq k} \sum_{i = 1}^{k-1} \langle L(\omega_{i,i+1}, \omega_{i,i+1}'), \pi_i \otimes \pi_{i+1} \rangle. 
\end{align}
This multiscale graph matching problem therefore fits into the problem of matchings of partitioned measure networks. Encouragingly, while the formulation of \cite{chowdhury2023hypergraph} was in terms of pairs of couplings $(\pi_i, \xi_i), 1 \leq i \leq k-1$ under the constraint $\xi_i = \pi_{i+1}$, the derivation of the problem from the viewpoint of partitioned measure networks allows us to directly and naturally formulate the problem in terms of a single set of couplings $(\pi_1, \ldots, \pi_k)$.

Furthermore, by modifying the function $\omega$, we can incorporate pairwise information on each of the graphs $G_i$:
\[
  \omega(x, y) = \begin{cases}
    \omega_{i, i+1}(x, y), \quad &x \in X_i, y \in X_{i+1} \text{ for } i = 1, \ldots, k-1; \\
    \omega_{i+1, i}(x, y), \quad &x \in X_{i+1}, y \in X_{i} \text{ for } i = 1, \ldots, k-1; \\
    \omega_{ii}(x, y), \quad &x, y \in X_i \times X_i \text{ for } i = 1, \ldots, k; \\ 
    0, \quad &\text{otherwise}. 
  \end{cases}
\]
This choice of $\omega$ leads to the problem
\begin{equation}
  \min_{\pi_i \in \Pi(\mu_i, \mu_i')} \: \sum_{i = 1}^{k-1} \langle L(\omega_{i,i+1}, \omega_{i,i+1}'), \pi_i \otimes \pi_{i+1} \rangle + \frac{1}{2} \sum_{i = 1}^k \langle L(\omega_{ii}, \omega_{ii}'), \pi_i \otimes \pi_i \rangle + \sum_{i = 1}^k \varepsilon_i \KL(\pi_i | \mu_i \otimes \mu_i'), 
  \label{eq:multiscale_alignment_problem}
\end{equation}
which incorporates Gromov-Wasserstein like (i.e. quadratic in $\pi$) terms. In the above we allow optionally for entropy regularization, $\varepsilon_i \ge 0$. 
For $\varepsilon_i > 0$, applying the projected gradient descent approach of Section \ref{sec:projected_gradient_descent} leads to the update rule
\begin{align}
  \begin{split}
  &\pi_{i}^{t+1} \gets \Proj_{\Pi(\mu_i, \mu_i')}^{\KL} \left( e^{-\varepsilon_i^{-1} \nabla_i \Lc(\pi_1^t, \ldots, \pi_k^t)} \mu_i \otimes \mu_i' \right), \\
    &\nabla_i \Lc(\pi_1, \ldots, \pi_k) = \\
    & \begin{cases}
    L(\omega_{12}, \omega_{12}') \otimes \pi_2 + \frac{1}{2} \left( L(\omega_{11}, \omega_{11}') + L(\omega_{11}^\top, \omega_{11}'^\top) \right) \otimes \pi_1, \quad &i = 1; \\
    \begin{aligned}
        L(\omega_{i-1, i}^\top, \omega_{i-1,i}') \otimes \pi_{i-1} + L(\omega_{i,i+1}, \omega_{i,i+1}') \otimes \pi_{i+1} \\
        + \frac{1}{2} \left( L(\omega_{ii}, \omega_{ii}') + L(\omega_{ii}^\top, \omega_{ii}'^\top) \right) \otimes \pi_i 
    \end{aligned}, \quad &2 \leq i \leq k-1; \\
    L(\omega_{k-1, k}^\top, \omega_{k-1, k}'^\top) \otimes \pi_{k-1} + \frac{1}{2} \left( L(\omega_{kk}, \omega_{kk}') + L(\omega_{kk}^\top, \omega_{kk}'^\top) \right) \otimes \pi_k, \quad &i = k.
  \end{cases}
  \end{split}
\end{align}
When we look for an unregularized solution and $\varepsilon_i = 0$, a block coordinate descent scheme similar to the one proposed in \cite{chowdhury2023hypergraph} can be employed. The block update in each of the $\pi_i$ works out to be a Fused Gromov-Wasserstein problem which can be tackled for instance using the Frank-Wolfe scheme of \cite{vayer2020fused}.
Alternatively, a proximal gradient approach similar to the one described in Section \ref{sec:proximal_gradient_algorithms} can be employed, in which case the gradient steps are the same as in \eqref{eq:proximal_gradient_update}.

An unbalanced formulation of this problem can also be solved by using the same biconvex relaxation approach laid out in Section \ref{sec:unbalanced_matchings}. 
While this problem falls into the scope of Algorithm \ref{alg:unbalanced_partitioned}, it is in fact a sub-case since each partition $i$ is only coupled to its ``adjoining'' partitions (rather than all partitions in the general case). We detail below the specific updates for the biconvex relaxation, in terms of two sets of couplings, $(\pi_i)_{i = 1}^k, (\xi_i)_{i = 1}^k$. For each of the $\pi_i, 1 \leq i \leq k$, solve
\begin{align}
  \begin{split}
    \min_{\pi_i \in \Mc_+(X_i \times X_i')} \langle M_i, \pi_i \rangle &+ \left(\varepsilon \sum_j m(\xi_j)\right) \KL(\pi_i | \mu_i \otimes \mu_i') \\ &+ \left(\lambda_1 \sum_j m(\xi_j)\right) \KL(\pi_i \ones | \mu_i) + \left(\lambda_2 \sum_j m(\xi_j)\right) \KL(\pi_i^\top \ones | \mu_i')
  \end{split}
\end{align}
where
\begin{align}
  \begin{split}
  M_i &= L_i + \sum_j \left( \varepsilon \Ent(\xi_j | \mu_j \otimes \mu_j') + \lambda_1 \Ent(\xi_j \ones | \mu_j) + \lambda_2 \Ent(\xi_j^\top \ones | \mu_j') \right),  \\
  L_i &= \begin{cases}
    \frac{1}{2} L(\omega_{12}, \omega_{12}') \otimes \xi_{2} + \frac{1}{2} L(\omega_{11}, \omega_{11}') \otimes \xi_1, &i = 1; \\
    \frac{1}{2} \left( L(\omega_{i, i+1}, \omega_{i, i+1}') \otimes \xi_{i+1} + L(\omega_{i-1, i}^\top, \omega_{i-1, i}'^\top) \otimes \xi_{i-1}\right) + \frac{1}{2} L(\omega_{ii}, \omega_{ii}') \otimes \xi_i, &2 \leq i \leq k-1; \\
    \frac{1}{2} L(\omega_{k-1, k}^\top, \omega_{k-1, k}'^\top) \otimes \xi_{k-1}+ \frac{1}{2} L(\omega_{kk}, \omega_{kk}') \otimes \xi_k & i = k.
  \end{cases}
  \end{split}
\end{align}
Similarly, in each of the $\xi_i, 1 \leq i \leq k$, we solve
\begin{align}
  \begin{split}
  \min_{\xi_i \in \Mc_+(X_i \times X_i')}  \langle M'_i, \xi_i \rangle &+ \left( \varepsilon \sum_j m(\pi_j) \right) \KL(\xi_i | \mu_i \otimes \mu_i') \\
                                                                       &+ \left( \lambda_1 \sum_j m(\pi_j) \right) \KL(\xi_i \ones | \mu_i) + \left( \lambda_2 \sum_j m(\pi_j) \right) \KL(\xi_i^\top \ones | \mu_i'), 
  \end{split}
\end{align}
where
\begin{align}
  \begin{split}
  M'_i &= L'_i + \sum_j \left( \varepsilon \Ent(\pi_j | \mu_j \otimes \mu_j') + \lambda_1 \Ent(\pi_j \ones | \mu_j) + \lambda_2 \Ent(\pi_j^\top \ones | \mu_j') \right),  \\
  L'_i &= \begin{cases}
    \frac{1}{2} L(\omega_{12}, \omega_{12}') \otimes \pi_{2} + \frac{1}{2} L(\omega_{11}^\top, \omega_{11}'^\top) \otimes \pi_1, &i = 1;\\
    \frac{1}{2} \left( L(\omega_{i, i+1}, \omega_{i, i+1}') \otimes \pi_{i+1} + L(\omega_{i-1, i}^\top, \omega_{i-1, i}'^\top) \otimes \pi_{i-1}\right) + \frac{1}{2} L(\omega_{ii}^\top, \omega_{ii}'^\top) \otimes \pi_i, &2 \leq i \leq k-1;\\
    \frac{1}{2} L(\omega_{k-1, k}^\top, \omega_{k-1, k}'^\top) \otimes \pi_{k-1}+ \frac{1}{2} L(\omega_{kk}^\top, \omega_{kk}'^\top) \otimes \pi_k & i = k.
  \end{cases}
  \end{split}
\end{align}
These updates, together with the projections of Algorithm \ref{alg:unbalanced_partitioned} onto the mass equality constraint sets, give a numerical approach to approximating a solution of the general unbalanced multiscale alignment problem.  

\subsection{Barycenters with fixed support}\label{sec:barycenters_fixed_support}

As an alternative to the blow-up scheme of \cite{chowdhury2020gromov}, we can consider an approximation of the barycenter problem where we restrict our approach to seeking a minimizer over network representatives of a fixed size. 
This is the same as the approach of \cite{peyre2016gromov}, which was developed in the setting of the Gromov-Wasserstein distance. That is, for an input ensemble of partitioned measure networks $\{ P^{(i)}, 1 \leq i \leq N \}$, we consider a barycenter $\overline{P} = ((\overline{X}_i, \overline{\mu}_i)_{i = 1}^k, \overline{\omega})$ in which we have fixed the cardinalities of $\overline{X}_i$ to $|\overline{X}_i| = n_i$. We also prescribe the probability measures $\overline{\mu}_i$ for $\overline{P}$, so that it remains to find the optimal function $\overline{\omega}$. Expanding the definition of the partitioned network distance between hypernetworks, we have 
\begin{align}
  \min_{\overline{\omega}} \sum_{i = 1}^{N} w_i d_{\Pc_k}(\overline{P}, P^{(i)})^2 = \min_{\overline{\omega}} \sum_{i = 1}^N \left[ w_i \min_{\pi^{(i)} \in \Pi_k(\overline{\mu}, \mu^{(i)})} \| \overline{\omega} - \omega^{(i)} \|_{L^2(\pi^{(i)} \otimes \pi^{(i)})}^2 \right].
  \label{eq:barycenter_objective}
\end{align}
From this, it is apparent that an alternating scheme can be developed by minimizing separately in the couplings $\pi^{(i)} \in \Pi_k(\overline{\mu}, \mu^{(i)})$ and in the function $\overline{\omega}$.
Fixing $\overline{\omega}$, the objective \eqref{eq:barycenter_objective} can be minimized in each of the $\pi^{(i)}, 1 \leq i \leq N$ by solving $N$ independent partitioned network matching problems. Fixing the couplings $\{ \pi^{(i)} \}_{i = 1}^N$, the minimization problem in $\overline{\omega}$ becomes 
\[
  \min_{\overline{\omega}} \sum_i w_i \: \| \overline{\omega} - \omega^{(i)} \|_{L^2(\pi^{(i)} \otimes \pi^{(i)})}^2,  
\]
this amounts to minimizing a quadratic objective and therefore has a closed form solution. 

\begin{prop}[Barycenter update for fixed couplings]
  For fixed couplings $\pi^{(i)}, 1 \leq i \leq N$, the objective \eqref{eq:barycenter_objective} is quadratic and minimized in $\overline{\omega}$ at 
  \begin{equation}
    \overline{\omega}_{jl}^\star = \frac{1}{\overline{\mu}_j \otimes \overline{\mu}_l} \sum_{i = 1}^N w_i \pi^{(i)}_j \omega^{(i)}_{jl} {\pi^{(i)}_l}^\top, \quad 1 \leq j, l \leq k.
  \end{equation}
\end{prop}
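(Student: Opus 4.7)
The plan is a direct first-order calculation: since the couplings $\pi^{(i)}$ are held fixed, the objective in \eqref{eq:barycenter_objective} is a finite sum of squared $L^2$-norms of the form $\|\overline{\omega} - \omega^{(i)}\|_{L^2(\pi^{(i)}_j \otimes \pi^{(i)}_l)}^2$, and this is manifestly a (non-negative) quadratic polynomial in the free variable $\overline{\omega}$. Thus a global minimizer is characterized by the first-order optimality conditions, which I will write out entry-wise and solve in closed form.

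First I would expand the objective using the definition of the partitioned network distance, noting that
\[
\sum_{i=1}^N w_i \, d_{\Pc_k}(\overline P, P^{(i)})^2 \text{ (with couplings fixed)} = \sum_{i=1}^N \sum_{j,l=1}^k w_i \, \|\overline\omega_{jl} - \omega^{(i)}_{jl}\|^2_{L^2(\pi^{(i)}_j \otimes \pi^{(i)}_l)},
\]
where $\overline\omega_{jl}$ denotes the restriction of $\overline\omega$ to $\overline X_j \times \overline X_l$ and analogously for $\omega^{(i)}_{jl}$. Because the block $\overline{\omega}_{jl}$ appears only in the $(j,l)$-summand and interacts linearly with the (fixed) $\omega^{(i)}_{jl}$ via $\pi^{(i)}_j \otimes \pi^{(i)}_l$, the problem decouples into an independent minimization over each block $\overline\omega_{jl}$, and within each block it further decouples into independent univariate quadratic minimizations in each entry $\overline\omega_{jl}(x,y)$ for $(x,y) \in \overline X_j \times \overline X_l$.

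Next I would compute the one-variable minimizer. For a fixed $(j,l,x,y)$, the terms of the objective involving $\overline\omega_{jl}(x,y)$ form
\[
\sum_{i=1}^N w_i \sum_{x' \in X^{(i)}_j}\sum_{y' \in X^{(i)}_l} \pi^{(i)}_j(x,x')\,\pi^{(i)}_l(y,y')\,\bigl(\overline\omega_{jl}(x,y) - \omega^{(i)}_{jl}(x',y')\bigr)^2,
\]
a strictly convex quadratic (assuming $\overline\mu_j(x),\overline\mu_l(y) > 0$). Setting its derivative in $\overline\omega_{jl}(x,y)$ to zero and using the marginalization identities $\sum_{x'} \pi^{(i)}_j(x,x') = \overline\mu_j(x)$ and $\sum_{y'} \pi^{(i)}_l(y,y') = \overline\mu_l(y)$, which hold because $\pi^{(i)}_j \in \Pi(\overline\mu_j, \mu^{(i)}_j)$, yields
\[
\overline\omega_{jl}(x,y) \cdot \overline\mu_j(x)\overline\mu_l(y) = \sum_{i=1}^N w_i \sum_{x',y'} \pi^{(i)}_j(x,x')\,\omega^{(i)}_{jl}(x',y')\,\pi^{(i)}_l(y,y').
\]
The right-hand side is precisely the $(x,y)$-entry of the matrix $\sum_i w_i \, \pi^{(i)}_j \, \omega^{(i)}_{jl} \, (\pi^{(i)}_l)^\top$, so dividing entrywise by $\overline\mu_j(x)\overline\mu_l(y)$ (i.e., by $\overline\mu_j \otimes \overline\mu_l$) gives the claimed formula. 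Assembling these entrywise solutions over all blocks $(j,l)$ produces the global minimizer $\overline\omega^\star$.

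The argument has no substantive obstacle; the only care needed is notational (distinguishing partition indices $(j,l)$ from within-block indices $(x,y)$) and a brief comment handling points where $\overline\mu_j(x)\overline\mu_l(y) = 0$, at which the objective does not depend on $\overline\omega_{jl}(x,y)$ and any value is optimal (so the convention $0/0 = 0$ may be adopted without loss of generality).
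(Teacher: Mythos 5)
Your proposal is correct and follows essentially the same route as the paper, which simply notes that the derivation is identical to that of Peyr\'e et al.\ (2016) up to the presence of two distinct couplings per block; your entrywise first-order computation, using the marginal constraints $\sum_{x'}\pi^{(i)}_j(x,x')=\overline\mu_j(x)$, is exactly that derivation written out (with the usual implicit normalization $\sum_i w_i = 1$, which the stated formula also assumes). Your remark about entries where $\overline\mu_j(x)\overline\mu_l(y)=0$ is a sensible extra precaution that the paper leaves unstated.
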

We note that the derivation of the form of this update is identical to that of \cite{peyre2016gromov}, except for the presence of two possibly distinct couplings in the summand.

\end{document}